\theoremstyle{plain}
\newtheorem{theorem}{Theorem}[section]
\newtheorem{proposition}[theorem]{Proposition}
\newtheorem{lemma}[theorem]{Lemma}
\theoremstyle{definition}
\newtheorem{assumption}[theorem]{Assumption}
\theoremstyle{remark}
\def \S {\mathbf{S}}
\def \R {\mathbb{R}}
\def \w {\mathbf{w}}
\def \v {\mathbf{v}}
\def \V {\mathcal{V}}
\def \x {\mathbf{x}}
\def \E {\mathbb{E}}
\def \x {\mathbf{x}}
\def \p {\mathbf{p}}
\def \e {\mathbf{e}}
\def \1 {\mathbf{1}} 
\def \z {\mathbf{z}}
\def \s {\mathbf{s}}
\def \y {\mathbf{y}}
\def \u {\mathbf{u}}
\def \F {\mathcal{F}}
\def \I {\mathbb{I}}
\def \P {\mathcal{P}}
\def \Q {\mathcal{Q}}
\def \D {\mathcal{D}}
\def \mI {\mathcal{I}}
\def \B {\mathcal{B}}
\def \cB {\mathcal{B}}
\def \wB {\widetilde{\mathcal{B}}}
\def \btau {\bar{\tau}}
\def \balp {\bar{\alpha}}
\def \bgam {\bar{\gamma}}
\def \O {\mathcal{O}}
\def \h {\mathbf{h}}
\def \mS {\mathcal{S}}
\def \name {\text{BSVRB}}
\def \vone {\text{BSVRB}^{\text{v1}}}
\def \vtwo {\text{BSVRB}^{\text{v2}}}
\DeclareMathOperator*{\argmin}{arg\,min}
\icmltitlerunning{Blockwise Stochastic Variance-Reduced Methods with Parallel Speedup for Multi-Block Bilevel Optimization}
\begin{document}

\twocolumn[
\icmltitle{Blockwise Stochastic Variance-Reduced Methods with Parallel Speedup\\ for Multi-Block Bilevel Optimization}

% It is OKAY to include author information, even for blind
% submissions: the style file will automatically remove it for you
% unless you've provided the [accepted] option to the icml2023
% package.

% List of affiliations: The first argument should be a (short)
% identifier you will use later to specify author affiliations
% Academic affiliations should list Department, University, City, Region, Country
% Industry affiliations should list Company, City, Region, Country

% You can specify symbols, otherwise they are numbered in order.
% Ideally, you should not use this facility. Affiliations will be numbered
% in order of appearance and this is the preferred way.
% \icmlsetsymbol{equal}{*}

\begin{icmlauthorlist}
\icmlauthor{Quanqi Hu}{tamu}
\icmlauthor{Zi-Hao Qiu}{nju,vis}
\icmlauthor{Zhishuai Guo}{tamu}
\icmlauthor{Lijun Zhang}{nju}
\icmlauthor{Tianbao Yang}{tamu}
\end{icmlauthorlist}

\icmlaffiliation{tamu}{Department of Computer Science and Engineering, Texas A\&M University, College Station, TX, USA}
\icmlaffiliation{nju}{National Key Laboratory for Novel Software Technology, Nanjing University, Nanjing, China}
\icmlaffiliation{vis}{Most work of Z.H. Qiu was done when visiting the OptMAI lab
at TAMU}

\icmlcorrespondingauthor{Tianbao Yang, Quanqi Hu}{tianbao-yang, quanqi-hu@tamu.edu}

% You may provide any keywords that you
% find helpful for describing your paper; these are used to populate
% the "keywords" metadata in the PDF but will not be shown in the document
\icmlkeywords{Machine Learning, ICML}

\vskip 0.3in
]

% this must go after the closing bracket ] following \twocolumn[ ...

% This command actually creates the footnote in the first column
% listing the affiliations and the copyright notice.
% The command takes one argument, which is text to display at the start of the footnote.
% The \icmlEqualContribution command is standard text for equal contribution.
% Remove it (just {}) if you do not need this facility.

\printAffiliationsAndNotice{}  % leave blank if no need to mention equal contribution
% \printAffiliationsAndNotice{something} % otherwise use the standard text.

\begin{abstract}
In this paper, we consider non-convex multi-block bilevel optimization (MBBO) problems, which involve $m\gg 1$ lower level problems and have important applications in machine learning. %This problem is more challenging than standard SBO problems because the sampling process is more complicated which not only involves sampling of blocks but also sampling of data samples for each sampled block. Hence, 
Designing a stochastic gradient and controlling its variance is more intricate due to the hierarchical sampling of blocks and data and the unique challenge of estimating hyper-gradient. We aim to achieve three nice properties for our algorithm: (a) matching the state-of-the-art complexity of standard BO problems with a single block;  (b) achieving parallel speedup by sampling   $I$ blocks and sampling $B$ samples for each sampled block per-iteration; (c) avoiding the computation of the inverse of a high-dimensional Hessian matrix estimator. However, it is non-trivial to achieve all of these by observing that existing works only achieve one or two of these properties. To address the involved challenges for achieving (a, b, c), we propose two stochastic algorithms by using advanced blockwise variance-reduction techniques for tracking the Hessian matrices (for low-dimensional problems) or the Hessian-vector products (for high-dimensional problems), and prove an iteration complexity of $\O(\frac{m\epsilon^{-3}\mathbb I(I<m)}{I\sqrt{I}} + \frac{m\epsilon^{-3}}{I\sqrt{B}})$ for finding an $\epsilon$-stationary point under appropriate conditions. We also conduct experiments to verify the effectiveness of the proposed algorithms comparing with existing MBBO algorithms. 

%Despite the tremendous studies of the standard stochastic bilevel optimization (SBO), the mSBO problem is under-explored. Naively applying existing SBO algorithms to mSBO would be prohibitive in practice since it requires processing all blocks at each iteration. Hence, it is necessary to % Thus, the key of solving mSBO is how to design a ``partial update'' algorithm and how the ``partial update'' affects the theoretical analysis. 
%On the other hand, the existing works on mSBO either do not match the state-of-the-art complexity for non-convex stochastic optimization, or are slowed down by complicated algorithm design. To address these issues, we propose two algorithms for solving non-convex mSBO problems, which require processing a constant number of blocks at each iteration and achieve a sample complexity of $\O(\frac{m}{I\sqrt{\min\{I,B\}}}\epsilon^{-3})$ for finding an $\epsilon$-stationary point under Lipschitz continuous conditions of stochastic oracles. This convergence rate matches the lower bound for stochastic smooth non-convex optimization. Furthermore, we establish even faster convergence results for gradient-dominant functions. Experimental results have verified the effectiveness of the proposed algorithms. 
\end{abstract}

\setlength{\textfloatsep}{2pt}% Remove \textfloatsep

\setlength\abovedisplayskip{2pt}
\setlength\belowdisplayskip{2pt}
	% \vspace*{-0.15in}
\section{Introduction}
This paper considers solving the following  generalized  bilevel optimization  problem with  multi-block structure: 
\begin{equation}\label{eqn:msbo}
\begin{aligned}
&\min_{\x\in \R^{d_x}} F(\x) = \frac{1}{m}\sum_{i=1}^m \underbrace{f_i(\x, \y_i(\x))}\limits_{F_i(\x)},\\
&\y_i(\x) = \arg\min_{\y_i\in\R^{d_{y,i}}} g_i(\x, \y_i), i=1, \ldots, m.
\end{aligned}
\end{equation}
where $f_i,g_i$ are continuously differentiable functions in expectation forms and $g_i(\x,\y_i)$ is strongly convex with respect to $\y_i$. To be specific, $f_i,g_i$ are defined as $f_i(\x, \y_i(\x)):=  \E_{\xi\sim \mathcal P_i}[f_i(\x, \y_i(\x); \xi)]$ and $g_i(\x, \y_i)= \E_{\zeta\sim \Q_i}[g_i(\x, \y_i;  \zeta)]$. The number of blocks $m$ is considered to be greatly larger than $1$. We refer to the above problem as \textbf{multi-block bilevel optimization} (MBBO). When $m=1$, the MBBO problem reduces to the standard BO problem. The MBBO problem has found many interesting applications in machine learning and AI, e.g., multi-task compositional AUC maximization~\cite{https://doi.org/10.48550/arxiv.2206.00260}, top-$K$ normalized discounted cumulative gain (NDCG) optimization for learning to rank~\cite{https://doi.org/10.48550/arxiv.2202.12183}, and meta-learning~\cite{DBLP:journals/corr/abs-1909-04630}. Recently, \citet{https://doi.org/10.48550/arxiv.2206.00439} uses MBBO to formulate a family of risk functions for optimizing performance at the top. 

The theoretical study of MBBO was initiated by~\cite{https://doi.org/10.48550/arxiv.2105.02266}. In their paper, the authors proposed a randomized stochastic variance-reduced method (RSVRB) for solving MBBO aiming to achieve a state-of-the-art (SOTA) iteration complexity in the order of $O(1/\epsilon^3)$ for finding an $\epsilon$-stationary solution. However, RSVRB and its analysis suffer from several drawbacks: (i) RSVRB requires computing the inverse of the Hessian matrix estimator, which is prohibited for high-dimensional lower-level problems; (ii) the Jacobian estimators maintained for each block could be memory consuming and slow down the algorithm in practice for problems with high-dimensional $\x$; (iii) RSVRB does not achieve a parallel speed-up when using a mini-batch of samples to estimate the gradients, Jacobians and Hessians. While these issues have been tackled for the standard BO problems, e.g., the Hessian matrix can be estimated by the Neumann series and there are works achieving SOTA complexity without maintainng Jacobian estimator~\cite{NEURIPS2021_71cc107d,https://doi.org/10.48550/arxiv.2102.07367}, they become trickier for MBBO problems due to extra noise caused by sampling blocks. Although some later studies for particular MBBO problems  have achieved parallel speed-up and eschewed computing the inverse of a Hessian estimator~\cite{https://doi.org/10.48550/arxiv.2206.00260}, they do not match the SOTA complexity of $O(1/\epsilon^3)$. 

In this paper, we aim to achieve three nice properties for solving MBBO problems: (a) matching the SOTA $O(1/\epsilon^3)$ complexity of standard BO problems with a single block;  (b) achieving parallel speedup by sampling multiple blocks and multiple samples for each sampled block per-iteration; (c) avoiding the computation of the inverse of a  Hessian matrix estimator for high-dimensional lower level problems. To the best of our knowledge, this is the first work that enjoys all of these three properties for solving MBBO problems. We propose two algorithms named $\vone$ and $\vtwo$ for low-dimensional and high-dimensional lower-level problems, respectively.  For $\vone$, we propose to use an advanced blockwise stochastic variance-reduced estimator namely MSVR~\cite{https://doi.org/10.48550/arxiv.2207.08540}  to track and estimate the Hessian matrices and the partial gradients of the lower level problems. To further achieve (c) in $\vtwo$, we explore the idea of converting the inverse of the Hessian matrix multiplied by a partial gradient for each block into solving  another lower level problem using matrix-vector products. %and use MSVR to track their gradient estimators for estimating the Hessian-vector product. 
%However, %naively utilizing blockwise updates of the lower variables and the estimators of Hessian-vector products will blow up the complexity by a significant factor due to their inter-dependence. 
To maintain the same iteration complexity of $\vone$, we update the estimators of Hessian-vector products of all blocks without compromising the sample complexity per-iteration. At the end, we manage to prove the same iteration complexity of $\O(\frac{m\epsilon^{-3}\mathbb I(I<m)}{I\sqrt{I}} + \frac{m\epsilon^{-3}}{I\sqrt{B}})$ for both algorithms, which reduces to the SOTA complexity $O(\epsilon^{-3}/\sqrt{B})$ of the standard BO with one block. %  The first algorithm named BSVRB-v1 achieve (a) and (b), and the second algorithm named BSVRB-v2 achieves (a), (b) and (c).

Our contributions are summarized as following: 
\begin{itemize}[noitemsep,topsep=0pt,parsep=0pt,partopsep=0pt]
\item We propose two efficient algorithms by using blockwise stochastic variance reduction for solving MBBO problems with low-dimensional and high-dimensional lower-level problems, respectively.
\item We prove the iteration complexity of the two algorithms, which not only matches the SOTA complexity of existing algorithms for solving the standard BO but also achieves parallel speed-up of using multiple blocks and multiple samples of sampled blocks.
\item We conduct experiments on both algorithms for low-dimensional and high-dimensional lower problems and demonstrate the effectiveness of the proposed algorithms against existing algorithms of MBBO. 
\end{itemize}

\begin{table*}[t] 
	\caption{Comparison of iteration complexity and the three properties of different methods for solving MBBO and FCCO problems. We use MMBO-v2 to refer to the second algorithm proposed in~\cite{https://doi.org/10.48550/arxiv.2206.00260} for solving a MBBO problem with a min-max objective.  The iteration complexity only considers the case $I<m$, where $m$ is the total number of blocks, $I$ is the number of sampled blocks per-iteration and $B$ is the number of sampled data for each sampled block per-iteration. (c) is not applicable to FCCO problems.    %memory allocation of different algorithms for solving multi-block bilevel (B), compositional (C), min-max bilevel (MB) and compositional bilevel (CB) problems. Memory column denotes the memory allocation for storing variables needed in the process, where $d_x$ and $d_{y,i}$ are the dimension of $\x$ and $\y_{i}$ respectively. Here we only consider variables related to the bilevel structure, thus it does not apply to MSVR-v2.
	}\label{tab:1} 
	\centering
	\label{tab:2}
	\scalebox{1}{\begin{tabular}{lcccc}
			\toprule
			Method&Objective& Iteration Complexity&	Satisfying (a), (b), (c)\\
        \midrule
          MSVR-v2~\cite{https://doi.org/10.48550/arxiv.2207.08540}& FCCO&$\O(\frac{m\epsilon^{-3}}{I\sqrt{I}} + \frac{m\epsilon^{-3}}{I\sqrt{B}})$& (a), (b)\\
          \hline
        MMBO-v2~\cite{https://doi.org/10.48550/arxiv.2206.00260}&\makecell{MBBO\\ (min-max)}&$\O(\max\left\{\frac{m}{IB},\frac{1}{\min\{I,B\} } \right\}\epsilon^{-4})$&(b), (c)\\
        K-SONG~\cite{https://doi.org/10.48550/arxiv.2202.12183}&MBBO&$\O(\max\left\{\frac{m}{IB},\frac{1}{\min\{I,B\} } \right\}\epsilon^{-4})$& (b)\\
        RSVRB~\cite{https://doi.org/10.48550/arxiv.2105.02266} & MBBO&$\O(m\epsilon^{-3})$& (a)\\
	  $\vone$ (this work)& MBBO&$\O(\frac{m\epsilon^{-3}}{I\sqrt{I}} + \frac{m\epsilon^{-3}}{I\sqrt{B}})$&(a), (b)\\
        $\vtwo$ (this work)& MBBO&$\O(\frac{m\epsilon^{-3}}{I\sqrt{I}} + \frac{m\epsilon^{-3}}{I\sqrt{B}})$&(a), (b), (c)\\
		\bottomrule
	\end{tabular}}
	% \vspace*{-0.15in} 
\end{table*}

\section{Related Work}
\textbf{Stochastic Bilevel Optimization (SBO).} SBO algorithms have garnered increasing attention recently. The first non-asymptotic convergence analysis for non-convex SBO with strongly convex lower level problem was given by \cite{99401}. The authors proposed a double-loop stochastic algorithm, where the inner loop solves the lower level problem and the outer loop solves the upper level, and established a sample complexity of $\O(\epsilon^{-6})$ for finding an $\epsilon$-stationary point of $F(\x)$, i.e., a point $\x$ such that $\|\nabla F(\x)\|\leq \epsilon$ in expectation. With a large mini-batch size, \cite{DBLP:journals/corr/abs-2010-07962} improved the sample complexity to $\O(\epsilon^{-4})$. A single-loop two timescale algorithm (TTSA) based on SGD was proposed in \cite{DBLP:journals/corr/abs-2007-05170}, but suffers from a worse sample complexity of $\O(\epsilon^{-5})$. By utilizing variance-reduction method (STORM) to estimate second-order gradients, i.e., Jacobian $\nabla_{xy}^2 g(\x, \y)$ and Hessian $\nabla_{yy}^2 g(\x, \y)$, \cite{DBLP:journals/corr/abs-2102-04671} proposed a single-loop single timescale algorithm (STABLE) that enjoys a sample complexity of $\O(\epsilon^{-4})$ without large mini-batch. Recently, \cite{https://doi.org/10.48550/arxiv.2102.07367,NEURIPS2021_71cc107d,https://doi.org/10.48550/arxiv.2105.02266} further improved the sample complexity to $\O(\epsilon^{-3})$ by fully utilizing variance-reduced estimator for gradients of both upper and lower level objectives. \cite{https://doi.org/10.48550/arxiv.2107.12301} proposed Bregman distance-based algorithms for solving nonsmooth BO with and without  variance reduction. 

One of the difficulties for solving SBO problems lies at how to efficiently compute the Hessian inverse in the gradient estimation. To avoid such potentially expensive matrix inverse operation, many existing works have employed the Neumann series approximation with independent mini-batches following~\cite{99401}.   Another method is to transfer the product of the Hessian inverse and a vector to the solution to a quadratic problem \cite{https://doi.org/10.48550/arxiv.2112.04660,https://doi.org/10.48550/arxiv.2201.13409,DBLP:journals/corr/abs-1909-04630} and to solve it by using deterministic methods (e.g., conjugate gradient) or stochastic methods that only involve matrix-vector products. However, these methods are tailored to single-block BO problems, and their direct applications to MBBO may suffer from per iteration computation inefficiency. Thus, with the potential efficiency issue in consideration, it is trickier to achieve faster rates for MBBO problems~\cite{https://doi.org/10.48550/arxiv.2206.00260}.

\textbf{MBBO.} Besides~\cite{https://doi.org/10.48550/arxiv.2105.02266}, two recent works have considered MBBO and their applications in ML \cite{https://doi.org/10.48550/arxiv.2202.12183,https://doi.org/10.48550/arxiv.2206.00260}. In particular, \citet{https://doi.org/10.48550/arxiv.2202.12183} formulated top-$K$ NDCG optimization for learning-to-rank as a MBBO problem with a compositional objective function, which can be formulated as our MBBO problem. There are many lower-level problems with each having only an one-dimensional variable for optimization. They proposed a stochastic algorithm (K-SONG) that uses blockwise sampling and moving average estimators for tracking gradients and Hessians, and proved an iteration complexity of $\O(\max\{\frac{m}{IB\epsilon^4},\frac{1}{\min\{I,B\}\epsilon^4}\})$. \citet{https://doi.org/10.48550/arxiv.2206.00260} considered a MBBO problem with a min-max objective which includes our considered MBBO problem as a special case. They proposed two algorithms that use moving average estimators  for tracking gradients and Hessians or  Hessian-vector products for lower-dimensional and high-dimensional lower-level problems, respectively, and established a similar iteration complexity of $\O(\max\{\frac{m}{IB\epsilon^4},\frac{1}{\min\{I,B\}\epsilon^4}\})$. In their second algorithm, they avoided computing the inverse of the Hessian matrix estimator by using SGD to solve a quadratic problem. It is notable that the iteration complexities of these two works do not match the SOTA result for the standard BO. As discussed before and later, achieving (a), (b) and (c) simultaneously is not just applying variance-reduction techniques such as SPIDER/SARAH/STORM, etc.~\cite{DBLP:journals/corr/abs-1807-01695,pmlr-v70-nguyen17b,cutkosky2019momentum, NIPS2013_37f0e884}, as done in \cite{https://doi.org/10.48550/arxiv.2105.02266}.

Finally, we would like to point out a related work~\cite{https://doi.org/10.48550/arxiv.2207.08540} that considered the finite-sum coupled  compositional optimization (FCCO) problem, which is a special case of MBBO with the lower problems being quadratic problems with an identity Hessian matrix. They proposed multi-block-Single-probe Variance Reduced (MSVR) estimator for tracking the inner functional mappings in a blockwise stochastic manner. MSVR helps achieve both the SOTA complexity and the parallel speed-up, which is also leveraged in this work. However, since MBBO is more general than FCCO and involves estimating the hyper-gradient, our algorithmic design and analysis face a new challenge for tracking the Hessian-vector-products, which is not present in their work. We make a comparison between different works for solving MBBO and FCCO problems in Table~\ref{tab:1}. 

\section{Preliminaries}
% \vspace*{-0.05in}
\paragraph{Notations.} Let $\|\cdot\|$ denote the $\ell_2$ norm of a vector or the spectral norm of a matrix. Let $\Pi_{\Omega}[\cdot]$ denote Euclidean projection onto a convex set $\Omega$ for a vector and $\S_{\lambda}[X]$ denotes a projection onto the set $\{X\in \R^{d\times d}: X\succeq \lambda I\}$. The matrix projection operation $\S_{\lambda}[X]$ can be implemented by using singular value decomposition and thresholding the singular values. %For an Euclidean ball $\Omega_C=\{\u\in\R^d, \|\u\|\leq C\}$, we abuse the notation $\Pi_{C} = \Pi_{\Omega_C}$. 
For multi-block structured vectors, we use vector name with subscript $i$ to denote its $i$-th block. For a twice differentiable function $f:X\times Y\to \R$, let $\nabla_x f(x,y)$ and $\nabla_y f(x,y)$ denote its partial gradients taken with respect to $x$ and $y$ respectively, and let $\nabla^2_{xy} f(x,y)$ and $\nabla^2_{yy} f(x,y)$ denote the Jacobian and  the Hessian matrix w.r.t $y$ respectively. We use $f(\cdot;\B)$ to represent an unbiased stochastic estimator of $f(\cdot)$ depending on a sampled mini-batch $\B$.
An unbiased stochastic estimator using one sample $\xi$ is said to have bounded variance $\sigma^2$ if $\E_\xi[\|f(\cdot;\xi)-f(\cdot)\|^2]\leq \sigma^2$. A mapping $f:X\to \R$ is $C$-Lipschitz continuous if $\|f(x)-f(x')\|\leq C\|x-x'\|$ $\forall x,x'\in X$. Function $f$ is $L$-smooth if its gradient $\nabla f(\cdot)$ is $L$-Lipschitz continuous. A function $g:X\to \R$ is $\lambda$-strongly convex if $\forall x,x'\in X$, $g(x)\geq g(x')+\nabla g(x')^T(x-x')+\frac{\lambda}{2}\|x-x'\|^2$.  A point $\x$ is called $\epsilon$-stationary of $F(\cdot)$ if $\|\nabla F(\x)\|\leq \epsilon$.

In order to understand the proposed algorithms, we first present following proposition about the (hyper-)gradient of $F(\x)$, which follows from the standard result in the literature of bilevel optimization~\citep{99401}.  
% \vspace*{-0.2in}
\begin{proposition}\label{prop:1}
When $g_i(\x, \y_i)$ is strongly convex w.r.t. $\y_i$, we have
\begin{equation*}
\begin{aligned} 
&\nabla F(\x)  =\frac{1}{m}\sum_{i=1}^m \big\{\nabla_x f_i(\x, \y_i(\x)) \\
&\, - \nabla_{xy}^2 g_i(\x, \y_i(\x))[\nabla_{yy}^2g_i(\x, \y_i(\x))]^{-1}\nabla_y f_i(\x, \y_i(\x))\big\}.
\end{aligned}
\end{equation*}
\end{proposition}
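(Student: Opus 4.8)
The plan is to exploit the additive structure of $F$ and reduce to the classical single-block hyper-gradient computation via the implicit function theorem. Since $F(\x)=\frac{1}{m}\sum_{i=1}^m F_i(\x)$ with $F_i(\x)=f_i(\x,\y_i(\x))$ is a finite average, by linearity of differentiation it suffices to establish the stated expression for a single block, namely
\begin{equation*}
\nabla F_i(\x)=\nabla_x f_i(\x,\y_i(\x))-\nabla_{xy}^2 g_i(\x,\y_i(\x))[\nabla_{yy}^2 g_i(\x,\y_i(\x))]^{-1}\nabla_y f_i(\x,\y_i(\x)),
\end{equation*}
and then to average over $i=1,\dots,m$.

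First I would apply the chain rule to $F_i(\x)=f_i(\x,\y_i(\x))$, which gives $\nabla F_i(\x)=\nabla_x f_i(\x,\y_i(\x))+[\nabla\y_i(\x)]^\top\nabla_y f_i(\x,\y_i(\x))$, where $\nabla\y_i(\x)\in\R^{d_{y,i}\times d_x}$ is the Jacobian of the solution mapping. The crux is therefore to obtain a closed form for $\nabla\y_i(\x)$. Since $g_i(\x,\cdot)$ is strongly convex, the minimizer $\y_i(\x)$ is unique and is characterized by the unconstrained first-order optimality condition $\nabla_y g_i(\x,\y_i(\x))=0$, which holds as an identity in $\x$. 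Differentiating this identity with respect to $\x$ and applying the chain rule yields $(\nabla_{xy}^2 g_i(\x,\y_i(\x)))^\top+\nabla_{yy}^2 g_i(\x,\y_i(\x))\nabla\y_i(\x)=0$.

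The invertibility needed to solve this linear system is exactly where strong convexity enters: $\lambda$-strong convexity of $g_i$ in $\y_i$ gives $\nabla_{yy}^2 g_i(\x,\y_i(\x))\succeq\lambda I\succ 0$, so $[\nabla_{yy}^2 g_i(\x,\y_i(\x))]^{-1}$ exists and $\nabla\y_i(\x)=-[\nabla_{yy}^2 g_i(\x,\y_i(\x))]^{-1}(\nabla_{xy}^2 g_i(\x,\y_i(\x)))^\top$. Substituting this into the chain-rule expression and transposing, and using the symmetry of the Hessian $\nabla_{yy}^2 g_i$ (so that the inverse is symmetric too), collapses the two transposes and produces precisely the single-block formula above; averaging over the $m$ blocks then completes the argument.

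The step I expect to require the most care is the justification that $\y_i(\x)$ is continuously differentiable, so that the chain rule and the differentiation of the optimality condition are legitimate rather than merely formal. This is furnished by the implicit function theorem applied to $G(\x,\y):=\nabla_y g_i(\x,\y)$ at $(\x,\y_i(\x))$: continuous differentiability of $g_i$ makes $G$ continuously differentiable, and nonsingularity of $\partial G/\partial\y=\nabla_{yy}^2 g_i\succeq\lambda I$ guarantees a local $C^1$ implicit solution whose Jacobian is exactly the displayed formula. Beyond this, the only remaining subtlety is careful bookkeeping of matrix shapes and transpose conventions, so that $\nabla_{xy}^2 g_i$ (a $d_x\times d_{y,i}$ matrix) multiplies $[\nabla_{yy}^2 g_i]^{-1}\nabla_y f_i\in\R^{d_{y,i}}$ to return a vector in $\R^{d_x}$.
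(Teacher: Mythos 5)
Your proposal is correct and is essentially the paper's own argument: the paper does not prove Proposition~\ref{prop:1} directly but defers to the standard bilevel hyper-gradient result of \citet{99401}, whose proof is exactly your implicit-function-theorem derivation (differentiate the optimality condition $\nabla_y g_i(\x,\y_i(\x))=0$, invert $\nabla_{yy}^2 g_i \succeq \lambda I$, substitute into the chain rule, and average over blocks). The only minor imprecision is that $G(\x,\y)=\nabla_y g_i(\x,\y)$ being $C^1$ requires $g_i$ to be twice continuously differentiable, not merely continuously differentiable, which is implicit in the paper's standing smoothness assumptions.
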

% \vspace*{-0.1in}
There are three sources of computational costs involved in the above gradient: (i) the sum over all $m$ blocks; (ii) the costs for computing the partial gradients, Jacobians and Hessian matrices of individual blocks, which usually depend on many samples; and (iii) the inverse of Hessian matrices. The last two have been tackled in the existing literature of BO.  The first cost can be alleviated by sampling a mini-batch of blocks. However, due to the compositional structure of the hyper-gradient, designing a variance-reduced stochastic gradient estimator is complicated due to the existence of multiple blocks~\cite{https://doi.org/10.48550/arxiv.2207.08540}. In particular, we need to track multiple Hessian matrices  $\nabla_{yy}^2g_i(\x, \y_i(\x))$ or Hessian-vector products $[\nabla_{yy}^2g_i(\x, \y_i(\x))]^{-1}\nabla_y f_i(\x, \y_i(\x))$. To this end, we will leverage the MSVR estimator~\cite{https://doi.org/10.48550/arxiv.2207.08540}, which is described below. 
%A simple approach for tackling these costs is to sample a mini-batch of blocks and use a mini-batch of samples for computing the involved gradients, Jacobians of each sampled block, and use the Neuman series to approximate the Hessian matrix inverse, i.e., 
%\begin{equation}\label{G_v0}
%\begin{aligned}
%G_t&=\frac{1}{ I }\sum_{i\in \mI_t}\nabla_x f_i(\x_t,\y_{i,t};\B_i^t)\\
%&-\nabla_{xy}^2 g_i(\x_t,\y_{i,t};\wB_i^t)Q_{i,t}\nabla_y f_i(\x_t,\y_{i,t};\B_i^t).
%\end{aligned}
%\end{equation}
%where $y_{i,t}$ is an approximation of $y_{i}(\x_t)$,  $Q_{i,t} = \frac{k_t}{C_{gyy}}\prod_{j=1}^q \left(I-\frac{1}{C_{gyy}}\nabla_{yy}^2 g_i(\x_{t},\y_i^{t};\xi_i^t)\right)$ is constructed following the  Neuman series, where $q$ is chosen from $\{1,\dots, k_t\}$ randomly and $k_t$ is the number of samples $\{\xi_i^t\}_{i=1}^{k_t}$ for estimating the Hessian inverse. It is possible to establish a convergence for using the above gradient estimator, however, it does not introduce variance-reduction and hence would suffer from slow convergence. One might consider using recursive variance reduction techniques (e.g., STORM) based on computing the above stochastic gradient estimators at two consecutive solutions. However, due to the randomness of the sampled blocks $\mI_t$ the variance would induce an error of $\sum_{i=1}^m\|Q_{i,t} - \nabla_{yy}^2 g_i(\x_t,\y_{i,t})\|^2$, which is impossible to be controlled for those blocks that are not sampled (i.e., $Q_{i,t}=0$

\textbf{MSVR estimator.} Consider multiple functional  mappings $(h_1(\e), \ldots, h_m(\e))$, at the $t$-th iteration we need to estimate their values by an estimator $\h_t=(\h_{1,t}, \ldots, \h_{m,t})$. Given the constraint that  only a few blocks of mappings $h_i(\e)$ are sampled for assessing their stochastic values, the MSVR update is given by ~\cite{https://doi.org/10.48550/arxiv.2207.08540}:
\begin{equation*}
\h_{i,t+1}=\begin{cases} \bigg[(1-\alpha)\h_{i,t}+\alpha h_i(\e_{t};\B_i^t)\\
\quad+\underbrace{\gamma(h_i(\e_{t};\B_i^t)- h_i(\e_{t-1};\B_i^t))}\limits_{\text{error correction}}\bigg], \,\,  i\in \mI_t\\
\h_{i,t},\,\, \text{o.w.} \end{cases}
\end{equation*}
The update for the sampled $I=|\mI_t|$ blocks have a customized error correction term, which is inspired by previous variance reduced estimator STORM~\cite{cutkosky2019momentum} but has a subtle difference in setting the value of $\gamma$.  Different from the setting of STORM, i.e., $\gamma=1-\alpha$, MSVR sets $\gamma = \frac{m-I}{I(1-\alpha)}+(1-\alpha)$ to account for the randomness and noise induced from block sampling. Due to the need of tracking individual $\y_i$ for each block and the boundedness in our analysis, we extend the above MSVR estimator with two changes: (i) adding a projection onto a convex domain $\Omega$ for the update $\h_{i,t+1}$ of sampled blocks whenever boundedness is required, (ii) the input argument $\e_t$ is changed to individual input $\e_{i,t}$. %by. We show that it holds the same variance-reduction property as the original MSVR: 
%\begin{lemma}\label{lem:MSVR}
%Suppose mapping $h_i(\e_i;\xi)$ is $L$-Lipschitz, $h_i(\e)=\E_{\xi}[h_i(\e_i;\xi)]$, $h_i(\e)\in \Omega$ and $\E_{\xi}[\|h_i(\e_i)-h_i(\e_i;\xi)\|^2]\leq \sigma^2$ for all $i=1,\dots,m$. Consider the MSVR update~(\ref{MSVR_update}). Denote $\delta_{h,t}:=\sum_{i=1}^m \|\h_{i,t}-h_i(\e_{i,t-1})\|^2$.  By setting $\gamma = \frac{m-I}{I(1-\alpha)}+(1-\alpha)$, for $\alpha\leq \frac{1}{2}$, with batch sizes $I=|\mI_t|$ and $B=|\B_i^t|$, we have
%\begin{equation}\label{eqn:msvr}
%\begin{aligned}
 %   \E\left[\delta_{h,t+1}\right]&\leq (1-\frac{I \alpha}{m})\E\left[ \delta_{h,t}\right]+\frac{2I\alpha^2 \sigma^2}{B} \\
 %   &\quad +\frac{8m^2 L^2}{I}\E\left[\sum_{i=1}^m\|\e_{i,t-1}-\e_{i,t}\|^2\right]
%\end{aligned}
%\end{equation}
%\end{lemma}
%With $\Omega=\R^d$ the above lemma is Lemma 1 in \cite{https://doi.org/10.48550/arxiv.2207.08540}. We refer the detailed proof to Appendix~\ref{app:MSVR}

%As one may see from the above gradient formulation, during the process of gradient approximation, data sampling and accessing gradient oracles for all block could be prohibitive when $m\gg 1$. To tackle this challenge, we propose two efficient algorithms for solving mSBO problems in the next section.

% An extra challenge for computing the gradient of $F(\x)$ is processing $m$ lower problems (sampling data and computing stochastic gradients). When $m$ is large, the cost of processing all $m$ lower problems would be prohibitive. Below, we will present two efficient algorithms for tackling bilevel optimization with many lower level problems. 

\section{Algorithms}
% \vspace*{-0.05in}
%{\bf Motivation of the Proposed Algorithm.}% A na\"ive approach to approximate the gradient $\nabla F(\x)$ at any iterate $\x_t$ is to compute estimator $\y_{i,t}$ of $\y_i(\x_t)$ for all block $i$ and then approximate each term on the right hand side of the equation in Proposition~\ref{prop:1} by their (mini-batch) stochastic gradients, i.e., 
%\begin{equation*}
%    \begin{aligned}
%        &\frac{1}{m}\sum_{i=1}^m \bigg\{\nabla_x f_i(\x_t, \y_{i,t}; \B_i) - \nabla_{xy}^2 g_i(\x_t, \y_{i,t}; \wB_i)\\
%        &\quad\quad\quad\quad  [\nabla_{yy}^2g_i(\x_t, \y_{i,t}; \bar{\B}_i)]^{-1}\nabla_y f_i(\x_t, \y_{i,t}; \B_i)\bigg\}
%    \end{aligned}
%\end{equation*}
%where $\B_i, \bar{\B}_i, \wB_i$ are independent stochastic random sample batches. However, 

Due to the compositional structure in terms of $\y_i(\x)$ and $\nabla_{yy}^2g_i(\x_t, \y_i(\x))$ in the hyper-gradient as shown in Proposition~\ref{prop:1},  we need to maintain and estimate variance-reduced estimators for these variables. Below, we present two algorithms for low-dimensional and high-dimensional lower-level problems, respectively. For low-dimensional lower-level problems, we directly estimate the Hessian matrices and compute their inverse if needed. For high-dimensional lower-level problems, we propose to estimate the Hessian-vector products $[\nabla_{yy}^2g_i(\x, \y_{i}(\x)]^{-1}\nabla_y f_i(\x, \y_{i}(\x))$.  %There are two deficiencies of this naive approach: (i) Due to the inverse function, the expectation of above stochastic variable is biased. In order to control the error of the stochastic gradient estimator, a straightforward approach is to use a mini-batch stochastic gradient estimator for $\nabla_{yy}^2g_i(\x_t, \y_{i,t})$ with a large mini-batch size.  Due to the compositional structure in terms of $y_{i,t}$, In fact, since the inverse function $[\nabla_{yy}^2g_i(\x, \y_{i,t})]^{-1}$ makes it similar to a gradient of a compositional function, a better thing to do is to use a variance-reduced estimator, which has been studied extensively in the literature of stochastic optimization and stochastic compositional optimization~\citep{DBLP:journals/corr/abs-2006-10138,cutkosky2019momentum,chen2020solving,DBLP:journals/mp/WangFL17,DBLP:journals/siamjo/GhadimiRW20}.   An estimator of $\nabla_{yy}^2g_i(\x_t, \y_{i,t})$ with variance-reduction property is crucial to the convergence of an algorithm without using large mini-batch size. (ii) Estimating $\y_i(\x_t)$ with high accuracy at each iteration is computationally expensive and also unnecessary. This is because at the beginning $\x_t$ is far from good, hence a rough estimate to the solution of $\y_i(\x_t)$ is enough for the algorithm to make progress.  

\subsection{For low-dimensional lower-level problems}
% \vspace*{-0.05in}
% Let us start from the standard SBO problem, i.e. mSBO with $m=1$. Based on the above motivation, we may have the following algorithm. At each iteration $t$, we first use variance-reduced stochastic gradient estimators to estimate $\nabla_y g_1(\x_t, \y_{1,t})$ and update $\y_{1,t+1}$ accordingly. Second, we use variance-reduced stochastic gradient estimators to estimate $\nabla_{yy}^2g_1(\x_t, \y_{1,t})$ as $H_{1,t+1}$ and hence $\nabla F(\x_t)$ can be estimated as 
% \begin{equation}
%     \begin{aligned}
%         G_t&=\nabla_x f_1(\x_t, \y_{1,t}; \B_1^t)\\
%         & - \nabla_{xy}^2 g_1(\x_t, \y_1(\x_t); \wB_1^t) [H_{1,t}]^{-1}\nabla_y f_1(\x_t, \y_{1,t}; \B_1^t)
%     \end{aligned}
% \end{equation}
% Lastly, we apply variance-reduced technique to gradient estimation $G_t$ and use it to update $\x_{t+1}$. In fact, similar algorithms for SBO problem have been studied extensively~\cite{https://doi.org/10.48550/arxiv.2102.07367,NEURIPS2021_71cc107d}. The difference is that existing works use Neumann series method to approximate the Hessian inverse $[\nabla_{yy}^2g_i(\x, \y_i(\x))]^{-1}$ to avoid computing matrix inverse. We will discuss this problem later on. 

\begin{algorithm}[t]
\caption {Blockwise Stochastic Variance-Reduced Bilevel Method (version 1): $\vone$}\label{alg:4}
\begin{algorithmic}[1]
\STATE{Initialization: $\x_0=\x_1, \y_0=\y_1,\s_1,H_1, \z_1$}
\FOR{$t=1,2, ..., T$}
\STATE Sample a subset of lower problems $\mI_t$ 
\STATE Sample two batches $\B_i^t\sim \P_i,\widetilde{\B}_i^t\sim \Q_i$  for $i\in \mI_t$.
%\STATE Update $\y_{t+1} = \y_{t} - \tau \tau_t \s_{t} $
%\STATE Update MSVR estimators $\s_{i,t+1}$ and $H_{i,t+1}$ for $\nabla_y g_i$ and $\nabla_{yy}^2 g_i$ according to (\ref{s_compute},\ref{H_compute}) for $i\in\mI_t$.
\STATE Update  $\s_{i,t+1}$ and $H_{i,t+1}$ according to (\ref{sH_compute}) for $i\in\mI_t$.

%\STATE update $[H_{i,t+1}]^{-1} = [H_{i,t+1}]^{-1}$ for $i\in \mI_t$.
% \STATE $\s_{i,t+1}=(1-\alpha_t)\s_{i,t}+\alpha_t \nabla_y g_i(\x_t, \y_{i,t}; \wB_i^t)+\gamma_t(\nabla_y g_i(\x_t, \y_{i,t}; \wB_i^t)-\nabla_y g_i(\x_{t-1}, \y_{i,t-1}; \wB_i^t))$
% \STATE Update $\y_{i, t+1} = \y_{i,t} - \tau \tau_t \s_{i,t} $ for all $i$.
% \STATE Update $\y_{i, t+1} =\begin{cases} \y_{i,t} - \tau \tau_t \s_{i,t},\quad i\in \mI_t\\
% \y_{i,t},\quad \text{o.w.}\end{cases}$.
% \STATE $\u_{i,t+1} = (1-\balp_t)\u_{i,t}+\balp_t\nabla_v \phi_i(\v_{i,t},\x_t,\y_{i,t};\B_i^t,\wB_i^t) +\bgam_t\left(\nabla_v \phi_i(\v_{i,t},\x_t,\y_{i,t};\B_i^t,\wB_i^t) -\nabla_v \phi_i(\v_{i,t-1},\x_{t-1},\y_{i,t-1};\B_i^t,\wB_i^t) \right)$
% \STATE $\widetilde{G}_t = \frac{1}{ I }\sum_{i\in \mI_t}\nabla_x f_i(\x_{t-1},\y_{i,t-1};\B_i^t)-\nabla_{xy}^2 g_i(\x_{t-1},\y_{i,t-1};\wB_i^t)[H_{i,t-1}]^{-1}\nabla_y f_i(\x_{t-1},\y_{i,t-1};\B_i^t)$
% \STATE $G_t=\frac{1}{ I }\sum_{i\in \mI_t}\nabla_x f_i(\x_t,\y_{i,t};\B_i^t)-\nabla_{xy}^2 g_i(\x_t,\y_{i,t};\wB_i^t)[H_{i,t}]^{-1}\nabla_y f_i(\x_t,\y_{i,t};\B_i^t)$
\STATE Compute $G_t,\widetilde{G}_t$ according to (\ref{G_v1}).
\STATE Update $\z_{t+1}=(1-\beta_t)(\z_t-\widetilde{G}_t) +G_t$
\STATE Update $\y_{t+1} = \y_{t} - \tau \tau_t \s_{t} $
\STATE Update $\x_{t+1} = \x_{t} - \eta_t \z_{t+1}$
\ENDFOR 
\STATE{\textbf{return} $(\x_{\Tilde{t}}, \y_{\Tilde{t}},\s_{\Tilde{t}},H_{\Tilde{t}},\z_{\Tilde{t}})$ for a randomly selected $\Tilde{t}$}%\in\{0, 1, \ldots, T\}$.}
\end{algorithmic}
%\label{alg:4}
\end{algorithm}
%The methods for solving BO problems in existing work can easily extend to solve the MBBO by processing all blocks at each update. However, it might not be appropriate, especially when $m\gg 1$. To tackle this challenge, we propose a block-wise update. 
We first discuss updates for estimators of the (partial) gradients and the Hessian matrices as they are the major costs per-iteration. Then we discuss the updates of $\x$ and $\y=(\y_1, \ldots, \y_m)$, and finally compare with RSVRB. 

{\bf Updates for Gradient/Hessian Estimators.} We need to estimate $\nabla_y g_i(\x_t, \y_{i,t})$ for updating $\y_{i,t}$, to estimate $\nabla_{yy}^2g_i(\x_t, \y_{i,t})$ for updating $\x_t$.  To this end,  at each iteration $t$, we randomly sample a subset of blocks $\mI_t\subset[m]$. For each sampled block $i\in \mI_t$, we sample a mini-batch $\wB_i^t\sim \Q_i$  for the lower-level problem, and a mini-batch $\B_i^t\sim \P_i$ for the upper-level problem.    % we first compute variance-reduced estimations for $\nabla_y g_i(\x_t, \y_{i,t})$ and $\nabla_{yy}^2g_i(\x_t, \y_{i,t})$, and perform one step of gradient descent update for $\y_{i,t+1}$. 
%Due to the need of tracking multiple $\nabla_y g_i(\x_t, \y_{i,t})$ and $\nabla_{yy}^2g_i(\x_t, \y_{i,t})$, 
We update the following MSVR estimators of  $\nabla_y g_i(\x_t, \y_{i,t})$ and $\nabla_{yy}^2g_i(\x_t, \y_{i,t})$ for $i\in\mI_t$ and keep their other coordinates unchanged:   %Such block-sampling strategy brings another layer of difficulties to the convergence analysis. To be specific, due to the bilevel/compositional structure, applying both STORM and block-wise update to $\nabla_y g_i(\x_t, \y_{i,t})$ and $\nabla_{yy}^2g_i(\x_t, \y_{i,t})$ results in a worse sample complexity~\cite{https://doi.org/10.48550/arxiv.2207.08540}. To this end, instead of STORM, we utilize the previously mentioned MSVR estimator. For each block $i$ sampled at iteration $t$, we do the following updates
\begin{align}\label{sH_compute}
&\s_{i,t+1}=(1-\alpha_t)\s_{i,t}+\alpha_t \nabla_y g_i(\x_t, \y_{i,t}; \wB_i^t)\notag\\
&\quad +\gamma_t(\nabla_y g_i(\x_t, \y_{i,t}; \wB_i^t)-\nabla_y g_i(\x_{t-1}, \y_{i,t-1}; \wB_i^t))\notag\\
% &H_{i,t+1} = \S_{\lambda}\bigg[(1-\balp_t)H_{i,t}+\balp_t\nabla_{yy}^2 g_i(\x_t,\y_{i,t};\wB_i^t) \\
% &\quad+\bgam_t\left(\nabla_{yy}^2 g_i(\x_t,\y_{i,t};\wB_i^t) -\nabla_{yy}^2 g_i(\x_{t-1},\y_{i,t-1};\wB_i^t) \right)\bigg],
%\end{aligned}
%\end{equation}
%\begin{equation}\label{H_compute}
%\begin{aligned}
% &\s_{i,t+1}=(1-\alpha_t)\s_{i,t}+\alpha_t \nabla_y g_i(\x_t, \y_{i,t}; \wB_i^t)\\
% &\quad +\gamma_t(\nabla_y g_i(\x_t, \y_{i,t}; \wB_i^t)-\nabla_y g_i(\x_{t-1}, \y_{i,t-1}; \wB_i^t))\\
&H_{i,t+1} = \S_{\lambda}\bigg[(1-\balp_t)H_{i,t}+\balp_t\nabla_{yy}^2 g_i(\x_t,\y_{i,t};\wB_i^t) \\
&\quad+\bgam_t\left(\nabla_{yy}^2 g_i(\x_t,\y_{i,t};\wB_i^t) -\nabla_{yy}^2 g_i(\x_{t-1},\y_{i,t-1};\wB_i^t) \right)\bigg],\notag
\end{align}
where $\gamma_t = \frac{m-I}{I(1-\alpha_t)}+(1-\alpha_t)$ and $\bgam_t = \frac{m-I}{I(1-\balp_t)}+(1-\balp_t)$,  $\lambda$ is the lower bound of the Hessian matrix (cf. Assumption~\ref{ass:1}) and $\S_{\lambda}$ is a projection operator to ensure the eigen-value of $H_{i,t+1}$  is lower bounded so that its inverse can be appropriately bounded.

To compute the variance-reduced estimator of $\nabla F(\x_t)$ ,  we compute the stochastic gradient estimations at two iterations: %instead of taking average over gradients of all blocks, we take average over only the sampled blocks:
% \vspace*{-0.1in}
\begin{small}
\begin{align}\label{G_v1}\small
&G_t=\frac{1}{ I }\sum_{i\in \mI_t}\big[\nabla_x f_i(\x_t,\y_{i,t};\B_i^t)\notag\\
&\quad\quad\quad  -\nabla_{xy}^2 g_i(\x_t,\y_{i,t};\wB_i^t)[H_{i,t}]^{-1}\nabla_y f_i(\x_t,\y_{i,t};\B_i^t)\big],\notag\\
&\widetilde{G}_t = \frac{1}{ I }\sum_{i\in \mI_t}\big[\nabla_x f_i(\x_{t-1},\y_{i,t-1};\B_i^t)\\
&\!-\!\nabla_{xy}^2 g_i(\x_{t-1},\y_{i,t-1};\wB_i^t)[H_{i,t-1}]^{-1}\nabla_y f_i(\x_{t-1},\y_{i,t-1};\B_i^t)\big]. \notag
\end{align}
\end{small}

Then the STORM gradient estimator $\z_{t+1}$ of $\nabla F(\x_t)$ is updated by
%\begin{equation}\label{eqn:storm}
%\begin{aligned}
$\z_{t+1}=(1-\beta_t)(\z_t-\widetilde{G}_t) +G_t$.
%\end{aligned}
%\end{equation}
Note that in the above updates, only stochastic partial gradients, Jacobians, and Hessians based on {\it two mini-batches of data} $\B_i^t$ and $\wB_i^t$ for the sampled blocks $i\in\mI_t$ are computed. This is in sharp contrast with the previous SOTA variance-reduced methods~\cite{https://doi.org/10.48550/arxiv.2102.07367,NEURIPS2021_71cc107d} that require {\it three or four independent} mini-batches due to the use of the Neumann series for estimating the Hessian inverse.  It is also notable that we use the Hessian estimator $H_{i,t}$ from the previous iteration in computing $G_t$ to  decouple its dependence from $\nabla_{xy}^2 g_i(\x_t,\y_{i,t};\wB_i^t)$ due to using the same mini-batch of data $\wB_i^t$; otherwise we need two independent mini-batches~\cite{wang2022finite,https://doi.org/10.48550/arxiv.2206.00260}.

%we compute the gradient estimation $\widetilde{G}_t$ of the previous points $(\x_{t-1}, y_{i,t-1})$ using the same sampled blocs $\mI_t$ and their %sampled data batches $\B_i^t,\wB_i^t$, i.e., 
%\vspace{-0.05in}
%\begin{align}
%%%&\widetilde{G}_t = \frac{1}{ I }\sum_{i\in \mI_t}\nabla_x f_i(\x_{t-1},\y_{i,t-1};\B_i^t)-\label{tG_v1}\\
%&\nabla_{xy}^2 g_i(\x_{t-1},\y_{i,t-1};\wB_i^t)[H_{i,t-1}]^{-1}\nabla_y f_i(\x_{t-1},\y_{i,t-1};\B_i^t)\notag.
%\end{align}

{\bf Updates for $\x_{t+1}$ and $\y_{t+1}$.} While the update for $\x_{t+1}=\x_t - \eta_t \z_{t+1}$ is simple, the update of $\y_{t+1}$ is trickier as there are multiple blocks $\y_{i,t+1},i\in[m]$.  A simple approach is to only update $\y_{i,t+1}$ for $i\in\mI_t$ as only their gradient estimators $\s_{i,t+1}$ are updated. This is adopted by~\cite{https://doi.org/10.48550/arxiv.2206.00260}.  However, since we use MSVR estimators $s_{i,t+1}$ for deriving a fast rate, additional error terms of MSVR estimators will emerge and cause a blow-up on the dependence of $m/I$. In particular, if we only update $\y_{i,t+1}$ for $i\in\mI_t$ and keep other blocks unchanged, we will have an iteration complexity of $T =  \O\left(\max\{\frac{m\mathbb I(I<m)}{I\sqrt{I}},\frac{m^{1.5}}{I^{1.5}\sqrt{B}}\}\epsilon^{-3}\right)$, which has an additional scaling  $\sqrt{m/I}$ compared that in~\cite{  https://doi.org/10.48550/arxiv.2206.00260} albeit with an improved order on $\epsilon^{-1}$. To avoid this unnecessary blow up, a simple remedy will work by updating all blocks of $\y_{i,t+1}$, i.e., $\y_{t+1} = \y_t - \tau\tau_t\s_t$, where $\s_t=(\s_{1,t}, \ldots, \s_{m, t})$, $\tau$ is a parameter and $\tau_t$ is scaled stepsize.  
%With the partial gradient estimator $\s_{i,t+1}$,  we update $\y_{i,t+1}$ with gradient descent using $\s_{i,t+1}$ for all blocks $i$. Here it seems natural to again perform a sampled-block-only update for $\y_{t+1}$. However, since the $\y_{t+1}$ is updated using the MSVR estimator $\s_{t}$, which is also updated with sampled blocks, such two-layer of block sampling will eventually lead to an iteration complexity of $T =  \O\left(\max\{\frac{m\mathbb I(I<m)}{I\sqrt{I}},\frac{m^{1.5}}{I^{1.5}\sqrt{B}}\}\epsilon^{-3}\right)$. The worse block number dependency $\frac{m^{1.5}}{I^{1.5}}$, comparing to $\frac{m}{I}$ in  \cite{https://doi.org/10.48550/arxiv.2207.08540, https://doi.org/10.48550/arxiv.2206.00260} is caused by the additional layer of block sampling.
%Previous works do not have this issue because \cite{https://doi.org/10.48550/arxiv.2207.08540} considers FCCO, which does not involve solving lower-level problems, and \cite{ https://doi.org/10.48550/arxiv.2206.00260} uses unbiased stochastic estimators to update $\y_{t}$, which prevents it from achieving fast convergence rate. To ensure the block number dependency reducing to $\frac{m}{I}$ we update all blocks of $\y_{t+1}$.
In fact, such all-block updates can be avoided by using a lazy update strategy. Since the unsampled blocks $\y_{i,t}, \y_{i,t-1}$ are not used in computing gradient/Hessian estimators until $i$ is sampled again, one may accumulate the $\y_{i,t}$ updates and leave it to the future. In particular, at iteration $t$, we replace the full-block updates of $\y_{i,t+1}$ with the following updates for sampled blocks $i\in \mI_t$ at the beginning of the iteration:
\begin{equation}\label{delay_update}
\begin{aligned}
    &\y_{i,t-1} = \y_{i,t-1}-(K_{i,t}-1)\tau \tau_t \s_{i,t},\\
    &\y_{i,t} = \y_{i,t-1}-\tau \tau_t \s_{i,t},
\end{aligned}
\end{equation}
where $K_{i,t}$ denotes the number of iterations passed since the last time $i$ was sampled. For unsampled blocks $i\not\in \mI_t$, no update is needed.
Finally, we present the detailed steps in Algorithms~\ref{alg:4}, to which is referred as $\vone$. 

\textbf{Comparison with RSVRB.} 
$\vone$ is different from RSVRB regarding both  algorithm design and theoretical analysis. We summarize the key differences in algorithm design below, and leave the  differences  of theoretical analysis to section~\ref{sec:analysis_RS}. First of all, RSVRB keeps variance-reduced estimators for all partial gradients, Jacobians and Hessians involved in $\nabla F(\x)$, while $\vone$ only need it for the Hessians and partial gradients of lower-level problems. In fact, estimators for $\nabla_x f_i,\nabla_{xy}^2 g_i, \nabla_y f_i$ do not need variance reduction because they all have unbiased estimator and we keep a variance-reduced estimator $\z_t$ for $\nabla F(\x)$. %Note that for the Hessian variance reduction is necessary to get fast convergence since the estimation of the Hessian inverse is biased. 
Secondly, in the updates of variance-reduced estimators, RSVRB requires scaling updates for non-sampled blocks, while $\vone$ requires none. Thirdly, at each iteration RSVRB requires twice independent blocking samplings, one for updates of partial gradient estimators  and the other for the update of STORM estimator of $\nabla F(\x)$. Lastly,  RSVRB involves projection operation for the updates of $\y_{i,t}$ while $\vone$ does not. These improvements simplify the algorithm without sacrificing its convergence rate.

\subsection{For high-dimensional lower-level problems}
One limitation of $\vone$ is that computing the inverse of the Hessian estimator $H_{i,t}$  is not suitable for high-dimensional lower-level problems. To address this issue, we propose our second method $\vtwo$. The main idea is to treat $[\nabla^2_{yy}g_i(\x,\y_i)]^{-1}\nabla_y f_i(\x,\y_i)$ as the solution to a quadratic function minimization problem. As a result, $[\nabla^2_{yy}g_i(\x,\y_i)]^{-1}\nabla_y f_i(\x,\y_i)$ can be approximated in a similar way as $\y_i$  in Algorithm~\ref{alg:4}. This strategy  has been studied for solving BO problems in recent works \cite{https://doi.org/10.48550/arxiv.2206.00260,https://doi.org/10.48550/arxiv.2201.13409, https://doi.org/10.48550/arxiv.2112.04660}. However, none of them directly applies to variance reduction methods for MBBO, which incurs additional challenge to be discussed shortly. 

Let us define $m$ quadratic problems and their solutions: %Define a quadratic function $\phi_i$ for each block and its minimum point as following
\begin{align}\label{v_problem}
% \begin{aligned}
    \phi_i(\v,\x,\y_i) &:=\frac{1}{2} \v^T \nabla^2_{yy}g_i(\x,\y_i) \v - \v^T \nabla_y f_i(\x,\y_i) \notag\\
    \v_i(\x,\y_i) &:= \argmin\nolimits_{\v\in\R^{d_y}} \phi_i(\v,\x,\y_i).
% \end{aligned}
\end{align}
It is not difficult to show that $\v_i(\x,\y_i)$ is equal to $[\nabla^2_{yy}g_i(\x,\y_i)]^{-1}\nabla_y f_i(\x,\y_i)$. 
%Then we have the gradient 
% \begin{equation*}
%     \nabla_v \phi_i(\v,\x,\y_i) = \nabla^2_{yy}g_i(\x,\y_i) \v -  \nabla_y f_i(\x,\y_i),
% \end{equation*}
%which implies that the unique solution is given by 
% \begin{equation*}
%     % \end{equation*}
%Note that due to the smoothness of $g$ and Lipschitz continuity of $f$ with respect to $\y$ in Assumption~\ref{ass:1} and \ref{ass:2}, one may define constant $\V=\frac{C_{fy}}{\lambda}$ so that $\|\v_i(\x,\y_i)\|^2\leq \V^2$. Considering the updates in Algorithm~\ref{alg:2}, we have $\|\v_{i,t}\|^2\leq \V^2$ for all $i,t$. 
 %, then it has bounded variance, of which the proof is deferred to Appendix~\ref{Appen:rsvrbv2}. Here we enlarge the value of $\sigma$ so that $\E_{\cB_i^t}[\|\nabla_v \phi_i(\v^t_i,\x_t,\y_i^t;\cB_i^t,\wB_i^t)-\nabla_v \phi_i(\v^t_i,\x_t,\y_i^t)\|^2]\leq \frac{\sigma^2}{B}$. 
Since $\v_i(\x_t,\y_{i,t})$ can be viewed as solution to another layer of lower-level problem, we conduct similar updates for $\v_{i,t}$  to that for $\y_{i,t}$. Define a stochastic estimator $\nabla_v \phi_i(\v,\x,\y_i;\cB_i,\wB_i) := \nabla^2_{yy}g_i(\x,\y_i;\wB_i)\v -  \nabla_y f_i(\x,\y_i;\cB_i)$. Then an MSVR estimator $\u_{i,t+1}$ for gradient $\nabla_v \phi_i(\v_{i,t},\x_t,\y_{i,t})$ is given by 
\begin{align}\label{u_update}
&\u_{i,t+1} = (1-\balp_t)\u_{i,t}+\balp_t\nabla_v \phi_i(\v_{i,t},\x_t,\y_{i,t};\cB_i^t,\wB_i^t) \notag\\
&\quad\quad\quad\quad+\bgam_t\big[\nabla_v \phi_i(\v_{i,t},\x_t,\y_{i,t};\cB_i^t,\wB_i^t) \\
&\quad\quad\quad\quad -\nabla_v \phi_i(\v_{i,t-1},\x_t,\y_{i,t-1};\cB_i^t,\wB_i^t)\big], i\in\mI_t \notag, 
\end{align}
and then an update $\v_{i, t+1} = \left[\v_{i,t}-\btau_t\u_{i,t}\right]$ for the sampled blocks can be conducted. Then we  compute  STORM gradient estimator of $\nabla F(\x_t)$ using the following two stochastic gradient estimations: 
\begin{align}\label{G_v2}
        G_t&=\frac{1}{ I }\sum_{i\in \mI_t}\big[\nabla_x f_i(\x_t,\y_{i,t};\B_i^t)-\nabla_{xy}^2 g_i(\x_t,\y_{i,t};\wB_i^t)\v_{i,t}\big], \notag\\
        \widetilde{G}_t &= \frac{1}{ I }\sum_{i\in \mI_t}\big[\nabla_x f_i(\x_{t-1},\y_{i,t-1};\B_i^t)\\
        &\quad\quad\quad\quad\quad\quad\quad -\nabla_{xy}^2 g_i(\x_{t-1},\y_{i,t-1};\wB_i^t)\v_{i,t-1}\big].\notag
\end{align}
%Then the STORM gradient estimator is updated similarly as before. 
%To prepare for the final gradient estimator, we compute the gradient estimation $\widetilde{G}_t$ of the previous iteration using sampled blocks and data of the current iteration
%\begin{equation}\label{tG_v2}
%    \begin{aligned}
%    \end{aligned}
%\end{equation}
%Then the STROM estimator as~(\ref{eqn:storm}) will be computed for updating the variable $\x_{t+1}$. 

{\bf Updates for $\x_{t+1}$, $\y_{t+1}$ and $\v_{t+1}$.} The updates of $\x_{t+1}$ and $\y_{t+1}$ will be conducted similarly as before. However, the update for $\v_{t+1}$ is more subtle. %The simple extension made above causes some challenges in our convergence analysis. 
First, the stochastic estimator $\nabla_v \phi_i(\v_{i,t},\x_t,\y_{i,t};\cB_i,\wB_i) = \nabla^2_{yy}g_i(\x_t,\y_{i,t};\wB_i) \v_{i,t} -  \nabla_y f_i(\x_t,\y_{i,t};\cB_i)$ has no bounded variance unless $\v_{i,t}$ is bounded. To this end, we derive an upper bound $V=\frac{C_{fy}}{\lambda}$ so that $\v_i(\x, \y_i)\in\V=\{\v_i: \|\v_i\|_2^2\leq V^2\}$ under the Assumption~\ref{ass:1}, \ref{ass:2} and \ref{ass:3} (cf. Appendix~\ref{Appen:rsvrbv2}). Then the update of $\v_{i,t+1}$ is modified as $\v_{i, t+1} = \Pi_{\V}\left[\v_{i,t}-\btau_t\u_{i,t}\right]$. A similar approach has been used in~\cite{https://doi.org/10.48550/arxiv.2206.00260}. 
Second, similar to the problem of updating $\y_{i,t+1}$ only for $i\in\mI_t$ in $\vone$, updating $\v_{i,t+1}$ only for $i\in\mI_t$ in $\vtwo$ will lead to worse scaling factor in iteration complexity. To avoid this, we update all blocks of $\v_{i,t+1}$ using its MSVR gradient estimators $\u_{i,t+1}$. 
% To be specific, the sample complexity would be $T =  \O\left(\max\{\frac{m\mathbb I(I<m)}{I\sqrt{I}},\frac{m^{1.5}}{I^{1.5}\sqrt{B}}\}\epsilon^{-3}\right)$. The additional scaling factor $\sqrt{m/I}$ of the second term is caused by dealing with  the extra layer of lower level problems for solving $\phi_i(\v, \x_t,\y_{i,t})$. The reason is that the error correction term in the MSVR estimator produces terms $\sum_{i=1}^m\|\y_{i,t+1}-\y_{i,t}\|^2$ and $\sum_{i=1}^m\|\v_{i,t+1}-\v_{i,t}\|^2$ in the error bounds of $\u_{i,t}$ and $\s_{i,t}$. In order to cancel them out, the learning rates $\tau_t,\btau_t$ must be small enough. Moreover, due to the fact that the estimation of $\v_i(\x_t,\y_{i,t})$ closely depends on $\y_{i,t}$, one needs $\tau_t\leq \frac{I^{0.5}}{m^{0.5}}\btau_t$ to hold, which eventually causes the additional scaling $\frac{m^{0.5}}{I^{0.5}}$ in the iteration complexity. In order to remove this additional scaling factor, we propose a simple trick by updating all blocks for $\y_{t+1}=(\y_{1,t+1},\ldots, \y_{m,t+1})$ and $\v_{t+1}=(\v_{1,t+1},\ldots, \v_{m, t+1})$.  
With these changes, we present detailed steps in Algorithm~\ref{alg:2} for $\vtwo$. Finally, we remark that (i) the sample complexity per-iteration of $\vtwo$ is the same as $\vone$, i.e., only two mini-batches $\B_i^t$ and $\wB_i^t$ are required for each sampled block; (ii) similar to (\ref{delay_update}), the updates of non-sampled blocks  $\v_{i,t+1}$ can be delayed until they are sampled again due to that their corresponding gradient estimators do not change and they are not used for computing the gradient estimator $\z_{t+1}$ until their corresponding blocks are sampled again.

\begin{algorithm}[t]
\caption {Block-wise Stochastic Variance-Reduced Bilevel Method (version 2): $\vtwo$}\label{alg:2}
\begin{algorithmic}[1]
\STATE{Initialization: $\x_0=\x_1,\y_0=\y_1,\v_0=\v_1,\s_1,\u_1,\z_1$}
\FOR{$t=1,2, ..., T$}
\STATE Sample a subset of lower problems $\mI_t$ 
\STATE Sample two batches $\B_i^t\sim \P_i,\widetilde{\B}_i^t\sim \Q_i$
 for $i\in \mI_t$.
% \STATE Update $\y_{t+1} = \y_{t} - \tau \tau_t \s_{t} $% for all $i$.
 %\STATE Update $\v_{t+1} = \Pi_{\mathcal{V}^m}\left[\v_{t}-\btau_t\u_{t}\right]$% for all $i$.
\STATE Update $\s_{i,t+1}$ and $\u_{i,t+1}$  according to (\ref{sH_compute}), (\ref{u_update}).
% \STATE $\s_{i,t+1}=(1-\alpha_t)\s_{i,t}+\alpha_t \nabla_y g_i(\x_t, \y_{i,t}; \wB_i^t)+\gamma_t(\nabla_y g_i(\x_t, \y_{i,t}; \wB_i^t)-\nabla_y g_i(\x_{t-1}, \y_{i,t-1}; \wB_i^t))$
% \STATE $\u_{i,t+1} = (1-\balp_t)\u_{i,t}+\balp_t\nabla_v \phi_i(\v_{i,t},\x_t,\y_{i,t};\B_i^t,\wB_i^t) +\bgam_t\left(\nabla_v \phi_i(\v_{i,t},\x_t,\y_{i,t};\B_i^t,\wB_i^t) -\nabla_v \phi_i(\v_{i,t-1},\x_{t-1},\y_{i,t-1};\B_i^t,\wB_i^t) \right)$
% \STATE $\widetilde{G}_t = \frac{1}{ I }\sum_{i\in \mI_t}\nabla_x f_i(\x_{t-1},\y_{i,t-1};\B_i^t)-\nabla_{xy}^2 g_i(\x_{t-1},\y_{i,t-1};\wB_i^t)\v_{i,t-1}$
% \STATE $G_t=\frac{1}{ I }\sum_{i\in \mI_t}\nabla_x f_i(\x_t,\y_{i,t};\B_i^t)-\nabla_{xy}^2 g_i(\x_t,\y_{i,t};\wB_i^t)\v_{i,t}$
\STATE Compute $G_t,\widetilde{G}_t$ according to (\ref{G_v2}).
\STATE $\z_{t+1}=(1-\beta_t)(\z_t-\widetilde{G}_t) +G_t$
 \STATE Update $\y_{t+1} = \y_{t} - \tau \tau_t \s_{t} $% for all $i$.
 \STATE Update $\v_{t+1} = \Pi_{\mathcal{V}^m}\left[\v_{t}-\btau_t\u_{t}\right]$% for all $i$.
\STATE Update $\x_{t+1} = \x_{t} - \eta_t \z_{t+1}$
\ENDFOR 
\STATE{\textbf{return} $(\x_{\Tilde{t}}, \y_{\Tilde{t}},\v_{\Tilde{t}},\s_{\Tilde{t}},\u_{\Tilde{t}},\z_{\Tilde{t}})$ for a randomly selected $\Tilde{t}$} %${\Tilde{t}}\in\{0, 1, \ldots, T\}$.}
\end{algorithmic}
%\label{alg:2}
\end{algorithm}

\section{Convergence Analysis of $\name$}\label{sec:analysis_RS}
% \vspace*{-0.05in}
In this section we provide the convergence analysis of the proposed algorithms and highlight how it is different from the analysis of RSVRB.
% \subsection{Technical Assumptions}
First of all, we make the following assumptions regarding problem~(\ref{eqn:msbo}).
\begin{assumption}\label{ass:1}
For any $\x$, $g_i(\x, \cdot)$ is  $L_g$-smooth and $\lambda$-strongly convex, i.e., $L_gI\succeq \nabla_{yy}^2 g_i(\x,  \y_i)\succeq \lambda I$. 
\end{assumption}
\begin{assumption}\label{ass:2}
Assume the following conditions hold 
\begin{itemize}
[noitemsep,topsep=0pt,parsep=0pt,partopsep=0pt]
\item $\nabla_x f_i(\x, \y_i; \xi)$ is $L_{fx}$-Lipschitz continuous, $\nabla_y f_i(\x, \y_i; \xi)$ is $L_{fy}$-Lipschitz continuous, $\nabla_y g_i(\x, \y_i; \zeta)$ is $L_{gy}$-Lipschitz continuous, $\nabla_{xy}^2 g_i(\x, \y_i; \zeta)$ is $L_{gxy}$-Lipschitz continuous, $\nabla_{yy}^2 g_i(\x, \y_i; \zeta)$ is $L_{gyy}$-Lipschitz continuous, all with respect to $(\x, \y_i)$. 
\item $\|\nabla_x f_i(\x, \y_i)\|^2\leq C_{fx}^2$, $\|\nabla_y f_i(\x, \y_i)\|^2\leq C_{fy}^2$.
% \item   $\|\nabla_{xy}^2 g_i(\x, \y_i)\|^2\leq C_{gxy}^2$, $\|\nabla_{yy}^2 g_i(\x, \y_i)\|^2\leq C_{gyy}^2$.
\item All stochastic estimators $\nabla_x f_i(\x, \y_i; \xi)$, $ \nabla_y f_i(\x, \y_i; \xi)$, $\nabla_y g_i(\x, \y_i; \zeta)$, $\nabla_{xy}^2 g_i(\x, \y_i; \zeta)$, $\nabla_{yy}^2 g_i(\x, \y_i; \zeta)$ have bounded variance $\sigma^2$.
% \item $F(\x_0) - F(\x^*) \leq \Delta$, where $\x^* = \arg\min\limits_{\x} F(\x)$.
\end{itemize}
\end{assumption}
\begin{assumption}\label{ass:3}
    $\|\nabla_{yy}^2 g_i(\x, \y_i,\zeta)\|^2\preceq \tilde{C}_{gyy}^2 I$.
\end{assumption}
% \vspace*{-0.1in}
Assumption~\ref{ass:1} is made in many existing works for SBO~\cite{DBLP:journals/corr/abs-2102-04671,99401,DBLP:journals/corr/abs-2007-05170,DBLP:journals/corr/abs-2010-07962}. Assumption~\ref{ass:2} ii)~iii) are also standard in the literature~\cite{https://doi.org/10.48550/arxiv.2010.07962,99401,DBLP:journals/corr/abs-2007-05170}.  To employ variance reduction technique, Lipchitz continuity of stochastic gradients, i.e., Assumption~\ref{ass:2} i), is required~\cite{https://doi.org/10.48550/arxiv.2106.04692,cutkosky2019momentum}. Note that the assumption $\nabla_y g_i(\x, \y_i; \zeta)$ is $L_{gy}$-Lipschitz continuous implies that $\|\nabla_{xy}^2 g_i(\x, \y_i)\|^2\leq C_{gxy}^2$ and $\|\nabla_{yy}^2 g_i(\x, \y_i)\|^2\leq C_{gyy}^2$ with $C_{gxy} = C_{gyy} = L_{gy}$. Assumption~\ref{ass:3} is only required by $\vtwo$ to ensure the Lipschitz continuity of $\nabla_v\phi_i(\v_i,\x,\y_i,\xi,\zeta)$. It is notable that an even stronger assumption $\tilde{C}_{gyy}^2 I\succeq \nabla_{yy}^2 g(\x, \y; \zeta)\succeq \lambda I$ is made in \citep{99401,DBLP:journals/corr/abs-2007-05170,https://doi.org/10.48550/arxiv.2106.04692} due to the use of the Neumann series (cf. the proof of Lemma 3.2 in~\citep{99401}, Assumption 1\&2 in~\cite{https://doi.org/10.48550/arxiv.2106.04692}).

% \citep{99401,DBLP:journals/corr/abs-2007-05170,https://doi.org/10.48550/arxiv.2106.04692} make an additional (implicit) assumption that $\tilde{C}_{gyy}^2 I\succeq \nabla_{yy}^2 g(\x, \y; \zeta)\succeq \lambda I$ due to the use of the Neuman series (cf. the proof of Lemma 3.2 in~\citep{99401}, Assumption 1\&2 in~\cite{https://doi.org/10.48550/arxiv.2106.04692}). Notice that this is a much stronger assumption, which usually does not hold in practice. 

Comparing to the assumptions made for RSVRB, $\name$ no longer requires the boundedness of $\y_i(\x)$ and the expectation of the  stochastic gradients norms $\nabla_x f_i(\x,\y_i;\xi)$, $\nabla_y f_i(\x,\y_i;\xi)$, $\nabla_y g_i(\x,\y_i;\zeta)$, $\nabla_{xy}^2 g_i(\x,\y_i;\zeta)$, $\nabla_{yy}^2 g_i(\x,\y_i;\zeta)$. The latter boundedness requirements in RSVRB come from the error bound analysis of the randomized coordinate STORM estimators, which uses $(0,\dots,m\nabla_y g_i(\x_t,\y_{i,t};\zeta_t),\dots,0)$ as an unbiased estimator of $\nabla_y g(\x_t,\y_t)$.  It  is also the reason for not having parallel speed-up of using multiple samples for each block~\cite{wang2022finite}. 
%Their error bound analysis treats the stochastic gradients as unbiased stochastic estimation of the full-block true gradient. Take $\nabla_y g_i(\x,\y_i;\zeta)$ as an example and assume that sampled task in iteration $t$ is $i$, then 
%\begin{equation*}
%    \E_t[(0,\dots,m\nabla_y g_i(\x_t,\y_{i,t};\zeta_t),\dots,0)]=\nabla_y g(\x_t,\y_t).
%\end{equation*}
%This enables the error bound of single-block STORM to naturally extend to multi-block setting. However, since such error bound requires the stochastic gradient to have bounded variance, $\E[\|\nabla_y g_i(\x,\y_i;\zeta)\|^2]\leq C$ for some constant $C$ is necessary. In the current error bound analysis, we first focus on the error bound of each block, then extend it to the full-block gradient following the block sampling distribution. In this way we managed to avoid the extra assumptions caused by block-sampling. Note that the boundedness of $\y_i(x)$ assumption was there to ensure the boundedness of $\nabla_y g_i(\x,\y_i;\zeta)$, which otherwise would contradict the strongly convex assumption of $g_i(\x,\y_i)$.

Next, we present our main result about the convergence of $\vone$ and $\vtwo$ unified in the following theorem. 
% \vspace*{-0.05in}
\begin{theorem}\label{thm:informal}
Under Assumptions~\ref{ass:1}, \ref{ass:2} and \ref{ass:3} (for $\vtwo$), with $|\B_i^t|=|\wB_i^t|=B$, $\tau\leq \frac{2}{3L_g}$,  $\alpha_t = \O(B\epsilon^2)$, $\balp_t,\beta_t = \O((\frac{\I(I<m)}{I}+\frac{1}{B})^{-1}\epsilon^2)$, $\tau_t, \btau_t= \O(\sqrt{\frac{ I}{m}}(\frac{\I(I<m)}{I}+\frac{1}{B})^{-1/2}\epsilon)$, $\eta_t =  \O(\frac{ I }{m}(\frac{\I(I<m)}{I}+\frac{1}{B})^{-1/2}\epsilon)$, and by using a large mini-batch size of $\O(1/\epsilon)$ at the initial iteration, both Algorithm~\ref{alg:4} and \ref{alg:2} give $\E\left[\frac{1}{T}\sum_{t=1}^T\|\nabla F(\x_t)\|^2\right]\leq \epsilon^2$ with an iteration complexity $T =  \O(\frac{m\epsilon^{-3}\mathbb I(I<m)}{I\sqrt{I}} + \frac{m\epsilon^{-3}}{I\sqrt{B}})$.
\end{theorem}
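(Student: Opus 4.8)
The plan is to telescope a single Lyapunov (potential) function that couples the objective value $F(\x_t)$ with the squared errors of every variance-reduced quantity the algorithm tracks. First I would record the structural facts that make the hyper-objective tractable: by Proposition~\ref{prop:1} together with Assumptions~\ref{ass:1}--\ref{ass:3}, $F$ is $L_F$-smooth for a constant $L_F$ depending on the Lipschitz/boundedness constants, while the projections $\S_\lambda[\cdot]$ and $\Pi_\V[\cdot]$ ensure $\|[H_{i,t}]^{-1}\|\le 1/\lambda$ and $\|\v_{i,t}\|\le V$, so every stochastic term entering $G_t,\widetilde G_t$ has bounded variance. Using $L_F$-smoothness and $\x_{t+1}=\x_t-\eta_t\z_{t+1}$, the inner-product identity $\langle a,b\rangle=\tfrac12(\|a\|^2+\|b\|^2-\|a-b\|^2)$ gives the descent step
\begin{equation*}
\E[F(\x_{t+1})]\le \E[F(\x_t)] - \tfrac{\eta_t}{2}\E\|\nabla F(\x_t)\|^2 + \tfrac{\eta_t}{2}\E\|\z_{t+1}-\nabla F(\x_t)\|^2 - \big(\tfrac{\eta_t}{2}-\tfrac{L_F\eta_t^2}{2}\big)\E\|\z_{t+1}\|^2 ,
\end{equation*}
so the whole problem reduces to controlling $\E\|\z_{t+1}-\nabla F(\x_t)\|^2$, and the negative $\|\z_{t+1}\|^2$ term is kept to absorb the $\eta_t^2\|\z_{t+1}\|^2$ drift produced by the inner trackers.

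Next I would derive one-step recursions for each error quantity. The STORM recursion for $\z$ gives $\E\|\z_{t+1}-\nabla F(\x_t)\|^2\le (1-\beta_t)\E\|\z_t-\nabla F(\x_{t-1})\|^2+ O(\beta_t^2\sigma^2/(IB))+O(\eta_t^2)$ plus a bias from plugging $H_{i,t},\s_{i,t},\v_{i,t}$ into $G_t$; that bias is bounded by the estimator errors $\E\|H_{i,t}-\nabla_{yy}^2 g_i\|^2$ (resp. $\E\|\v_{i,t}-\v_i(\x_t,\y_{i,t})\|^2$) and the tracking error $\E\|\y_{i,t}-\y_i(\x_t)\|^2$. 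For the latter I would use $\lambda$-strong convexity of $g_i$: the all-block update $\y_{t+1}=\y_t-\tau\tau_t\s_t$ makes $\y_{i,t}$ contract toward $\y_i(\x_t)$, yielding $\E\|\y_{i,t+1}-\y_i(\x_{t+1})\|^2\le(1-\Theta(\tau_t))\E\|\y_{i,t}-\y_i(\x_t)\|^2+O(\tau_t)\E\|\s_{i,t}-\nabla_y g_i\|^2+O(\eta_t^2/\tau_t)\E\|\z_{t+1}\|^2$, and analogously for $\v$ using strong convexity of $\phi_i$. The crux is the MSVR error recursion for $\s$ and $H$ (or $\u$): the calibrated choice $\gamma_t=\frac{m-I}{I(1-\alpha_t)}+(1-\alpha_t)$ makes the error-correction term absorb the block-sampling variance, so that (writing $\rho:=\frac{\I(I<m)}{I}+\frac1B$) the effective per-iteration contraction is $\Theta(\tfrac{I}{m}\alpha_t)$ and the residual variance is $O(\tfrac{I}{m}\rho)(\cdots)$, \emph{without} the spurious $m/I$ blow-up a naive moving average would incur. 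This is exactly where the factor $\rho$ enters the complexity.

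Then I would form the Lyapunov function
\begin{equation*}
\Phi_t = F(\x_t) + c_1\E\|\z_{t+1}-\nabla F(\x_t)\|^2 + c_2\tfrac{m}{I}\textstyle\sum_i\E\|\y_{i,t}-\y_i(\x_t)\|^2 + c_3\tfrac{m}{I}\textstyle\sum_i\E\|\s_{i,t}-\nabla_y g_i\|^2 + c_4(\cdots),
\end{equation*}
where $(\cdots)$ is the $H$-error term for $\vone$ or the $\v,\u$-error terms for $\vtwo$, and I would choose the $c_j$ so each positive cross term produced by one recursion is dominated by a negative contraction term supplied by another. With the stated scalings $\alpha_t=O(B\epsilon^2)$, $\balp_t,\beta_t=O(\rho^{-1}\epsilon^2)$, $\tau_t,\btau_t=O(\sqrt{I/m}\,\rho^{-1/2}\epsilon)$, $\eta_t=O(\tfrac Im\rho^{-1/2}\epsilon)$, and a large $O(1/\epsilon)$ initial batch bounding $\Phi_1$ at the right order, telescoping gives $\frac1T\sum_t\E\|\nabla F(\x_t)\|^2\le \frac{\Phi_1}{T\eta}+O(\epsilon^2)$; solving $\frac{1}{T\eta}\le\epsilon^2$ with $\eta=\Theta(\tfrac Im\rho^{-1/2}\epsilon)$ and expanding $\rho^{-1/2}$ yields $T=\O(\frac{m\epsilon^{-3}\I(I<m)}{I\sqrt I}+\frac{m\epsilon^{-3}}{I\sqrt B})$.

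The hard part is the MSVR analysis under the double randomness of block- and data-sampling: one must show that, after taking expectation over $\mI_t$ and over $\B_i^t,\wB_i^t$, the correction term cancels the block-sampling variance so that the contraction is $\Theta(\tfrac Im\alpha_t)$ rather than $\Theta(\alpha_t)$, and then match the per-iteration weights across the $\s$/$H$/$\v$ recursions, the $\y$/$\v$ tracking recursions, and the $\z$ recursion so the Lyapunov closes. For $\vtwo$ the extra subtlety is that $\v$ is itself a tracked inner solution whose target $\v_i(\x_t,\y_{i,t})$ drifts with both $\x$ and $\y$, so its recursion carries the $\y$-error as a driving term and must be nested carefully; finally, the decoupling device of inserting $H_{i,t}$ (resp. $\v_{i,t-1}$) rather than the freshly updated estimator into $G_t$ is what keeps the plug-in terms conditionally unbiased, which is essential for the STORM recursion to hold with only the two mini-batches $\B_i^t,\wB_i^t$.
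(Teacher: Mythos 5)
Your proposal is correct and takes essentially the same route as the paper's proof: the descent lemma retaining the $-\tfrac{\eta_t}{4}\|\z_{t+1}\|^2$ term, the decomposition of the gradient-estimator error into a STORM recursion about the conditional mean $\Delta_t$ plus a plug-in bias controlled by the $\y$/$H$ (resp.\ $\v$) tracking errors, the MSVR recursions with the calibrated $\gamma_t$, the strong-convexity contractions for $\y$ and $\v$ under all-block updates, the previous-iterate decoupling in $G_t$, and identical parameter scalings; the paper's device of summing each recursion over $t$ and nesting the summed bounds with constants $C_1,\dots,C_{11}$ is just the unrolled form of your weighted Lyapunov telescoping. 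The one minor imprecision is attribution: the factor $\tfrac{\I(I<m)}{I}+\tfrac{1}{B}$ enters chiefly through the variance of $G_t$ around $\Delta_t$ in the $\z$-recursion (Lemmas~\ref{lem:302} and~\ref{lem:202}), not solely through the MSVR recursions, but this does not affect your final accounting.
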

% \vspace*{-0.1in}
{\bf Remark:} The achieved iteration complexity (i) matches the SOTA results for standard BO problems with only one block when $I=m=1$~\cite{https://doi.org/10.48550/arxiv.2102.07367,NEURIPS2021_71cc107d,https://doi.org/10.48550/arxiv.2105.02266}; (ii) has a parallel speed-up  by using multiple blocks $I$ and multiple samples in the mini-batches  $\B_i^t$ and $\wB_i^t$. It is worth mentioning that the above theorem requires using a large batch size at the initial iteration. This is mainly because that we use fixed small parameters for $\alpha_t, \balp_t, \beta_t, \eta_t$ for simplicity of exposition and for proving faster convergence  under a Polyak-\L ojasiewicz (PL) condition in next section, for which we do not require the large batch size at the initial iteration.  We can also use decreasing parameters as in previous works to remove the large batch size at the initial iteration for finding an $\epsilon$-stationary point. 

\section{Faster Convergence for Gradient-Dominant Functions}
In this section, we use a standard restarting trick to improve the convergence of BSVRB under  the gradient dominant condition (aka. Polyak-\L ojasiewicz (PL) condition), i.e., 
\begin{align*}
\mu(F(\x) - \min_{\x'} F(\x'))\leq \|\nabla F(\x)\|^2. 
\end{align*}
%To leverage the above condition for achieving a faster rate, we employ the restarting trick with a stagewise decreasing step size strategy. 
The procedure is described in~Algorithm \ref{alg:5} and its convergence is stated below. 

\begin{algorithm}[t]
    \centering
    \caption{RE-$\name$}
    \label{alg:5}
    \begin{algorithmic}[1]
        \STATE Initialize the set of variables $\Theta_0=\{\x_0=\x_1,\y_0=\y_1,\v_0=\v_1,\s_1,H_1,\u_1,\z_1\}$
        \STATE Define parameters $\{\Xi_k\}_{k=1}^K$ according to Theorem~\ref{thm:RE_informal}
       \FOR {$k = 1,\ldots,K$} 
        \STATE $\Theta_k$ = BSVRB($\Theta_{k-1}, \Xi_k)$  %\quad\quad\quad $\triangleleft$ RE-$\vone$
        %\STATE \quad \quad \quad \quad  or
        %\STATE $\Theta_k$ = $\vtwo$$(\Theta_{k-1}, \Xi_k)$ \quad\quad\quad $\triangleleft$  RE-$\vtwo$
        \ENDFOR
        \STATE \textbf{Return:} $\x_K$
    \end{algorithmic}
\end{algorithm}

\begin{theorem}\label{thm:RE_informal}
Suppose Assumptions~\ref{ass:1}, \ref{ass:2} and \ref{ass:3} (for RE-$\vtwo$) hold and the PL condition holds. Set appropriate initial parameters $\Xi_1=(\beta_1,\alpha_1, \balp_1, \btau_1,\tau_1,\eta_1,T_1)$. Define proper constant $\epsilon_1=\O(\frac{1}{\mu }(\frac{\I(I<m)}{I}+\frac{1}{B}))$ and $\epsilon_k=\epsilon_1/2^{k-1}$. For $k\geq 2$, set parameter $\Xi_k$ such that $\beta_k,\alpha_k,\balp_k=\O(\mu\epsilon_k(\frac{\I(I<m)}{I}+\frac{1}{B})^{-1})$, $\btau_k,\tau_k,\eta_k=\O(\frac{I\sqrt{\mu \epsilon_k}}{m}(\frac{\I(I<m)}{I}+\frac{1}{B})^{-1/2})$ and 
% $T_k=\O(\max\{\frac{1}{\min\{ I ,B\}\epsilon_k},\frac{m}{\mu I\sqrt{\min\{ I ,B\}\epsilon_k}}\})$
$T_k = \O(\max\left\{\frac{1}{\mu\eta_k},\frac{1}{\tau_k},\frac{1}{\alpha_k}\right\})$ (for RE-$\vone$), or $T_k = \O(\max\left\{\frac{1}{\mu\eta_k},\frac{1}{\beta_k},\frac{1}{\tau_k },\frac{1}{\btau_k }\right\})$ (for RE-$\vtwo$), then after  $K=\O(\log(\epsilon_1/\epsilon))$ stages, the output of RE-$\name$ satisfies  $\E[F(\x_K) - \min_{\x}F(\x)]\leq \epsilon$. 
\end{theorem}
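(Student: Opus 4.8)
The plan is to run the base method $\name$ as a subroutine inside a geometrically shrinking schedule, and to use the PL (gradient-dominant) condition to convert the stationarity guarantee of Theorem~\ref{thm:informal} into a \emph{linear} decrease of the objective gap. Writing $F^*=\min_{\x'}F(\x')$, I would start from the standard nonconvex descent inequality for $\x_{t+1}=\x_t-\eta_t\z_{t+1}$, in which the term $-\frac{\eta_t}{2}\|\nabla F(\x_t)\|^2$ appears. Applying PL, $\mu(F(\x_t)-F^*)\le\|\nabla F(\x_t)\|^2$, to this term turns it into $-\frac{\eta_t\mu}{2}(F(\x_t)-F^*)$, so that, modulo the gradient-estimation error $\|\nabla F(\x_t)-\z_{t+1}\|^2$, the gap $F(\x_t)-F^*$ contracts at rate $1-\Theta(\mu\eta_t)$.

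The second step is to control that estimation error using the same variance-reduced machinery behind Theorem~\ref{thm:informal}. Decomposing $\|\z_{t+1}-\nabla F(\x_t)\|^2\le 2\|\z_{t+1}-\Delta_t\|^2+2\|\Delta_t-\nabla F(\x_t)\|^2$, the STORM term $\|\z_{t+1}-\Delta_t\|^2$ contracts at rate $1-\Theta(\beta_t)$ up to a variance floor $\propto(\frac{\I(I<m)}{I}+\frac{1}{B})$, while the bias term is governed by the tracking errors $\delta_{y,t}=\sum_i\|\y_{i,t}-\y_i(\x_t)\|^2$, $\delta_{H,t}=\sum_i\|H_{i,t}-\nabla^2_{yy}g_i(\x_t,\y_{i,t})\|^2$ (for $\vone$) or $\delta_{v,t}$ (for $\vtwo$), together with $\delta_{s,t}$. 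Each obeys its own MSVR recursion; crucially $\delta_{y,t}$ contracts only at the slow rate $1-\Theta(\tau_t\tau\lambda I/m)$ because of block sampling, which is exactly why the theorem balances $\eta_t,\tau_t,\btau_t\propto\sqrt{I/m}$ against $\alpha_t,\balp_t,\beta_t$. I would bundle these into a single Lyapunov function $\Phi_t=(F(\x_t)-F^*)+c_1\delta_{y,t}+c_2\delta_{H,t}+c_3\delta_{s,t}+c_4\|\z_{t}-\Delta_{t-1}\|^2$ (plus a $\delta_{v,t}$ term for $\vtwo$), with constants $c_j$ chosen to absorb the cross terms (the $\|\x_{t+1}-\x_t\|^2$ coupling into $\delta_y$, and the $\delta_y,\delta_H$ coupling into the STORM error), yielding a one-step bound $\E[\Phi_{t+1}]\le(1-\rho)\E[\Phi_t]+CV$ with $\rho=\Theta(\min\{\mu\eta_t,\alpha_t,\balp_t,\beta_t,\tau_t\tau\lambda I/m\})$ and $V=\O((\frac{\I(I<m)}{I}+\frac{1}{B})\cdot(\text{parameters}))$.

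The third step is the restart calculus. Unrolling the one-step contraction over a stage of length $T_k$ gives $\E[\Phi^{(k)}_{T_k}]\le(1-\rho_k)^{T_k}\E[\Phi^{(k)}_0]+\frac{CV_k}{\rho_k}$. With the stated choices $\alpha_k,\balp_k,\beta_k=\O(\mu\epsilon_k(\cdots)^{-1})$ and $\eta_k,\tau_k,\btau_k=\O(\frac{I\sqrt{\mu\epsilon_k}}{m}(\cdots)^{-1/2})$, the residual floor scales as $\frac{CV_k}{\rho_k}=\O(\epsilon_{k+1})$, and taking $T_k=\O(\max\{1/(\mu\eta_k),1/\tau_k,1/\alpha_k\})$ (resp.\ the $\vtwo$ variant) makes $(1-\rho_k)^{T_k}$ a small constant, so each stage roughly halves the potential: $\E[\Phi^{(k)}_{T_k}]\le\tfrac12\E[\Phi^{(k)}_0]+\O(\epsilon_{k+1})$. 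I would then show by induction that $\E[\Phi^{(k)}_0]\le\O(\epsilon_k)$, using that the output variables of stage $k$ initialize stage $k+1$, which yields $\E[F(\x_k)-F^*]\le\E[\Phi^{(k)}_{T_k}]\le\O(\epsilon_{k+1})$; after $K=\O(\log(\epsilon_1/\epsilon))$ stages the gap falls below $\epsilon$.

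The main obstacle is the warm-start bookkeeping across stage boundaries. Unlike the single-run analysis of Theorem~\ref{thm:informal}, which kills the initial estimator error with a large batch at $t=0$, the restart must instead certify that the tracking errors $\delta_{y},\delta_{H}/\delta_{v},\delta_{s}$ and the STORM error inherited from stage $k$ are \emph{already} at level $\O(\epsilon_k)$, so that they form an admissible initialization for stage $k+1$ under its smaller stepsizes. This is precisely why the Lyapunov function must bundle the function gap with all tracking errors and why the inductive hypothesis must be stated for $\Phi$ rather than for $F(\x_k)-F^*$ alone; keeping the weights $c_j$ consistent across stages, where the stepsizes and hence the natural scalings of the error terms change with $\epsilon_k$, is the delicate point, and the slow $I/m$ contraction of $\delta_y$ is what forces the particular $\sqrt{\mu\epsilon_k}$ scaling of the stepsizes and the $1/\tau_k$ term in $T_k$.
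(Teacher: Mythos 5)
Your plan is viable, but it is a genuinely different route from the paper's proof. The paper never establishes a per-iteration Lyapunov contraction. Instead, it reuses, essentially verbatim, the telescoped bound already derived in the proofs of Theorems~\ref{thm:6} and~\ref{thm:5} — the bound on $\E[\sum_t\|\nabla F(\x_t)\|^2+\sum_t\delta_{z,t}+\frac{1}{m}\sum_t(\delta_{y,t}+\delta_{s,t}+\delta_{H,t})]$ (with $\delta_{v,t},\delta_{u,t}$ for $\vtwo$) — divides by $T_k$, and exploits the fact that each stage of $\name$ returns a \emph{randomly selected} iterate, so the stage output inherits the averaged bound. The PL condition is then invoked exactly once per stage, at the stage output, to convert $\E[\|\nabla F(\x_k)\|^2]\leq\mu\epsilon_k$ into $\E[F(\x_k)-F(\x^*)]\leq\epsilon_k/2$; the induction hypothesis carries the same bundle as your $\Phi$ (gap at level $\epsilon_{k-1}$, STORM/tracking errors at level $\mu\epsilon_{k-1}$), and the initial-condition terms decay as $1/T_k$, not geometrically. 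Your route — PL inside every iteration, one-step contraction of $\Phi_t$, geometric decay $(1-\rho_k)^{T_k}$ — buys last-iterate guarantees and a self-contained stage analysis, but it costs more: all the movement couplings ($\|\x_{t+1}-\x_t\|^2$, $\|\y_{t+1}-\y_t\|^2$, $\|\v_{t+1}-\v_t\|^2$) must be re-absorbed per iteration with weights $c_j$ that respect the very different contraction rates ($\mu\eta_k$, $\beta_k$, $\tau_k\tau\lambda$, $I\alpha_k/m$), and each noise term must be paired with its \emph{own} rate — a crude floor $V_k/\min_j\rho_j$ is lossy (it picks up spurious $m/I$ or $1/\mu$ factors) and does not give $\O(\epsilon_{k+1})$ unless the weights are chosen rate-dependently, hence stage-dependently. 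The paper sidesteps all of this because the couplings were already absorbed, after summation, in the non-restarted proofs.

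Two caveats on your sketch. First, your induction passes the \emph{last} iterate $\Phi^{(k)}_{T_k}$ to stage $k+1$, but Algorithms~\ref{alg:4} and~\ref{alg:2} (hence RE-$\name$) hand over a randomly selected iterate; your contraction still covers this after averaging, since $\frac{1}{T}\sum_t\E[\Phi_t]\leq\frac{\Phi_0}{\rho T}+\frac{V}{\rho}$, but then the geometric factor degrades to $1/(\rho_kT_k)$ and you have essentially rederived the paper's ergodic argument — so either modify the output rule or accept that the "geometric decay" advantage disappears. Second, you attribute the slow contraction $1-\Theta(\tau_t\tau\lambda I/m)$ of $\delta_{y,t}$ to block sampling; in fact both algorithms update \emph{all} blocks of $\y$ (or lazily, which is equivalent), so $\delta_{y,t}$ contracts at rate $1-\Theta(\tau_t\tau\lambda)$ (Lemma~\ref{lem:3}), and the $I/m$ factors enter only through the MSVR recursions for $\delta_{s,t},\delta_{H,t},\delta_{u,t}$ (Lemma~\ref{lem:MSVR}). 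Neither point is fatal, but both need fixing to align your argument with the stated algorithm and the paper's lemmas.
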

% \vspace*{-0.11in}
{\bf Remark:} For both RE-$\vone$ and RE-$\vtwo$, it follows from Theorem~\ref{thm:RE_informal} that the total sample complexity is $\sum_{k=1}^{K} T_k = O(\frac{m}{I\mu^{3/2}\epsilon^{1/2}}(\frac{\I(I<m)}{I}+\frac{1}{B})^{1/2}+ \frac{m}{I\mu \epsilon})$. If $\mu\geq \epsilon$, the dependence on $\mu$ and $\epsilon$ matches the optimal rate of $O(\frac{1}{\mu \epsilon})$ to minimize a strongly convex problem, which is a stronger condition than PL condition. In the existing works, RE-RSVRB~\cite{https://doi.org/10.48550/arxiv.2105.02266} has a similar result. Other than that, to get $\E[\|\x-\x^*\|^2]\leq \epsilon$, STABLE~\cite{DBLP:journals/corr/abs-2102-04671} takes a complexity of $O(\frac{m}{\mu^4 \epsilon})$, TTSA  \cite{{DBLP:journals/corr/abs-2007-05170}} takes  $\widetilde{O}(\frac{m}{\mu^3\epsilon^{3/2}})$, and BSA \cite{99401} takes $O(\frac{m}{\mu^5\epsilon^2})$, all  under the strong convexity.
% \vspace*{-0.03in}

\section{Experiments}
% \vspace*{-0.05in}
\subsection{Hyper-parameter Optimization}
In this subsection, we consider solving MBBO with high-dimensional lower-level problems. In particular, we consider a hyper-parameter optimization problem for classification with imbalanced data and noisy labels. For handling data imbalance,  we assign the $j$-th training data $\zeta_j$ a weight $\sigma(p_j)\in(0,1)$, where $\sigma(\cdot)$ is a sigmoid function, and $p_j$ is a decision variable which will be learned by a bilevel optimization. In order to tackle noisy labels in the training data, we consider using a robust loss function given by $\mathcal{L}_{\tau}(\w;x,y):=\log\left(1+\exp\left(-y (\w^Tx+w_0)/\tau\right)\right)$, where $x\in\R^d$ is input feature, $y\in\{1,-1\}$ denotes its label,  $\tau>0$ is a temperature parameter. This loss function has been shown to be robust to label noise by tuning the $\tau$ in~\cite{DBLP:journals/corr/abs-2112-14869}. In our experiment, instead of tuning $\tau$, we consider multiple values of them and learn a model that is robust for different temperature values.  In particular, for each temperature value $\tau_i$,  we learn a model $\w_i(\p)$ following the weighted empirical risk minimization using the $i$-th loss $ \mathcal{L}_{i}(\w;x,y) =  \mathcal{L}_{\tau_i}(\w;x,y)$. 

%Suppose there are multiple loss functions $L_i(\cdot; \cdot), i\in[m]$. Assuming there is distributional shift between training and testing data, our goal is to learn a robust weighting scheme of different training data. In particular, we assign the $i$-th training data $\zeta_j$ a weight $\sigma(p_j)\in(0,1)$, where $\sigma(\cdot)$ is a sigmoid function, and $p_j$ is a decision variable. Then we learn a model $\w_i(\p)$ following the weighted empirical risk minimization using the $i$-th loss. Then we optimize the averaged losses on a validation data of the multiple models $\w_i(\p), i\in[m]$. 
As a result, a MBBO problem is imposed as:      %We consider an application of hyperparameter optimization in machine learning. To avoid overfitting and control the negative effect caused by noise data, it could be beneficial to (i) train in a bilevel structure, where the upper level trains the model over the validation dataset and the lower level trains over the training dataset; (ii) assign non-uniform weights to each training data which will eventually learn to ignore the noisy samples. We consider the following formulation.
% Some potential problems in machine learning model training includes (i) the model overfitting to the training dataset; (ii) training dataset contains noise data. In this case, 
% \begin{equation*}
% \min_{\p}F(\p):=\sum_{i=1}^m f(\p,\w_i(\p)):=\frac{1}{m}\sum_{i=1}^m \frac{1}{\left|\D_{va} \right|}\sum_{\xi_j\in \D_{va}} \mathcal{L}_i(\w_i(\p);\xi_j)
% \end{equation*}
% \begin{equation*}
% \w_i(\p)\in \arg \min_\w g_i(\p,\w):= \frac{1}{\left|\D_{tr} \right|} \sum_{\zeta_j \in \D_{tr}} \sigma(\p_j) \mathcal{L}_i(\w;\zeta_j)+\frac{\lambda}{2}\|\w\|^2,\quad i=1,\dots,m
% \end{equation*}
\begin{equation*}
\begin{aligned}
&\min_{\p\in\R^n}F(\p):=\frac{1}{m}\sum\nolimits_{i=1}^m \E_{\xi_j \sim \D_{val}}\left[ \mathcal{L}_i(\w_i(\p);\xi_j)\right]\\
&\w_i(\p)= \arg \min_{\w\in\R^d}  \E_{\zeta_j \sim \D_{tr}}\left[ \sigma(p_j) \mathcal{L}_i(\w;\zeta_j)\right]+\frac{\lambda}{2}\|\w\|^2,\\
&\quad\quad\quad\quad\quad\quad\quad\quad\quad\quad\quad\quad\quad\quad \text{for all } i=1,\dots,m,
\end{aligned}
\end{equation*}
where $\D_{tr}$ contains $n$ training data points and $\D_{val}$ is a validation set, $\E_{\zeta_j \sim \D_{tr}}$ denotes an average of data from the given set. %the $i$-th loss function is defined as $    \mathcal{L}_i(\w;x,y):=\log\left(1+\exp\left(-y (\w^Tx+w_0)/\tau_i\right)\right)$, where $x\in\R^d$ is input feature, $y\in\{1,-1\}$ denotes its label,  $\tau_i>0$ is a temperature parameter. This loss function has been shown to be robust to label noise by tuning the $\tau_i$ in~\cite{DBLP:journals/corr/abs-2112-14869}. In our experiment, instead of tuning $\tau_i$, we consider multiple values of them and learn a model that is robust for different temperature values. 

In the first experiment on hyper-parameter optimization, we aim to compare BSVRB and RSVRB, compare $\vtwo$ with $\vone$ for high-dimensional lower-level problems, and to verify the parallel speedup of both $\vone$ and $\vtwo$ with respect to block sampling size $I$ and the batch size $B$ of samples.
% \vspace*{-0.03in}

\textbf{Data.} We use two binary classification datasets, UCI Adult benchmark dataset \textit{a8a}~\cite{Platt99} and web page classification dataset \textit{w8a}~\cite{Dua2019}. \textit{a8a} and \textit{w8a} have a feature dimensionality of $123$ and $300$ respectively, and contain $22696$ and $49749$ training samples. For both \textit{a8a} and \textit{w8a}, we follow $80\%$/$20\%$ training/validation split.
% \vspace*{-0.03in}

\textbf{Setup.} We set the number of loss functions to be $100$ using randomly generated $\{\tau_i\}_{i=1}^m$ in the range of $[1,11)$. For methods comparison, we sample $10$ blocks at each iteration and set the sample batch size to be $32$. The regularization parameter $\lambda$ is chosen from $\{0.00001,0.0001, 0.001,0.01\}$. For all methods, we tune the upper-level problem learning rate $\eta_t$ from $\{0.001, 0.01, 0.1\}$ and the lower-level problem learning rates $\tau_t, \btau_t$ from $\{0.01, 0.1, 0.5, 1, 5, 10\}$. Parameters $\alpha_t=\balp_t$ and $\gamma_t=\bgam_t$ in MSVR estimator are tuned from $\{0.5, 0.9,1,10,100\}$ and $\{0.001, 0.01, 0.1, 1, 10, 100\}$ respectively. In RSVRB, the STORM parameter $\beta$ is chosen from $\{0.1, 0.5, 0.9 ,1\}$. We runs $4$ trails for each setting and plot the average curves. This experiment is performed on a computing node with Intel Xeon 8352Y (Ice Lake) processor and $64$GB memory.
% \vspace*{-0.03in}

\textbf{Results.} We plot the curves of validation loss for $\name$ and RSVRB in Figure~\ref{fig:HO_perf}. For both datasets, all methods perform similarly in terms of epochs. However, in terms of running time, both $\vone$ and $\vtwo$ have better performance than RSVRB. For dataset $w8a$, that has a higher lower-level problem dimension $300$, $\vtwo$ shows its greater advantage against $\vone$ and RSVRB. This is consistent with our theory that $\vtwo$ is more suitable for high-dimensional lower-level problems. Note that one of the major issues that slows down RSVRB is maintaining the Jacobian estimators, e.g. a matrix of size $100*300*39799$ for \textit{w8a}, which is avoided by $\vone$ and $\vtwo$.
% and they both outperform RSVRB in terms of running time. This is mainly due to the time-consuming Jacobian estimator maintained in RSVRB. For dataset \textit{w8a}, $\vtwo$ converges much faster than $\vone$ since the latter does not require matrix inverse computation. As we can see from the performance comparisons of $\vone$ and $\vtwo$ on datasets with different dimension of lower-level problem parameter $\y_i$, ($123$ for \textit{a8a} and $300$ for \textit{w8a}), $\vtwo$ surpasses $\vone$ more as the dimension of $\y_i$ gets larger. This is consistent with our conclusion that $\vtwo$ is more suitable for high-dimensional lower-level problems.
In Figure~\ref{fig:abl}, we compare the loss curves of $\vone$ and $\vtwo$ with different values of $I$ (\# of sampled blocks) and $B$ (\# of sampled data per sampled block) on a8a. It shows that the convergence speed increases as $I$ and $B$ increases, which verifies the parallel speedup of our algorithms.

{\bf Classification with Imbalanced Data with Noisy Labels.} To further demonstrate the benefit of our multi-block bilevel optimization formulation for classification with imbalanced data and noisy labels,  we artificially construct an imbalanced a8a data with varied label noise.  We remove $70\%$ of the positive samples in training data to produce an imbalanced version. Moreover, we add noise by flipping the labels of the remaining training data with a certain probability, i.e., the noise level from $0$ to $0.4$. 

We compare three methods, i) logistic regression with the standard logistic loss as the baseline, ii) $\vone$ for solving the bilevel formulation with only one lower level problem ($m=1$ and using the standard logistic loss), and iii) our method for solving multi-block bilevel formulation with $m=100$ blocks corresponding to 100 settings of the scaling factor $\tau_i$ in the logistic loss. Since these methods optimize different objectives, we use the accuracy on a separate testing data as the performance measure for comparison. For our method, we have multiple models learned with different loss functions. We select the best model on the validation data and measure its accuracy on testing data. In terms of parameter tuning, for logistic regression we tune the step size in the range $\{0.001, 0.005,0.01, 0.05, 0.1, 0.5\}$. For $\vone$ we follow the same parameter tuning strategy described in the previous experiment. For each setting, we repeat the experiment 3 times by changing the random seeds. We present the results in Figure~\ref{fig:HO_eff} as a bar graph. We defer the numeric results to Appendix~\ref{app:HO}. As we can see from the results, our multi-block bilevel optimization formulation of hyper-parameter optimization has superior performance, especially with high noise level.

\begin{figure}[h]
\begin{center}
\includegraphics[width=0.4\textwidth]{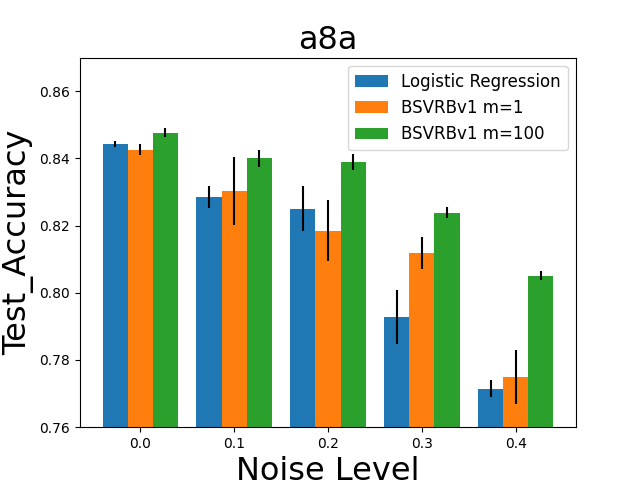}
\end{center}
\caption{Comparison of testing accuracy of models learned by regular logistic regression, $\text{BSVRB}^{\text{v1}}$ with $m=1$ lower-level problem, and $\text{BSVRB}^{\text{v1}}$ with $m=100$ lower-level problems on a corrupted dataset \textit{a8a}  with various noise levels.}\label{fig:HO_eff}
\end{figure}

\begin{figure}[h]
\begin{minipage}[c]{0.236\textwidth}
\centering\includegraphics[width=1\textwidth]{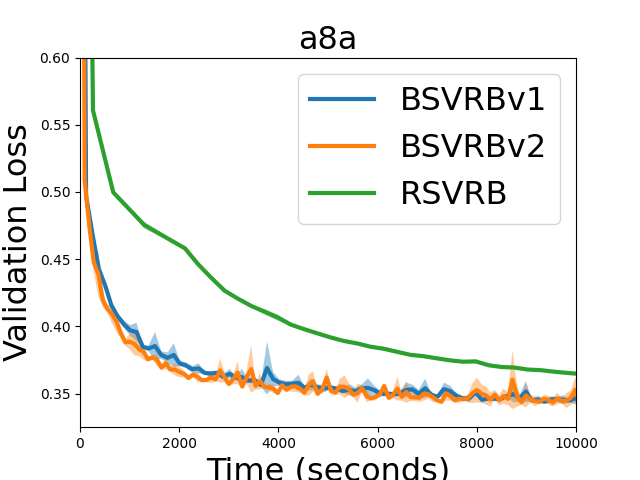}
\end{minipage}
\begin{minipage}[c]{0.236\textwidth}
\centering\includegraphics[width=1\textwidth]{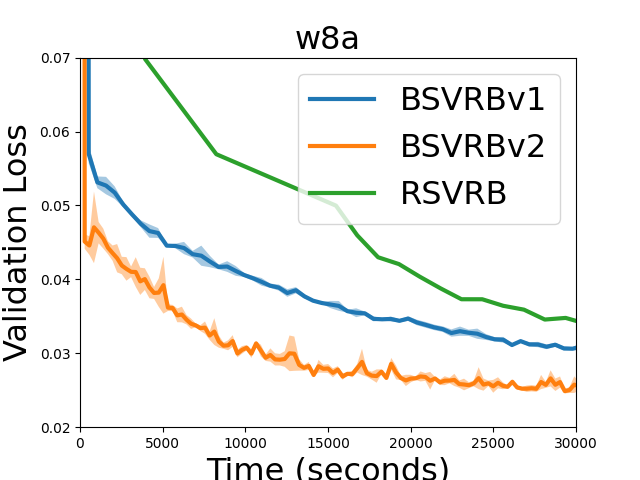}
\end{minipage}
\begin{minipage}[c]{0.236\textwidth}
\centering\includegraphics[width=1\textwidth]{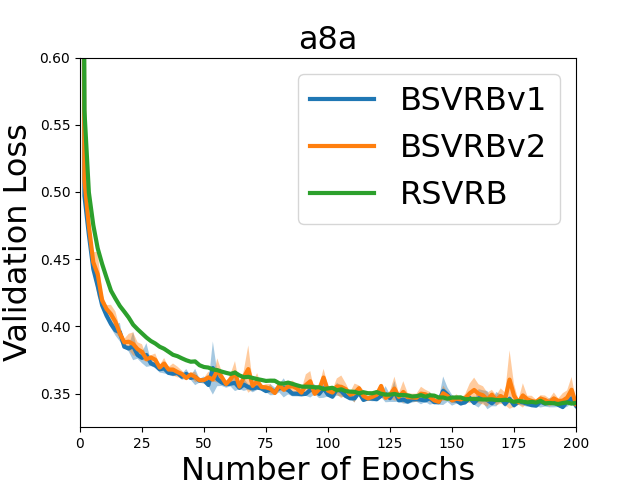}
\end{minipage}
\begin{minipage}[c]{0.236\textwidth}
\centering\includegraphics[width=1\textwidth]{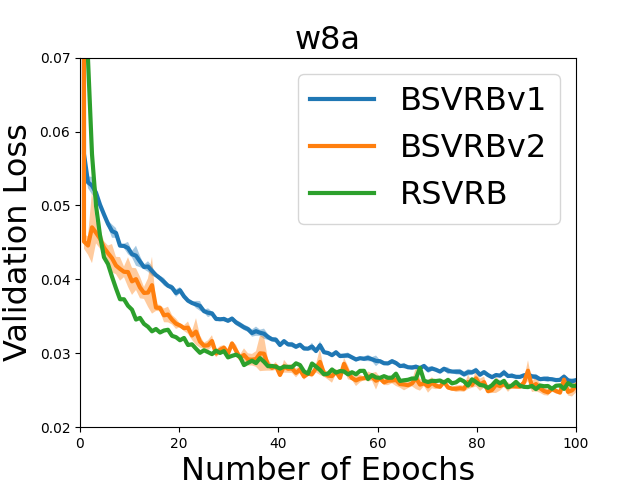}
\end{minipage}
% \vspace{-0.2cm}
\caption{Comparison of convergence curves of different methods in terms of validation loss on datasets a8a and w8a.}
\label{fig:HO_perf}
%\vspace{-0.2cm}
%\end{figure}

%\begin{figure}[h]
\begin{minipage}[c]{0.236\textwidth}
\centering\includegraphics[width=1\textwidth]{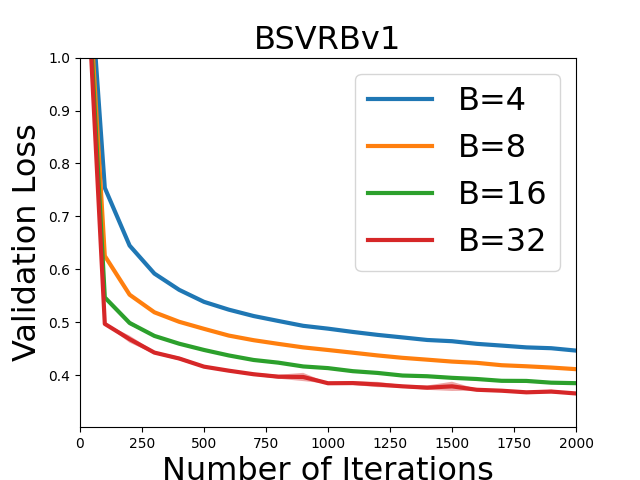}
\end{minipage}
\begin{minipage}[c]{0.236\textwidth}
\centering\includegraphics[width=1\textwidth]{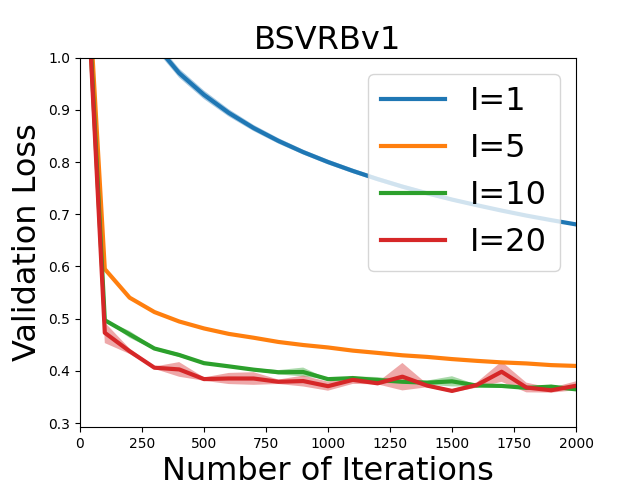}
\end{minipage}
\begin{minipage}[c]{0.236\textwidth}
\centering\includegraphics[width=1\textwidth]{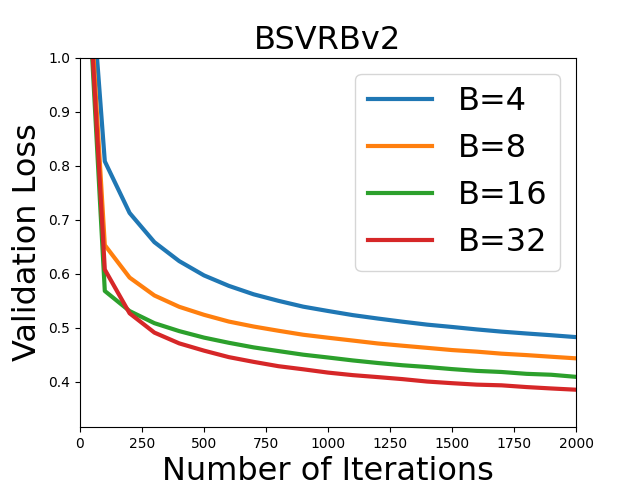}
\end{minipage}
\begin{minipage}[c]{0.236\textwidth}
\centering\includegraphics[width=1\textwidth]{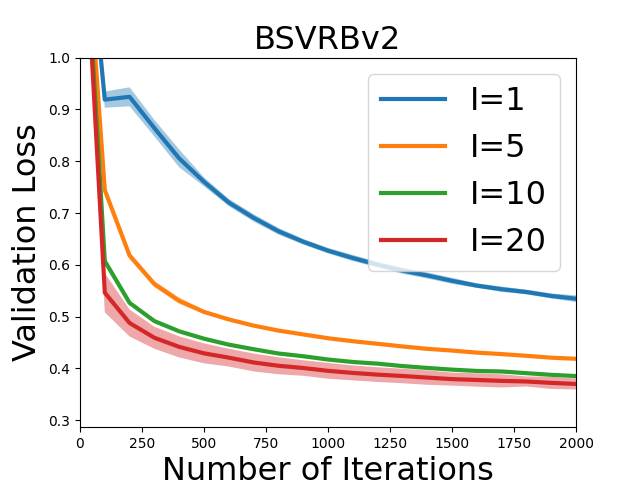}
\end{minipage}
% \vspace{-0.2cm}
\caption{Comparison of convergence curves of $\name$ algorithms with different values of $I$ and and $B$ on  a8a.}
\label{fig:abl}
 % \vspace{0.05in}
\end{figure}

\subsection{Top-K NDCG Optimization}
% \vspace*{-0.05in}

\begin{figure}[t]
\begin{minipage}[c]{0.236\textwidth}
\centering\includegraphics[width=1\textwidth]{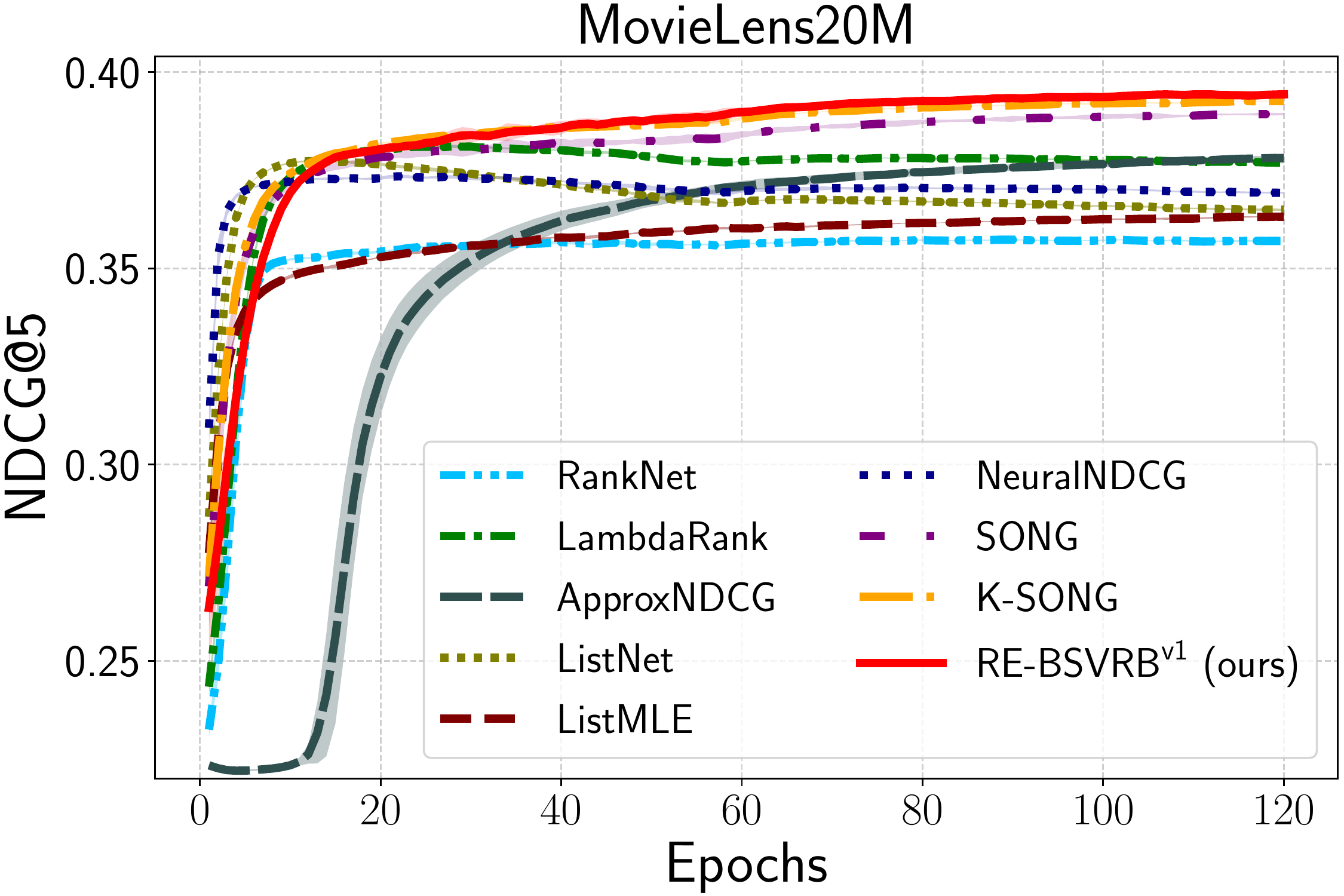}
\end{minipage}
\begin{minipage}[c]{0.236\textwidth}
\centering\includegraphics[width=1\textwidth]{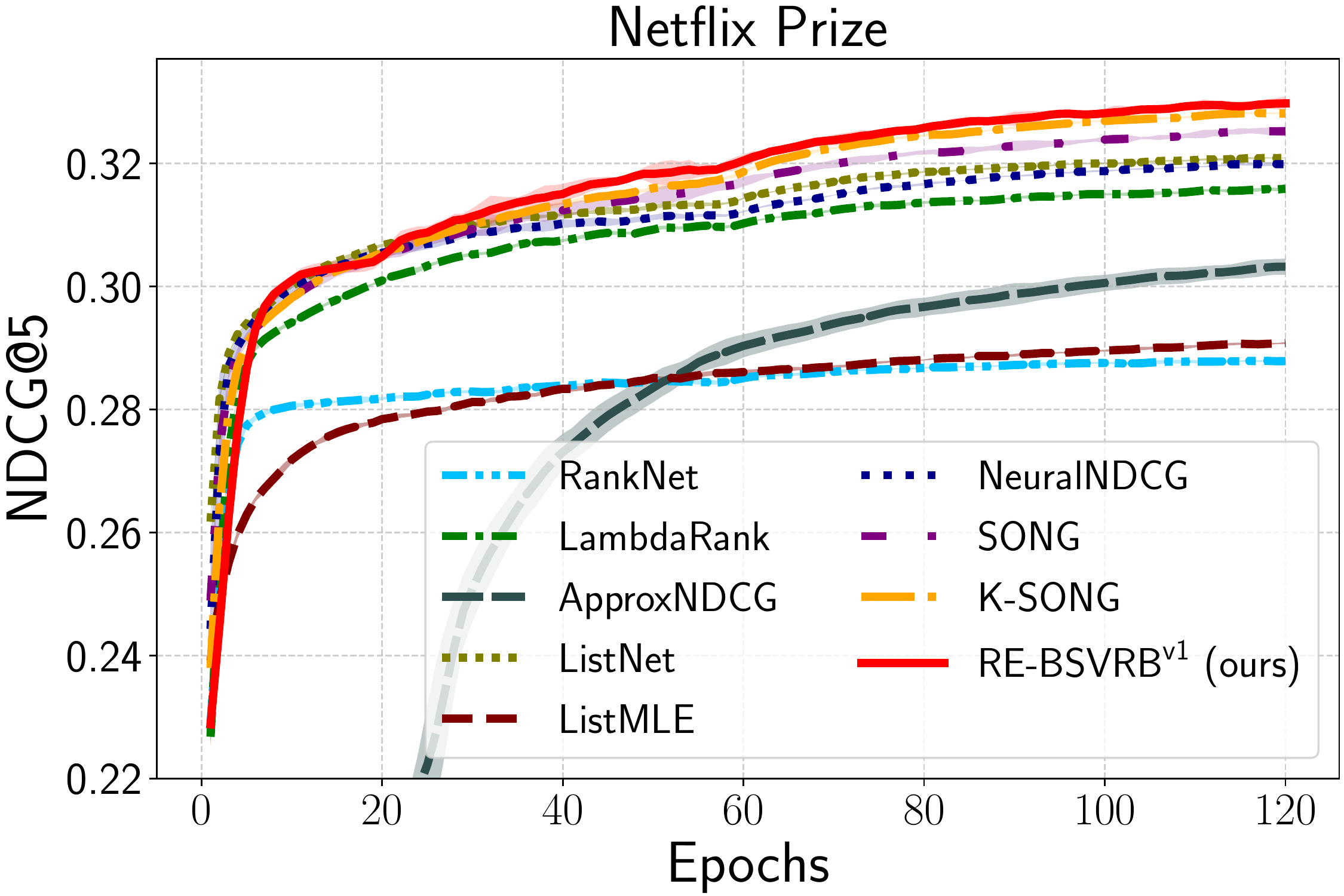}
\end{minipage}
% \vspace{-0.2cm}
\caption{Comparison of convergence of different methods in terms of validation NDCG@5 on two movie recommendation datasets.}
\label{fig:ndcg}
%\vspace{-0.1in}
\end{figure}

In this experiment, we consider the top-$K$ NDCG optimization proposed in \cite{https://doi.org/10.48550/arxiv.2202.12183}, and reformulate it into an equivalent MBBO problem. Let $q\in \mathcal{Q}$ denote a query, $\mS_q=\{(\x_i^q,y_i^q)\}_{i=1}^{N_q}$ denote a set of items and their relevance scores w.r.t to $q$, $\mS$ denote the set of relevant query-item pairs, and $h_q(\cdot,\cdot)$ denote the predictive model for query $q$. Then the MBBO formulation of this problem is:
\begin{equation*}
\begin{aligned} 
&\min \frac{1}{|\mS|}\sum_{(q, \x^q_i)\in\mS}\psi(h_q(\x^q_i; \w)- \lambda_q(\w))f_{q,i}(g(\w; \x^q_i)), \\
& \text{where}\quad \lambda_q(\w) = \arg\min_{\lambda} \frac{K+\varepsilon}{N_q}\lambda+ \frac{\tau_2}{2}\lambda^2 \\&+\frac{1}{N_q}\sum_{\x_i\in\mS_q}\tau_1\ln(1+\exp((h_q(\x_i;\w)-\lambda)/\tau_1)),\\
& g(\w; \x^q_i)=\argmin_g \frac{1}{2}(g-g(\w; \x^q_i, \mS_q))^2, \forall (q,\x^q_i)\in\mS,\\
%&\quad\quad\quad\quad\quad\quad\quad\quad\quad\quad\quad\quad\quad\quad\quad \forall q\in\Q, \x_i^q\in \mS_q,
\end{aligned}
\end{equation*}
where $f_{q,i}(g)=\frac{1}{Z^K_q}\frac{1-2^{y^q_i}}{\log_2(N_q g+ 1)}$, $g(\w; \x^q_i, \mS_q)=\frac{1}{|\mS_q|}\sum_{\x'\in\mS_q}\ell(h_q(\x';\w)-h_q(\x_i^q;\w))$, $\ell(\cdot)=(\cdot+c)^2_+$ with a margin parameter $c$, $\psi(\cdot)$ is sigmoid function, and $Z_q^K$ is the top-K DCG score of the perfect ranking. We refer the readers to \cite{https://doi.org/10.48550/arxiv.2202.12183} for more detailed description of the  problem which is omitted due to limite of space.

We follow the exactly same experimental settings as~\cite{https://doi.org/10.48550/arxiv.2202.12183}. Specifically, we adopt two movie recommendation datasets, i.e., MovieLens20M~\cite{ml-20m} and Netflix Prize dataset~\cite{netflix}, employ the same evaluation protocols, model architectures, and hyper-parameters for training. For our method, we tune $\alpha,\balp$ and $\gamma,\bgam$ in the ranges of $\{0.7,0.8,0.9\}$ and $\{0.001,0.005,0.01, 0.1, 1, 10\}$, respectively. Details of data and experimental setups are presented in Appendix~\ref{app:ndcg_exps}.

Since all lower-level problems have one-dimensional variable for optimization, we only compare $\text{RE-BSVRB}^{\text{v1}}$ with K-SONG and other methods reported in~\cite{https://doi.org/10.48550/arxiv.2202.12183}.  We plot the convergence curves for optimizing top-10 NDCG on two datasets in Figure~\ref{fig:ndcg}, and note that our $\text{RE-BSVRB}^{\text{v1}}$ converges faster than other methods. We also provide NDCG@10 scores on the test data for all methods in Table~\ref{tab:cf-testing-results} and more results in Table~\ref{tab:cf-testing-results_full} in Appendix~\ref{app:ndcg_exps}. We  observe that our method is better for top-$K$ NDCG optimization than other methods. Specifically, our method improves upon K-SONG by 5.24\% and 6.49\% on NDCG@10 for Movielens data and Netflix data, respectively.
% \vspace*{-0.25in}
\begin{table}[h]
\caption{The test NDCG@10 scores on two movie recommendation datasets averaged over 5 trials. More results for other metrics are in Table~\ref{tab:cf-testing-results_full} in Appendix~\ref{app:ndcg_exps}}
\label{tab:cf-testing-results}
% \vskip -0.8in
\begin{center}
\begin{small}
\begin{sc}
\renewcommand{\arraystretch}{0.8}
\begin{tabular}{p{2.2cm}p{2.0cm}p{2.0cm}}
\toprule
Method & MovieLens & Netflix \\
\midrule
RankNet & 0.0538$\pm$0.0011 &	0.0362$\pm$0.0002  \\
ListNet & 0.0660$\pm$0.0003  &	0.0532$\pm$0.0002  \\
ListMLE & 0.0588$\pm$0.0001 &	0.0376$\pm$0.0003 \\
LambdaRank & 0.0697$\pm$0.0001  &	0.0531$\pm$0.0002 \\
ApproxNDCG & 0.0735$\pm$0.0005  &	0.0434$\pm$0.0005\\
NeuralNDCG & 0.0692$\pm$0.0003 &	0.0554$\pm$0.0002 \\
SONG & 0.0748$\pm$0.0002  &	0.0571$\pm$0.0002  \\
K-SONG & 0.0747$\pm$0.0002  &	0.0573$\pm$0.0003 \\
\textbf{RE-$\vone$} & \textbf{0.0749}$\pm$0.0003  &	\textbf{0.0585}$\pm$0.0004 
 \\
\bottomrule
\end{tabular}
\end{sc}
\end{small}
\end{center}
% \vspace*{-0.25in}
\end{table}
%SONG & 0.0232$\pm$0.0003 & 0.0369$\pm$0.0004 & 0.0646$\pm$0.0003 & 0.0141$\pm$0.0004 & 0.0222$\pm$0.0005  & \textbf{0.0384}$\pm$0.0003 \\

The code for reproducing the experimental results in this section is available at \url{https://github.com/Optimization-AI/ICML2023_BSVRB}.

\section{Conclusions}
In this paper, we have proposed novel stochastic algorithms for solving MBBO problems. We have established the state-of-the-art complexity with a parallel speed-up. Our experiments on both algorithms for low-dimensional and high-dimensional lower problems demonstrate the effectiveness of our algorithms against existing algorithms of MBBO.  %We used a blockwise update strategy to reduce the number of blocks needed processing each iteration to constant. We achieved the SOTA sample complexity for solving multi-block SBO problems. %One drawback of the proposed algorithm $\vone$ is that it needs to compute the projection of a matrix onto a positive-definite matrix space with a minimum eigen-value, which could have a cubic time complexity in the worst case. Moreover, it requires computing Hessian inverse. Thus $\vone$ is suitable for problems with low dimension of $\y$. On the other hand, although $\vtwo$ no longer needs matrix projection nor inverse, matrix multiplications are still computationally expensive when $\y$ dimension is large. We leave this challenge to the future work.

% In the unusual situation where you want a paper to appear in the
% references without citing it in the main text, use \nocite
% \nocite{langley00}

\section*{Acknowledgements}
We thank anonymous reviewers for constructive comments. Q. Hu and T. Yang were partially supported by NSF Career Award 2246753, NSF Grant 2246757 and NSF Grant 2246756.

\bibliography{ref,example_paper,all,draft,ndcg}
\bibliographystyle{icml2023}

%%%%%%%%%%%%%%%%%%%%%%%%%%%%%%%%%%%%%%%%%%%%%%%%%%%%%%%%%%%%%%%%%%%%%%%%%%%%%%%
%%%%%%%%%%%%%%%%%%%%%%%%%%%%%%%%%%%%%%%%%%%%%%%%%%%%%%%%%%%%%%%%%%%%%%%%%%%%%%%
% APPENDIX
%%%%%%%%%%%%%%%%%%%%%%%%%%%%%%%%%%%%%%%%%%%%%%%%%%%%%%%%%%%%%%%%%%%%%%%%%%%%%%%
%%%%%%%%%%%%%%%%%%%%%%%%%%%%%%%%%%%%%%%%%%%%%%%%%%%%%%%%%%%%%%%%%%%%%%%%%%%%%%%
\newpage
\appendix
\onecolumn

\section{Top-$K$ NDCG Optimization}

\subsection{Details of data and experimental setups}
\label{app:ndcg_exps}

{\bf Data.} We use two large-scale movie recommendation datasets: MovieLens20M~\cite{ml-20m} and Netflix Prize dataset~\cite{netflix}. Both datasets contain large numbers of users and movies, which are represented with integer IDs. All users have rated several movies, with ratings range from 1 to 5. To create training/validation/test sets, we use the most recent rated item of each user for testing, the second recent item for validation, and the remaining items for training, which is widely-used in the literature~\cite{loo-1,loo-2}. When evaluating models, we need to collect irrelevant (unrated) items and rank them with the relevant (rated) item to compute NDCG metrics. During training, inspired by~\citet{wang2019modeling}, we randomly sample 1000 unrated items to save time. When testing, however, we adopt the all ranking protocol~\cite{wang2019neural,he2020lightgcn} --- all unrated items are used for evaluation. 

{\bf Setup.} We choose NeuMF~\cite{NCF} as the backbone network, which is commonly used in recommendation tasks. For all methods, models are first pre-trained by our initial warm-up method for 100 epochs with the learning rate 0.001 and a batch size of 256. Then the last layer is randomly re-initialized and the network is fine-tuned by different methods. At the fine-tuning stage, the initial learning rate and weight decay are set to 0.0004 and 1e-7, respectively. We train the models for 120 epochs with the learning rate multiplied by 0.25 at 60 epochs. The hyper-parameters of all methods are individually tuned for fair comparison, e.g., we tune $\alpha_*$ and $\gamma_*$ for our method in ranges of $\{0.7,0.8,0.9\}$ and $\{0.001,0.005,0.01\}$, respectively.

\begin{table*}[t]
\caption{The full results test NDCG on two movie recommendation datasets. We report the average NDCG@$k$ ($k\in[10,20,50]$) and standard deviation over 5 runs with different random seeds.}
\label{tab:cf-testing-results_full}
\vskip -0.8in
\begin{center}
\begin{small}
\begin{sc}
\renewcommand{\arraystretch}{0.8}
\begin{tabular}{p{2.2cm}p{2.0cm}p{2.0cm}p{2.0cm}p{2.0cm}p{2.0cm}p{2.0cm}}
\toprule
\multirow{2}{*}{\thead{Method}} &
\multicolumn{3}{c}{\thead{MovieLens20M}} &
\multicolumn{3}{c}{\thead{Netflix Prize Dataset}} \\
\cmidrule(lr){2-4}
\cmidrule(lr){5-7}
& NDCG@10 & NDCG@20 & NDCG@50 & NDCG@10 & NDCG@20 & NDCG@50 \\
\midrule
RankNet & 0.0538$\pm$0.0011	 & 0.0744$\pm$0.0013 & 0.1086$\pm$0.0013 &	0.0362$\pm$0.0002 &	0.0489$\pm$0.0003 &	0.0730$\pm$0.0003 \\
ListNet & 0.0660$\pm$0.0003 &	0.0875$\pm$0.0004 &	0.1227$\pm$0.0003 &	0.0532$\pm$0.0002 &	0.0700$\pm$0.0002 &	0.0992$\pm$0.0002 \\
ListMLE & 0.0588$\pm$0.0001 &	0.0799$\pm$0.0001 &	0.1137$\pm$0.0001 &	0.0376$\pm$0.0003 &	0.0508$\pm$0.0004 &	0.0753$\pm$0.0001 \\
LambdaRank & 0.0697$\pm$0.0001 &	0.0913$\pm$0.0002 &	0.1259$\pm$0.0001 &	0.0531$\pm$0.0002 &	0.0693$\pm$0.0002 &	0.0976$\pm$0.0003 \\
ApproxNDCG & 0.0735$\pm$0.0005 &	0.0938$\pm$0.0003 &	0.1284$\pm$0.0002 &	0.0434$\pm$0.0005 &	0.0592$\pm$0.0009 &	0.0873$\pm$0.0012 \\
NeuralNDCG & 0.0692$\pm$0.0003 &	0.0901$\pm$0.0003 &	0.1232$\pm$0.0007 &	0.0554$\pm$0.0002 &	0.0718$\pm$0.0003 &	0.1003$\pm$0.0002 \\
SONG & 0.0748$\pm$0.0002 &	0.0969$\pm$0.0002 &	0.1326$\pm$0.0001 &	0.0571$\pm$0.0002 &	0.0749$\pm$0.0002 &	0.1050$\pm$0.0003 \\
K-SONG & 0.0747$\pm$0.0002 &	\textbf{0.0973}$\pm$0.0003 &	\textbf{0.1340}$\pm$0.0001 &	0.0573$\pm$0.0003 &	0.0743$\pm$0.0003 &	0.1042$\pm$0.0001  \\
\textbf{RE-$\vone$} & \textbf{0.0749}$\pm$0.0003 &	0.0963$\pm$0.0002 &	0.1314$\pm$0.0003 &	\textbf{0.0585}$\pm$0.0004 &	\textbf{0.0760}$\pm$0.0003 &	\textbf{0.1061}$\pm$0.0002
 \\
\bottomrule
\end{tabular}
\end{sc}
\end{small}
\end{center}
\vskip -0.0in
\end{table*}

\section{Convergence Analysis of $\name$}\label{sec:thm2}
In this section, we present the convergence analysis of $\name$. We let $\y_t = (\y_{1,t}, \ldots, \y_{m,t})$, $\v_t = (\v_{1,t}, \ldots, \v_{m,t})$, $\u_t = (\u_{1,t}, \ldots, \u_{m,t})$, $\s_t = (\s_{1,t}, \ldots, \s_{m,t})$, $H_t = (H_{1,t}, \ldots, H_{m,t})$, $\y(\x)=(\y_1(\x),\dots,\y_m(\x))$, $\v(\x,\y)=(\v_1(\x,\y_1),\dots,\v_m(\x,\y_m))$.

% By defining  $g(\x, \y)=\sum_i g_i(\x, \y_i)$,  we have $g(\x, \y)$ is $\lambda$-strongly convex in terms of $\y$. Hence the $L_y$-Lipschitz continuity of $\y(\x)$ in Lemma~\ref{lem:y_lip} still holds.
For simplicity, we define the following notations.
\begin{equation*}
    \begin{aligned}
    	&\delta_{z,t}:=\|\z_{t+1}-\Delta_t\|^2,\quad \delta_{y,t}:=\sum_{i=1}^m\|\y_{i,t}-\y_i(\x_t)\|^2, \quad \delta_{v,t}:=\sum_{i=1}^m\|\v_{i,t}-\v(\x_t,\y_{i,t})\|^2,\\
        &\delta_{s,t}:=\sum_{i=1}^m\|\s_{i,t}-\nabla_y g_i(\x_{t-1},\y_{i,t-1})\|^2,\quad \tilde{\delta}_{s,t}:=\sum_{i=1}^m\|\s_{i,t}-\nabla_y g_i(\x_{t},\y_{i,t})\|^2,\\
        &\delta_{u,t}:=\sum_{i=1}^m\|\u_{i,t}-\nabla_v \phi_i(\v_{i,t-1},\x_{t-1},\y_{i,t-1})\|^2,\quad \tilde{\delta}_{u,t}:=\sum_{i=1}^m\|\u_{i,t}-\nabla_v \phi_i(\v_{i,t},\x_{t},\y_{i,t})\|^2,\\
        &\delta_{H,t}:=\sum_{i=1}^m\|H_{i,t}-\nabla_{yy}^2 g_i(\x_{t-1},\y_{i,t-1})\|^2,\quad \tilde{\delta}_{H,t}:=\sum_{i=1}^m\|H_{i,t}-\nabla_{yy}^2 g_i(\x_{t},\y_{i,t})\|^2.
    \end{aligned}
\end{equation*}

Note that under Assumption~\ref{ass:1}, \ref{ass:2}, \ref{ass:3}, we have
\begin{equation}\label{ineq:D2d}
    \begin{aligned}
        &\tilde{\delta}_{H,t}\leq 2\delta_{H,t}+2L_{gyy}^2 (m\|\x_t-\x_{t-1}\|^2+\|\y_t-\y_{t-1}\|^2)\\
        &\tilde{\delta}_{s,t}\leq 2\delta_{s,t}+2L_{gy}^2 (m\|\x_t-\x_{t-1}\|^2+\|\y_t-\y_{t-1}\|^2)\\
        &\tilde{\delta}_{u,t}\leq 2\delta_{u,t}+2L_{\phi v}^2 (\|\v_t-\v_{t-1}\|^2+m\|\x_t-\x_{t-1}\|^2+\|\y_t-\y_{t-1}\|^2)\\
    \end{aligned}
\end{equation}

We initialize $\x_0=\x_1$, $\y_0=\y_1$ and $\v_0=\v_1$, so that we have
\begin{equation}\label{ineq:xy_sum}
    \begin{aligned}
        &\sum_{t=1}^T\|\x_t-\x_{t-1}\|^2\leq \sum_{t=1}^T\|\x_{t+1}-\x_t\|^2,\quad \sum_{t=1}^T\|\y_t-\y_{t-1}\|^2\leq \sum_{t=1}^T\|\y_{t+1}-\y_t\|^2,\\
        &\quad \sum_{t=1}^T\|\v_t-\v_{t-1}\|^2\leq \sum_{t=1}^T\|\v_{t+1}-\v_t\|^2.
    \end{aligned}
\end{equation}

% \begin{lemma}\label{lem:1}
% Let $\x_{t+1} = \x_t - \eta_t\gamma\z_{t+1}$. For $\gamma\eta_tL_F\leq 1/2$, we have
% \begin{align*}
% F(\x_{t+1})\leq F(\x_t) +   \frac{ \gamma\eta_t}{2}\|\nabla F(\x_t) - \z_{t+1}\|^2- \frac{\gamma\eta_t}{2}\|\nabla F(\x_t)\|^2  - \frac{\gamma\eta_t}{4}\|\z_{t+1}\|^2.
% \end{align*}
% %and 
% %\begin{align*}
% %\|\x_{t+1} - \x_t\|\leq 2\eta_t^2\|\z_{t+1} - \nabla F(\x_t)\|^2 + 2\eta_t^2\| \nabla F(\x_t)\|^2
% %\end{align*}
% \end{lemma}

% \begin{lemma}\label{lem:y_lip}(\citep{99401}[Lemma 2.2])Under Assumptions~\ref{ass:1}, ~\ref{ass:2} or Assumptions~\ref{ass:1}, ~\ref{ass:2}, we have
% \begin{align*}
% &\|\nabla F(\x) - \nabla F(\x')\|\leq L_F\|\x- \x'\|\\
% &\|\y(\x) - \y(\x')\|\leq L_y \|\x - \x'\|
% \end{align*}
% where $L_y$ and $L_F$ are appropriate constants. 
% \end{lemma}

We first present some standard results from non-convex optimization and bilevel optimization literature.
\begin{lemma}[Lemma 2.2 in \citep{99401}]\label{lem:201}
$F(\x)$ is $L_F$-smooth and $\y_i(\x)$ is $L_y$-Lipschitz continuous for all $i=1,\dots,m$, where $L_y$ and $L_F$ are appropriate constants.
\end{lemma}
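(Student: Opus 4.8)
The plan is to establish the two claims separately, both resting on the first-order optimality characterization of the lower-level solution together with Assumptions~\ref{ass:1} and \ref{ass:2}; this is in essence the argument of \citet{99401}. First I would prove that each $\y_i(\x)$ is Lipschitz, and then feed this property into the explicit hyper-gradient formula of Proposition~\ref{prop:1} to conclude that $\nabla F$ is Lipschitz, i.e.\ that $F$ is $L_F$-smooth.

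\textbf{Lipschitz continuity of $\y_i(\x)$.} Since $g_i(\x,\cdot)$ is $\lambda$-strongly convex, $\y_i(\x)$ is its unique minimizer and satisfies the stationarity condition $\nabla_y g_i(\x,\y_i(\x))=0$ for every $\x$. Fix $\x,\x'$ and write $\y=\y_i(\x)$, $\y'=\y_i(\x')$. Using strong convexity of $g_i(\x',\cdot)$ together with $\nabla_y g_i(\x',\y')=0$ and $\nabla_y g_i(\x,\y)=0$,
\[
\lambda\|\y-\y'\|^2\le \langle \nabla_y g_i(\x',\y)-\nabla_y g_i(\x',\y'),\,\y-\y'\rangle=\langle \nabla_y g_i(\x',\y)-\nabla_y g_i(\x,\y),\,\y-\y'\rangle.
\]
Bounding the right-hand side by the $L_{gy}$-Lipschitz continuity of $\nabla_y g_i$ in $\x$ (Assumption~\ref{ass:2}) and Cauchy--Schwarz yields $\lambda\|\y-\y'\|^2\le L_{gy}\|\x-\x'\|\,\|\y-\y'\|$, hence $\|\y_i(\x)-\y_i(\x')\|\le (L_{gy}/\lambda)\|\x-\x'\|$, giving $L_y=L_{gy}/\lambda$.

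\textbf{Smoothness of $F$.} By Proposition~\ref{prop:1}, $\nabla F(\x)=\frac1m\sum_{i=1}^m \nabla F_i(\x)$ with $\nabla F_i(\x)=\nabla_x f_i(\x,\y_i(\x))-\nabla_{xy}^2 g_i(\x,\y_i(\x))[\nabla_{yy}^2 g_i(\x,\y_i(\x))]^{-1}\nabla_y f_i(\x,\y_i(\x))$, so it suffices to show each $\nabla F_i$ is Lipschitz, since averaging preserves the constant. The map $\x\mapsto(\x,\y_i(\x))$ is $(1+L_y)$-Lipschitz by the previous step, so every factor $\nabla_x f_i$, $\nabla_{xy}^2 g_i$, $\nabla_y f_i$ is a Lipschitz function of $\x$ by Assumption~\ref{ass:2}(i), and each is uniformly bounded (by $C_{fx}$, $C_{gxy}$, $C_{fy}$ via Assumption~\ref{ass:2}(ii) and the implication $C_{gxy}=C_{gyy}=L_{gy}$). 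The remaining factor $[\nabla_{yy}^2 g_i]^{-1}$ is bounded in norm by $1/\lambda$ (Assumption~\ref{ass:1}) and is Lipschitz: writing $A,B$ for the Hessians at $(\x,\y_i(\x))$ and $(\x',\y_i(\x'))$, the resolvent identity $A^{-1}-B^{-1}=A^{-1}(B-A)B^{-1}$ and the $L_{gyy}$-Lipschitzness of $\nabla_{yy}^2 g_i$ give $\|A^{-1}-B^{-1}\|\le \lambda^{-2}L_{gyy}(1+L_y)\|\x-\x'\|$. A product of uniformly bounded Lipschitz mappings is Lipschitz, so $\nabla_{xy}^2 g_i\,[\nabla_{yy}^2 g_i]^{-1}\nabla_y f_i$ is Lipschitz, and therefore so is $\nabla F_i$; collecting constants yields an explicit $L_F$.

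\textbf{Main obstacle.} The only delicate piece is handling the product of three matrix/vector factors together with the Hessian inverse at once: one must pass Lipschitzness through the inverse via the resolvent identity and then invoke the uniform boundedness of each factor — exactly the role of the norm bounds in Assumption~\ref{ass:2}(ii) and the lower eigenvalue bound $\lambda$ — so that the telescoping estimate $\|PQR-P'Q'R'\|\le \|P-P'\|\|Q\|\|R\|+\|P'\|\|Q-Q'\|\|R\|+\|P'\|\|Q'\|\|R-R'\|$ closes into a single constant. Tracking the resulting $L_F$ in terms of $\lambda, L_{gy}, L_{gyy}, C_{fy}, C_{gxy}, L_{fx}, L_{fy}$ is then routine bookkeeping.
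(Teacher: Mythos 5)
Your proposal is correct and follows essentially the same route as the paper's source for this result: the paper gives no proof of its own, deferring entirely to Lemma~2.2 of \citet{99401}, whose standard argument (Lipschitzness of $\y_i(\x)$ from strong monotonicity of $\nabla_y g_i$, then Lipschitzness of the hyper-gradient via boundedness of each factor, the resolvent identity for the Hessian inverse, and a telescoping product bound) is exactly what you reconstruct. The only cosmetic difference is that the cited reference obtains the Lipschitz constant of $\y_i(\x)$ via the implicit-function expression $\nabla \y_i(\x)=-[\nabla_{yy}^2 g_i]^{-1}\nabla_{yx}^2 g_i$ rather than your monotonicity inequality, but since $C_{gxy}=L_{gy}$ here both yield $L_y=L_{gy}/\lambda$.
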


\begin{lemma}\label{lem:01}
Let $\x_{t+1} = \x_t - \eta_t\z_{t+1}$. Under Assumptions~\ref{ass:1}, \ref{ass:2}, with $\eta_tL_F\leq 1/2$, we have
\begin{equation*}
\begin{aligned}
F(\x_{t+1})&\leq F(\x_t) +   \frac{ \eta_t}{2}\|\nabla F(\x_t) - \z_{t+1}\|^2 - \frac{\eta_t}{2}\|\nabla F(\x_t)\|^2  - \frac{\eta_t}{4}\|\z_{t+1}\|^2.
\end{aligned}
\end{equation*}
\end{lemma}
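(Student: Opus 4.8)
The plan is to prove the descent inequality of Lemma~\ref{lem:01} by combining the $L_F$-smoothness of $F$ (guaranteed by Lemma~\ref{lem:201}) with the update rule $\x_{t+1}=\x_t-\eta_t\z_{t+1}$. First I would write down the standard smoothness upper bound
\begin{equation*}
F(\x_{t+1})\leq F(\x_t)+\nabla F(\x_t)^T(\x_{t+1}-\x_t)+\frac{L_F}{2}\|\x_{t+1}-\x_t\|^2,
\end{equation*}
and substitute $\x_{t+1}-\x_t=-\eta_t\z_{t+1}$ to obtain
\begin{equation*}
F(\x_{t+1})\leq F(\x_t)-\eta_t\nabla F(\x_t)^T\z_{t+1}+\frac{L_F\eta_t^2}{2}\|\z_{t+1}\|^2.
\end{equation*}

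Next I would handle the inner-product term $-\eta_t\nabla F(\x_t)^T\z_{t+1}$, which is the crux of the manipulation. The natural device is the polarization identity $-a^Tb=\frac{1}{2}\|a-b\|^2-\frac{1}{2}\|a\|^2-\frac{1}{2}\|b\|^2$ applied with $a=\nabla F(\x_t)$ and $b=\z_{t+1}$, giving
\begin{equation*}
-\nabla F(\x_t)^T\z_{t+1}=\frac{1}{2}\|\nabla F(\x_t)-\z_{t+1}\|^2-\frac{1}{2}\|\nabla F(\x_t)\|^2-\frac{1}{2}\|\z_{t+1}\|^2.
\end{equation*}
Multiplying by $\eta_t$ and plugging back in produces a $+\frac{\eta_t}{2}\|\nabla F(\x_t)-\z_{t+1}\|^2$ term, a $-\frac{\eta_t}{2}\|\nabla F(\x_t)\|^2$ term, and the combined $\|\z_{t+1}\|^2$ coefficient $-\frac{\eta_t}{2}+\frac{L_F\eta_t^2}{2}=-\frac{\eta_t}{2}(1-L_F\eta_t)$.

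Finally I would invoke the stepsize hypothesis $\eta_t L_F\leq 1/2$ to control the last coefficient: since $1-L_F\eta_t\geq 1/2$, we have $-\frac{\eta_t}{2}(1-L_F\eta_t)\leq-\frac{\eta_t}{4}$, which yields exactly the claimed $-\frac{\eta_t}{4}\|\z_{t+1}\|^2$ term and completes the inequality. There is no genuine obstacle here, as this is a routine descent lemma; the only point requiring a little care is the bookkeeping on the $\|\z_{t+1}\|^2$ coefficient, making sure the smoothness quadratic term $\frac{L_F\eta_t^2}{2}$ is absorbed into the negative $\frac{\eta_t}{2}$ term from polarization precisely under the stepsize condition, leaving a residual $-\frac{\eta_t}{4}\|\z_{t+1}\|^2$ that will later be useful for bounding $\|\x_{t+1}-\x_t\|^2$ in the broader convergence argument. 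The strength of this formulation is that it keeps the gradient-estimation error $\|\nabla F(\x_t)-\z_{t+1}\|^2$ explicit, so that the subsequent analysis can bound it recursively via the STORM/MSVR error bounds on $\z_{t+1}$, $\s_{i,t}$, $H_{i,t}$, $\u_{i,t}$ and $\v_{i,t}$.
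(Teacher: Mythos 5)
Your proposal is correct and follows exactly the paper's own argument: apply $L_F$-smoothness (from Lemma~\ref{lem:201}), substitute the update $\x_{t+1}-\x_t=-\eta_t\z_{t+1}$, expand the inner product via the polarization identity, and absorb the $\frac{L_F\eta_t^2}{2}\|\z_{t+1}\|^2$ term into the $-\frac{\eta_t}{2}\|\z_{t+1}\|^2$ term using $\eta_t L_F\leq 1/2$. There is nothing to add; the bookkeeping on the $\|\z_{t+1}\|^2$ coefficient is precisely the step the paper performs.
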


\begin{lemma}[Lemma 6 in \cite{https://doi.org/10.48550/arxiv.2105.02266}]\label{lem:3}
Let $\y_{t+1} =\y_t - \tau_t\tau \s_t$ with $\tau\leq 2/(3L_g)$, we have
\begin{align*}
&\|\y_{t+1} - \y(\x_{t+1})\|^2 \leq  (1 - \frac{\tau_t\tau \lambda}{4})\|\y_t - \y(\x_t)\|^2 + \frac{8\tau_t\tau}{\lambda}\|\nabla_y g(\x_t, \y_t) -\s_t\|^2 \\
& + \frac{8L_y^2\gamma^2}{\tau_t\tau\lambda}\|\x_{t+1}-\x_t\|^2  -  \frac{2\tau}{\tau_t}(1+ \frac{\tau_t\tau \lambda}{4})(\frac{1}{2\tau} - \frac{3L_g}{4} )\|\y_t- \y_{t+1}\|^2. 
\end{align*}
\end{lemma}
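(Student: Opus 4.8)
The plan is to treat the update $\y_{t+1}=\y_t-\tau_t\tau\s_t$ as a single inexact gradient-descent step on the lower-level objective and to reduce the claim to a one-step strong-convexity contraction together with a drift term accounting for the movement of the moving target $\y(\cdot)$. Since each $g_i(\x_t,\cdot)$ is $\lambda$-strongly convex and $L_g$-smooth (Assumption~\ref{ass:1}) and the lower-level problem is block-separable in $\y=(\y_1,\dots,\y_m)$, the stacked objective inherits the same $\lambda$ and $L_g$ with minimizer $\y(\x_t)$, so the analysis is that of one gradient step with an inexact gradient $\s_t\approx\nabla_y g(\x_t,\y_t)$.

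First I would insert the intermediate point $\y(\x_t)$ and apply Young's inequality with weight $\tfrac{\tau_t\tau\lambda}{4}$:
\begin{equation*}
\|\y_{t+1}-\y(\x_{t+1})\|^2 \le \Big(1+\tfrac{\tau_t\tau\lambda}{4}\Big)\|\y_{t+1}-\y(\x_t)\|^2 + \Big(1+\tfrac{4}{\tau_t\tau\lambda}\Big)\|\y(\x_t)-\y(\x_{t+1})\|^2 .
\end{equation*}
The last term is handled by the $L_y$-Lipschitz continuity of $\y(\cdot)$ (Lemma~\ref{lem:201}), giving $\|\y(\x_t)-\y(\x_{t+1})\|^2\le L_y^2\|\x_{t+1}-\x_t\|^2$ and hence, for $\tau_t\tau\lambda\le 4$, the stated drift term of order $\tfrac{L_y^2}{\tau_t\tau\lambda}\|\x_{t+1}-\x_t\|^2$.

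Next I would bound the one-step term $\|\y_{t+1}-\y(\x_t)\|^2$. Expanding the square, splitting $\s_t=\nabla_y g(\x_t,\y_t)+(\s_t-\nabla_y g(\x_t,\y_t))$, and applying $\lambda$-strong convexity to the exact-gradient inner product extracts the contraction factor $(1-\tfrac{\tau_t\tau\lambda}{2})$; a Young step on the cross term involving $\s_t-\nabla_y g(\x_t,\y_t)$ produces a gradient-error contribution of order $\tfrac{\tau_t\tau}{\lambda}\|\nabla_y g(\x_t,\y_t)-\s_t\|^2$; and feeding the $L_g$-smoothness descent inequality $g(\x_t,\y_{t+1})\le g(\x_t,\y_t)+\langle\nabla_y g(\x_t,\y_t),\y_{t+1}-\y_t\rangle+\tfrac{L_g}{2}\|\y_{t+1}-\y_t\|^2$ together with the optimality bound $g(\x_t,\y_{t+1})\ge g(\x_t,\y(\x_t))$ back into the expansion converts the remaining first-order and quadratic terms into the negative $\|\y_t-\y_{t+1}\|^2$ term, whose coefficient is non-positive exactly when $\tau\le\tfrac{2}{3L_g}$. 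Multiplying this one-step estimate by the factor $(1+\tfrac{\tau_t\tau\lambda}{4})$ from the first step and using $(1+\tfrac{\tau_t\tau\lambda}{4})(1-\tfrac{\tau_t\tau\lambda}{2})\le 1-\tfrac{\tau_t\tau\lambda}{4}$ yields the stated contraction coefficient, while the Young weights in the cross-term and in the descent-inequality step are chosen so that the remaining coefficients come out as $\tfrac{8\tau_t\tau}{\lambda}$ and $\tfrac{2\tau}{\tau_t}(1+\tfrac{\tau_t\tau\lambda}{4})(\tfrac{1}{2\tau}-\tfrac{3L_g}{4})$.

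The delicate step is the second one: obtaining the sharp negative $\|\y_t-\y_{t+1}\|^2$ coefficient $\tfrac{1}{2\tau}-\tfrac{3L_g}{4}$ rather than the crude $-(1-\tau_t\tau L_g)$ that a textbook contraction would yield. This requires balancing the smoothness descent inequality against the gradient-error Young step so that the $\tfrac{L_g}{2}\|\y_{t+1}-\y_t\|^2$ from smoothness and the $\tfrac{L_g}{4}$-type residue combine into exactly $\tfrac{3L_g}{4}$. Retaining this negative term with the sharp constant is what makes the lemma useful downstream, since it is precisely what will cancel the $\|\y_t-\y_{t-1}\|^2$ drift generated by the MSVR estimator recursions through~(\ref{ineq:D2d}) when the inequalities are assembled into the global potential function.
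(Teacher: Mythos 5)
The paper never proves this lemma itself; it is imported verbatim as Lemma 6 of the cited RSVRB paper, so your proposal has to be measured against that source's (by now standard) argument. Your outer scaffolding agrees with it and is correct: the Young split with weight $\tfrac{\tau_t\tau\lambda}{4}$, the Lipschitz bound on $\|\y(\x_t)-\y(\x_{t+1})\|^2$ (with the caveat that the stacked map $\y(\cdot)$ is $\sqrt{m}L_y$-Lipschitz rather than $L_y$-Lipschitz, which is exactly where the factor $m$ in the paper's inequality~(\ref{ineq:307}) comes from), and the recombination $(1+\tfrac{\tau_t\tau\lambda}{4})(1-\tfrac{\tau_t\tau\lambda}{2})\le 1-\tfrac{\tau_t\tau\lambda}{4}$.

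The genuine gap is in your one-step bound, and it is not a matter of tuning Young weights. You apply the smoothness descent inequality at the actual iterate $\y_{t+1}=\y_t-\tau_t\tau\s_t$ together with $g(\x_t,\y_{t+1})\ge g(\x_t,\y(\x_t))$. Carry that computation out: the gap $g(\x_t,\y_t)-g(\x_t,\y(\x_t))$ is lower bounded by $\tau_t\tau\langle\nabla_y g(\x_t,\y_t),\s_t\rangle-\tfrac{L_g}{2}\|\y_{t+1}-\y_t\|^2$, and after multiplying by $-2\tau_t\tau$ and doing Young on $\langle \s_t-\nabla_y g,\s_t\rangle$, every $\|\y_{t+1}-\y_t\|^2$ term appears with a coefficient of the form $-1+a+\tau_t\tau L_g$ for a Young weight $a$ --- a quantity bounded by constants, independent of $\tau_t$. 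But the lemma asserts the coefficient $\tfrac{2\tau}{\tau_t}\big(1+\tfrac{\tau_t\tau\lambda}{4}\big)\big(\tfrac{1}{2\tau}-\tfrac{3L_g}{4}\big)=\tfrac{1}{\tau_t}\big(1-\tfrac{3L_g\tau}{2}\big)\big(1+\tfrac{\tau_t\tau\lambda}{4}\big)$, which blows up like $1/\tau_t$ as $\tau_t\to 0$. Your route can never produce this amplification, because in it the descent inequality is evaluated along the actual displacement $\y_{t+1}-\y_t$, whose length already carries the small factor $\tau_t$. The missing idea is to treat $\y_{t+1}$ as the convex combination $\y_{t+1}=(1-\tau_t)\y_t+\tau_t\tilde{\y}_{t+1}$ with $\tilde{\y}_{t+1}:=\y_t-\tau\s_t$ the \emph{full} $\tau$-step point, and to run the strongly-convex/smooth one-step analysis on $\tilde{\y}_{t+1}$ with step $\tau$ (equivalently, lower bound the gap by $g(\x_t,\y_t)-g(\x_t,\tilde{\y}_{t+1})$ via smoothness at $\tilde{\y}_{t+1}$). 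The descent then contributes $-2\tau_t\tau\cdot\Theta(\tau)\|\s_t\|^2=-\Theta(1/\tau_t)\|\y_{t+1}-\y_t\|^2$, which is exactly the claimed term; this also explains why the step-size restriction involves $\tau$ alone ($\tau\le\tfrac{2}{3L_g}$) and not $\tau_t\tau$ --- a diagnostic your version fails, since your negativity condition would naturally involve $\tau_t\tau$. The distinction is not cosmetic: as you yourself note, in the proof of Theorem~\ref{thm:6} this negative term (appearing after telescoping with weight proportional to $\tfrac{1}{\tau_t^2\lambda}(\tfrac{1}{2\tau}-\tfrac{3L_g}{4})$) must dominate $\|\y_{t+1}-\y_t\|^2$ terms whose coefficients scale like $1/\beta_t$, $1/\alpha_t$, $1/\balp_t$; an $O(1)$ negative coefficient would be far too weak there, and the step-size condition $\tau_t^2\le C_8\min\{\cdot\}$ could not be met.
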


\begin{lemma}\label{lem:MSVR}
Let $\Omega$ be a convex set. Suppose mapping $h_i(\e_i;\xi)$ is $L$-Lipschitz, $h_i(\e)=\E_{\xi}[h_i(\e_i;\xi)]$, $h_i(\e)\in \Omega$ and $\E_{\xi}[\|h_i(\e_i)-h_i(\e_i;\xi)\|^2]\leq \sigma^2$ for all $i=1,\dots,m$. Consider the MSVR update:
\begin{equation}\label{MSVR_update}
\h_{i,t+1}=\begin{cases} \Pi_{\Omega}\bigg[(1-\alpha)\h_{i,t}+\alpha h_i(\e_{i,t};\B_i^t)\\
\quad+\gamma(h_i(\e_{i,t};\B_i^t)- h_i(\e_{i,t-1};\B_i^t))\bigg], \,\,  i\in \mI_t\\
\h_{i,t},\,\, \text{o.w.} \end{cases}
\end{equation}
Denote $\delta_{h,t}:=\sum_{i=1}^m \|\h_{i,t}-h_i(\e_{i,t-1})\|^2$.  By setting $\gamma = \frac{m-I}{I(1-\alpha)}+(1-\alpha)$, for $\alpha\leq \frac{1}{2}$, with batch sizes $I=|\mI_t|$ and $B=|\B_i^t|$, we have
\begin{equation}\label{eqn:msvr}
\begin{aligned}
   \E\left[\delta_{h,t+1}\right]&\leq (1-\frac{I \alpha}{m})\E\left[ \delta_{h,t}\right]+\frac{2I\alpha^2 \sigma^2}{B} \\
   &\quad +\frac{8m^2 L^2}{I}\E\left[\sum_{i=1}^m\|\e_{i,t-1}-\e_{i,t}\|^2\right]
\end{aligned}
\end{equation}
\end{lemma}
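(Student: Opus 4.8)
The plan is to prove the recursion \eqref{eqn:msvr} one block at a time, conditioning on the history $\mathcal{F}_t$ (which fixes $\h_{i,t}$, $\e_{i,t}$, $\e_{i,t-1}$, hence the population values $h_i(\e_{i,t})$, $h_i(\e_{i,t-1})$) and then taking expectation over the uniform block sample $\mI_t$, for which $\Pr(i\in\mI_t)=I/m$, and the mini-batches. Fixing a block $i$, I would abbreviate $a_i:=\h_{i,t}-h_i(\e_{i,t-1})$ and $b_i:=h_i(\e_{i,t})-h_i(\e_{i,t-1})$, so that $\delta_{h,t}=\sum_i\|a_i\|^2$ and $\|b_i\|\le L\|\e_{i,t}-\e_{i,t-1}\|$ by Lipschitzness. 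If $i\in\mI_t$, then since $h_i(\e_{i,t})\in\Omega$ and $\Pi_\Omega$ is non-expansive I would discard the projection, $\|\h_{i,t+1}-h_i(\e_{i,t})\|\le\|z_i-h_i(\e_{i,t})\|$ with $z_i$ the pre-projection update in \eqref{MSVR_update}, and expand into the STORM-type form
\[
z_i-h_i(\e_{i,t})=(1-\alpha)a_i+(\alpha+\gamma-1)b_i+N_i,\qquad N_i:=(\alpha+\gamma)\xi_{i,t}-\gamma\xi_{i,t-1},
\]
where $\xi_{i,t}:=h_i(\e_{i,t};\B_i^t)-h_i(\e_{i,t})$ and $\xi_{i,t-1}:=h_i(\e_{i,t-1};\B_i^t)-h_i(\e_{i,t-1})$ are the two zero-mean probe errors computed on the \emph{same} batch $\B_i^t$.

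Taking the batch expectation, $N_i$ decouples from the $\mathcal{F}_t$-measurable part, so a sampled block contributes $\|(1-\alpha)a_i+(\alpha+\gamma-1)b_i\|^2+\E\|N_i\|^2$, whereas an unsampled block contributes exactly $\|\h_{i,t}-h_i(\e_{i,t})\|^2=\|a_i-b_i\|^2$. Averaging these with weights $I/m$ and $1-I/m$, the coefficient of $\|a_i\|^2$ is $\tfrac{I}{m}(1-\alpha)^2+(1-\tfrac{I}{m})$, the coefficient of $\|b_i\|^2$ is $\tfrac{I}{m}(\alpha+\gamma-1)^2+(1-\tfrac{I}{m})$, and the coefficient of the cross term $\langle a_i,b_i\rangle$ is $\tfrac{2I}{m}(1-\alpha)(\alpha+\gamma-1)-2(1-\tfrac{I}{m})$.

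The crux of the argument — and the single place where MSVR must deviate from STORM — is that the prescribed $\gamma=\frac{m-I}{I(1-\alpha)}+(1-\alpha)$ is exactly the solution of $\tfrac{I}{m}(1-\alpha)(\alpha+\gamma-1)=1-\tfrac{I}{m}$, so the cross-term coefficient vanishes identically. This is the main obstacle: the unsampled blocks produce a $-2(1-I/m)\langle a_i,b_i\rangle$ contribution of indefinite sign that cannot be folded into $\delta_{h,t}$, and only this value of $\gamma$ cancels it; the STORM choice $\gamma=1-\alpha$ would leave a residual inner product that obstructs a clean contraction. With the cross term gone, the $\|a_i\|^2$ coefficient simplifies to $1-\tfrac{I\alpha}{m}(2-\alpha)\le 1-\tfrac{I\alpha}{m}$ since $\alpha\le\tfrac12$, which is precisely the contraction factor on $\delta_{h,t}$.

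It then remains to collect the residual noise and drift terms. Splitting $N_i=\alpha\xi_{i,t}+\gamma(\xi_{i,t}-\xi_{i,t-1})$, I would use $\E\|\xi_{i,t}\|^2\le\sigma^2/B$ together with the variance-reduction estimate $\E\|\xi_{i,t}-\xi_{i,t-1}\|^2\le L^2\|\e_{i,t}-\e_{i,t-1}\|^2/B$, the latter exploiting that the two probes share one batch and the stochastic map is $L$-Lipschitz; multiplied by $\Pr(i\in\mI_t)=I/m$ and summed over $i$, the $\sigma^2$-part yields exactly $\tfrac{2I\alpha^2\sigma^2}{B}$. Finally, for $\alpha\le\tfrac12$ one has $\alpha+\gamma-1=\O(m/I)$ and $\gamma=\O(m/I)$, so every remaining coefficient of $\|\e_{i,t}-\e_{i,t-1}\|^2$ (coming from the $\|b_i\|^2$ term and from the batch-noise drift) is of order $m/I$ and, using $m\ge1$, is dominated by $\tfrac{8m^2L^2}{I}$. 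Summing the blockwise bound over $i=1,\dots,m$ then gives \eqref{eqn:msvr}.
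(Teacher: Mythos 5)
Your proposal is correct and follows essentially the same route as the paper's proof: condition on the history, split each block by the sampling event (probability $I/m$), drop the projection by non-expansiveness since $h_i(\e_{i,t})\in\Omega$, expand the STORM-type update, and observe that the prescribed $\gamma$ is exactly the value making the cross term $\langle \h_{i,t}-h_i(\e_{i,t-1}),\, h_i(\e_{i,t})-h_i(\e_{i,t-1})\rangle$ from unsampled blocks cancel against the sampled-block contribution, after which the contraction factor $1-\tfrac{I\alpha}{m}$, the noise term $\tfrac{2I\alpha^2\sigma^2}{B}$, and the $\O(m/I)$ drift coefficients fall out as you describe. The only cosmetic differences are that the paper absorbs the $\|h_i(\e_{i,t})-h_i(\e_{i,t-1})\|^2$ term entirely (its coefficient is shown to be non-positive thanks to the $-2(1-\alpha)\gamma$ cross term inside the sampled-block square) rather than bounding it positively by $\O(m/I)$ and invoking Lipschitzness as you do, and the paper uses the cruder bound $\E\|h_i(\e_{i,t};\B_i^t)-h_i(\e_{i,t-1};\B_i^t)\|^2\le L^2\|\e_{i,t}-\e_{i,t-1}\|^2$ without your extra $1/B$ factor; both variants land within the stated constants.
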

With $\Omega=\R^d$ the above lemma is Lemma 1 in \cite{https://doi.org/10.48550/arxiv.2207.08540}. We refer the detailed proof to Appendix~\ref{app:MSVR}

%\begin{lemma}[MSVR]\label{lem:MSVR}
%Suppose mapping $h_i(\e;\xi)$ is $L$-Lipschitz, $h_i(\e)=\E_{\xi}[h_i(\e;\xi)]$ and $\E_{\xi}[\|h_i(\e)-h_i(\e;\xi)\|^2]\leq \sigma^2$ for all $i=1,\dots,m$. Consider the MSVR update,
%\begin{equation*}
%\h_{i,t+1}=\begin{cases} (1-\alpha)\h_{i,t}+\alpha h_i(\e_t;\B_i^t)+\gamma (h_i(\e_t;\B_i^t)-h_i(\e_{t-1};\B_i^t)),\quad i\in \mI_t\\
%\h_{i,t},\quad \text{o.w.} \end{cases}
%\end{equation*}
%By setting $\gamma = \frac{m-I}{I(1-\alpha)}+(1-\alpha)$, for $\alpha\leq \frac{1}{2}$, with batch sizes $I=|\mI_t|$ and $B=|\B_i^t|$, we have
%\begin{equation*}
%\begin{aligned}
%    \E\left[\sum_{i=1}^m \|\h_{i,t+1}-h_i(\e_{t+1})\|^2\right]&\leq (1-\frac{I \alpha}{m})\E\left[\sum_{i=1}^m \|\h_{i,t}-h_i(\e_t)\|^2\right]+\frac{2I\alpha^2 \sigma^2}{B} +\frac{8m^2 L^2}{I}\E\left[\|\e_{t+1}-\e_t\|^2\right]
%\end{aligned}
%\end{equation*}
%\end{lemma}

\subsection{Convergence Analysis of $\vone$}

We first present a formal statement of Theorem~\ref{thm:informal} for $\vone$.
\begin{theorem}\label{thm:6}
Under Assumptions~\ref{ass:1} and \ref{ass:2}, with $\tau\leq \frac{2}{3L_g}$, $\bgam_t = \frac{m-I}{I(1-\balp_t)}+(1-\balp_t)$, $\gamma_t = \frac{m-I}{I(1-\alpha_t)}+(1-\alpha_t)$, $\alpha_t \leq \min\left\{\frac{1}{2},\frac{B\epsilon^2}{12C_{10}}\right\}$, $\beta_t \leq \frac{\min\{ I ,B\}\epsilon^2}{12C_{10}}$, $\balp_t\leq \min\left\{\frac{1}{2}, \frac{\epsilon^2}{12C_{10}}(\frac{\I(I<m)}{I}+\frac{1}{B})^{-1}\right\}$,  $\tau_t\leq  \sqrt{\frac{C_8}{12C_{10}}} \frac{ \sqrt{I}\epsilon}{\sqrt{m}}(\frac{\I(I<m)}{I}+\frac{1}{B})^{-1/2}$, $\eta_t \leq \min\left\{\frac{1}{2L_F}, \sqrt{C_{11}} \frac{ I \epsilon}{m }(\frac{\I(I<m)}{I}+\frac{1}{B})^{-1/2}\right\}$, where $C_{10}, C_{11}$ are constants specified in the proof, and by using a large mini-batch size of $\O(1/\epsilon)$ at the initial iteration for computing $\z_1, \s_1,H_1$ and computing an accurate solution $\y_1$ such that $\delta_{y,1}\leq \O(1)$, Algorithm~\ref{alg:4} gives $\E\left[\frac{1}{T}\sum_{i=1}^m\|\nabla F(\x_t)\|^2\right]\leq \epsilon^2$ with sample complexity $T =  \O\left(\frac{m\epsilon^{-3}\mathbb I(I<m)}{I\sqrt{I}} + \frac{m\epsilon^{-3}}{I\sqrt{B}}\right)$.
\end{theorem}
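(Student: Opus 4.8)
The plan is to run a single Lyapunov (potential-function) argument that trades the descent of $F$ along the $\x$-update against the tracking errors $\delta_{z,t},\delta_{y,t},\delta_{H,t},\delta_{s,t}$ of the four estimators, so I first record a one-step inequality for each quantity and then glue them together with carefully chosen weights.

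I would start from the smoothness descent inequality of Lemma~\ref{lem:01}, which already isolates the target term $-\frac{\eta_t}{2}\|\nabla F(\x_t)\|^2$ and the useful negative term $-\frac{\eta_t}{4}\|\z_{t+1}\|^2=-\frac{1}{4\eta_t}\|\x_{t+1}-\x_t\|^2$, at the price of $\frac{\eta_t}{2}\|\nabla F(\x_t)-\z_{t+1}\|^2$. I split $\|\nabla F(\x_t)-\z_{t+1}\|^2\le 2\delta_{z,t}+2\|\Delta_t-\nabla F(\x_t)\|^2$ with $\Delta_t=\E_t[G_t]$. For the bias $\|\Delta_t-\nabla F(\x_t)\|^2$ I would invoke Proposition~\ref{prop:1}: since $G_t$ uses $\y_{i,t}$ and $H_{i,t}$ in place of $\y_i(\x_t)$ and $\nabla_{yy}^2 g_i(\x_t,\y_i(\x_t))$, convexity of $\|\cdot\|^2$ together with the Lipschitz properties and bounded gradients of Assumption~\ref{ass:2} and the inverse-perturbation bound $\|[H_{i,t}]^{-1}-[\nabla_{yy}^2 g_i]^{-1}\|\le\lambda^{-2}\|H_{i,t}-\nabla_{yy}^2 g_i\|$, legitimate because the projection $\S_\lambda$ keeps $H_{i,t}\succeq\lambda I$, yields $\|\Delta_t-\nabla F(\x_t)\|^2\le \frac{C}{m}(\delta_{y,t}+\tilde\delta_{H,t})$. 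Here it is essential that $G_t$ pairs $H_{i,t-1}$ with the Jacobian sampled on $\wB_i^t$, so the conditional expectation factorizes and no extra independent batch is needed.

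Next I would derive the per-iteration recursions. For $\delta_{z,t}$ I expand $\z_{t+1}-\Delta_t=(1-\beta_t)(\z_t-\Delta_{t-1})+(1-\beta_t)(\Delta_{t-1}-\widetilde G_t)+(G_t-\Delta_t)$ and take conditional expectation; the cross terms vanish by unbiasedness, leaving $\E_t[\delta_{z,t}]\le(1-\beta_t)\delta_{z,t-1}+2(1-\beta_t)^2\E_t\|(G_t-\widetilde G_t)-(\Delta_t-\Delta_{t-1})\|^2+2\beta_t^2\E_t\|G_t-\Delta_t\|^2$. The first is a difference variance in which the shared block set $\mI_t$ and shared batches make the sampling noise act only on the small Lipschitz increments, so it is bounded by $\frac{C}{I}\|\x_t-\x_{t-1}\|^2+\frac{C}{Im}(\|\y_t-\y_{t-1}\|^2+\|H_t-H_{t-1}\|^2)$, with $\|H_t-H_{t-1}\|^2$ itself $\O(\balp_t^2)$-small; the second is the full variance of $G_t$, which I decompose into a block-sampling part of order $\frac{\I(I<m)}{I}$, vanishing at $I=m$, and a within-block data part of order $\frac{1}{IB}$. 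For $\delta_{s,t}$ and $\delta_{H,t}$ I apply Lemma~\ref{lem:MSVR} with $\Omega=\R^{d_y}$ and $\Omega=\{X\succeq\lambda I\}$, giving contraction $1-\frac{I\alpha_t}{m}$ (resp.\ $1-\frac{I\balp_t}{m}$), a noise floor $\O(\frac{I\alpha_t^2\sigma^2}{B})$, and a movement term $\frac{8m^2L^2}{I}(m\|\x_t-\x_{t-1}\|^2+\|\y_t-\y_{t-1}\|^2)$. For $\delta_{y,t}$ I use Lemma~\ref{lem:3}, converting its $\tilde\delta_{s,t}$ dependence into $\delta_{s,t}$ plus movement via \eqref{ineq:D2d}, and keeping its negative $-c\|\y_{t+1}-\y_t\|^2$ term.

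Finally I would set $\Phi_t=F(\x_t)+C_y\delta_{y,t}+C_H\delta_{H,t}+C_z\delta_{z,t-1}+C_s\delta_{s,t}$ and fix $C_y,C_H,C_z,C_s$ so that (i) each contraction factor dominates the couplings feeding into its term, and (ii) the aggregated positive coefficients of $\|\x_{t+1}-\x_t\|^2$ and $\|\y_{t+1}-\y_t\|^2$ are absorbed by the negative $-\frac{1}{4\eta_t}\|\x_{t+1}-\x_t\|^2$ from Lemma~\ref{lem:01} and the negative $\|\y_{t+1}-\y_t\|^2$ from Lemma~\ref{lem:3}; this is precisely where the smallness conditions on $\eta_t,\tau_t,\alpha_t,\balp_t,\beta_t$ originate, and \eqref{ineq:xy_sum} lets me shift the lagged increments. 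Telescoping $\E[\Phi_{t+1}]\le\E[\Phi_t]-\frac{\eta_t}{2}\|\nabla F(\x_t)\|^2+\text{noise}_t$ over $t=1,\dots,T$, bounding $\Phi_1$ via the large initial batch (so $\delta_{z,1},\delta_{s,1},\delta_{H,1}=\O(\epsilon)$) and the accurate solve $\delta_{y,1}=\O(1)$, then dividing by $T\eta/2$ and substituting the prescribed step sizes, drives the averaged noise to $\O(\epsilon^2)$ and forces $T=\O(\frac{m\epsilon^{-3}\I(I<m)}{I\sqrt I}+\frac{m\epsilon^{-3}}{I\sqrt B})$. I expect the main obstacle to be the STORM variance decomposition in the $\z$-recursion: cleanly isolating the irreducible block-sampling variance $\frac{\I(I<m)}{I}$ from the data variance $\frac{1}{B}$ while certifying that the difference variance is purely movement-driven is exactly what simultaneously buys the SOTA $\epsilon^{-3}$ rate and the parallel speedup, whereas the subsequent weight-balancing, though lengthy, is routine.
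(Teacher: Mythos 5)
Your proposal is correct and follows essentially the same route as the paper's proof: the same descent lemma, the same split $\|\z_{t+1}-\nabla F(\x_t)\|^2\leq 2\delta_{z,t}+2\|\Delta_t-\nabla F(\x_t)\|^2$ with the Hessian-inverse perturbation bound (valid thanks to the projection $\S_\lambda$) and the decoupling via the lagged $H_{i,t}$, the same STORM recursion for $\z$, MSVR recursions for $\s,H$, and $\y$-tracking lemma, the same absorption of the $\|\x_{t+1}-\x_t\|^2$ and $\|\y_{t+1}-\y_t\|^2$ movement terms through the step-size conditions, and the same initialization — your per-iteration Lyapunov telescoping is just a repackaging of the paper's summed recursions. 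One harmless overstatement: the $\frac{1}{I}$ factor you claim on the movement terms in the difference-variance bound is neither what a Jensen-type bound delivers (the mean-shift part $\|\Delta_t-\Delta_{t-1}\|^2$ carries no $\frac{1}{I}$) nor needed, since the paper's coefficient without this factor already yields the stated complexity.
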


Define
\begin{equation}
\Delta_t = \frac{1}{m}\sum_{i=1}^m \nabla_x f_i(\x_t,\y_{i,t})-\nabla_{xy}^2 g_i(\x_t,\y_{i,t})\E_t[[H_{i,t}]^{-1}]\nabla_y f_i(\x_t,\y_{i,t})
\end{equation}
so that $\E_t[G_t]=\Delta_t$, $\E_t[\widetilde{G}_t]=\Delta_{t-1}$.

\begin{lemma}\label{lem:301}
With constants $C_1,C_2$ defined in the proof, we have
\begin{equation}
\|\Delta_t-\nabla F(\x_t)\|^2 \leq  C_1 \delta_{y,t}+C_2\tilde{\delta}_{H,t}.
\end{equation}
\end{lemma}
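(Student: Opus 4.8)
The plan is to compare $\Delta_t$ with the closed-form hyper-gradient of Proposition~\ref{prop:1} block by block. Writing both as blockwise averages over $i\in[m]$ and applying Jensen's inequality $\|\frac1m\sum_i v_i\|^2\le\frac1m\sum_i\|v_i\|^2$, it suffices to bound, for each $i$, the per-block discrepancy $D_i := (\nabla_x f_i(\x_t,\y_{i,t}) - \nabla_x f_i(\x_t,\y_i(\x_t))) - (B_i M_i c_i - B_i^\star M_i^\star c_i^\star)$, where I abbreviate $B_i=\nabla_{xy}^2 g_i(\x_t,\y_{i,t})$, $M_i=\E_t[[H_{i,t}]^{-1}]$, $c_i=\nabla_y f_i(\x_t,\y_{i,t})$, and the starred quantities are the same objects evaluated at the optimum $\y_i(\x_t)$ with $M_i^\star=[\nabla_{yy}^2 g_i(\x_t,\y_i(\x_t))]^{-1}$. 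Splitting $\|D_i\|^2 \le 2\|\nabla_x f_i(\x_t,\y_{i,t})-\nabla_x f_i(\x_t,\y_i(\x_t))\|^2 + 2\|B_iM_ic_i - B_i^\star M_i^\star c_i^\star\|^2$ and invoking the $L_{fx}$-Lipschitz continuity of $\nabla_x f_i$ disposes of the first-order term, leaving the compositional term as the real work.

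First I would establish the uniform operator-norm bounds used throughout: $\|M_i\|\le 1/\lambda$ (since $\S_\lambda$ guarantees $H_{i,t}\succeq\lambda I$, hence $\|[H_{i,t}]^{-1}\|\le1/\lambda$, and Jensen passes this through the conditional expectation), together with $\|M_i^\star\|\le1/\lambda$ from Assumption~\ref{ass:1}, and $\|B_i\|,\|B_i^\star\|\le C_{gxy}$, $\|c_i\|,\|c_i^\star\|\le C_{fy}$ from Assumption~\ref{ass:2}. Then I would telescope the product difference as
\begin{equation*}
B_iM_ic_i - B_i^\star M_i^\star c_i^\star = (B_i-B_i^\star)M_ic_i + B_i^\star(M_i-M_i^\star)c_i + B_i^\star M_i^\star(c_i-c_i^\star),
\end{equation*}
bound each summand by the product of the relevant norms, and apply the Lipschitz estimates $\|B_i-B_i^\star\|\le L_{gxy}\|\y_{i,t}-\y_i(\x_t)\|$ and $\|c_i-c_i^\star\|\le L_{fy}\|\y_{i,t}-\y_i(\x_t)\|$. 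The first and third telescoped terms then feed directly into $\delta_{y,t}$.

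The crux is the middle term $\|M_i-M_i^\star\|$, where the difficulty is that $\tilde\delta_{H,t}$ measures the estimator error against the true Hessian at the current iterate $\bar H_i:=\nabla_{yy}^2 g_i(\x_t,\y_{i,t})$, not against the Hessian $H_i^\star:=\nabla_{yy}^2 g_i(\x_t,\y_i(\x_t))$ at the optimum. I would therefore insert $[\bar H_i]^{-1}$ as an intermediate point, writing $M_i-M_i^\star = (M_i-[\bar H_i]^{-1}) + ([\bar H_i]^{-1}-[H_i^\star]^{-1})$. Both pieces are controlled by the resolvent identity $A^{-1}-B^{-1}=A^{-1}(B-A)B^{-1}$ combined with the $1/\lambda$ bounds: the first gives $\|M_i-[\bar H_i]^{-1}\|\le\frac1{\lambda^2}\,\E_t\|H_{i,t}-\bar H_i\|$, which, since $H_{i,t}$ is determined before the fresh sampling at iteration $t$, equals $\frac1{\lambda^2}\|H_{i,t}-\bar H_i\|$ and feeds $\tilde\delta_{H,t}$; the second gives $\|[\bar H_i]^{-1}-[H_i^\star]^{-1}\|\le\frac{L_{gyy}}{\lambda^2}\|\y_{i,t}-\y_i(\x_t)\|$ via the $L_{gyy}$-Lipschitz continuity of $\nabla_{yy}^2 g_i$, feeding $\delta_{y,t}$.

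Finally I would collect terms: bounding the squared telescoped sum by $3$ times the sum of squares, summing over $i$, and grouping all contributions proportional to $\|\y_{i,t}-\y_i(\x_t)\|^2$ into one constant $C_1$ and those proportional to $\|H_{i,t}-\bar H_i\|^2$ into $C_2$, one obtains $\|\Delta_t-\nabla F(\x_t)\|^2\le C_1\delta_{y,t}+C_2\tilde\delta_{H,t}$ with $C_1$ depending on $L_{fx},L_{gxy},L_{fy},L_{gyy},C_{gxy},C_{fy},\lambda$ and $C_2=\O(C_{gxy}^2C_{fy}^2/\lambda^4)$. The main obstacle is the middle-term analysis: correctly inserting the intermediate Hessian $\bar H_i$ so that the estimator error is measured exactly as in $\tilde\delta_{H,t}$, and justifying that the conditional expectation $\E_t[[H_{i,t}]^{-1}]$ can be moved inside the norm via Jensen without losing the $1/\lambda$ spectral bounds.
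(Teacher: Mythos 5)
Your proposal is correct and takes essentially the same route as the paper's proof: Jensen's inequality over the block average, telescoping the product difference into Jacobian, Hessian-inverse, and gradient terms, the spectral/Lipschitz bounds from Assumptions~\ref{ass:1} and~\ref{ass:2}, the measurability fact $\E_t[[H_{i,t}]^{-1}]=[H_{i,t}]^{-1}$, and the resolvent bound together with insertion of $\nabla_{yy}^2 g_i(\x_t,\y_{i,t})$ so that the inverse error splits into a $\tilde{\delta}_{H,t}$ piece and an $L_{gyy}$-Lipschitz piece feeding $\delta_{y,t}$. The only cosmetic differences are the grouping order in the telescoping and that you apply the triangle inequality at the level of inverses rather than inside a single resolvent application, which yields the same bound with the same constants up to absolute factors.
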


\begin{lemma}\label{lem:302}
Consider the updates in Algorithm~\ref{alg:4}, we have
\begin{equation}
    \begin{aligned}
        \E_t[\left\|\z_{t+1}-\Delta_t\right\|^2]&\leq (1-\beta_t) \|\z_t-\Delta_{t-1}\|^2+2C_3\|\x_t-\x_{t-1}\|^2+\frac{2C_3}{m}\|\y_t-\y_{t-1}\|^2\\
        &\quad +\frac{2C_4}{m}\|H_t-H_{t-1}\|^2 +2\beta_t^2C_5\left(\frac{\I(I<m)}{I}+\frac{1}{B}\right)
    \end{aligned}
\end{equation}
\end{lemma}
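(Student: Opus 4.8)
The plan is to analyze $\z_{t+1}$ exactly as a STORM estimator of $\Delta_t$, isolating a contraction term and a zero-mean martingale difference, and then to bound the latter using the \emph{common mini-batch} coupling between $G_t$ and $\widetilde G_t$. First I would rewrite the update $\z_{t+1}=(1-\beta_t)(\z_t-\widetilde G_t)+G_t$ as
\begin{equation*}
\z_{t+1}-\Delta_t=(1-\beta_t)(\z_t-\Delta_{t-1})+M_t,\quad M_t:=(G_t-\Delta_t)-(1-\beta_t)(\widetilde G_t-\Delta_{t-1}).
\end{equation*}
Using $\E_t[G_t]=\Delta_t$ and $\E_t[\widetilde G_t]=\Delta_{t-1}$ gives $\E_t[M_t]=0$, while $\z_t,\Delta_t,\Delta_{t-1}$ are measurable with respect to the history before the $t$-th block/data sampling. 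Hence the cross term vanishes in $\E_t[\cdot]$ and
\begin{equation*}
\E_t\|\z_{t+1}-\Delta_t\|^2=(1-\beta_t)^2\|\z_t-\Delta_{t-1}\|^2+\E_t\|M_t\|^2,
\end{equation*}
and I would use $(1-\beta_t)^2\le 1-\beta_t$ to obtain the claimed leading factor $(1-\beta_t)$.

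Next I would decompose $M_t=\big[(G_t-\widetilde G_t)-\E_t(G_t-\widetilde G_t)\big]+\beta_t(\widetilde G_t-\Delta_{t-1})$, using $\Delta_t-\Delta_{t-1}=\E_t(G_t-\widetilde G_t)$, and apply $\|a+b\|^2\le 2\|a\|^2+2\|b\|^2$. The first piece is centered, so its second moment is at most $\E_t\|G_t-\widetilde G_t\|^2$. Because $G_t$ and $\widetilde G_t$ use the \emph{same} mini-batches $\B_i^t,\wB_i^t$ at the two consecutive iterates, writing $G_t-\widetilde G_t=\frac1I\sum_{i\in\mI_t}d_i$ with $d_i$ the per-block difference, Jensen's inequality gives $\|G_t-\widetilde G_t\|^2\le\frac1I\sum_{i\in\mI_t}\|d_i\|^2$, and taking expectation over the uniform block sampling yields $\E\big[\sum_{i\in\mI_t}\|d_i\|^2\big]=\frac Im\sum_{i=1}^m\E\|d_i\|^2$, which cancels the $1/I$. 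It remains to bound each $\E\|d_i\|^2$ by the change in the iterates: by the Lipschitz continuity of the stochastic partial gradients/Jacobians/Hessians (Assumption~\ref{ass:2}) together with the boundedness of $\nabla_x f_i,\nabla_y f_i,\nabla_{xy}^2 g_i$, this gives $\E\|d_i\|^2\le C(\|\x_t-\x_{t-1}\|^2+\|\y_{i,t}-\y_{i,t-1}\|^2+\|H_{i,t}-H_{i,t-1}\|^2)$. Averaging over $i$ turns the shared $\x$-term into $\|\x_t-\x_{t-1}\|^2$ and the block-specific terms into $\frac1m\|\y_t-\y_{t-1}\|^2$ and $\frac1m\|H_t-H_{t-1}\|^2$, producing the coefficients $2C_3,\tfrac{2C_3}{m},\tfrac{2C_4}{m}$ after the factor $2$.

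For the second piece, $\E_t\|\widetilde G_t-\Delta_{t-1}\|^2=\mathrm{Var}_t(\widetilde G_t)$ since $\widetilde G_t$ is unbiased for $\Delta_{t-1}$; I would split this by the law of total variance into a block-sampling part and a data-sampling part. The block-sampling part is the variance of a size-$I$ uniform sample mean of the $m$ deterministic per-block values, which carries a finite-population factor that is exactly zero when $I=m$, giving the $\frac{\I(I<m)}{I}$ scaling; the data-sampling part, controlled by the bounded variance $\sigma^2$ over independent mini-batches of size $B$, is of order $\frac1{IB}\le\frac1B$, contributing the $\frac1B$ term. Multiplying by $2\beta_t^2$ gives the last term $2\beta_t^2 C_5(\frac{\I(I<m)}{I}+\frac1B)$. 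I expect the main obstacle throughout to be controlling the compositional factor $\nabla_{xy}^2 g_i[H_{i,\cdot}]^{-1}\nabla_y f_i$ appearing in both $d_i$ and $\widetilde G_t-\Delta_{t-1}$: its increment across $t-1\to t$ must be expanded factor by factor, and the inverse-Hessian increment handled through the resolvent identity $\|[H_{i,t}]^{-1}-[H_{i,t-1}]^{-1}\|\le\|[H_{i,t}]^{-1}\|\,\|[H_{i,t-1}]^{-1}\|\,\|H_{i,t}-H_{i,t-1}\|$, where the projection $\S_{\lambda}$ guarantees $H_{i,\cdot}\succeq\lambda I$ and hence $\|[H_{i,\cdot}]^{-1}\|\le 1/\lambda$. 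Keeping all these bounds dimension-free, so that no spurious $m/I$ factor is introduced, is precisely what is needed for the SOTA rate, and is where Assumptions~\ref{ass:1}--\ref{ass:3} are consumed.
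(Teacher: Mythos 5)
Your proposal is correct and follows essentially the same route as the paper's proof: the same STORM recursion with vanishing cross term (since $\E_t[M_t]=0$ and $\z_t,\Delta_t,\Delta_{t-1}$ are past-measurable), the same splitting of $M_t$ into a centered $(G_t-\widetilde G_t)$ piece and a $\beta_t$-scaled variance piece, the same coupled-minibatch/Lipschitz/resolvent argument (with $\S_{\lambda}$ giving $\|[H_{i,\cdot}]^{-1}\|\leq 1/\lambda$) to bound $\E_t\|G_t-\widetilde G_t\|^2$ by $C_3\|\x_t-\x_{t-1}\|^2+\frac{C_3}{m}\|\y_t-\y_{t-1}\|^2+\frac{C_4}{m}\|H_t-H_{t-1}\|^2$, and the same block-sampling-versus-data-sampling split yielding $C_5\left(\frac{\I(I<m)}{I}+\frac{1}{B}\right)$. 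The only cosmetic deviations are that you center the $\beta_t$-term at $\widetilde G_t-\Delta_{t-1}$ where the paper uses $G_t-\Delta_t$ (a symmetric choice bounded by the identical argument) and you invoke the law of total variance where the paper uses a cruder two-term Young split; both give the stated bound.
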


\begin{lemma}\label{lem:303}
With MSVR updates for $H_{t+1}$, if $\balp_t\leq \frac{1}{2}$, we have
\begin{equation}
\begin{aligned}
\E\left[\|H_{t+1}-H_t\|^2\right]&\leq \frac{2I\balp_t^2\sigma^2}{B}+\frac{8I \balp_t^2}{m}\E[\delta_{H,t}]+\frac{9m^2 L_{gyy}^2}{I}\E[\|\x_{t-1}-\x_t\|^2]+\frac{9m L_{gyy}^2}{I}E[\|\y_{t-1}-\y_t\|^2]
\end{aligned}
\end{equation}
\end{lemma}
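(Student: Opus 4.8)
The plan is to bound $\|H_{t+1}-H_t\|^2 = \sum_{i=1}^m\|H_{i,t+1}-H_{i,t}\|^2$ block by block, observing that only the sampled blocks $i\in\mI_t$ contribute since $H_{i,t+1}=H_{i,t}$ for $i\notin\mI_t$. For a sampled block, let $A_{i,t}$ denote the argument of the matrix projection in the $H$-update of (\ref{sH_compute}). The key first step is that $\S_\lambda$ is non-expansive and that $H_{i,t}$ itself lies in its feasible set $\{X\succeq\lambda I\}$ — an invariant maintained by the projection in every previous update together with the initialization — so that $H_{i,t}=\S_\lambda[H_{i,t}]$ and hence $\|H_{i,t+1}-H_{i,t}\| = \|\S_\lambda[A_{i,t}]-\S_\lambda[H_{i,t}]\|\le\|A_{i,t}-H_{i,t}\|$. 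This eliminates the thresholding from the rest of the argument.

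Next I would write $A_{i,t}-H_{i,t} = \balp_t(\widehat{D}_{i,t}-H_{i,t}) + \bgam_t(\widehat{D}_{i,t}-\widehat{D}'_{i,t-1})$, where $\widehat{D}_{i,t}:=\nabla_{yy}^2 g_i(\x_t,\y_{i,t};\wB_i^t)$ and $\widehat{D}'_{i,t-1}:=\nabla_{yy}^2 g_i(\x_{t-1},\y_{i,t-1};\wB_i^t)$ share the mini-batch $\wB_i^t$, and apply Young's inequality $\|a+b\|^2\le 2\|a\|^2+2\|b\|^2$. Taking the conditional expectation — first over the mini-batch, then over the block sampling, which are independent and place each block in $\mI_t$ with marginal probability $I/m$ — reduces the quantity to $\frac{I}{m}\sum_i\E_t[\|A_{i,t}-H_{i,t}\|^2]$. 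For the $\balp_t$-branch the mini-batch is unbiased, so a bias–variance split gives $\E_t[\|\widehat{D}_{i,t}-H_{i,t}\|^2]\le \sigma^2/B + \|\nabla_{yy}^2 g_i(\x_t,\y_{i,t})-H_{i,t}\|^2$; summing the variance parts yields exactly the $\frac{2I\balp_t^2\sigma^2}{B}$ term and the bias parts sum to $\tilde{\delta}_{H,t}$. For the $\bgam_t$-branch I would invoke the $L_{gyy}$-Lipschitz continuity of the stochastic Hessian (Assumption~\ref{ass:2}), applied at the common sample $\wB_i^t$, giving $\|\widehat{D}_{i,t}-\widehat{D}'_{i,t-1}\|^2\le L_{gyy}^2(\|\x_t-\x_{t-1}\|^2+\|\y_{i,t}-\y_{i,t-1}\|^2)$.

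To finish, I would convert the time-$t$ error $\tilde{\delta}_{H,t}$ into the time-$(t-1)$ error $\delta_{H,t}$ via (\ref{ineq:D2d}), which produces the $\O(I\balp_t^2/m)\,\delta_{H,t}$ term along with additional movement terms. Collecting the movement contributions of both branches and using the explicit bound $\bgam_t = \frac{m-I}{I(1-\balp_t)}+(1-\balp_t)\le \frac{2m-I}{I}\le \frac{2m}{I}$ (valid since $\balp_t\le 1/2$ gives $1-\balp_t\ge 1/2$), hence $\bgam_t^2\le 4m^2/I^2$, together with $\balp_t^2\le 1/4$ and $I\le m$, yields the stated coefficients $\frac{9m^2 L_{gyy}^2}{I}$ on $\|\x_{t-1}-\x_t\|^2$ and $\frac{9m L_{gyy}^2}{I}$ on $\|\y_{t-1}-\y_t\|^2$ (the $\balp_t$-branch movement is dominated by the $\bgam_t$-branch).

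The crux lies in two places rather than in the routine Young/Lipschitz estimates. First, the non-expansiveness step, including verifying the $H_{i,t}\succeq\lambda I$ invariant, is what makes the projected thresholded update tractable and avoids obstructing the recursion. Second, and more substantively, the whole content of the lemma is the $m/I$ scaling induced by block sampling: the $I/m$ factor from the marginal sampling probability competes with the $\bgam_t=\O(m/I)$ factor that the MSVR error-correction necessarily carries, and it is their product that inflates the movement coefficients to order $m^2/I$ and $m/I$. Tracking these competing factors correctly is where the real difficulty is.
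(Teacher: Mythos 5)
Your proof is correct, and it reaches the stated bound with exactly the right constants, but it takes a different route from the paper: the paper's entire proof of this lemma is a citation to Lemma 6 of \cite{https://doi.org/10.48550/arxiv.2207.08540}, which packages precisely the computation you carry out (the $I/m$ block-sampling probability, Young's inequality across the two MSVR terms, bias--variance for the mini-batch, stochastic Lipschitzness of the Hessian, and the bound $\bgam_t\le 2m/I$ from $\balp_t\le\tfrac12$), followed by an implicit translation into the paper's notation. Your self-contained derivation buys several things the citation glosses over. First, the cited lemma concerns unprojected MSVR estimators, whereas the $H$-update in (\ref{sH_compute}) involves $\S_{\lambda}$; your non-expansiveness-plus-invariance argument (the same device the paper uses in its own Lemma~\ref{lem:MSVR}) is what actually covers this case, with the caveat---shared with the paper---that the matrix norms must be read as Frobenius norms for the projection onto $\{X\succeq\lambda I\}$ to be non-expansive. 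Second, you perform explicitly the conversion from $\tilde{\delta}_{H,t}$ to $\delta_{H,t}$ via (\ref{ineq:D2d}), which is exactly where the stated coefficient $\tfrac{8I\balp_t^2}{m}$ (rather than the raw $\tfrac{4I\balp_t^2}{m}$) and part of the movement terms come from; the paper's displayed inequality stops at the $\tilde{\delta}$ form and moreover carries mismatched time indices ($\balp_{t+1}$, $\|\x_{t+1}-\x_t\|^2$) and an extra factor of $m$ relative to the lemma statement, so the reader must do your bookkeeping anyway to recover the statement. Third, your accounting of the competing factors checks the constants: $\tfrac{I}{m}\cdot 2\bgam_t^2 L_{gyy}^2\le \tfrac{8m}{I}L_{gyy}^2$ yields the $\tfrac{8m^2 L_{gyy}^2}{I}$ and $\tfrac{8m L_{gyy}^2}{I}$ movement coefficients, while the $\balp_t$-branch (after the $\tilde\delta$-to-$\delta$ conversion, using $\balp_t^2\le\tfrac14$ and $I\le m$) adds at most $I L_{gyy}^2\le \tfrac{m^2}{I}L_{gyy}^2$ and $\tfrac{I}{m}L_{gyy}^2\le \tfrac{m}{I}L_{gyy}^2$, giving the $9$'s in the statement. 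What the paper's route buys is brevity; yours is the argument one would have to write out to verify the lemma as stated.
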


% \begin{lemma}\label{lem:306}
% Consider updates $\y_{i,t+1} = \begin{cases}\y_{i,t}-\tau_t\tau \s_{i,t}&\text{if }i\in \mI_t\\
%     \y_{i,t} & \text{o.w.}
% \end{cases}$, with $\tau \leq 2/(3L_g)$, we have
% \begin{equation}
% \begin{aligned}
%     \E[\delta_{y,t+1}]&\leq (1 - \frac{\tau_t\tau \lambda I}{4m})\E[\delta_{y,t}] +  \frac{8\tau_t\tau I}{\lambda m}\E[\tilde{\delta}_{s,t}]  - \frac{2\tau }{\tau_t }(\frac{1}{2\tau} - \frac{3L_g}{4} )\E[\|\y_{t+1}- \y_t\|^2]\\
%     &\quad +\frac{8mL_y^2}{\tau_t\tau \lambda I}\E[\|\x_t-\x_{t+1}\|^2]
% \end{aligned}
% \end{equation}
% \end{lemma}

By applying Lemma~\ref{lem:MSVR} to $\s_t,H_t$, we have
\begin{equation*}
\begin{aligned}
    \E\left[\delta_{H,t+1}\right]&\leq (1-\frac{I \balp_t}{m})\E\left[\delta_{H,t}\right]+\frac{2I\balp_t^2 \sigma^2}{B} +\frac{8m^2 L_{gyy}^2}{I}\E\left[\|\x_{t-1}-\x_t\|^2+\frac{1}{m}\|\y_{t-1}-\y_t\|^2\right]
\end{aligned}
\end{equation*}
and
\begin{equation*}
\begin{aligned}
    \E\left[\delta_{s,t+1}\right]&\leq (1-\frac{I \alpha_t}{m})\E\left[\delta_{s,t}\right]+\frac{2I\alpha_t^2 \sigma^2}{B} +\frac{8m^2 L_{gy}^2}{I}\E\left[\|\x_{t-1}-\x_t\|^2+\frac{1}{m}\|\y_{t-1}-\y_t\|^2\right]
\end{aligned}
\end{equation*}
Take summation over $t=1,\dots,T$, then we obtain
\begin{equation}\label{ineq:305}
\begin{aligned}
    \E\left[\sum_{t=1}^T\delta_{H,t}\right]&\leq \E\Bigg[\frac{m}{I \balp_t}\delta_{H,1}+\frac{2m\balp_t \sigma^2T}{B} +\frac{8m^3 L_{gyy}^2}{\balp_t I^2}\sum_{t=1}^T\left[\|\x_{t-1}-\x_t\|^2+\frac{1}{m}\|\y_{t-1}-\y_t\|^2\right]\Bigg]
\end{aligned}
\end{equation}
and
\begin{equation}\label{ineq:304}
\begin{aligned}
    \E\left[\sum_{t=1}^T\delta_{s,t}\right]&\leq \E\Bigg[\frac{m}{I \alpha_t}\delta_{s,1}+\frac{2m\alpha_t \sigma^2T}{B} +\frac{8m^3 L_{gy}^2}{\alpha_t I^2}\sum_{t=1}^T\left[\|\x_{t-1}-\x_t\|^2+\frac{1}{m}\|\y_{t-1}-\y_t\|^2\right]\Bigg]
\end{aligned}
\end{equation}

%\begin{theorem}\label{thm:6}
%Under Assumptions~\ref{ass:1} and \ref{ass:2}, with $\tau\leq 2/(3L_g)$, $\bgam_t = \frac{m-I}{I(1-\balp_t)}+(1-\balp_t)$, $\balp_t\leq \frac{1}{2}$, $\gamma_t = \frac{m-I}{I(1-\alpha_t)}+(1-\alpha_t)$, $\alpha_t\leq \frac{1}{2}$, $\alpha_t = \frac{B\epsilon^2}{12C'_{11}}$, $\balp_t=\beta_t = \frac{\min\{ I ,B\}\epsilon^2}{12C_{11}}$, $\tau_t= \sqrt{\frac{C_9}{12C_{11}}} \frac{ \sqrt{I\min\{ I ,B\}}\epsilon}{\sqrt{m}}$, $\eta_t = \sqrt{C_{12}} \frac{ I \sqrt{\min\{ I ,B\}}\epsilon}{m }$, where $C_{11}, C_{12}$ are constants specified in the proof, and by using a large mini-batch size of $\O(1/\epsilon)$ at the initial iteration for computing $\z_1, \s_1,H_1$ and computing an accurate solution $\y_1$ such that $\delta_{y,1}\leq \O(1)$, Algorithm~\ref{alg:4} gives $\E\left[\frac{1}{T}\sum_{i=1}^m\|\nabla F(\x_t)\|^2\right]\leq \epsilon^2$ with sample complexity $T =  \O\left(\frac{m}{I\sqrt{\min\{I,B\}}}\epsilon^{-3}\right)$.
%\end{theorem}

\subsubsection{Proof of Theorem~\ref{thm:6}}
\begin{proof}
By Lemma~\ref{lem:01}, we have
\begin{equation}\label{ineq:300}
    \begin{aligned}
        F(\x_{t+1})-F(\x_t)\leq \frac{\eta_t}{2}\|\z_{t+1}-\nabla F(\x_t)\|^2-\frac{\eta_t}{2}\|\nabla F(\x_t)\|^2-\frac{\eta_t}{4}\|\z_{t+1}\|^2.
    \end{aligned}
\end{equation}
The first term on the right hand side can be divided into two terms.
\begin{equation}\label{ineq:301}
    \begin{aligned}
    \left\|\z_{t+1}-\nabla F(\x_t)\right\|^2
    \leq 2\left\|\z_{t+1}-\Delta_t\right\|^2 +2\left\|\Delta_t -\nabla F(\x_t)\right\|^2
    \end{aligned}
\end{equation}
where we have recursion for the first term on the right hand side in Lemma~\ref{lem:302} and the second term is bounded by Lemma~\ref{lem:301}. Combining inequalities~\ref{ineq:300},\ref{ineq:301} and Lemma~\ref{lem:301} gives
\begin{equation}
    \begin{aligned}
        F(\x_{t+1})-F(\x_t)&\leq \eta_t\delta_{z,t}+\frac{\eta_tC_1}{m}\delta_{y,t}+\frac{\eta_tC_2}{m}\tilde{\delta}_{H,t} -\frac{\eta_t}{2}\|\nabla F(\x_t)\|^2-\frac{\eta_t}{4}\|\z_{t+1}\|^2.
    \end{aligned}
\end{equation}

Taking summation over $t=1,\dots,T$ yields

\begin{equation}
    \begin{aligned}
        \sum_{t=1}^T\|\nabla F(\x_t)\|^2
        % &\leq \frac{2}{\eta_t}\Bigg[-F(\x_{T+1})+F(\x_1)+ \eta_t\sum_{t=1}^T\delta_{z,t}+\frac{\eta_tC_1}{m}\sum_{t=1}^T\delta_{y,t}+\frac{\eta_tC_2}{m}\sum_{t=1}^T\delta_{H,t}\\
        % &\quad  -\frac{\eta_t}{4}\sum_{t=1}^T\|\z_{t+1}\|^2\Bigg]\\
        &\leq \frac{2}{\eta_t}(F(\x_1)-F(\x^*))+ 2\sum_{t=1}^T\delta_{z,t}+\frac{2C_1}{m}\sum_{t=1}^T\delta_{y,t}+\frac{2C_2}{m}\sum_{t=1}^T\tilde{\delta}_{H,t} -\frac{1}{2}\sum_{t=1}^T\|\z_{t+1}\|^2
    \end{aligned}
\end{equation}

We enlarge the values of constants $C_1$ so that
\begin{equation}\label{ineq:20_1}
    \begin{aligned}
        &\sum_{t=1}^T\|\nabla F(\x_t)\|^2+\sum_{t=1}^T\delta_{z,t}+\frac{1}{m}\sum_{t=1}^T\delta_{y,t}\\
        &\leq \frac{2}{\eta_t}(F(\x_1)-F(\x^*))+ 3\sum_{t=1}^T\delta_{z,t}+\frac{C_1}{m}\sum_{t=1}^T\delta_{y,t}+\frac{C_2}{m}\sum_{t=1}^T\tilde{\delta}_{H,t}-\frac{1}{2}\sum_{t=1}^T\|\z_{t+1}\|^2
    \end{aligned}
\end{equation}

It follows from Lemma~\ref{lem:302} that
\begin{equation}
	\begin{aligned}
        \E\left[\sum_{t=1}^T\delta_{z,t}\right]
        &\leq \E\Bigg[\frac{\delta_{z,1}}{\beta_t}+\frac{2C_3}{\beta_t}\sum_{t=1}^T\|\x_t-\x_{t+1}\|^2+\frac{2C_3}{m\beta_t}\sum_{t=1}^T\|\y_t-\y_{t+1}\|^2\\
        &\quad+\frac{2C_4}{m\beta_t}\sum_{t=1}^T\|H_t-H_{t+1}\|^2 +2\beta_tC_5T\left(\frac{\I(I<m)}{I}+\frac{1}{B}\right)\Bigg]
    \end{aligned}
\end{equation}
Combing with Lemma~\ref{lem:303}, we have

\begin{equation}\label{ineq:306}
\begin{aligned}
\E\left[\sum_{t=1}^T\delta_{z,t}\right]
% &\leq \E\Bigg[\frac{\delta_{z,1}}{\beta_t}+\frac{2C_3}{\beta_t}\sum_{t=1}^T\|\x_t-\x_{t+1}\|^2+\frac{2C_3}{m\beta_t}\sum_{t=1}^T\|\y_t-\y_{t+1}\|^2+\frac{2\beta_tC_5T}{\min\{ I ,B\}}\\
% &\quad +\frac{2C_4}{m\beta_t}\Bigg(\frac{2I\balp_{t+1}^2\sigma^2T}{B}+\frac{8I \balp_{t+1}^2}{m}\sum_{t=1}^T\delta_{H,t}+\frac{8L_{gyy}^2I \balp_{t+1}^2}{m}\sum_{t=1}^T\delta_{y,t}\\
% &\quad +\frac{9m^2 L_{gyy}^2}{I}\sum_{t=1}^T\|\x_{t+1}-\x_t\|^2+\frac{9m L_{gyy}^2}{I}\sum_{t=1}^T\|\y_{t+1}-\y_t\|^2\Bigg) \Bigg]\\
 &\leq \E\Bigg[\frac{\delta_{z,1}}{\beta_t}+\left(\frac{2C_3}{\beta_t}+\frac{18C_4m L_{gyy}^2}{I\beta_t}\right)\sum_{t=1}^T\|\x_t-\x_{t+1}\|^2+\left(\frac{2C_3}{m\beta_t}+\frac{18 L_{gyy}^2}{I\beta_t}\right)\sum_{t=1}^T\|\y_t-\y_{t+1}\|^2\\
 &\quad +2\beta_tC_5T\left(\frac{\I(I<m)}{I}+\frac{1}{B}\right)+\frac{4C_4I\balp_{t+1}^2\sigma^2T}{m\beta_t B}+\frac{8C_4I \balp_{t+1}^2}{m^2\beta_t}\sum_{t=1}^T\delta_{H,t}\Bigg]
\end{aligned}
\end{equation}

Following from Lemma~\ref{lem:3}, we have

% \begin{equation}\label{ineq:303}
% \begin{aligned}
%     \E\left[\sum_{t=1}^T\delta_{y,t}\right]&\leq \E\Bigg[\frac{4}{\tau_t\tau \lambda}\delta_{y,1} +  \frac{32}{\lambda^2 }\sum_{t=1}^T\delta_{s,t}  - \frac{8}{\tau_t^2 \lambda }(\frac{1}{2\tau} - \frac{3L_g}{4} )\sum_{t=1}^T\|\y_{t+1}- \y_t\|^2\\
%     &\quad +\frac{32L_y^2}{\tau_t^2\tau^2 \lambda^2}\sum_{t=1}^T\|\x_t-\x_{t+1}\|^2\Bigg]
% \end{aligned}
% \end{equation}

\begin{equation}\label{ineq:303}
\begin{aligned}
    \E\left[\sum_{t=1}^T\delta_{y,t}\right]&\leq \E\Bigg[\frac{4}{\tau_t\tau \lambda }\delta_{y,1} +  \frac{32}{\lambda^2 }\sum_{t=1}^T\tilde{\delta}_{s,t}  - \frac{8}{\tau_t^2 \lambda I}(\frac{1}{2\tau} - \frac{3L_g}{4} )\sum_{t=1}^T\|\y_{t+1}- \y_t\|^2\\
    &\quad +\frac{32mL_y^2}{\tau_t^2\tau^2 \lambda^2 }\sum_{t=1}^T\|\x_t-\x_{t+1}\|^2\Bigg]
\end{aligned}
\end{equation}

Following from inequalities~(\ref{ineq:303}), (\ref{ineq:306}) and (\ref{ineq:20_1}), we have
\begin{equation*}
    \begin{aligned}
        &\E\left[\sum_{t=1}^T\|\nabla F(\x_t)\|^2+\sum_{t=1}^T\delta_{z,t}+\frac{1}{m}\sum_{t=1}^T\delta_{y,t}\right]\\
        % &\leq \E\Bigg[\frac{2}{\eta_t}(F(\x_1)-F(\x^*))+\frac{C_1}{m}\sum_{t=1}^T\delta_{y,t}+\frac{C_2}{m}\sum_{t=1}^T\tilde{\delta}_{H,t}-\frac{1}{2}\sum_{t=1}^T\|\z_{t+1}\|^2\\
        % &\quad +\frac{3\delta_{z,1}}{\beta_t}+\left(\frac{6C_3}{\beta_t}+\frac{54C_4m L_{gyy}^2}{I\beta_t}\right)\sum_{t=1}^T\|\x_t-\x_{t+1}\|^2+\left(\frac{6C_3}{m\beta_t}+\frac{54 L_{gyy}^2}{I\beta_t}\right)\sum_{t=1}^T\|\y_t-\y_{t+1}\|^2\\
 % &\quad +\frac{6\beta_tC_5T}{\min\{ I ,B\}}+\frac{12C_4I\balp_{t+1}^2\sigma^2T}{m\beta_t B}+\frac{24C_4I \balp_{t+1}^2}{m^2\beta_t}\sum_{t=1}^T\delta_{H,t}\Bigg]\\
&\leq \E\Bigg[\frac{2}{\eta_t}(F(\x_1)-F(\x^*))+\frac{3\delta_{z,1}}{\beta_t} +6\beta_tC_5T\left(\frac{\I(I<m)}{I}+\frac{1}{B}\right)+\frac{12C_4I\balp_{t+1}^2\sigma^2T}{m\beta_t B}\\
&\quad +\frac{C_1}{m}\sum_{t=1}^T\delta_{y,t}+\frac{C_2}{m}\sum_{t=1}^T\tilde{\delta}_{H,t}+\frac{24C_4I \balp_{t+1}^2}{m^2\beta_t}\sum_{t=1}^T\delta_{H,t}\\
&\quad +\left(\frac{6C_3}{\beta_t}+\frac{54C_4m L_{gyy}^2}{I\beta_t}\right)\sum_{t=1}^T\|\x_t-\x_{t+1}\|^2-\frac{1}{2}\sum_{t=1}^T\|\z_{t+1}\|^2+\left(\frac{6C_3}{m\beta_t}+\frac{54 L_{gyy}^2}{I\beta_t}\right)\sum_{t=1}^T\|\y_t-\y_{t+1}\|^2\\
% &\leq \E\Bigg[\frac{2}{\eta_t}(F(\x_1)-F(\x^*))+\frac{3\delta_{z,1}}{\beta_t} +\frac{6\beta_tC_5T}{\min\{ I ,B\}}+\frac{12C_4I\balp_{t+1}^2\sigma^2T}{m\beta_t B}\\
% &\quad +\frac{C_1}{m}\sum_{t=1}^T\delta_{y,t}+\frac{2C_2}{m}\sum_{t=1}^T\delta_{H,t}+\frac{2C_2L_{gyy}^2}{m}(m\|\x_t-\x_{t-1}\|^2+\|\y_t-\y_{t-1}\|^2)+\frac{48C_4I \balp_{t+1}^2}{m^2\beta_t}\sum_{t=1}^T\delta_{H,t}\\
% &\quad +\left(\frac{6C_3}{\beta_t}+\frac{54C_4m L_{gyy}^2}{I\beta_t}\right)\sum_{t=1}^T\|\x_t-\x_{t+1}\|^2-\frac{1}{2}\sum_{t=1}^T\|\z_{t+1}\|^2+\left(\frac{6C_3}{m\beta_t}+\frac{54 L_{gyy}^2}{I\beta_t}\right)\sum_{t=1}^T\|\y_t-\y_{t+1}\|^2\\
% &\leq \E\Bigg[\frac{2}{\eta_t}(F(\x_1)-F(\x^*))+\frac{3\delta_{z,1}}{\beta_t} +\frac{6\beta_tC_5T}{\min\{ I ,B\}}+\frac{12C_4I\balp_{t+1}^2\sigma^2T}{m\beta_t B}\\
% &\quad +\frac{C_1}{m}\Bigg[\frac{4m}{\tau_t\tau \lambda I}\delta_{y,1} +  \frac{32}{\lambda^2 }\sum_{t=1}^T\tilde{\delta}_{s,t}  - \frac{8m}{\tau_t^2 \lambda I}(\frac{1}{2\tau} - \frac{3L_g}{4} )\sum_{t=1}^T\|\y_{t+1}- \y_t\|^2 +\frac{32m^2L_y^2}{\tau_t^2\tau^2 \lambda^2 I^2}\sum_{t=1}^T\|\x_t-\x_{t+1}\|^2\Bigg]\\
% &\quad +\frac{C_2}{m}\sum_{t=1}^T\tilde{\delta}_{H,t}+\frac{24C_4I \balp_{t+1}^2}{m^2\beta_t}\sum_{t=1}^T\delta_{H,t}\\
% &\quad +\left(\frac{6C_3}{\beta_t}+\frac{54C_4m L_{gyy}^2}{I\beta_t}\right)\sum_{t=1}^T\|\x_t-\x_{t+1}\|^2-\frac{1}{2}\sum_{t=1}^T\|\z_{t+1}\|^2+\left(\frac{6C_3}{m\beta_t}+\frac{54 L_{gyy}^2}{I\beta_t}\right)\sum_{t=1}^T\|\y_t-\y_{t+1}\|^2\\
&\leq \E\Bigg[\frac{2}{\eta_t}(F(\x_1)-F(\x^*))+\frac{3\delta_{z,1}}{\beta_t} +\frac{4C_1}{\tau_t\tau \lambda m}\delta_{y,1} +6\beta_tC_5T\left(\frac{\I(I<m)}{I}+\frac{1}{B}\right)+\frac{12C_4I\balp_{t+1}^2\sigma^2T}{m\beta_t B}\\
&\quad +  \frac{32C_1}{m\lambda^2 }\sum_{t=1}^T\tilde{\delta}_{s,t}   +\frac{C_2}{m}\sum_{t=1}^T\tilde{\delta}_{H,t}+\frac{24C_4I \balp_{t+1}^2}{m^2\beta_t}\sum_{t=1}^T\delta_{H,t}\\
&\quad +\left(\frac{6C_3}{\beta_t}+\frac{54C_4m L_{gyy}^2}{I\beta_t}+\frac{32L_y^2C_1}{\tau_t^2\tau^2 \lambda^2 }\right)\sum_{t=1}^T\|\x_t-\x_{t+1}\|^2-\frac{1}{2}\sum_{t=1}^T\|\z_{t+1}\|^2\\
&\quad +\left(\frac{6C_3}{m\beta_t}+\frac{54 L_{gyy}^2}{I\beta_t} - \frac{8C_1}{\tau_t^2 \lambda m}(\frac{1}{2\tau} - \frac{3L_g}{4} )\right)\sum_{t=1}^T\|\y_t-\y_{t+1}\|^2\\
    \end{aligned}
\end{equation*}
% where $\frac{C_6}{m}\geq \frac{C_1}{m}+\frac{48C_4L_{gyy}^2I \balp_{t+1}^2}{m^2\beta_t}$, $\frac{C_7}{m}\geq \frac{C_2}{m}+\frac{48C_4I \balp_{t+1}^2}{m^2\beta_t}$.

Then we replace $\tilde{\delta}_{s,t}, \tilde{\delta}_{H,t}$ by $\delta_{s,t}, \delta_{H,t}$ following inequality~(\ref{ineq:D2d}) and (\ref{ineq:xy_sum}).

\begin{equation*}
    \begin{aligned}
        &\E\left[\sum_{t=1}^T\|\nabla F(\x_t)\|^2+\sum_{t=1}^T\delta_{z,t}+\frac{1}{m}\sum_{t=1}^T\delta_{y,t}+\frac{1}{m}\sum_{t=1}^T\delta_{H,t}+\frac{1}{m}\sum_{t=1}^T\delta_{s,t}\right]\\
        &\leq \E\Bigg[\frac{2}{\eta_t}(F(\x_1)-F(\x^*))+\frac{3\delta_{z,1}}{\beta_t} +\frac{4C_1}{\tau_t\tau \lambda m}\delta_{y,1} +6\beta_tC_5T\left(\frac{\I(I<m)}{I}+\frac{1}{B}\right)+\frac{12C_4I\balp_{t+1}^2\sigma^2T}{m\beta_t B} +  \frac{C_6}{m}\sum_{t=1}^T\delta_{s,t} \\
&\quad +\frac{C_7}{m}\sum_{t=1}^T\delta_{H,t} +\left(\frac{6C_3}{\beta_t}+\frac{54C_4m L_{gyy}^2}{I\beta_t}+\frac{32L_y^2C_1}{\tau_t^2\tau^2 \lambda^2 }+\frac{64C_1L_{gy}^2}{\lambda^2 }+2C_2L_{gyy}^2\right)\sum_{t=1}^T\|\x_t-\x_{t+1}\|^2-\frac{1}{2}\sum_{t=1}^T\|\z_{t+1}\|^2\\
&\quad +\left(\frac{6C_3}{m\beta_t}+\frac{54 L_{gyy}^2}{I\beta_t} - \frac{8C_1}{\tau_t^2 \lambda m}(\frac{1}{2\tau} - \frac{3L_g}{4} )+\frac{64C_1L_{gy}^2}{m\lambda^2 }+\frac{2C_2L_{gyy}^2}{m}\right)\sum_{t=1}^T\|\y_t-\y_{t+1}\|^2\\
    \end{aligned}
\end{equation*}
where $\frac{C_6}{m}\geq \frac{64C_1}{m\lambda^2 }+\frac{1}{m}$ and $\frac{C_7}{m}\geq \frac{2C_2+1}{m}+\frac{24C_4I \balp_{t+1}^2}{m^2\beta_t}$

Then we plug in $\sum_{t=1}^T \delta_{s,t}$ and $\sum_{t=1}^T \delta_{H,t}$ to the right hand side following (\ref{ineq:305}) and (\ref{ineq:304}), 

\begin{equation*}
    \begin{aligned}
        &\E\left[\sum_{t=1}^T\|\nabla F(\x_t)\|^2+\sum_{t=1}^T\delta_{z,t}+\frac{1}{m}\sum_{t=1}^T\delta_{y,t}+\frac{1}{m}\sum_{t=1}^T\delta_{H,t}+\frac{1}{m}\sum_{t=1}^T \delta_{s,t}\right]\\
        % &\leq \E\Bigg[\frac{2}{\eta_t}(F(\x_1)-F(\x^*))+\frac{3\delta_{z,1}}{\beta_t} +\frac{4C_1}{\tau_t\tau \lambda I}\delta_{y,1} +\frac{6\beta_tC_5T}{\min\{ I ,B\}}+\frac{12C_4I\balp_{t+1}^2\sigma^2T}{m\beta_t B}\\
% &\quad + \frac{C_6}{m}\Bigg[\frac{m}{I \alpha_t}\delta_{s,1}+\frac{2m\alpha_t \sigma^2T}{B} +\frac{8m^3 L_{gy}^2}{\alpha_t I^2}\sum_{t=1}^T\left[\|\x_{t-1}-\x_t\|^2+\frac{1}{m}\|\y_{t-1}-\y_t\|^2\right]\Bigg]  \\
% &\quad +\frac{C_7}{m}\Bigg[\frac{m}{I \balp_t}\delta_{H,1}+\frac{2m\balp_t \sigma^2T}{B} +\frac{8m^3 L_{gyy}^2}{\balp_t I^2}\sum_{t=1}^T\left[\|\x_{t-1}-\x_t\|^2+\frac{1}{m}\|\y_{t-1}-\y_t\|^2\right]\Bigg]\\
% &\quad +\left(\frac{6C_3}{\beta_t}+\frac{54C_4m L_{gyy}^2}{I\beta_t}+\frac{32mL_y^2C_1}{\tau_t^2\tau^2 \lambda^2 I^2}+\frac{64C_1L_{gy}^2}{\lambda^2 }+2C_2L_{gyy}^2\right)\sum_{t=1}^T\|\x_t-\x_{t+1}\|^2-\frac{1}{2}\sum_{t=1}^T\|\z_{t+1}\|^2\\
% &\quad +\left(\frac{6C_3}{m\beta_t}+\frac{54 L_{gyy}^2}{I\beta_t} - \frac{8C_1}{\tau_t^2 \lambda I}(\frac{1}{2\tau} - \frac{3L_g}{4} )+\frac{64C_1L_{gy}^2}{m\lambda^2 }+\frac{2C_2L_{gyy}^2}{m}\right)\sum_{t=1}^T\|\y_t-\y_{t+1}\|^2\\
&\leq \E\Bigg[\frac{2}{\eta_t}(F(\x_1)-F(\x^*))+\frac{3\delta_{z,1}}{\beta_t} +\frac{4C_1}{\tau_t\tau \lambda m}\delta_{y,1} + \frac{C_6}{I \alpha_t}\delta_{s,1}+\frac{C_7}{I \balp_t}\delta_{H,1}+6\beta_t C_5T\left(\frac{\I(I<m)}{I}+\frac{1}{B}\right)\\
&\quad +\frac{12C_4I\balp_{t+1}^2\sigma^2T}{m\beta_t B}+\frac{2C_6\alpha_t \sigma^2T}{B}+\frac{2C_7\balp_t \sigma^2T}{B} +\Bigg(\frac{6C_3}{\beta_t}+\frac{54C_4m L_{gyy}^2}{I\beta_t}+\frac{32L_y^2C_1}{\tau_t^2\tau^2 \lambda^2 }+\frac{64C_1L_{gy}^2}{\lambda^2 }+2C_2L_{gyy}^2\\
&\quad +\frac{8m^2 C_6L_{gy}^2}{\alpha_t I^2}+\frac{8m^2 C_7L_{gyy}^2}{\balp_t I^2}\Bigg)\sum_{t=1}^T\|\x_t-\x_{t+1}\|^2-\frac{1}{2}\sum_{t=1}^T\|\z_{t+1}\|^2+\Bigg(\frac{6C_3}{m\beta_t}+\frac{54 L_{gyy}^2}{I\beta_t}\\
&\quad  - \frac{8C_1}{\tau_t^2 \lambda m}(\frac{1}{2\tau} - \frac{3L_g}{4} )+\frac{64C_1L_{gy}^2}{m\lambda^2 }+\frac{2C_2L_{gyy}^2}{m}+\frac{8m C_6L_{gy}^2}{\alpha_t I^2}+\frac{8m C_7L_{gyy}^2}{\balp_t I^2}\Bigg)\sum_{t=1}^T\|\y_t-\y_{t+1}\|^2\\
    \end{aligned}
\end{equation*}
To ensure the coefficient of $\sum_{t=1}^T\|\y_t-\y_{t+1}\|^2$ is non positive, we need 
\begin{equation*}
\begin{aligned}
\tau_t^2&= C_8 \min\left\{ \frac{I\beta_t}{m}, \frac{\balp_t I^2}{m^2},\frac{\alpha_t I^2}{m^2}\right\}\\
&\leq \frac{48C_1}{ \lambda m}(\frac{1}{2\tau} - \frac{3L_g}{4} ) \min \left\{\frac{m\beta_t}{6C_3},\frac{I\beta_t}{54 L_{gyy}^2},\frac{m\lambda^2 }{64C_1L_{gy}^2},\frac{m}{2C_2L_{gyy}^2},\frac{\alpha_t I^2}{8m C_6L_{gy}^2},\frac{\balp_t I^2}{8m C_7L_{gyy}^2}\right\}\\
\end{aligned}
\end{equation*}
where $C_8:=\frac{48C_1}{ \lambda I}(\frac{1}{2\tau} - \frac{3L_g}{4} ) \min \left\{\frac{1}{6C_3},\frac{1}{54 L_{gyy}^2},\frac{\lambda^2 }{64C_1L_{gy}^2},\frac{1}{2C_2L_{gyy}^2},\frac{1}{8 C_6L_{gy}^2},\frac{1}{8 C_7L_{gyy}^2}\right\}$. 

To ensure the coefficient of $\sum_{t=1}^T\|\x_t-\x_{t+1}\|^2$ is non positive, we need 
\begin{equation*}
\begin{aligned}
\eta_t^2 & = C_9\min\left\{\frac{I\beta_t}{m}, \frac{\balp_t I^2}{m^2 }, \tau_t^2, \frac{\alpha_t I^2}{m^2 } \right\}\\
% &\leq \min\left\{ \frac{\beta_t}{60C_3},\frac{I\beta_t}{540C_4m L_{gyy}^2}, \frac{\balp_t I^2}{80m^2 C_7 L_{gyy}^2}, \frac{\tau_t^2\tau^2 \lambda^2 I^2}{320C_6L_y^2m}, \frac{\lambda^2\alpha_t I^2}{2560m^2 C_6L_{gy}^2} \right\}\\
&\leq \min\{ \frac{\beta_t}{84C_3},\frac{I\beta_t}{756C_4m L_{gyy}^2},\frac{\tau_t^2\tau^2 \lambda^2 }{448L_y^2C_1},\frac{\lambda^2 }{896C_1L_{gy}^2},\frac{1}{28C_2L_{gyy}^2},\frac{\alpha_t I^2}{112m^2 C_6L_{gy}^2},\frac{\balp_t I^2}{112m^2 C_7L_{gyy}^2}\}
\end{aligned}
\end{equation*}
where $C_9:= \min\{ \frac{1}{84C_3},\frac{1}{756C_4 L_{gyy}^2},\frac{\tau^2 \lambda^2}{448L_y^2C_1},\frac{\lambda^2 }{896C_1L_{gy}^2},\frac{1}{28C_2L_{gyy}^2},\frac{1}{112 C_6L_{gy}^2},\frac{1}{112 C_7L_{gyy}^2}\}$

Then it follows

\begin{equation*}
\begin{aligned}
 &\E\left[\sum_{t=1}^T\|\nabla F(\x_t)\|^2+\sum_{t=1}^T\delta_{z,t}+\frac{1}{m}\sum_{t=1}^T\delta_{y,t}+\frac{1}{m}\sum_{t=1}^T\delta_{H,t}+\frac{1}{m}\sum_{t=1}^T\delta_{s,t}\right]\\
&\leq \E\Bigg[\frac{2}{\eta_t}(F(\x_1)-F(\x^*))+\frac{3\delta_{z,1}}{\beta_t} +\frac{4C_1}{\tau_t\tau \lambda m}\delta_{y,1} + \frac{C_6}{I \alpha_t}\delta_{s,1}+\frac{C_7}{I \balp_t}\delta_{H,1}\\
&\quad +6\beta_t C_5T\left(\frac{\I(I<m)}{I}+\frac{1}{B}\right)+\frac{12C_4I\balp_{t+1}^2\sigma^2T}{m\beta_t B}+\frac{2C_6\alpha_t \sigma^2T}{B}+\frac{2C_7\balp_t \sigma^2T}{B}\Bigg] \\
&\leq \E\Bigg[\frac{2\Delta}{\eta_t}+C_{10}\Bigg(\frac{1}{\beta_t}\delta_{z,1} + \frac{1}{ I \alpha_t}\delta_{s,1}+\frac{1}{\tau_t I}\delta_{y,1}+\frac{1}{I \balp_t}\delta_{H,1}+\frac{\balp_t T}{B} +\frac{\alpha_t T}{B} +\beta_tT\left(\frac{\I(I<m)}{I}+\frac{1}{B}\right)+\frac{I\balp_{t+1}^2T}{m\beta_t B}\Bigg)\Bigg]\\
\end{aligned}
\end{equation*}
where $C_{10}:=\max\left\{3 ,\frac{4C_1}{\tau \lambda } ,C_6,C_7,6 C_5,12C_4\sigma^2,2C_6 \sigma^2,2C_7 \sigma^2\right\}$.

% where $C_{11}:=\max\left\{3 , \frac{32C_8}{\lambda^2 },\frac{4C_6}{\tau \lambda },C_7 ,2C_7 \sigma^2 ,\frac{64C_6\sigma^2}{\lambda^2 } ,6C_5,12C_4\sigma^2 \right\}$.

Set $\alpha_t \leq \frac{B\epsilon^2}{12C_{10}}$, $\balp_t,\beta_t \leq \frac{\epsilon^2}{12C_{10}\left(\frac{\I(I<m)}{I}+\frac{1}{B}\right)}$, so that 

\begin{equation*}
C_{10}\left(\frac{\balp_t }{B} +\frac{\alpha_t }{B} +\beta_t\left(\frac{\I(I<m)}{I}+\frac{1}{B}\right)+\frac{I\balp_{t+1}^2}{m\beta_t B}\right) = (\frac{\epsilon^2}{12B}+\frac{I\epsilon^2}{12m B})\left(\frac{\I(I<m)}{I}+\frac{1}{B}\right)^{-1} +\frac{\epsilon^2}{12} +\frac{\epsilon^2}{12}\leq \frac{\epsilon^2}{3}
\end{equation*}

As a result, we have
\begin{equation*}
\begin{aligned}
\tau_t^2& \leq C_8 \min\left\{\frac{I\beta_t}{m} ,  \frac{\balp_t I^2}{m^2},\frac{\alpha_t I^2}{m^2}\right\}\\
& = \frac{C_8}{12C_{10}} \min\left\{ \frac{I^2\epsilon^2}{m^2}\left(\frac{\I(I<m)}{I}+\frac{1}{B}\right)^{-1} ,\frac{ I^2B\epsilon^2}{m^2}\right\}\\
& = \frac{C_8}{12C_{10}} \frac{ I^2\epsilon^2}{m^2}\left(\frac{\I(I<m)}{I}+\frac{1}{B}\right)^{-1} 
\end{aligned}
\end{equation*}
and
\begin{equation*}
\begin{aligned}
\eta_t^2 & \leq C_9\min\left\{\frac{I\beta_t}{m }, \frac{\balp_t I^2}{m^2 }, \tau_t^2, \frac{\alpha_t I^2}{m^2 } \right\}\\
& = C_{11}\min\left\{\frac{I\epsilon^2}{m }\left(\frac{\I(I<m)}{I}+\frac{1}{B}\right)^{-1} , \frac{ I^2\epsilon^2}{m^2 }\left(\frac{\I(I<m)}{I}+\frac{1}{B}\right)^{-1} , \frac{ I^2 B\epsilon^2}{m^2 } \right\}\\
& = C_{11} \frac{ I^2\epsilon^2}{m^2 }\left(\frac{\I(I<m)}{I}+\frac{1}{B}\right)^{-1} 
\end{aligned}
\end{equation*}
where $C_{11} = \frac{C_8}{12C_{10}}$.

Thus, with $T = c_T \epsilon^{-3} :=6\Delta \sqrt{C_{11}}\frac{m}{I}\left(\frac{\I(I<m)}{\sqrt{I}}+\frac{1}{\sqrt{B}}\right)\geq 6\Delta \sqrt{C_{11}}\frac{m}{I}\left(\frac{\I(I<m)}{I}+\frac{1}{B}\right)^{1/2} \epsilon^{-3}$, we have
\begin{equation*}
\begin{aligned}
\frac{2\Delta}{\eta_t T} = 2\Delta \sqrt{C_{11}}\frac{m}{I}\left(\frac{\I(I<m)}{I}+\frac{1}{B}\right)^{1/2} \epsilon^{-1}\frac{1}{T} \leq \frac{\epsilon^2}{3}
\end{aligned}
\end{equation*}

Note that $\frac{C_{10}}{T}\E\left[\frac{1}{\beta_t}\delta_{z,1}+\frac{1}{\tau_t I}\delta_{y,1}+\frac{1}{\alpha_t I}\delta_{s,1}+\frac{1}{\balp_t I}\delta_{H,1}\right]\leq \frac{\epsilon^2}{3}$ can be achieved by processing all lower problems at the beginning and finding good initial solutions $\delta_{z,1},\delta_{s,1},\delta_{H,1}=\O(\epsilon)$, with complexity $\O(\epsilon^{-1})$, and $\delta_{y,1}=\O(1)$ with complexity $\O(1)$. Denote the iteration number for initialization as $T_0=\O(\epsilon^{-1})$. Then the total iteration complexity is $\O\left(\frac{m\epsilon^{-3}\mathbb I(I<m)}{I\sqrt{I}} + \frac{m\epsilon^{-3}}{I\sqrt{B}}\right)$.

% $\delta_{z,1}=\O(\frac{m\sqrt{\min\{I,B\}}}{I}\epsilon)$, $\delta_{s,1}=\O(m\sqrt{\min\{I,B\}}\epsilon)$, $\delta_{H,1}=\O(\frac{mB}{\sqrt{\min\{I,B\}}}\epsilon)$ with complexity $\O(\epsilon^{-1})$, and $\delta_{y,1}=\O(m^{1/2}I^{3/2})$ with complexity $\O(1)$. Denote the iteration number for initialization as $T_0$. Then the total iteration complexity is $\O\left(\frac{m}{I\sqrt{\min\{I,B\}}}\epsilon^{-3}\right)$.

\end{proof}

\subsection{Convergence Analysis of $\vtwo$}\label{Appen:rsvrbv2}
We first present the formal statement of Theorem~\ref{thm:informal} for $\vtwo$.
\begin{theorem}\label{thm:5}
Under Assumptions~\ref{ass:1}, \ref{ass:2} and \ref{ass:3}, with  $\tau\leq 2/(3L_g)$, $\bgam_t = \frac{m-I}{I(1-\balp_t)}+(1-\balp_t)$, $\gamma_t = \frac{m-I}{I(1-\alpha_t)}+(1-\alpha_t)$, 
 $\tau_t \leq  \sqrt{\frac{C_8'\min\left\{1,C_{\btau}\right\}}{18C_9'}}\frac{I}{m}(\frac{\I(I<m)}{I}+\frac{1}{B})^{-1/2}\epsilon$, $\beta_t\leq \frac{1}{18C_9'}(\frac{\I(I<m)}{I}+\frac{1}{B})^{-1}\epsilon^2$, $\balp_t  \leq\min\left\{\frac{1}{2}, \frac{1}{18C_9'}(\frac{\I(I<m)}{I}+\frac{1}{B})^{-1}\epsilon^2\right\}$, $\alpha_t\leq\min\left\{\frac{1}{2},  \frac{B}{18C_9'}\epsilon^2\right\}$, $\btau_t\leq \min\left\{\frac{\lambda}{8L_{\phi v}^2},\frac{\lambda}{2},\frac{1}{\lambda},\frac{1}{2\sqrt{C_6'}}, \frac{\sqrt{C_{\btau}} I}{m}\sqrt{\balp_t}\right\}$,
$\eta_t\leq \min\left\{\frac{1}{2L_F},\frac{ \sqrt{C_{11}'}I}{m }(\frac{\I(I<m)}{I}+\frac{1}{B})^{-1/2} \epsilon\right\}$, where $C_8',C_9',C_{\btau},C_{11}'$ are constants specified in the proof, and by using a large mini-batch size of $\O(1/\epsilon)$ at the initial iteration for computing $\z_1, \s_1,\u_1$ and computing an accurate solution $\y_1,\v_1$ such that $\delta_{y,1}\leq \O(m)$, Algorithm~\ref{alg:4} gives $\E\left[\frac{1}{T}\sum_{i=1}^m\|\nabla F(\x_t)\|^2\right]\leq \epsilon^2$ with sample complexity $T =  \O\left(\frac{m\epsilon^{-3}\mathbb I(I<m)}{I\sqrt{I}} + \frac{m\epsilon^{-3}}{I\sqrt{B}}\right)$.
\end{theorem}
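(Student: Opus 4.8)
The plan is to run the same four-part argument used for Theorem~\ref{thm:6}, substituting the Hessian-inverse estimator $H_{i,t}$ by the Hessian-vector-product estimator $\v_{i,t}$ defined through the auxiliary quadratic problems~(\ref{v_problem}). I would begin from the descent inequality in Lemma~\ref{lem:01} and split the STORM error as $\|\z_{t+1}-\nabla F(\x_t)\|^2\leq 2\|\z_{t+1}-\Delta_t\|^2+2\|\Delta_t-\nabla F(\x_t)\|^2$, where now $\Delta_t=\frac{1}{m}\sum_{i=1}^m[\nabla_x f_i(\x_t,\y_{i,t})-\nabla_{xy}^2 g_i(\x_t,\y_{i,t})\v_{i,t}]$ so that $\E_t[G_t]=\Delta_t$ and $\E_t[\widetilde{G}_t]=\Delta_{t-1}$ for $G_t,\widetilde{G}_t$ in~(\ref{G_v2}). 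A pleasant simplification over $\vone$ is that no conditional-expectation-of-inverse trick is needed, since $\v_{i,t}$ is measurable w.r.t.\ the past and $\nabla_{xy}^2 g_i(\x_t,\y_{i,t};\wB_i^t)\v_{i,t}$ is already unbiased. The hyper-gradient bias then splits cleanly, using $\v_i(\x,\y_i)=[\nabla_{yy}^2 g_i]^{-1}\nabla_y f_i$, the $L_y$-Lipschitzness of $\y_i(\cdot)$, and boundedness of $\|\nabla_{xy}^2 g_i\|$, into an analogue of Lemma~\ref{lem:301} of the form $\|\Delta_t-\nabla F(\x_t)\|^2\leq C_1'\delta_{y,t}+C_2'\delta_{v,t}$.

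Next, four recursions must be assembled. The STORM recursion (analogue of Lemma~\ref{lem:302}) now contains a driving term $\|\v_t-\v_{t-1}\|^2$ in addition to $\|\x_t-\x_{t-1}\|^2$ and $\|\y_t-\y_{t-1}\|^2$, because $G_t,\widetilde{G}_t$ depend on $\v_{i,t},\v_{i,t-1}$ through the products $\nabla_{xy}^2 g_i\,\v_{i,\cdot}$. The tracking error $\delta_{v,t}$ is controlled exactly as $\delta_{y,t}$ is in Lemma~\ref{lem:3}: since $\phi_i(\cdot,\x,\y_i)$ is $\lambda$-strongly convex and $L_g$-smooth in $\v$, the projected step $\v_{i,t+1}=\Pi_{\V}[\v_{i,t}-\btau_t\u_{i,t}]$ contracts toward its target $\v_i(\x_t,\y_{i,t})$ with factor $1-\Theta(\btau_t\lambda)$. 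The essential new feature is that this target depends on \emph{both} $\x_t$ and $\y_{i,t}$, so the $\v$-tracking lemma carries driving terms in $\|\x_t-\x_{t+1}\|^2$ \emph{and} $\|\y_t-\y_{t+1}\|^2$ together with the MSVR error $\delta_{u,t}$. Finally I would invoke Lemma~\ref{lem:MSVR} for both $\s_t$ and $\u_t$; the driving term for $\u_t$ is $\sum_i(\|\v_t-\v_{t-1}\|^2+m\|\x_t-\x_{t-1}\|^2+\|\y_t-\y_{t-1}\|^2)$, matching the third line of~(\ref{ineq:D2d}). This step is where the projection onto $\V$ and Assumption~\ref{ass:3} are indispensable: they ensure $\v_{i,t}\in\V$, that $\nabla_v\phi_i(\v,\x,\y_i;\cdot)$ has bounded variance, and that its Lipschitz constant $L_{\phi v}$ is finite.

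Summing all recursions over $t=1,\ldots,T$ and telescoping as in~(\ref{ineq:305})--(\ref{ineq:306}) yields a master inequality bounding $\E[\sum_t\|\nabla F(\x_t)\|^2+\sum_t\delta_{z,t}+\frac{1}{m}\sum_t(\delta_{y,t}+\delta_{v,t}+\delta_{s,t}+\delta_{u,t})]$ by (i) initial-condition terms scaled by $1/\eta_t,1/\beta_t,1/(\tau_t I),1/(\btau_t I),1/(\alpha_t I),1/(\balp_t I)$, (ii) noise terms proportional to $(\frac{\I(I<m)}{I}+\frac{1}{B})$, and (iii) three movement sums $\sum_t\|\x_t-\x_{t+1}\|^2$, $\sum_t\|\y_t-\y_{t+1}\|^2$, $\sum_t\|\v_t-\v_{t+1}\|^2$. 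The decisive step is choosing $\eta_t,\tau_t,\btau_t,\beta_t,\alpha_t,\balp_t$ so that the coefficients of all three movement sums are non-positive and can be discarded; this is precisely the origin of the step-size windows in the theorem, in particular the coupling $\btau_t\leq\sqrt{C_{\btau}}\,\frac{I}{m}\sqrt{\balp_t}$ that ties the $\v$-step to the MSVR rate of $\u$, and $\btau_t\leq\min\{\frac{\lambda}{8L_{\phi v}^2},\frac{\lambda}{2},\frac{1}{\lambda},\ldots\}$ that keeps the $\delta_{v}$-contraction valid. After the movement sums are absorbed, the remaining parameters are set so each residual term is at most $\epsilon^2/3$; together with a large initial batch of size $\O(1/\epsilon)$ making $\delta_{z,1},\delta_{s,1},\delta_{u,1}=\O(\epsilon)$ and $\delta_{y,1},\delta_{v,1}=\O(m)$, this gives $\E[\frac{1}{T}\sum_t\|\nabla F(\x_t)\|^2]\leq\epsilon^2$ and $T=\O(\frac{m\epsilon^{-3}\I(I<m)}{I\sqrt{I}}+\frac{m\epsilon^{-3}}{I\sqrt{B}})$.

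I expect the main obstacle to be the extra layer of nesting created by $\v$: in $\vone$ the estimator $H$ enters the gradient only through a bounded inverse, whereas here $\v$ tracks a moving target $\v_i(\x,\y_i)$ that drifts with both $\x$ and $\y$ and simultaneously feeds the STORM estimator $\z$ and the MSVR error $\delta_{u,t}$. Closing the loop requires the $\x$-, $\y$-, and $\v$-contractions and the two MSVR recursions to be balanced at once, so that every $\|\x_t-\x_{t+1}\|^2$, $\|\y_t-\y_{t+1}\|^2$, and $\|\v_t-\v_{t+1}\|^2$ coefficient can be simultaneously neutralized without inflating the dependence on $m/I$. Verifying that the upper and lower admissible bounds on $\btau_t$, and its interplay with $\balp_t$, remain compatible---so that the contraction $1-\Theta(\btau_t\lambda)$ holds while its induced driving coefficients stay dominated by the $\x$- and $\y$-steps---is the delicate calculation, and is what ultimately preserves the same $\O(\cdot)$ rate as $\vone$.
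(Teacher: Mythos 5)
Your proposal is correct and, at the level of architecture, is essentially the paper's own proof: the descent inequality of Lemma~\ref{lem:01}, the split $\|\z_{t+1}-\nabla F(\x_t)\|^2\leq 2\|\z_{t+1}-\Delta_t\|^2+2\|\Delta_t-\nabla F(\x_t)\|^2$ with the observation that no expectation-of-inverse trick is needed for $\Delta_t$, the bias bound $\frac{C_1'}{m}\delta_{y,t}+\frac{C_2'}{m}\delta_{v,t}$ (the paper's Lemma~\ref{lem:203}), the STORM recursion with the extra $\|\v_t-\v_{t-1}\|^2$ driving term (Lemma~\ref{lem:202}), Lemma~\ref{lem:3} for the $\y$-tracking, a contraction for the $\v$-tracking, Lemma~\ref{lem:MSVR} applied to both $\s_t$ and $\u_t$ with the driving terms of (\ref{ineq:D2d}), and the same final parameter balancing including the coupling $\btau_t^2\lesssim C_{\btau}\balp_t I^2/m^2$. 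The one place where you genuinely diverge from the paper is the disposal of $\sum_t\|\v_t-\v_{t+1}\|^2$: you propose to cancel it by sign, the way $\sum_t\|\x_t-\x_{t+1}\|^2$ and $\sum_t\|\y_t-\y_{t+1}\|^2$ are cancelled against the negative terms supplied by Lemma~\ref{lem:01} and Lemma~\ref{lem:3}. The paper does not do this; its Lemma~\ref{lem:204} is proved via non-expansiveness of $\Pi_{\V}$ and therefore contains no negative $\|\v_{t+1}-\v_t\|^2$ term, so instead the paper bounds the movement directly by $\|\v_t-\v_{t+1}\|^2\leq 2\btau_t^2\tilde{\delta}_{u,t}+2\btau_t^2L_{\phi v}^2\delta_{v,t}$ (its inequality~(\ref{ineq:29}), using that $\v_i(\x_t,\y_{i,t})$ is a fixed point of the exact projected step), substitutes this into the master inequality, and lets the coupling $\btau_t^2\leq C_{\btau}\balp_t I^2/m^2$ absorb the substituted terms. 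Your sign-based variant is workable, but note the subtlety: because the $\v$-update is projected, deriving a recursion "exactly as in Lemma~\ref{lem:3}" (which treats an unprojected step) requires the projection three-point/prox inequality in order to retain a $-\Theta(1/\btau_t)\|\v_{t+1}-\v_t\|^2$ term; plain non-expansiveness, as in Lemma~\ref{lem:204}, loses it, and without it no choice of step sizes can make the $\v$-movement coefficient non-positive, since it only receives positive contributions from the STORM and $\u$-MSVR recursions. Once the prox-style recursion is in hand, the non-positivity requirement reproduces exactly the conditions $\btau_t^2\lesssim\beta_t$ and $\btau_t^2\lesssim\balp_t I^2/m^2$ that the paper's substitution route imposes, so both mechanisms yield the same step-size windows and the same complexity $\O\left(\frac{m\epsilon^{-3}\I(I<m)}{I\sqrt{I}}+\frac{m\epsilon^{-3}}{I\sqrt{B}}\right)$.
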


First, we note that the bounded variance of $\nabla_v \phi_i(v,\x,\y_i;\cB_i)$ can be derived as
\begin{equation*}
\begin{aligned}
    &\E_{\cB_i^t}[\|\nabla_v \phi_i(\v_{i,t},\x_t,\y_{i,t};\cB_i^t,\wB_i^t)-\nabla_v \phi_i(\v_{i,t},\x_t,\y_{i,t})\|^2]\\
    &=\E_{\cB_i^t,\wB_i^t}[\|\nabla^2_{yy}g_i(\x_t,\y_{i,t};\wB_i^t) \v_{i,t} -  \nabla_y f_i(\x_t,\y_{i,t};\cB_i^t)-\nabla^2_{yy}g_i(\x_t,\y_{i,t}) \v^t_i +  \nabla_y f_i(\x_t,\y_{i,t})\|^2]\\
    &\leq \E_{\cB_i^t,\wB_i^t}[2\|\nabla^2_{yy}g_i(\x_t,\y_{i,t};\wB_i^t) \v_{i,t} -\nabla^2_{yy}g_i(\x_t,\y_{i,t}) \v_{i,t} \|^2+2\|  \nabla_y f_i(\x_t,\y_{i,t})-  \nabla_y f_i(\x_t,\y_{i,t};\cB_i^t)\|^2]\\
    &\leq \frac{2\sigma^2}{B}\|\v_{i,t}\|^2+\frac{2\sigma^2}{B}\leq (1+\V^2) \frac{2\sigma^2}{B}.
\end{aligned}
\end{equation*}
Moreover, to achieve the variance-reduced estimation error bound, we need the stochastic gradient $\nabla_v \phi_i(\v_i,\x,\y_i;\xi,\zeta)$ to be $L_{\phi v}$-Lipschitz with some constant $L_{\phi v}$. The value of $L_{\phi v}$ can be derived as following. Assume that $(\v_i,\x,\y_i)$ and $(\v_i',\x',\y_i')$ are parameters from some iterations in algorithm~\ref{alg:4}, then under Assumptions~\ref{ass:2} and \ref{ass:3} we have
\begin{equation*}
    \begin{aligned}
        &\|\nabla_v \phi_i(\v_i,\x,\y_i;\xi,\zeta)-\nabla_v \phi_i(\v_i',\x',\y_i';\xi,\zeta)\|^2\\
        &\leq 4\|\nabla_{yy}^2 g_i(\x,\y_i;\zeta)\v_{i,t}-\nabla_{yy}^2 g_i(\x',\y_i';\zeta)\v_{i,t}\|^2+4\|\nabla_{yy}^2 g_i(\x',\y_i';\zeta)\v_{i,t}-\nabla_{yy}^2 g_i(\x',\y_i';\zeta)\v_{i,t}'\|\\
        &\quad +2\|\nabla_y f_i(\x,\y_i;\xi)-\nabla_y f_i(\x',\y_i';\xi)\|^2\\
        &\leq (4L_{gyy}^2\V^2+2L_{fy}^2)(\|\x-\x'\|^2+\|\y_i-\y_i'\|^2)+4\tilde{C}_{gyy}^2\|\v_{i,t}-\v_{i,t}'\|^2\\
        &\leq L_{\phi v}^2(\|\v_{i,t}-\v_{i,t}'\|^2+\|\x-\x'\|^2+\|\y_i-\y_i'\|^2)
    \end{aligned}
\end{equation*}
where $L_{\phi v}:=\max\{4L_{gyy}^2\V^2+2L_{fy}^2,4\tilde{C}_{gyy}^2\}$.

\begin{lemma}[\citep{99401}(Lemma 2.2)]
For all $i=1,\dots,m$, $\v_i(\x,\y_i)$ is $L_v$-Lipschitz continuous with $L_v=\frac{L_{fy}C_{gyy}+C_{fy}L_{gyy}}{\lambda^2}$.
\end{lemma}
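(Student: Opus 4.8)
The plan is to use the closed-form characterization of $\v_i$ together with a standard resolvent (matrix-inverse) perturbation argument. Since $\phi_i(\cdot,\x,\y_i)$ is a strongly convex quadratic, its unique minimizer satisfies the first-order condition $\nabla^2_{yy}g_i(\x,\y_i)\,\v_i(\x,\y_i)=\nabla_y f_i(\x,\y_i)$, i.e. $\v_i(\x,\y_i)=[\nabla^2_{yy}g_i(\x,\y_i)]^{-1}\nabla_y f_i(\x,\y_i)$, which is well defined because Assumption~\ref{ass:1} gives $\nabla^2_{yy}g_i\succeq\lambda I$ and hence $\|[\nabla^2_{yy}g_i]^{-1}\|\le 1/\lambda$. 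Writing $A=\nabla^2_{yy}g_i(\x,\y_i)$, $b=\nabla_y f_i(\x,\y_i)$ and $A'=\nabla^2_{yy}g_i(\x',\y_i')$, $b'=\nabla_y f_i(\x',\y_i')$, I would split the difference as $\v_i(\x,\y_i)-\v_i(\x',\y_i')=A^{-1}(b-b')+(A^{-1}-A'^{-1})b'$ and bound the two pieces separately.

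For the first piece, $\|A^{-1}(b-b')\|\le\|A^{-1}\|\,\|b-b'\|\le\frac{1}{\lambda}L_{fy}\|(\x,\y_i)-(\x',\y_i')\|$, using $\|A^{-1}\|\le 1/\lambda$ and the $L_{fy}$-Lipschitz continuity of $\nabla_y f_i$ in $(\x,\y_i)$ from Assumption~\ref{ass:2}. For the second piece, I would apply the resolvent identity $A^{-1}-A'^{-1}=A^{-1}(A'-A)A'^{-1}$, so that $\|(A^{-1}-A'^{-1})b'\|\le\|A^{-1}\|\,\|A'-A\|\,\|A'^{-1}\|\,\|b'\|\le\frac{1}{\lambda^2}L_{gyy}\|(\x,\y_i)-(\x',\y_i')\|\,C_{fy}$, combining $\|A^{-1}\|,\|A'^{-1}\|\le 1/\lambda$, the $L_{gyy}$-Lipschitz continuity of the Hessian, and the uniform bound $\|b'\|=\|\nabla_y f_i(\x',\y_i')\|\le C_{fy}$, all from Assumption~\ref{ass:2}.

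Adding the two bounds yields a Lipschitz constant $\frac{L_{fy}}{\lambda}+\frac{C_{fy}L_{gyy}}{\lambda^2}$. To match the stated form $L_v=\frac{L_{fy}C_{gyy}+C_{fy}L_{gyy}}{\lambda^2}$, I would invoke the elementary fact that $\lambda\le C_{gyy}$ (since $\lambda I\preceq\nabla^2_{yy}g_i$ and $\|\nabla^2_{yy}g_i\|\le C_{gyy}$ force $\lambda\le\|\nabla^2_{yy}g_i\|\le C_{gyy}$), hence $\frac{L_{fy}}{\lambda}\le\frac{L_{fy}C_{gyy}}{\lambda^2}$, which upgrades the first term to the claimed coefficient. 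There is no genuine obstacle here---this is the standard Ghadimi--Wang computation (Lemma~2.2 in~\citep{99401})---and the only points requiring a little care are that the Lipschitz bounds for $\nabla_y f_i$ and $\nabla^2_{yy}g_i$ must be taken jointly in $(\x,\y_i)$ under the product norm, and that the a priori bound $\|\nabla_y f_i\|\le C_{fy}$ is what controls $b'$ in the resolvent term.
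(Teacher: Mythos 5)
Your proof is correct: the first-order condition, the splitting $A^{-1}(b-b')+(A^{-1}-A'^{-1})b'$, the resolvent identity, and the bounds $\|A^{-1}\|,\|A'^{-1}\|\le 1/\lambda$, $\|b-b'\|\le L_{fy}\|(\x,\y_i)-(\x',\y_i')\|$, $\|A-A'\|\le L_{gyy}\|(\x,\y_i)-(\x',\y_i')\|$, $\|b'\|\le C_{fy}$ are all justified by Assumptions~\ref{ass:1} and~\ref{ass:2}, and the final upgrade $\frac{L_{fy}}{\lambda}\le\frac{L_{fy}C_{gyy}}{\lambda^2}$ via $\lambda\le C_{gyy}$ legitimately recovers the stated constant (your intermediate constant is in fact slightly tighter). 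This is essentially the same argument as the paper's, which gives no proof of its own but defers to Lemma~2.2 of~\citep{99401}, whose standard computation is exactly the decomposition you carried out.
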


% \begin{lemma}\label{lemma:nabF_Lip}
% Under Assumption~\ref{}, $\nabla F(\x)$ are $L_F$-Lipschitz continuous with some constant $L_F$.
% \end{lemma}

Define
\begin{equation}
    \Delta_t := \frac{1}{m}\sum_{i=1}^m \nabla_x f_i(\x_{t},\y_{i,t})-\nabla_{xy}^2 g_i(\x_t,\y_{i,t})\v_{i,t}
\end{equation}
Note that $\E_t[G_t]=\Delta_t$, $\E_t[\widetilde{G}_t]=\Delta_{t-1}$. Then we have the following two lemmas.

\begin{lemma}\label{lem:203}
For all $t> 0$, we have
    \begin{equation}
    \begin{aligned}
        &\left\|\Delta_t-\nabla F(\x_t)\right\|^2\leq\frac{C_1'}{m}\delta_{y,t}+\frac{C_2'}{m}\delta_{v,t}
    \end{aligned}
\end{equation}
\end{lemma}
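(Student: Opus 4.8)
The plan is to bound the per-block discrepancy and then aggregate. By Proposition~\ref{prop:1} together with the identity $\v_i(\x,\y_i)=[\nabla_{yy}^2 g_i(\x,\y_i)]^{-1}\nabla_y f_i(\x,\y_i)$, the true hyper-gradient decomposes blockwise as $\nabla F(\x_t)=\frac{1}{m}\sum_{i=1}^m\nabla F_i(\x_t)$ with $\nabla F_i(\x_t)=\nabla_x f_i(\x_t,\y_i(\x_t))-\nabla_{xy}^2 g_i(\x_t,\y_i(\x_t))\v_i(\x_t,\y_i(\x_t))$, which matches the block structure of $\Delta_t=\frac1m\sum_i\Delta_t^{(i)}$, $\Delta_t^{(i)}=\nabla_x f_i(\x_t,\y_{i,t})-\nabla_{xy}^2 g_i(\x_t,\y_{i,t})\v_{i,t}$, with $\y_{i,t}$ in place of $\y_i(\x_t)$ and $\v_{i,t}$ in place of $\v_i(\x_t,\y_i(\x_t))$. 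First I would apply Cauchy--Schwarz (Jensen) to pull the average outside the square, $\|\Delta_t-\nabla F(\x_t)\|^2\le\frac{1}{m}\sum_{i=1}^m\|\Delta_t^{(i)}-\nabla F_i(\x_t)\|^2$, reducing the claim to a single-block Lipschitz-type estimate of the form $\|\Delta_t^{(i)}-\nabla F_i(\x_t)\|\le c_y\|\y_{i,t}-\y_i(\x_t)\|+c_v\|\v_{i,t}-\v_i(\x_t,\y_{i,t})\|$.

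The per-block estimate I would obtain by splitting into the gradient part and the Hessian-vector part. The gradient part $\nabla_x f_i(\x_t,\y_{i,t})-\nabla_x f_i(\x_t,\y_i(\x_t))$ is controlled by the $L_{fx}$-Lipschitz continuity of $\nabla_x f_i$ (Assumption~\ref{ass:2}). For the Hessian-vector part, writing $J_t=\nabla_{xy}^2 g_i(\x_t,\y_{i,t})$ and $J^*=\nabla_{xy}^2 g_i(\x_t,\y_i(\x_t))$, I would use the three-term telescoping
\[
J_t\v_{i,t}-J^*\v_i(\x_t,\y_i(\x_t))=J_t(\v_{i,t}-\v_i(\x_t,\y_{i,t}))+(J_t-J^*)\v_i(\x_t,\y_{i,t})+J^*\big(\v_i(\x_t,\y_{i,t})-\v_i(\x_t,\y_i(\x_t))\big).
\]
The first term is bounded by $C_{gxy}\|\v_{i,t}-\v_i(\x_t,\y_{i,t})\|$ via the operator-norm bound $\|\nabla_{xy}^2 g_i\|\le C_{gxy}$; the second by $L_{gxy}V\|\y_{i,t}-\y_i(\x_t)\|$, using Lipschitzness of the Jacobian in $\y$ together with the a~priori bound $\|\v_i(\x_t,\y_{i,t})\|\le V=C_{fy}/\lambda$; and the third by $C_{gxy}L_v\|\y_{i,t}-\y_i(\x_t)\|$, using the $L_v$-Lipschitz continuity of $\v_i(\x,\cdot)$ established in the preceding lemma. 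Collecting terms gives $c_y=L_{fx}+L_{gxy}V+C_{gxy}L_v$ and $c_v=C_{gxy}$.

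Finally I would square the per-block estimate, apply $(a+b)^2\le 2a^2+2b^2$ to separate the $\y$- and $\v$-contributions, and sum over $i$, which yields the claim with $C_1'=2(L_{fx}+L_{gxy}V+C_{gxy}L_v)^2$ and $C_2'=2C_{gxy}^2$. The step I expect to require the most care is the middle term of the telescoping: it is exactly where the boundedness $\|\v_i(\x_t,\y_{i,t})\|\le V$ must be invoked, and this is precisely why $\vtwo$ enforces the projection onto $\V$ in the $\v$-update so that such a bound is available throughout. By contrast, the analogous single-block argument for $\vone$ (Lemma~\ref{lem:301}) produces a Hessian-estimation error $\tilde{\delta}_{H,t}$ rather than $\delta_{v,t}$; replacing the Hessian-inverse error by the quadratic-solution error $\delta_{v,t}$ is the conceptual difference in this version.
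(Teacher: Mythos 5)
Your proof is correct and takes essentially the same route as the paper's: blockwise Jensen, the $L_{fx}$-Lipschitz bound on the $\nabla_x f_i$ part, and a telescoping of the Jacobian--vector product controlled by $\|\nabla_{xy}^2 g_i\|\leq C_{gxy}$, the $L_{gxy}$-Lipschitzness of the Jacobian, the a priori bound $\|\v_i(\cdot,\cdot)\|\leq C_{fy}/\lambda$, and the $L_v$-Lipschitzness of $\v_i$. The only cosmetic difference is that you telescope into three terms in one step, whereas the paper first bounds against $\v_i(\x_t,\y_i(\x_t))$ and then splits off the $L_v$ term, which yields slightly different but equally valid constants $C_1',C_2'$.
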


\begin{lemma}\label{lem:202}
For all $t> 0$, we have
    \begin{equation}
    \begin{aligned}
        \E_t[\left\|\z_{t+1}-\Delta_t\right\|^2] &\leq (1-\beta_t) \|\z_t-\Delta_{t-1}\|^2+2C_3'\|\x_t-\x_{t-1}\|^2+\frac{2C_3'}{m}\|\y_t-\y_{t-1}\|^2\\
        &\quad +\frac{2C_4'}{m}\|\v_t-\v_{t-1}\|^2 +2\beta_t^2C_5'\left(\frac{\I(I<m)}{I}+\frac{1}{B}\right)
    \end{aligned}
\end{equation}
\end{lemma}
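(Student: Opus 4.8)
The plan is to establish the standard STORM one-step recursion, adapted to the two layers of randomness (block sampling $\mI_t$ and data sampling $\B_i^t,\wB_i^t$) present in $\vtwo$. Recall $\z_{t+1}=(1-\beta_t)(\z_t-\widetilde{G}_t)+G_t$, with $G_t,\widetilde G_t$ as in~(\ref{G_v2}), and that $\E_t[G_t]=\Delta_t$, $\E_t[\widetilde G_t]=\Delta_{t-1}$, where $\E_t$ conditions on all randomness before iteration $t$. I would first rewrite
\begin{equation*}
\z_{t+1}-\Delta_t=(1-\beta_t)(\z_t-\Delta_{t-1})+\underbrace{\big[(G_t-\widetilde G_t)-(\Delta_t-\Delta_{t-1})\big]}_{P}\;\underbrace{-\,\beta_t(\widetilde G_t-\Delta_{t-1})}_{Q}.
\end{equation*}
The first summand is measurable before iteration $t$, and $\E_t[P]=\E_t[Q]=0$, so the cross terms with it vanish; using $(1-\beta_t)^2\le 1-\beta_t$ and $\|P+Q\|^2\le 2\|P\|^2+2\|Q\|^2$ gives
\begin{equation*}
\E_t[\|\z_{t+1}-\Delta_t\|^2]\le (1-\beta_t)\|\z_t-\Delta_{t-1}\|^2+2\E_t[\|P\|^2]+2\E_t[\|Q\|^2].
\end{equation*}
It then remains to bound the two variance terms.

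For $Q$, I bound $\E_t[\|Q\|^2]=\beta_t^2\,\E_t[\|\widetilde G_t-\Delta_{t-1}\|^2]$ by splitting the estimator error into data and block parts. Writing $\bar g_i:=\nabla_x f_i(\x_{t-1},\y_{i,t-1})-\nabla_{xy}^2 g_i(\x_{t-1},\y_{i,t-1})\v_{i,t-1}$ so that $\Delta_{t-1}=\frac1m\sum_i\bar g_i$, I decompose $\widetilde G_t-\Delta_{t-1}=\frac1I\sum_{i\in\mI_t}(\widetilde g_i-\bar g_i)+\big(\frac1I\sum_{i\in\mI_t}\bar g_i-\frac1m\sum_i\bar g_i\big)$, where $\widetilde g_i$ is the stochastic summand. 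Conditioned on $\mI_t$ the data part is mean zero, so its cross term with the block part drops. The data part contributes $O(\sigma^2/(IB))\le O(\sigma^2/B)$ using the bounded-variance assumption together with $\|\v_{i,t-1}\|\le V$ (guaranteed by the projection onto $\V$); the block part contributes $O(\tfrac{m-I}{I(m-1)})=O(\tfrac{\I(I<m)}{I})$ by the sampling-without-replacement variance identity, its summands being uniformly bounded via boundedness of $\nabla_x f_i$, $\nabla_{xy}^2 g_i$ and $\|\v_{i,t-1}\|$. Together these give $\E_t[\|\widetilde G_t-\Delta_{t-1}\|^2]\le C_5'(\frac{\I(I<m)}{I}+\frac1B)$, hence the claimed $2\beta_t^2C_5'(\frac{\I(I<m)}{I}+\frac1B)$.

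The crux is $P$, the variance of the consecutive difference $G_t-\widetilde G_t=\frac1I\sum_{i\in\mI_t}d_i$ with $d_i:=[\nabla_x f_i(\x_t,\y_{i,t};\B_i^t)-\nabla_{xy}^2 g_i(\x_t,\y_{i,t};\wB_i^t)\v_{i,t}]-[\nabla_x f_i(\x_{t-1},\y_{i,t-1};\B_i^t)-\nabla_{xy}^2 g_i(\x_{t-1},\y_{i,t-1};\wB_i^t)\v_{i,t-1}]$. The essential feature is that $d_i$ reuses the same mini-batches $\B_i^t,\wB_i^t$ at both iterates, so the per-block second moment $\E\|d_i\|^2$ is controlled by the iterate movement. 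Splitting $P$ into data and block parts exactly as for $Q$ (with $\bar d_i:=\E_{\text{data}}[d_i]$), both reduce to bounding $\E\|d_i\|^2$. The gradient difference contributes $L_{fx}^2(\|\x_t-\x_{t-1}\|^2+\|\y_{i,t}-\y_{i,t-1}\|^2)$ by Assumption~\ref{ass:2}(i); for the bilinear term I add and subtract $\nabla_{xy}^2 g_i(\x_{t-1},\y_{i,t-1};\wB_i^t)\v_{i,t}$, so one piece is bounded by $L_{gxy}^2V^2(\|\x_t-\x_{t-1}\|^2+\|\y_{i,t}-\y_{i,t-1}\|^2)$ (using $\|\v_{i,t}\|\le V$) and the other by $C_{gxy}^2\|\v_{i,t}-\v_{i,t-1}\|^2$ (using the bounded Jacobian norm). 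Summing over sampled blocks and taking $\E_{\mI_t}$ (each block present with probability $I/m$) turns $\sum_i\|\y_{i,t}-\y_{i,t-1}\|^2$ into $\|\y_t-\y_{t-1}\|^2$, likewise for $\v$, and produces the $\frac1m$ scaling on the $\y,\v$ terms. This yields $\E_t[\|P\|^2]\le C_3'\|\x_t-\x_{t-1}\|^2+\frac{C_3'}{m}\|\y_t-\y_{t-1}\|^2+\frac{C_4'}{m}\|\v_t-\v_{t-1}\|^2$, where $C_3'$ collects $L_{fx}^2$ and $L_{gxy}^2V^2$ and $C_4'$ collects $C_{gxy}^2$; combining with the bound on $Q$ gives the lemma.

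The main obstacle is this last step: handling the bilinear Hessian-vector product $\nabla_{xy}^2 g_i\,\v_{i,t}$ inside $d_i$. Unlike $\vone$, whose analogous quantity is a Hessian-inverse term, here the difference must be split so that the Lipschitzness of the stochastic Jacobian and the boundedness $\|\v_{i,t}\|\le V$ (from projection onto $\V$) are used simultaneously; obtaining a clean movement bound that isolates $\|\v_t-\v_{t-1}\|^2$ with the correct $1/m$ factor, while ensuring the data and block noises recombine without spurious cross terms, is the delicate part.
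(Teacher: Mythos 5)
Your proposal follows essentially the same route as the paper's proof: the STORM error recursion with a mean-zero noise split and Young's inequality, then bounding the consecutive-difference term $\E_t[\|G_t-\widetilde{G}_t\|^2]$ via stochastic Lipschitzness with the add/subtract trick on $\nabla_{xy}^2 g_i\,\v_{i,t}$ (using $\|\v_{i,t}\|\le V$ and the bounded Jacobian) and bounding the single-point variance term by block-sampling plus data-sampling noise $C_5'(\frac{\I(I<m)}{I}+\frac{1}{B})$; the only cosmetic difference is that you attach the $\beta_t$ weight to $\widetilde{G}_t-\Delta_{t-1}$ where the paper uses $G_t-\Delta_t$, which is immaterial. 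One harmless algebra slip: the correct identity is $\z_{t+1}-\Delta_t=(1-\beta_t)(\z_t-\Delta_{t-1})+\big[(G_t-\widetilde{G}_t)-(\Delta_t-\Delta_{t-1})\big]+\beta_t(\widetilde{G}_t-\Delta_{t-1})$, i.e.\ your $Q$ should carry a plus sign, but since only $\E_t[Q]=0$ and $\|Q\|^2$ enter the bound, nothing downstream changes.
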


Following from Lemma~\ref{lem:3}, with update $\y_{i,t+1} = \y_{i,t}-\tau_t\tau \s_{i,t}$ for all $i=1,\dots, m$, with $\tau \leq 2/(3L_g)$, we have 
\begin{equation}\label{ineq:307}
\begin{aligned}
    \E[\delta_{y,t+1}]&\leq (1 - \frac{\tau_t\tau \lambda }{4})\E[\delta_{y,t}] +  \frac{8\tau_t\tau }{\lambda }\E[\tilde{\delta}_{s,t}]  - \frac{2\tau }{\tau_t }(\frac{1}{2\tau} - \frac{3L_g}{4} )\E[\|\y_{t+1}- \y_t\|^2] +\frac{8mL_y^2}{\tau_t\tau \lambda }\E[\|\x_t-\x_{t+1}\|^2]
\end{aligned}
\end{equation}

\begin{lemma}\label{lem:204}
Consider the update $\v_{i,t+1}=\Pi_{\V}[\v_{i,t}-\btau_t \u_{i,t}]$ for all $i=1,\dots,m$, with $\btau_t\leq \min\left\{\frac{\lambda}{8L_{\phi v}^2},\frac{\lambda}{2},\frac{1}{\lambda}\right\}$, we have
\begin{equation}
\begin{aligned}
\E[\delta_{v,t+1}]&\leq (1-\frac{\lambda\btau_t}{4})\E[\delta_{v,t}]+10\lambda\btau_t\E[\tilde{\delta}_{u,t}]  +\frac{5mL_v^2}{\lambda\btau_t}E[\|\x_t-\x_{t+1}\|^2]+\frac{5L_v^2}{\lambda\btau_t}\E[\|\y_{t}-\y_{t+1}\|^2]\\
\end{aligned}
\end{equation}
\end{lemma}

By applying Lemma~\ref{lem:MSVR} to $\s_t,\u_t$, we have
\begin{equation*}
\begin{aligned}
    \E\left[\delta_{s,t+1}\right]&\leq (1-\frac{I \alpha_t}{m})\E\left[\delta_{s,t}\right]+\frac{2I\alpha_t^2 \sigma^2}{B} +\frac{8m^2 L_{gy}^2}{I}\E\left[\|\x_{t+1}-\x_t\|^2+\frac{1}{m}\|\y_{t+1}-\y_t\|^2\right]
\end{aligned}
\end{equation*}
and
\begin{equation*}
\begin{aligned}
    \E\left[\delta_{u,t+1}\right]&\leq (1-\frac{I \balp_t}{m})\E\left[\delta_{u,t}\right]+\frac{2I\balp_t^2 \sigma^2}{B} +\frac{8m^2 L_{\phi v}^2}{I}\E\left[\|\x_{t+1}-\x_t\|^2+\frac{1}{m}\|\y_{t+1}-\y_t\|^2+\frac{1}{m}\|\v_{t+1}-\v_t\|^2\right]
\end{aligned}
\end{equation*}
which implies

\begin{equation}\label{ineq:24}
\begin{aligned}
    \E\left[\sum_{t=1}^T\delta_{s,t}\right]&\leq \E\Bigg[\frac{m}{I \alpha_t}\delta_{s,1}+\frac{2m\alpha_t \sigma^2T}{B} +\frac{8m^3 L_{gy}^2}{\alpha_t I^2}\sum_{t=1}^T\left(\|\x_{t+1}-\x_t\|^2+\frac{1}{m}\|\y_{t+1}-\y_t\|^2\right)\Bigg]
\end{aligned}
\end{equation}
and
\begin{equation}\label{ineq:25}
\begin{aligned}
    \E\left[\sum_{t=1}^T\delta_{u,t}\right]&\leq \E\Bigg[\frac{m}{I \balp_t}\delta_{u,1}+\frac{2m\balp_t \sigma^2T}{B} +\frac{8m^3 L_{\phi v}^2}{\balp_t I^2}\sum_{t=1}^T\bigg(\frac{1}{m}\|\v_{t+1}-\v_t\|^2+\|\x_{t+1}-\x_t\|^2+\frac{1}{m}\|\y_{t+1}-\y_t\|^2\bigg)\Bigg]
\end{aligned}
\end{equation}

%\begin{theorem}\label{thm:5}
%Under Assumptions~\ref{ass:1} and \ref{ass:2}, with $\tau\leq 2/(3L_g)$, $\bgam_t = \frac{m-I}{I(1-\balp_t)}+(1-\balp_t)$, $\balp_t\leq \frac{1}{2}$, $\gamma_t = \frac{m-I}{I(1-\alpha_t)}+(1-\alpha_t)$, 
%$\alpha_t\leq \frac{1}{2}$, $\btau_t\leq \min\left\{\frac{\lambda}{8L_{\phi v}^2},\frac{\lambda}{2},\frac{1}{\lambda}\right\}$, $\tau_t^2 = \frac{C_8'\min\left\{1,C_{\btau}\right\}}{15C_9'}\frac{I^2 \min\{I,B\}}{m^2}\epsilon^2$, $\beta_t=\balp_t  = \frac{\min\{I,B\}}{15C_9'}\epsilon^2$, $\alpha_t= \frac{B}{15C_9'}\epsilon^2$, $\btau_t^2 = \frac{C_{\btau} I^2}{m^2}\balp_t$,
%$\eta_t^2=C_{11}'\frac{ I^2\min\{I,B\}}{m^2 } \epsilon^2$, where $C_8',C_9',C_{\btau},C_{11}'$ are constants specified in the proof, and by using a large mini-batch size of $\O(1/\epsilon)$ at the initial iteration for computing $\z_1, \s_1,\u_1$ and computing an accurate solution $\y_1,\v_1$ such that $\delta_{y,1}\leq \O(m)$, Algorithm~\ref{alg:4} gives $\E\left[\frac{1}{T}\sum_{i=1}^m\|\nabla F(\x_t)\|^2\right]\leq \epsilon^2$ with sample complexity $T =  \O\left(\frac{m}{I\sqrt{\min\{I,B\}}}\epsilon^{-3}\right)$.
%\end{theorem}

\subsubsection{Proof of Theorem~\ref{thm:5}}
\begin{proof}
By Lemma~\ref{lem:01}, we have
\begin{equation}\label{ineq:0}
    \begin{aligned}
        F(\x_{t+1})-F(\x_t)\leq \frac{\eta_t}{2}\|\z_{t+1}-\nabla F(\x_t)\|^2-\frac{\eta_t}{2}\|\nabla F(\x_t)\|^2-\frac{\eta_t}{4}\|\z_{t+1}\|^2.
    \end{aligned}
\end{equation}
The first term on the right hand side can be divided into two terms.
\begin{equation}\label{ineq:1}
    \begin{aligned}
    \left\|\z_{t+1}-\nabla F(\x_t)\right\|^2
    \leq 2\left\|\z_{t+1}-\Delta_t\right\|^2 +2\left\|\Delta_t -\nabla F(\x_t)\right\|^2
    \end{aligned}
\end{equation}
where we have recursion for the first term on the right hand side in Lemma~\ref{lem:202} and the second term is bounded by Lemma~\ref{lem:203}. Combining inequalities~\ref{ineq:0},\ref{ineq:1} and Lemma~\ref{lem:203} gives
\begin{equation}
    \begin{aligned}
        F(\x_{t+1})-F(\x_t)&\leq \eta_t\delta_{z,t}+\frac{\eta_tC_1'}{m}\delta_{y,t}+\frac{\eta_tC_2'}{m}\delta_{v,t} -\frac{\eta_t}{2}\|\nabla F(\x_t)\|^2-\frac{\eta_t}{4}\|\z_{t+1}\|^2.
    \end{aligned}
\end{equation}

Taking summation over $t=1,\dots,T$ yields

\begin{equation}
    \begin{aligned}
        \sum_{t=1}^T\|\nabla F(\x_t)\|^2
        % &\leq \frac{2}{\eta_t}\Bigg[-F(\x_{T+1})+F(\x_1)+ \eta_t\sum_{t=1}^T\delta_{z,t}+\frac{\eta_tC_1'}{m}\sum_{t=1}^T\delta_{y,t}+\frac{\eta_tC_2'}{m}\sum_{t=1}^T\delta_{v,t}\\
        % &\quad  -\frac{\eta_t}{4}\sum_{t=1}^T\|\z_{t+1}\|^2\Bigg]\\
        &\leq \frac{2}{\eta_t}(F(\x_1)-F(\x^*))+ 2\sum_{t=1}^T\delta_{z,t}+\frac{2C_1'}{m}\sum_{t=1}^T\delta_{y,t}+\frac{2C_2'}{m}\sum_{t=1}^T\delta_{v,t} -\frac{1}{2}\sum_{t=1}^T\|\z_{t+1}\|^2
    \end{aligned}
\end{equation}

We enlarge the values of constants $C_1',C_2'$ so that
\begin{equation}\label{ineq:20}
    \begin{aligned}
        &\sum_{t=1}^T\|\nabla F(\x_t)\|^2+\sum_{t=1}^T\delta_{z,t}+\frac{1}{m}\sum_{t=1}^T\delta_{y,t}+\frac{1}{m}\sum_{t=1}^T\delta_{v,t}\\
        &\leq \frac{2}{\eta_t}(F(\x_1)-F(\x^*))+ 3\sum_{t=1}^T\delta_{z,t}+\frac{C_1'}{m}\sum_{t=1}^T\delta_{y,t}+\frac{C_2'}{m}\sum_{t=1}^T\delta_{v,t}-\frac{1}{2}\sum_{t=1}^T\|\z_{t+1}\|^2
    \end{aligned}
\end{equation}

Following from inequality~(\ref{ineq:307}) and Lemma~\ref{lem:204} we have

\begin{equation}\label{ineq:22}
\begin{aligned}
    \E\left[\sum_{t=1}^T\delta_{y,t}\right]&\leq \E\Bigg[\frac{4}{\tau_t\tau \lambda}\delta_{y,1} +  \frac{32}{\lambda^2 }\sum_{t=1}^T\tilde{\delta}_{s,t}  - \frac{8}{\tau_t^2 \lambda }(\frac{1}{2\tau} - \frac{3L_g}{4} )\sum_{t=1}^T\|\y_{t+1}- \y_t\|^2+\frac{32mL_y^2}{\tau_t^2\tau^2 \lambda^2}\sum_{t=1}^T\|\x_{t+1}-\x_t\|^2\Bigg]
\end{aligned}
\end{equation}

%\begin{equation}\label{ineq:23}
%    \begin{aligned}
%        \E\left[\sum_{t=1}^T\delta_{v,t}\right]&\leq \E\Bigg[\frac{4}{\btau_t\btau\lambda}\delta_{v,1}-\frac{8}{\btau_t^2\lambda }\left(\frac{1}{2\btau}-\frac{3L_{\phi v}}{4}\right)\sum_{t=1}^T\|\v_{t+1}-\v_{t}\|^2 +\frac{32}{\lambda^2}\sum_{t=1}^T\delta_{u,t}\\
%        &\quad +\frac{32L_v^2}{\btau_t^2\btau^2\lambda^2}\sum_{t=1}^T\|\x_t-\x_{t+1}\|^2 +\frac{32L_v^2}{\btau_t^2\btau^2\lambda^2}\sum_{t=1}^T\|\y_{t}-\y_{t+1}\|^2\Bigg]\\
%    \end{aligned}
%\end{equation}

\begin{equation}\label{ineq:23}
\begin{aligned}
\E\left[\sum_{i=1}^m\delta_{v,t}\right]&\leq\E\Bigg[\frac{4}{\lambda\btau_t}\E[\delta_{v,1}]+40\sum_{t=1}^T\tilde{\delta}_{u,t}  +\frac{20L_v^2}{\lambda^2\btau_t^2}\sum_{t=1}^T\|\x_{t+1}-\x_{t}\|^2+\frac{20mL_v^2}{\lambda^2\btau_t^2}\sum_{t=1}^T\|\y_{t+1}-\y_{t}\|^2\Bigg]\\
\end{aligned}
\end{equation}

It follows from Lemma~\ref{lem:202} that
\begin{equation}\label{ineq:21}
	\begin{aligned}
        \E\left[\sum_{t=1}^T\delta_{z,t}\right]
        &\leq \E\Bigg[\frac{\delta_{z,1}}{\beta_t}+\frac{2C_3'}{\beta_t}\sum_{t=1}^T\|\x_t-\x_{t+1}\|^2+\frac{2C_3'}{m\beta_t}\sum_{t=1}^T\|\y_t-\y_{t+1}\|^2\\
        &\quad+\frac{2C_4'}{m\beta_t}\sum_{t=1}^T\|\v_t-\v_{t+1}\|^2 +2\beta_tC_5'T\left(\frac{\I(I<m)}{I}+\frac{1}{B}\right)\Bigg]
    \end{aligned}
\end{equation}
Note that 
\begin{equation}
\begin{aligned}
\|\v_t-\v_{t+1}\|^2&\leq \sum_{i=1}^m\btau_t^2\|\u_{i,t}\|^2\\
&= \sum_{i=1}^m\btau_t^2\|\u_{i,t}-\nabla_v \phi_i(\v_{i,t},\x_t,\y_{i,t})+\nabla_v \phi_i(\v_{i,t},\x_t,\y_{i,t})\|^2\\
&\leq \sum_{i=1}^m2\btau_t^2\|\u_{i,t}-\nabla_v \phi_i(\v_{i,t},\x_t,\y_{i,t})\|^2+2\btau_t^2\|\nabla_v \phi_i(\v_{i,t},\x_t,\y_{i,t})-\nabla_v \phi_i(\v_i(\x_t,\y_{i,t}),\x_t,\y_{i,t})\|^2\\
&\leq \sum_{i=1}^m2\btau_t^2\|\u_{i,t}-\nabla_v \phi_i(\v_{i,t},\x_t,\y_{i,t})\|^2+2\btau_t^2L_{\phi v}^2\|\v_{i,t}-\v_i(\x_t,\y_{i,t})\|^2\\
&= 2\btau_t^2\tilde{\delta}_{u,t}+2\btau_t^2L_{\phi v}^2 \delta_{v,t}
\end{aligned}
\end{equation}

Taking summation over all iterations and expectation, and combining with inequality~(\ref{ineq:23}) yields
\begin{equation*}
\begin{aligned}
&\E\left[\sum_{t=1}^T\|\v_t-\v_{t+1}\|^2\right]\\
&\leq \E\left[2\btau_t^2\sum_{t=1}^T\tilde{\delta}_{u,t}+2\btau_t^2L_{\phi v}^2\sum_{t=1}^T\delta_{v,t}\right]\\
&\leq \E\Bigg[\left(2\btau_t^2+80L_{\phi v}^2\btau_t^2\right)\sum_{t=1}^T\tilde{\delta}_{u,t}+\frac{8L_{\phi v}^2\btau_t}{\lambda}\delta_{v,1}  +\frac{40mL_{\phi v}^2L_v^2}{\lambda^2}\sum_{t=1}^T\|\x_t-\x_{t+1}\|^2+\frac{40L_v^2L_{\phi v}^2}{\lambda^2}\sum_{t=1}^T\|\y_{t}-\y_{t+1}\|^2\Bigg]\\
% &\stackrel{(a)}{\leq} \E\Bigg[\left(4\btau_t^2+160L_{\phi v}^2\btau_t^2\right)\sum_{t=1}^T\delta_{u,t}+\left(4\btau_t^2L_{\phi v}^2+160L_{\phi v}^4\btau_t^2\right)\sum_{t=1}^T(m\|\x_{t+1}-\x_t\|^2+\|\y_{t+1}-\y_t\|^2+\|\v_{t+1}-\v_t\|^2)\\
% &\quad +\frac{8L_{\phi v}^2\btau_t}{\lambda}\delta_{v,1}  +\frac{40L_{\phi v}^2L_v^2}{\lambda^2}\sum_{t=1}^T\|\x_t-\x_{t+1}\|^2+\frac{40L_v^2L_{\phi v}^2}{\lambda^2}\sum_{t=1}^T\|\y_{t}-\y_{t+1}\|^2\Bigg]\\
&\stackrel{(a)}{\leq} \E\Bigg[\left(4\btau_t^2+160L_{\phi v}^2\btau_t^2\right)\sum_{t=1}^T\delta_{u,t} +\frac{8L_{\phi v}^2\btau_t}{\lambda}\delta_{v,1} +(4\btau_t^2L_{\phi v}^2+160L_{\phi v}^4\btau_t^2) \sum_{t=1}^T\|\v_{t+1}-\v_t\|^2\\
&\quad +\left(\frac{40mL_{\phi v}^2L_v^2}{\lambda^2}+4m\btau_t^2L_{\phi v}^2+160mL_{\phi v}^4\btau_t^2\right)\sum_{t=1}^T\|\x_t-\x_{t+1}\|^2\\
&\quad +\left(\frac{40L_v^2L_{\phi v}^2}{\lambda^2}+4\btau_t^2L_{\phi v}^2+160L_{\phi v}^4\btau_t^2\right)\sum_{t=1}^T\|\y_{t}-\y_{t+1}\|^2\Bigg]\\
&\stackrel{(b)}{\leq} \E\Bigg[C_6'\btau_t^2\sum_{t=1}^T\delta_{u,t} +\frac{8L_{\phi v}^2\btau_t}{\lambda}\delta_{v,1} +C_6'\btau_t^2 \sum_{t=1}^T\|\v_{t+1}-\v_t\|^2+\left(\frac{40mL_{\phi v}^2L_v^2}{\lambda^2}+mC_6'\btau_t^2\right)\sum_{t=1}^T\|\x_t-\x_{t+1}\|^2\\
&\quad +\left(\frac{40L_v^2L_{\phi v}^2}{\lambda^2}+C_6'\btau_t^2\right)\sum_{t=1}^T\|\y_{t}-\y_{t+1}\|^2\Bigg]
\end{aligned}
\end{equation*}
where inequality (a) follows from inequality~(\ref{ineq:D2d}) and (\ref{ineq:xy_sum}), and in (b) we denote $C_6'=(4+160L_{\phi v}^2)\max\{1,L_{\phi v}^2\}$.

Combining with inequality~(\ref{ineq:25}), we have
\begin{equation*}
\begin{aligned}
&\E\left[\sum_{t=1}^T\|\v_t-\v_{t+1}\|^2\right]\\
&\leq \E\Bigg[\frac{mC_6'\btau_t^2}{I \balp_t}\delta_{u,1} +\frac{8L_{\phi v}^2\btau_t}{\lambda}\delta_{v,1} +\frac{2C_6'\btau_t^2m\balp_t \sigma^2T}{B} +\left(C_6'\btau_t^2+\frac{8C_6'\btau_t^2m^2 L_{\phi v}^2}{\balp_t I^2}\right) \sum_{t=1}^T\|\v_{t+1}-\v_t\|^2\\
&\quad +\left(\frac{40mL_{\phi v}^2L_v^2}{\lambda^2}+mC_6'\btau_t^2+\frac{8C_6'\btau_t^2m^3 L_{\phi v}^2}{\balp_t I^2}\right)\sum_{t=1}^T\|\x_t-\x_{t+1}\|^2\\
&\quad +\left(\frac{40L_v^2L_{\phi v}^2}{\lambda^2}+C_6'\btau_t^2+\frac{8C_6'\btau_t^2m^2 L_{\phi v}^2}{\balp_t I^2}\right)\sum_{t=1}^T\|\y_{t}-\y_{t+1}\|^2\Bigg]
\end{aligned}
\end{equation*}

Setting $\btau_t^2 \leq \min\{\frac{1}{4C_6'}, C_{\btau}\frac{I^2}{m^2}\balp_t\}$ where $C_{\btau}:= \frac{1}{32C_6' L_{\phi v}^2}$, i.e. $\frac{8C_6'm^2 L_{\phi v}^2\btau_t^2}{\balp_t I^2}\leq \frac{1}{4}$ and $C_6'\btau^2\leq \frac{1}{4}$, we have
\begin{equation}\label{ineq:29}
\begin{aligned}
\E\left[\sum_{t=1}^T\|\v_t-\v_{t+1}\|^2\right]
&\leq \E\Bigg[\frac{2mC_6'\btau_t^2}{I \balp_t}\delta_{u,1} +\frac{16L_{\phi v}^2\btau_t}{\lambda}\delta_{v,1} +\frac{4C_6'\btau_t^2m\balp_t \sigma^2T}{B} \\
&\quad +\left(\frac{80mL_{\phi v}^2L_v^2}{\lambda^2}+2mC_6'\btau_t^2+\frac{16C_6'\btau_t^2m^3 L_{\phi v}^2}{\balp_t I^2}\right)\sum_{t=1}^T\|\x_t-\x_{t+1}\|^2\\
&\quad +\left(\frac{80L_v^2L_{\phi v}^2}{\lambda^2}+2C_6'\btau_t^2+\frac{16C_6'\btau_t^2m^2 L_{\phi v}^2}{\balp_t I^2}\right)\sum_{t=1}^T\|\y_{t}-\y_{t+1}\|^2\Bigg]
\end{aligned}
\end{equation}

Combining inequalities~(\ref{ineq:20}), (\ref{ineq:21}), (\ref{ineq:22}), (\ref{ineq:23}), we obtain

\begin{equation}
    \begin{aligned}
        &\E\Bigg[\sum_{t=1}^T\|\nabla F(\x_t)\|^2+\sum_{t=1}^T\delta_{z,t}+\frac{1}{m}\sum_{t=1}^T\delta_{y,t}+\frac{1}{m}\sum_{t=1}^T\delta_{v,t}\Bigg]\\
        % &\leq \E\Bigg[\frac{2}{\eta_t}(F(\x_1)-F(\x^*))+ \frac{3\delta_{z,1}}{\beta_t}+\frac{6C_3'}{\beta_t}\sum_{t=1}^T\|\x_t-\x_{t+1}\|^2+\frac{6C_3'}{m\beta_t}\sum_{t=1}^T\|\y_t-\y_{t+1}\|^2\\
        % &\quad +\frac{6C_4'}{m\beta_t}\sum_{t=1}^T\|\v_t-\v_{t+1}\|^2 +\frac{6\beta_tC_5'T}{\min\{ I ,B\}}\\
        % &\quad +\frac{C_1'}{m}\Bigg(\frac{4}{\tau_t\tau \lambda}\delta_{y,1} +  \frac{32}{\lambda^2 }\sum_{t=1}^T\delta_{s,t}  - \frac{8}{\tau_t^2 \lambda }(\frac{1}{2\tau} - \frac{3L_g}{4} )\sum_{t=1}^T\|\y_{t+1}- \y_t\|^2\\
    % &\quad +\frac{32L_y^2}{\tau_t^2\tau^2 \lambda^2}\sum_{t=1}^T\|\x_t-\x_{t+1}\|^2\Bigg)\\
        % &\quad +\frac{C_2'}{m}\Bigg(\frac{4}{\lambda\btau_t}\E[\delta_{v,1}]+40\sum_{t=1}^T\delta_{u,t}  +\frac{20L_v^2}{\lambda^2\btau_t^2}\sum_{t=1}^T\|\x_t-\x_{t+1}\|^2+\frac{20L_v^2}{\lambda^2\btau_t^2}\sum_{t=1}^T\|\y_{t}-\y_{t+1}\|^2\Bigg)\\
        % &\quad -\frac{1}{2}\sum_{t=1}^T\|\z_{t+1}\|^2\Bigg]\\
        &\leq \E\Bigg[\frac{2}{\eta_t}(F(\x_1)-F(\x^*))+ \frac{3\delta_{z,1}}{\beta_t}+6\beta_tC_5'T\left(\frac{\I(I<m)}{I}+\frac{1}{B}\right)+\frac{4C_1'}{m\tau_t\tau \lambda}\delta_{y,1} +\frac{4C_2'}{m\lambda\btau_t}\delta_{v,1}\\
        &\quad +\left(\frac{6C_3'}{\beta_t}+\frac{32C_1'L_y^2}{\tau_t^2\tau^2 \lambda^2}+\frac{20C_2'L_v^2}{\lambda^2\btau_t^2}\right)\sum_{t=1}^T\|\x_t-\x_{t+1}\|^2-\frac{1}{2}\sum_{t=1}^T\|\z_{t+1}\|^2\\
        &\quad +\left(\frac{6C_3'}{m\beta_t} - \frac{8C_1'}{m\tau_t^2 \lambda }(\frac{1}{2\tau} - \frac{3L_g}{4} )+\frac{20C_2'L_v^2}{m\lambda^2\btau_t^2}\right)\sum_{t=1}^T\|\y_t-\y_{t+1}\|^2\\
        &\quad +\frac{6C_4'}{m\beta_t}\sum_{t=1}^T\|\v_t-\v_{t+1}\|^2  +  \frac{32C_1'}{m\lambda^2 }\sum_{t=1}^T\tilde{\delta}_{s,t}  +\frac{40C_2'}{m}\sum_{t=1}^T\tilde{\delta}_{u,t}  \Bigg]\\
         &\stackrel{(a)}{\leq} \E\Bigg[\frac{2}{\eta_t}(F(\x_1)-F(\x^*))+ \frac{3\delta_{z,1}}{\beta_t}+6\beta_tC_5'T\left(\frac{\I(I<m)}{I}+\frac{1}{B}\right)+\frac{4C_1'}{m\tau_t\tau \lambda}\delta_{y,1} +\frac{4C_2'}{m\lambda\btau_t}\delta_{v,1}\\
        &\quad +\left(\frac{6C_3'}{\beta_t}+\frac{32C_1'L_y^2}{\tau_t^2\tau^2 \lambda^2}+\frac{20C_2'L_v^2}{\lambda^2\btau_t^2}+\frac{64C_1'L_{gy}^2}{\lambda^2 }+80C_2'L_{\phi v}^2\right)\sum_{t=1}^T\|\x_t-\x_{t+1}\|^2-\frac{1}{2}\sum_{t=1}^T\|\z_{t+1}\|^2\\
        &\quad +\left(\frac{6C_3'}{m\beta_t} - \frac{8C_1'}{m\tau_t^2 \lambda }(\frac{1}{2\tau} - \frac{3L_g}{4} )+\frac{20C_2'L_v^2}{m\lambda^2\btau_t^2}+\frac{64C_1'L_{gy}^2}{m\lambda^2 }+\frac{80C_2'L_{gyy}^2}{m}\right)\sum_{t=1}^T\|\y_t-\y_{t+1}\|^2\\
        &\quad +\left(\frac{6C_4'}{m\beta_t}+\frac{80C_2'L_{gyy}^2}{m}\right)\sum_{t=1}^T\|\v_t-\v_{t+1}\|^2  +  \frac{64C_1'}{m\lambda^2 }\sum_{t=1}^T\delta_{s,t}  +\frac{80C_2'}{m}\sum_{t=1}^T\delta_{u,t}  
    \end{aligned}
\end{equation}
where (a) uses inequality~(\ref{ineq:D2d}) and (\ref{ineq:xy_sum}).

Enlarge the value of constant $C_2'$ so that $C_2'\geq \max\left\{\frac{64C_1'}{\lambda^2}+1, 80C_2'+1\right\}$.

Combining with inequalities~(\ref{ineq:24}), (\ref{ineq:25}), we have
\begin{equation}
    \begin{aligned}
    &\E\Bigg[\sum_{t=1}^T\|\nabla F(\x_t)\|^2+\sum_{t=1}^T\delta_{z,t}+\frac{1}{m}\sum_{t=1}^T\delta_{y,t}+\frac{1}{m}\sum_{t=1}^T\delta_{v,t}+\frac{1}{m}\sum_{t=1}^T\delta_{s,t}+\frac{1}{m}\sum_{t=1}^T\delta_{u,t}\Bigg]\\
        % &\leq \E\Bigg[\frac{2}{\eta_t}(F(\x_1)-F(\x^*))+ \frac{3\delta_{z,1}}{\beta_t}+\frac{6\beta_tC_5'T}{\min\{ I ,B\}}+\frac{4C_1'}{m\tau_t\tau \lambda}\delta_{y,1} +\frac{4C_2'}{m\lambda\btau_t}\delta_{v,1}\\
        % &\quad +\left(\frac{6C_3'}{\beta_t}+\frac{32C_1'L_y^2}{m\tau_t^2\tau^2 \lambda^2}+\frac{20C_2'L_v^2}{m\lambda^2\btau_t^2}+\frac{64C_1'L_{gy}^2}{\lambda^2 }+80C_2'L_{\phi v}^2\right)\sum_{t=1}^T\|\x_t-\x_{t+1}\|^2-\frac{1}{2}\sum_{t=1}^T\|\z_{t+1}\|^2\\
        % &\quad +\left(\frac{6C_3'}{m\beta_t} - \frac{8C_1'}{m\tau_t^2 \lambda }(\frac{1}{2\tau} - \frac{3L_g}{4} )+\frac{20C_2'L_v^2}{m\lambda^2\btau_t^2}+\frac{64C_1'L_{gy}^2}{m\lambda^2 }+\frac{80C_2'L_{gyy}^2}{m}\right)\sum_{t=1}^T\|\y_t-\y_{t+1}\|^2\\
        % &\quad +\left(\frac{6C_4'}{m\beta_t}+\frac{80C_2'L_{gyy}^2}{m}\right)\sum_{t=1}^T\|\v_t-\v_{t+1}\|^2  \\
        % &\quad +  \frac{64C_1'}{m\lambda^2 }\Bigg[\frac{m}{I \alpha_t}\delta_{s,1}+\frac{2m\alpha_t \sigma^2T}{B} +\frac{8m^3 L_{gy}^2}{\alpha_t I^2}\sum_{t=1}^T\left(\|\x_{t+1}-\x_t\|^2+\frac{1}{m}\|\y_{t+1}-\y_t\|^2\right)\Bigg]\\
        % &\quad +\frac{80C_2'}{m}\Bigg[\frac{m}{I \balp_t}\delta_{u,1}+\frac{2m\balp_t \sigma^2T}{B} +\frac{8m^3 L_{\phi v}^2}{\balp_t I^2}\sum_{t=1}^T\bigg(\frac{1}{m}\|\v_{t+1}-\v_t\|^2+\|\x_{t+1}-\x_t\|^2 +\frac{1}{m}\|\y_{t+1}-\y_t\|^2\bigg)\Bigg]\Bigg]\\
        &\leq \E\Bigg[\frac{2}{\eta_t}(F(\x_1)-F(\x^*))+ \frac{3\delta_{z,1}}{\beta_t}+6\beta_tC_5'T\left(\frac{\I(I<m)}{I}+\frac{1}{B}\right)+\frac{4C_1'}{m\tau_t\tau \lambda}\delta_{y,1} +\frac{4C_2'}{m\lambda\btau_t}\delta_{v,1}+  \frac{64C_1'}{\lambda^2 I \alpha_t}\delta_{s,1}+\frac{80C_2'}{I \balp_t}\delta_{u,1}\\
        &\quad +\frac{128C_1'\alpha_t \sigma^2T}{\lambda^2 B} +\frac{160C_2'\balp_t \sigma^2T}{B} +\Bigg(\frac{6C_3'}{\beta_t}+\frac{32C_1'L_y^2}{\tau_t^2\tau^2 \lambda^2}+\frac{20C_2'L_v^2}{\lambda^2\btau_t^2}+\frac{64C_1'L_{gy}^2}{\lambda^2 }+80C_2'L_{\phi v}^2\\
        &\quad +\frac{512m^2L_{gy}^2}{\lambda^2 \alpha_t I^2}+\frac{640C_2'm^2 L_{\phi v}^2}{\balp_t I^2}\Bigg)\sum_{t=1}^T\|\x_t-\x_{t+1}\|^2-\frac{1}{2}\sum_{t=1}^T\|\z_{t+1}\|^2 +\Bigg(\frac{6C_3'}{m\beta_t} - \frac{8C_1'}{m\tau_t^2 \lambda }(\frac{1}{2\tau} - \frac{3L_g}{4} )\\
        &\quad +\frac{20C_2'L_v^2}{m\lambda^2\btau_t^2}+\frac{64C_1'L_{gy}^2}{m\lambda^2 }+\frac{80C_2'L_{gyy}^2}{m}+\frac{512mL_{gy}^2}{\lambda^2 \alpha_t I^2}+\frac{640C_2'm L_{\phi v}^2}{\balp_t I^2}\Bigg)\sum_{t=1}^T\|\y_t-\y_{t+1}\|^2\\
        &\quad +\left(\frac{6C_4'}{m\beta_t}+\frac{80C_2'L_{gyy}^2}{m}+\frac{640C_2'm L_{\phi v}^2}{\balp_t I^2}\right)\sum_{t=1}^T\|\v_t-\v_{t+1}\|^2  \Bigg]
    \end{aligned}
\end{equation}
Combining with inequality~(\ref{ineq:29}), we have
\begin{equation}
    \begin{aligned}
    &\E\Bigg[\sum_{t=1}^T\|\nabla F(\x_t)\|^2+\sum_{t=1}^T\delta_{z,t}+\frac{1}{m}\sum_{t=1}^T\delta_{y,t}+\frac{1}{m}\sum_{t=1}^T\delta_{v,t}+\frac{1}{m}\sum_{t=1}^T\delta_{s,t}+\frac{1}{m}\sum_{t=1}^T\delta_{u,t}\Bigg]\\
        &\leq \E\Bigg[\frac{2}{\eta_t}(F(\x_1)-F(\x^*))+ \frac{3\delta_{z,1}}{\beta_t}+6\beta_tC_5'T\left(\frac{\I(I<m)}{I}+\frac{1}{B}\right)+\frac{4C_1'}{m\tau_t\tau \lambda}\delta_{y,1} +\frac{4C_2'}{m\lambda\btau_t}\delta_{v,1}+  \frac{64C_1'}{\lambda^2 I \alpha_t}\delta_{s,1}+\frac{80C_2'}{I \balp_t}\delta_{u,1}\\
        &\quad +\frac{128C_1'\alpha_t \sigma^2T}{\lambda^2 B} +\frac{160C_2'\balp_t \sigma^2T}{B} +\Bigg(\frac{6C_3'}{\beta_t}+\frac{32C_1'L_y^2}{\tau_t^2\tau^2 \lambda^2}+\frac{20C_2'L_v^2}{\lambda^2\btau_t^2}+\frac{64C_1'L_{gy}^2}{\lambda^2 }+80C_2'L_{\phi v}^2\\
        &\quad +\frac{512m^2L_{gy}^2}{\lambda^2 \alpha_t I^2}+\frac{640C_2'm^2 L_{\phi v}^2}{\balp_t I^2}\Bigg)\sum_{t=1}^T\|\x_t-\x_{t+1}\|^2-\frac{1}{2}\sum_{t=1}^T\|\z_{t+1}\|^2 +\Bigg(\frac{6C_3'}{m\beta_t} - \frac{8C_1'}{m\tau_t^2 \lambda }(\frac{1}{2\tau} - \frac{3L_g}{4} )\\
        &\quad +\frac{20C_2'L_v^2}{m\lambda^2\btau_t^2}+\frac{64C_1'L_{gy}^2}{m\lambda^2 }+\frac{80C_2'L_{gyy}^2}{m}+\frac{512mL_{gy}^2}{\lambda^2 \alpha_t I^2}+\frac{640C_2'm L_{\phi v}^2}{\balp_t I^2}\Bigg)\sum_{t=1}^T\|\y_t-\y_{t+1}\|^2\\
        &\quad +\left(\frac{6C_4'}{m\beta_t}+\frac{80C_2'L_{gyy}^2}{m}+\frac{640C_2'm L_{\phi v}^2}{\balp_t I^2}\right)\Bigg[\frac{2mC_6'\btau_t^2}{I \balp_t}\delta_{u,1} +\frac{16L_{\phi v}^2\btau_t}{\lambda}\delta_{v,1} +\frac{4C_6'\btau_t^2m\balp_t \sigma^2T}{B} \\
&\quad +\left(\frac{80mL_{\phi v}^2L_v^2}{\lambda^2}+2mC_6'\btau_t^2+\frac{16C_6'\btau_t^2m^3 L_{\phi v}^2}{\balp_t I^2}\right)\sum_{t=1}^T\|\x_t-\x_{t+1}\|^2\\
&\quad +\left(\frac{80L_v^2L_{\phi v}^2}{\lambda^2}+2C_6'\btau_t^2+\frac{16C_6'\btau_t^2m^2 L_{\phi v}^2}{\balp_t I^2}\right)\sum_{t=1}^T\|\y_{t}-\y_{t+1}\|^2\Bigg]  \Bigg]
    \end{aligned}
\end{equation}
Recall that $\btau_t^2 \leq \min\{\frac{1}{4C_6'}, C_{\btau}\frac{I^2}{m^2}\balp_t\}$, i.e. $2C_6'\btau_t^2+\frac{16C_6'\btau_t^2m^2 L_{\phi v}^2}{\balp_t I^2}\leq 1$, and let 
\begin{equation*}
    C_7'=\max\left\{18C_4', 240C_2'L_{gyy}^2,1920C_2' L_{\phi v}^2\right\}\left(\frac{80L_v^2L_{\phi v}^2}{\lambda^2}+1\right) 
\end{equation*}
then
\begin{equation}
    \begin{aligned}
    &\E\Bigg[\sum_{t=1}^T\|\nabla F(\x_t)\|^2+\sum_{t=1}^T\delta_{z,t}+\frac{1}{m}\sum_{t=1}^T\delta_{y,t}+\frac{1}{m}\sum_{t=1}^T\delta_{v,t}+\frac{1}{m}\sum_{t=1}^T\delta_{s,t}+\frac{1}{m}\sum_{t=1}^T\delta_{u,t}\Bigg]\\
        &\leq \E\Bigg[\frac{2}{\eta_t}(F(\x_1)-F(\x^*))+ \frac{3\delta_{z,1}}{\beta_t}+6\beta_tC_5'T\left(\frac{\I(I<m)}{I}+\frac{1}{B}\right)+\frac{4C_1'}{m\tau_t\tau \lambda}\delta_{y,1} +\frac{4C_2'}{m\lambda\btau_t}\delta_{v,1}+  \frac{64C_1'}{\lambda^2 I \alpha_t}\delta_{s,1}\\
        &\quad +\frac{80C_2'}{I \balp_t}\delta_{u,1}+\frac{128C_1'\alpha_t \sigma^2T}{\lambda^2 B} +\frac{160C_2'\balp_t \sigma^2T}{B} +\Bigg(\frac{6C_3'}{\beta_t}+\frac{32C_1'L_y^2}{\tau_t^2\tau^2 \lambda^2}+\frac{20C_2'L_v^2}{\lambda^2\btau_t^2}+\frac{64C_1'L_{gy}^2}{\lambda^2 }+80C_2'L_{\phi v}^2\\
        &\quad +\frac{512m^2L_{gy}^2}{\lambda^2 \alpha_t I^2}+\frac{640C_2'm^2 L_{\phi v}^2}{\balp_t I^2}\Bigg)\sum_{t=1}^T\|\x_t-\x_{t+1}\|^2-\frac{1}{2}\sum_{t=1}^T\|\z_{t+1}\|^2 +\Bigg(\frac{6C_3'}{m\beta_t} - \frac{8C_1'}{m\tau_t^2 \lambda }(\frac{1}{2\tau} - \frac{3L_g}{4} )\\
        &\quad +\frac{20C_2'L_v^2}{m\lambda^2\btau_t^2}+\frac{64C_1'L_{gy}^2}{m\lambda^2 }+\frac{80C_2'L_{gyy}^2}{m}+\frac{512mL_{gy}^2}{\lambda^2 \alpha_t I^2}+\frac{640C_2'm L_{\phi v}^2}{\balp_t I^2}\Bigg)\sum_{t=1}^T\|\y_t-\y_{t+1}\|^2\\
        &\quad +C_7'\left(\frac{1}{m\beta_t}+\frac{1}{m}+\frac{m }{\balp_t I^2}\right)\left(\frac{2mC_6'\btau_t^2}{I \balp_t}\delta_{u,1} +\frac{16L_{\phi v}^2\btau_t}{\lambda}\delta_{v,1} +\frac{4C_6'\btau_t^2m\balp_t \sigma^2T}{B} \right)\\
&\quad +C_7'\left(\frac{1}{\beta_t}+1+\frac{m^2 }{\balp_t I^2}\right)\sum_{t=1}^T\|\x_t-\x_{t+1}\|^2 +C_7'\left(\frac{1}{m\beta_t}+\frac{1}{m}+\frac{m }{\balp_t I^2}\right)\sum_{t=1}^T\|\y_{t}-\y_{t+1}\|^2\Bigg]  \Bigg]
    \end{aligned}
\end{equation}

To ensure the coefficient of $\sum_{t=1}^T\|\y_{t+1}-\y_t\|^2$, we set
\begin{equation*}
\begin{aligned}
\tau_t^2 &= C_8'\min\left\{\beta_t,\btau_t^2,\frac{I^2 \alpha_t}{m^2},\frac{I^2 \balp_t}{m^2}\right\}\\
&\leq \frac{8C_1'}{9m \lambda }(\frac{1}{2\tau} - \frac{3L_g}{4} ) \min\left\{\frac{m\beta_t}{6C_3'}  ,\frac{m\lambda^2\btau_t^2}{20C_2'L_v^2}, \frac{m\lambda^2 }{64C_1'L_{gy}^2}, \frac{m}{80C_2'L_{gyy}^2}, \frac{\lambda^2 \alpha_t I^2}{512mL_{gy}^2}, \frac{\balp_t I^2}{640C_2'm L_{\phi v}^2},\frac{m\beta_t}{C_7'}, \frac{m}{C_7'}, \frac{\balp_t I^2}{mC_7' }\right\}
\end{aligned}
\end{equation*}
where $C_8' =\frac{8C_1'}{9 \lambda }(\frac{1}{2\tau} - \frac{3L_g}{4} ) \min\left\{\frac{1}{6C_3'}  ,\frac{\lambda^2}{20C_2'L_v^2}, \frac{\lambda^2 }{64C_1'L_{gy}^2}, \frac{1}{80C_2'L_{gyy}^2}, \frac{\lambda^2 }{512L_{gy}^2}, \frac{1}{640C_2' L_{\phi v}^2},\frac{1}{C_7'}\right\}$. Let 
\begin{equation*}
    \begin{aligned}
        C_9' = 11\max\bigg\{ 3, 6 C_5', \frac{4C_1'}{\tau \lambda}, \frac{4C_2'}{\lambda},  \frac{64C_1'}{\lambda^2 }, 80C_2', \frac{128C_1' \sigma^2}{\lambda^2 } , 160C_2' \sigma^2,2C_7'C_6' ,\frac{16C_7'L_{\phi v}^2}{\lambda}, 4C_7'C_6' \sigma^2 \bigg\}
    \end{aligned}
\end{equation*}

It follows
\begin{equation}
    \begin{aligned}
    &\E\Bigg[\sum_{t=1}^T\|\nabla F(\x_t)\|^2+\sum_{t=1}^T\delta_{z,t}+\frac{1}{m}\sum_{t=1}^T\delta_{y,t}+\frac{1}{m}\sum_{t=1}^T\delta_{v,t}+\frac{1}{m}\sum_{t=1}^T\delta_{s,t}+\frac{1}{m}\sum_{t=1}^T\delta_{u,t}\Bigg]\\
        &\leq \E\Bigg[\frac{2}{\eta_t}(F(\x_1)-F(\x^*))+ C_9'\Bigg( \frac{\delta_{z,1}}{\beta_t}+\frac{\delta_{y,1}}{m\tau_t} +\left(\frac{1}{m\btau_t}+\frac{\btau_t}{m\beta_t}+\frac{\btau_t}{m}+\frac{m \btau_t}{\balp_t I^2}\right)\delta_{v,1}\\
        &\quad +  \frac{\delta_{s,1}}{ I \alpha_t}+\left(\frac{1}{I \balp_t}+\frac{\btau_t^2}{I \balp_t\beta_t}+\frac{\btau_t^2}{I \balp_t}+\frac{m^2\btau_t^2}{I \balp_t^2 I^2}\right)\delta_{u,1}\\
        &\quad +\beta_t T\left(\frac{\I(I<m)}{I}+\frac{1}{B}\right)+\frac{\alpha_t T}{B} +\frac{\balp_t T}{B}+\frac{\btau_t^2\balp_t T}{\beta_tB}+\frac{\btau_t^2\balp_t T}{B}+\frac{m^2\btau_t^2 T }{ BI^2}\Bigg) \\
        &\quad +\Bigg(\frac{6C_3'}{\beta_t}+\frac{32C_1'L_y^2}{\tau_t^2\tau^2 \lambda^2}+\frac{20C_2'L_v^2}{\lambda^2\btau_t^2}+\frac{64C_1'L_{gy}^2}{\lambda^2 }+80C_2'L_{\phi v}^2\\
        &\quad +\frac{512m^2L_{gy}^2}{\lambda^2 \alpha_t I^2}+\frac{640C_2'm^2 L_{\phi v}^2}{\balp_t I^2}\Bigg)\sum_{t=1}^T\|\x_t-\x_{t+1}\|^2-\frac{1}{2}\sum_{t=1}^T\|\z_{t+1}\|^2 \\
&\quad +C_7'\left(\frac{1}{\beta_t}+1+\frac{m^2 }{\balp_t I^2}\right)\sum_{t=1}^T\|\x_t-\x_{t+1}\|^2 \Bigg]  \Bigg]
    \end{aligned}
\end{equation}

Setting $\beta_t,\balp_t  \leq \frac{\epsilon
^2}{18C_9'\left(\frac{\I(I<m)}{I}+\frac{1}{B}\right)}$, $\alpha_t\leq \frac{B}{18C_9'}\epsilon^2$, $\btau_t^2 \leq \frac{C_{\btau} I^2}{m^2}\balp_t$, we have
\begin{equation}
C_9'\bigg(\beta_t \left(\frac{\I(I<m)}{I}+\frac{1}{B}\right)+\frac{\alpha_t }{B} +\frac{\balp_t }{B}+\frac{\btau_t^2\balp_t }{\beta_tB}+\frac{\btau_t^2\balp_t }{B}+\frac{m^2\btau_t^2 }{ BI^2}\bigg)\leq \frac{\epsilon^2}{3}
\end{equation}
and
\begin{equation}
\begin{aligned}
\tau_t^2 &\leq C_8'\min\left\{\beta_t,\btau_t^2,\frac{I^2 \alpha_t}{m^2},\frac{I^2 \balp_t}{m^2},\right\}\\
&= \frac{C_8'}{18C_9'}\min\left\{\frac{\epsilon^2}{\frac{\I(I<m)}{I}+\frac{1}{B}},\frac{C_{\btau} I^2\epsilon^2}{m^2\left(\frac{\I(I<m)}{I}+\frac{1}{B}\right)}\epsilon^2,\frac{I^2B }{m^2}\epsilon^2,\frac{I^2\epsilon^2}{m^2\left(\frac{\I(I<m)}{I}+\frac{1}{B}\right)}\epsilon^2,\right\}\\
&= \frac{C_8'}{18C_9'}\frac{I^2 \min\left\{1,C_{\btau}\right\}}{m^2\left(\frac{\I(I<m)}{I}+\frac{1}{B}\right)}\epsilon^2\\
\end{aligned}
\end{equation}

To ensure the coefficient of $\sum_{t=1}^T\|\x_t-\x_{t+1}\|^2$ is non-positive, we set
\begin{equation}
\begin{aligned}
\eta_t^2&\leq C_{11}'\frac{ I^2\epsilon^2}{m^2 } \left(\frac{\I(I<m)}{I}+\frac{1}{B}\right)^{-1}\\
&\leq C_{10}'\min\left\{\beta_t, \tau_t^2, \btau_t^2, \frac{\alpha_t I^2}{m^2 }, \frac{\balp_t I^2}{m^2 } \right\}\\
&\leq \frac{1}{20}\min\left\{\frac{\beta_t}{6C_3'},\frac{\tau_t^2\tau^2 \lambda^2}{32C_1'L_y^2},\frac{\lambda^2\btau_t^2}{20C_2'L_v^2},\frac{\lambda^2 }{64C_1'L_{gy}^2},\frac{1}{80C_2'L_{\phi v}^2},\frac{\lambda^2 \alpha_t I^2}{512m^2L_{gy}^2},\frac{\balp_t I^2}{640C_2'm^2 L_{\phi v}^2},\frac{\beta_t}{C_7'},\frac{1}{C_7'},\frac{\balp_t I^2}{m^2C_7'}\right\}
\end{aligned}
\end{equation}
where $C_{10}'= \frac{1}{20}\min\left\{\frac{1}{6C_3'},\frac{\tau^2 \lambda^2}{32C_1'L_y^2},\frac{\lambda^2}{20C_2'L_v^2},\frac{\lambda^2 }{64C_1'L_{gy}^2},\frac{1}{80C_2'L_{\phi v}^2},\frac{\lambda^2}{512L_{gy}^2},\frac{1}{640C_2' L_{\phi v}^2},\frac{1}{C_7'},\frac{1}{C_7'},\frac{1}{C_7'}\right\}$, and $C_{11}'=C_{10}'\min\left\{\frac{1}{18C_9'},\frac{C_8'\min\left\{1,C_{\btau}\right\}}{18C_9'},\frac{C_{\btau}}{18C_9'}\right\}$

Thus, with $T = c_T \epsilon^{-3} :=6\Delta \frac{m}{\sqrt{C_{11}'}I}\left(\frac{\I(I<m)}{\sqrt{I}}+\frac{1}{\sqrt{B}}\right)\epsilon^{-3}\geq 6\Delta \frac{m}{\sqrt{C_{11}'}I}\left(\frac{\I(I<m)}{I}+\frac{1}{B}\right)^{1/2}\epsilon^{-3}$, we have
\begin{equation}
\begin{aligned}
\frac{2\Delta}{\eta_t T} = 2\Delta 
\frac{m}{\sqrt{C_{11}'}I\sqrt{\min\{I,B\}}}\epsilon^{-1}\frac{1}{T} = \frac{\epsilon^2}{3}
\end{aligned}
\end{equation}

$\frac{C_9'}{T}\bigg[\frac{\delta_{z,1}}{\beta_t}+\frac{\delta_{y,1}}{m\tau_t} +\left(\frac{1}{m\btau_t}+\frac{\btau_t}{m\beta_t}+\frac{\btau_t}{m}+\frac{m \btau_t}{\balp_t I^2}\right)\delta_{v,1}+  \frac{\delta_{s,1}}{ I \alpha_t}+\left(\frac{1}{I \balp_t}+\frac{\btau_t^2}{I \balp_t\beta_t}+\frac{\btau_t^2}{I \balp_t}+\frac{m^2\btau_t^2}{ \balp_t^2 I^3}\right)\delta_{u,1}\bigg] \leq \frac{\epsilon^2}{3}$ can be achieved by processing all lower problems at the beginning and finding good initial solutions $\delta_{z,1},\delta_{s,1},\delta_{u,1}$ with accuracy $\O(\epsilon)$ with complexity $\O(\epsilon^{-1})$, and $\delta_{y,1},\delta_{v,1}$ with accuracy $\O(1)$ with complexity $\O(1)$. Denote the iteration number for initialization as $T_0$. Then the total iteration complexity is $\O\left(\frac{m\epsilon^{-3}\mathbb I(I<m)}{I\sqrt{I}} + \frac{m\epsilon^{-3}}{I\sqrt{B}}\right)$.

\end{proof}

\section{Convergence Analysis of RE-$\name$}

\subsection{Convergence Analysis of RE-$\vone$}

We present the formal statement of Theorem~\ref{thm:RE_informal} for RE-$\vone$.

\begin{theorem}\label{thm:6_RE}
Suppose Assumptions~\ref{ass:1} and~\ref{ass:2} hold and the PL condition holds. Set $\alpha_1=\balp_1=\beta_1\leq \frac{1}{2}$, $\tau_1=\sqrt{\frac{C_8I\alpha_1}{m}}$, $\eta_1=\min\left\{\frac{1}{2L_F},\sqrt{\frac{C_{9}I^2\alpha_1}{m^2}}\right\}$, $T_1 =O\left( \max\left\{ \frac{m}{\mu\eta_1},\frac{m}{\mu\beta_1}\left(\frac{\I(I<m)}{I}+\frac{1}{B}\right)^{-1},\frac{m}{\mu\tau_1 }\left(\frac{\I(I<m)}{I}+\frac{1}{B}\right)^{-1}\right\}\right)$. Define a constant $\epsilon_1 = \frac{7C_{10}(\beta_1+\alpha_1+\balp_1)}{\mu}\left(\frac{\I(I<m)}{I}+\frac{1}{B}\right)$ and $\epsilon_k=\epsilon_1/2^{k-1}$. For $k\geq2$, setting $\beta_k=\alpha_k=\balp_k \leq \frac{\mu \epsilon_k }{21C_{10}}\left(\frac{\I(I<m)}{I}+\frac{1}{B}\right)^{-1}$, $\tau_k= \frac{\sqrt{C_8\alpha_k} I}{m}$, $\eta_k=\sqrt{C_{9}}\sqrt{\min\left\{   \tau_k^2, \frac{\alpha_k I^2}{m^2 } \right\}}$, $T_k = O\left(\max\left\{\frac{1}{\mu\eta_k},\frac{1}{\beta_k},\frac{1}{\tau_k}\right\}\right)$, where and $C_1\sim C_{11}$ are as used in Theorem~\ref{thm:6}, then after  $K=O(\log(\epsilon_1/\epsilon))$ stages, the output of RE-$\vone$ satisfies  $\E[F(\x_K) - F(\x^*)]\leq \epsilon$. 
\end{theorem}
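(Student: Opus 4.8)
The plan is to run the standard stagewise \emph{restarting} argument of the type used for RE-RSVRB in~\cite{https://doi.org/10.48550/arxiv.2105.02266}: each invocation of \name\ is viewed as contracting, by a constant factor, a Lyapunov potential that couples the objective gap with \emph{all} the estimator errors the algorithm maintains, and the PL condition is the ingredient that upgrades the per-stage stationarity guarantee of Theorem~\ref{thm:6} into geometric decay of the gap. The coupling is unavoidable here because we deliberately carry the estimator state $(\y,\s,H,\z)$ from one stage to the next instead of reinitializing it with a large batch.

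First I would re-derive the single-stage inequality from the proof of Theorem~\ref{thm:6}, stopping one step before the final substitution of $T$. Writing $F^\star:=\min_\x F(\x)$, this gives, for a stage with constant step sizes run from $\x_1$,
\[
\E\Big[\tfrac1T\textstyle\sum_{t=1}^T\big(\|\nabla F(\x_t)\|^2+\delta_{z,t}+\tfrac1m\delta_{y,t}+\tfrac1m\delta_{H,t}+\tfrac1m\delta_{s,t}\big)\Big]\;\le\;\tfrac{A}{T}+N,
\]
with initialization cost $A:=\tfrac{2(F(\x_1)-F^\star)}{\eta}+C_{10}\big(\tfrac{\delta_{z,1}}{\beta}+\tfrac{\delta_{s,1}}{I\alpha}+\tfrac{\delta_{y,1}}{\tau I}+\tfrac{\delta_{H,1}}{I\balp}\big)$ and noise floor $N:=C_{10}\big(\tfrac{\alpha}{B}+\tfrac{\balp}{B}+\beta(\tfrac{\I(I<m)}{I}+\tfrac1B)+\tfrac{I\balp^2}{m\beta B}\big)$; the only change relative to Theorem~\ref{thm:6} is that I keep $A/T$ and $N$ separate instead of driving both below $\epsilon^2/3$. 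I would then apply the PL inequality termwise, $\|\nabla F(\x_t)\|^2\ge\mu(F(\x_t)-F^\star)$, turning the averaged gradient norm into an averaged gap.

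Next I would exploit that a stage returns a \emph{uniformly random} iterate $\tilde t$ together with its full estimator state $(\x_{\tilde t},\y_{\tilde t},\s_{\tilde t},H_{\tilde t},\z_{\tilde t})$: hence the expected initial gap and expected initial errors of stage $k+1$ equal exactly the time averages on the left-hand side of stage $k$. Defining the potential $\mathcal L_k:=\E[F(\x^{(k)}_0)-F^\star]+\tfrac1\mu\E[\delta^{(k)}_{z,1}+\tfrac1m\delta^{(k)}_{y,1}+\tfrac1m\delta^{(k)}_{H,1}+\tfrac1m\delta^{(k)}_{s,1}]$ and dividing the above by $\mu$ produces the one-stage recursion $\mathcal L_{k+1}\le \tfrac{A_k}{\mu T_k}+\tfrac{N_k}{\mu}$. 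The schedule is then chosen to make this contract geometrically: the step sizes $\alpha_k=\balp_k=\beta_k\asymp \mu\epsilon_k(\tfrac{\I(I<m)}{I}+\tfrac1B)^{-1}$ force $N_k/\mu\le\tfrac14\epsilon_k$, while $T_k$ is taken large enough that, using the induction hypothesis $\mathcal L_k\le\epsilon_k$, each term of $A_k/(\mu T_k)$ falls below $\tfrac14\epsilon_k$ — this is what fixes $T_k=\O(\max\{\tfrac1{\mu\eta_k},\tfrac1{\beta_k},\tfrac1{\tau_k},\dots\})$ (up to the $m/I$ factors coming from the MSVR mixing times $\tfrac{m}{I\alpha_k},\tfrac{m}{I\balp_k}$), with the couplings $\tau_k=\tfrac{\sqrt{C_8\alpha_k}\,I}{m}$ and $\eta_k=\sqrt{C_9}\sqrt{\min\{\tau_k^2,\alpha_kI^2/m^2\}}$ inherited verbatim from Theorem~\ref{thm:6} so that all components balance. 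This yields $\mathcal L_{k+1}\le\tfrac12\epsilon_k=\epsilon_{k+1}$; the base case $\mathcal L_1\le\epsilon_1$ is arranged by the first, longer ``burn-in'' stage $T_1$ (which carries the extra $m/\mu$ factors since the initial gap and errors are only generic constants) together with the definition of $\epsilon_1$. Since $\E[F(\x_K)-F^\star]\le\mathcal L_K\le\epsilon_K=\epsilon_1/2^{K-1}$, taking $K=\O(\log(\epsilon_1/\epsilon))$ gives the claim.

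I expect the crux to be precisely this coupled contraction. Because the estimators are not re-seeded at each restart, the per-stage bound must be organized as a \emph{simultaneous} contraction of a single potential in the objective gap and in every maintained error $\delta_z,\delta_y,\delta_H,\delta_s$; the delicate part is verifying that one common choice of step sizes and of $T_k$ shrinks all components by the same factor while holding the noise floor at the $\mu\epsilon_k$ level, in particular reconciling the MSVR contraction timescales (which contribute $\tfrac{m}{I\alpha_k}$-type lower bounds on $T_k$) with the clean form stated for $T_k$. Establishing the base case — where the initial gap and errors are only $\O(1)$ constants rather than already-small quantities — is what forces the separate, larger first stage.
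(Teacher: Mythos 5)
Your proposal is correct and follows essentially the same route as the paper's proof: the stage-wise inequality from Theorem~\ref{thm:6} with the initialization cost and noise floor kept separate, the observation that the randomly returned iterate (with its carried estimator state) makes the next stage's expected initial gap and errors equal to the current stage's time averages, a coupled induction on the objective gap together with $\delta_z,\delta_y,\delta_H,\delta_s$, and the PL condition to convert the gradient bound into geometric decay of the gap. Your single Lyapunov potential $\mathcal{L}_k$ and termwise PL application are just a repackaging of the paper's two-part induction hypothesis ($\E[F(\x_{k-1})-F^*]\leq\epsilon_{k-1}$ and $\E[\delta_{z,k-1}+\tfrac{1}{m}\delta_{y,k-1}+\tfrac{1}{m}\delta_{s,k-1}+\tfrac{1}{m}\delta_{H,k-1}]\leq\mu\epsilon_{k-1}$) with PL applied at the returned iterate, and your handling of the $m/I$ mixing-time factors matches the paper's absorption of them into the constant $C_{10}$.
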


\begin{proof}
Following from the proof of Theorem~\ref{thm:6}, we have
\begin{equation}\label{ineq:28}
\begin{aligned}
 &\E\left[\sum_{t=1}^T\|\nabla F(\x_t)\|^2+\sum_{t=1}^T\delta_{z,t}+\frac{1}{m}\sum_{t=1}^T\delta_{y,t}+\frac{1}{m}\sum_{t=1}^T\delta_{H,t}+\frac{1}{m}\sum_{t=1}^T\delta_{s,t}\right]\\
&\leq \E\Bigg[\frac{2\Delta}{\eta_t}+C_{10}\Bigg(\frac{\delta_{z,1}}{\beta_{t+1}} + \frac{\delta_{s,1}}{ I \alpha_t}+\frac{\delta_{y,1}}{\tau_t m}+\frac{\delta_{H,1}}{I \balp_t}+\frac{\balp_t T}{B} +\frac{\alpha_t T}{B} +\beta_{t+1}T\left(\frac{\I(I<m)}{I}+\frac{1}{B}\right)+\frac{I\balp_{t+1}^2T}{m\beta_{t+1} B}\Bigg)\Bigg]\\
&\stackrel{(a)}{\leq}\E\Bigg[\frac{2\Delta}{\eta_t}+C_{10}\Bigg(\frac{\delta_{z,1}}{\beta_{t+1}} + \frac{\delta_{s,1}}{ m \alpha_t}+\frac{\delta_{y,1}}{\tau_t m}+\frac{\delta_{H,1}}{m \balp_t}+(\balp_t+\alpha_t+\beta_{t+1})T\left(\frac{\I(I<m)}{I}+\frac{1}{B}\right)\Bigg)\Bigg]\\
\end{aligned}
\end{equation}
where in $(a)$ we redefine the constant $C_{10}=\frac{m}{I}C_{10}$ and use the setting $\balp_t=\beta_{t+1}$.

From Theorem~\ref{thm:6}, we know that it is required that $\alpha_0,\balp_0,\beta_0\leq \frac{1}{2}$, $\tau_0=\sqrt{C_8}\sqrt{\min\left\{\frac{I\beta_0}{m},\frac{I^2\balp_0}{m^2},\frac{I^2\alpha_0}{m^2}\right\}}$, $\eta_0=\min\left\{\frac{1}{2L_F},\sqrt{C_{9}}\sqrt{\min\left\{\frac{I\beta_0}{m},\frac{I^2\balp_0}{m^2},\frac{I^2\alpha_0}{m^2}\right\}}\right\}$.
%$\eta_0 = \sqrt{C_{12}} \frac{ I \sqrt{\min\{ I ,B\}}\epsilon}{m }$, $\alpha_0 = \frac{B\epsilon^2}{12C'_{11}}$, $\balp_0=\beta_0 = \frac{\min\{ I ,B\}\epsilon^2}{12C_{11}}$, $\tau_0= \sqrt{\frac{C_9}{12C_{11}}} \frac{ \sqrt{I\min\{ I ,B\}}\epsilon}{\sqrt{m}}$. 

Without loss of generality, let us set $\alpha_0=\balp_0=\beta_0$ and assume that $\epsilon_0 = 2\Delta > \frac{7C_{10} (\beta_0+\alpha_0+\balp_0)}{\mu}\left(\frac{\I(I<m)}{I}+\frac{1}{B}\right)$. The case that $2\Delta \leq \frac{7C_{10} (\beta_0+\alpha_0+\balp_0)}{\mu}\left(\frac{\I(I<m)}{I}+\frac{1}{B}\right)$ can be simply covered by our proof. Then denotes $\epsilon_1 = \frac{7C_{10} (\beta_0+\alpha_0+\balp_0)}{\mu}\left(\frac{\I(I<m)}{I}+\frac{1}{B}\right)$, and $\epsilon_k = \epsilon_1/2^{k-1}$.\\

In the first epoch $(k=1)$, we have initialization such that $F(\x_1)-F(\x^*)\leq \Delta$. In the following, we let the last subscript denote the epoch index. Setting $\eta_1 = \eta_0$, $\beta_1 = \beta_0$, $\alpha_1 = \alpha_0$, $\balp_1 = \balp_0$, $\tau_1=\tau_0$, and $T_1 = \max\left\{ \frac{7\Delta}{\mu\eta_1},\frac{7mC_{10}}{\mu\beta_1}\left(\frac{\I(I<m)}{I}+\frac{1}{B}\right)^{-1}(\delta_{z,0}+\delta_{s,0}+\delta_{w,0}),\frac{7C_{10}m}{\mu\tau_1 }\left(\frac{\I(I<m)}{I}+\frac{1}{B}\right)^{-1}\delta_{y,0}\right\}$. We bound the error of the first stage's output as follows,
\begin{equation}
    \begin{aligned}
        &\E\left[\|\nabla F(\x_1)\|^2+\delta_{z,1}+\frac{1}{m}\delta_{y,1}+\frac{1}{m}\delta_{s,1}+\frac{1}{m}\delta_{H,1}\right]\\
        &\leq \frac{2\Delta}{\eta_1 T_1}+\frac{C_{10}}{T_1}\left(\frac{1}{\beta_1}\delta_{z,0}+\frac{1}{\tau_1 m}\delta_{y,0} +\frac{1}{\alpha_1 m}\delta_{s,0}+\frac{1}{\balp_1 m}\delta_{w,0}\right)+C_{10}(\beta_1+\alpha_1+\balp_1)\left(\frac{\I(I<m)}{I}+\frac{1}{B}\right)\\
        &\leq \mu\epsilon_1
    \end{aligned}
\end{equation}  
where the first inequality uses (\ref{ineq:28}) and the fact that the output of each epoch is randomly sampled from all iterations, and the last line uses the choice of $\eta_1,\beta_1,\alpha_1,\balp_1,\tau_1,T_1,\epsilon_1$. If follows that
\begin{equation}
\E[F(\x_1)-\F(\x^*)]\leq \frac{1}{2\mu}\E[\|\nabla F(\x_1)\|^2]\leq \frac{\epsilon_1}{2}.
\end{equation}
Starting from the second stage, we will prove by induction. Suppose we are at $k$-th stage. Assuming that the output of $(k-1)$-the stage satisfies that $\E[F(\x_{k-1})-F(\x^*)]\leq \epsilon_{k-1}$ and $\E\left[\delta_{z,k-1}+\frac{\delta_{y,k-1}}{m} +\frac{\delta_{s,k-1}}{m}+\frac{\delta_{w,k-1}}{m}\right]\leq \mu \epsilon_{k-1}$, and setting $\beta_k=\alpha_k=\balp_k \leq \frac{\mu \epsilon_k }{21C_{10}}\left(\frac{\I(I<m)}{I}+\frac{1}{B}\right)^{-1}$, $\tau_k^2=  C_8\frac{\alpha_k I^2}{m^2}$, $\eta_k^2=C_{9}\min\left\{ \tau_k^2, \frac{\alpha_k I^2}{m^2 } \right\}$, $T_k = \max\left\{\frac{28}{\mu\eta_k},\frac{7C_{10}}{\beta_k},\frac{7C_{10}}{\tau_k}\right\}$, we have
\begin{equation}
    \begin{aligned}
        &\E\left[\|\nabla F(\x_k)\|^2+\delta_{z,k}+\frac{1}{m}\delta_{y,k}+\frac{1}{m}\delta_{s,k}+\frac{1}{m}\delta_{w,k}\right]\\
        &\leq \E\Bigg[\frac{2(F(\x_{k-1})-F(\x^*))}{\eta_k T_k}+\frac{C_{10}}{T_k}\left(\frac{1}{\beta_k}\delta_{z,k-1}+\frac{1}{\tau_k m}\delta_{y,k-1} +\frac{1}{\alpha_k m}\delta_{s,k-1}+\frac{1}{\balp_k m}\delta_{w,k-1}\right)\\
        &\quad +C_{10}(\beta_k+\alpha_k+\balp_k)\left(\frac{\I(I<m)}{I}+\frac{1}{B}\right) \Bigg]\\
        &\leq \E\Bigg[\frac{2\epsilon_{k-1}}{\eta_k T_k}+\frac{C_{10}\mu \epsilon_{k-1}}{T_k}\left(\frac{1}{\beta_k}+\frac{1}{\tau_k } +\frac{1}{\alpha_k }+\frac{1}{\balp_k}\right) +C_{10}(\beta_k+\alpha_k+\balp_k)\left(\frac{\I(I<m)}{I}+\frac{1}{B}\right)\Bigg]\\
        &\leq \mu\epsilon_k
    \end{aligned}
\end{equation}  
It follows that
\begin{equation}
\begin{aligned}
\E[F(\x_k)-F(\x^*)]\leq \frac{1}{2\mu}\E[\|\nabla F(\x_k)\|^2] \leq \frac{\epsilon_k}{2}.
\end{aligned}
\end{equation}
Thus, after $K=1+\log_2(\epsilon_1/\epsilon)\leq \log_2(\epsilon_0/\epsilon)$ stages, $\E[F(\x_k)-F(\x^*)]\leq \epsilon$.

\end{proof}

\subsection{Convergence Analysis of RE-$\vtwo$}

We present the formal statement of Thoerem~\ref{thm:RE_informal} for RE-$\vtwo$.

\begin{theorem}\label{thm:5_RE}
Suppose Assumptions~\ref{ass:1} and~\ref{ass:2} hold and the PL condition holds. Set $\beta_1=\alpha_1=\balp_1\leq \frac{1}{2}$, $\btau_1 = \min\left\{\frac{\lambda}{8L_{\phi v}^2},\frac{\lambda}{2},\frac{1}{\lambda},\frac{\sqrt{C_{\btau}}I\balp_1^{1/2}}{m}\right\}$, \quad  $\tau_1 = \sqrt{C_8'}\sqrt{\min\left\{\btau_1^2,\frac{I^2 \alpha_1}{m^2}\right\}}$, \quad  $\eta_1 = \min\bigg\{\frac{1}{2L_f},\sqrt{C_{10}'}\min\left\{ \sqrt{m}\tau_1, \sqrt{m}\btau_1, \frac{\sqrt{\alpha_1} I}{m}\right\}\bigg\}$, $T_1 = O\left(\max\left\{ \frac{1}{\mu\eta_1},\frac{\min\{ I ,B\}}{\mu\beta_1^2},\frac{I^2\min\{ I ,B\}}{\mu\tau_1^3 m^3}\right\}\right)$. Define a constant $\epsilon_1 = \frac{7C_9' (\beta_1+\alpha_1+\balp_1)}{\mu\min\{ I ,B\}}$ and $\epsilon_k=\epsilon_1/2^{k-1}$. For $k\geq2$, setting $\beta_k=\alpha_k=\balp_k \leq \frac{\mu \epsilon_k \min\{I,B\}}{21C_9'}$, $\btau_k^2 = \frac{C_{\btau} I^2}{m^2}\balp_k$, $\tau_k = \sqrt{C_8'}\sqrt{\min\left\{\btau_k^2,\frac{I^2 \alpha_k}{m^2}\right\}}$, $\eta_k=\sqrt{C_{10}'}\sqrt{\min\left\{m\tau_k^2, m\btau_k^2, \frac{\alpha_k I^2}{m^2 }\right\}}$, $T_k = O\left(\max\left\{\frac{1}{\mu\eta_k},\frac{1}{\beta_k},\frac{1}{\tau_k m},\frac{1}{\btau_k m}\right\}\right)$, where and $C_1'\sim C_{11}'$ are as used in Theorem~\ref{thm:5}, then after  $K=O(\log(\epsilon_1/\epsilon))$ stages, the output of RE-$\vtwo$ satisfies  $\E[F(\x_K) - F(\x^*)]\leq \epsilon$. 
\end{theorem}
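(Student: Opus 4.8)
The plan is to mirror the restarting argument used for RE-$\vone$ in Theorem~\ref{thm:6_RE}, but to start from the single-stage master inequality established inside the proof of Theorem~\ref{thm:5}. That inequality already controls, in one shot, the averaged squared gradient $\frac{1}{T}\sum_t\|\nabla F(\x_t)\|^2$ together with the five running estimation errors $\delta_{z,t},\delta_{y,t},\delta_{v,t},\delta_{s,t},\delta_{u,t}$, writing their sum as $\frac{2}{\eta_t}(F(\x_1)-F(\x^*))$ plus an initialization part (the quantities $\delta_{z,1},\delta_{y,1},\delta_{v,1},\delta_{s,1},\delta_{u,1}$ each divided by a step size) plus a variance part proportional to $T$. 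First I would rewrite this inequality after absorbing a factor $m/I$ into the leading constant (redefining $C_9'\leftarrow\frac{m}{I}C_9'$), so that every initialization coefficient carries a $\frac{1}{m}$ rather than a $\frac{1}{I}$, exactly as in step $(a)$ of the RE-$\vone$ proof, and so that the $T$-proportional variance terms collapse to a single floor of order $C_9'(\beta_k+\alpha_k+\balp_k)/\min\{I,B\}$.

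The second step is to invoke the PL condition. Since the stage-$k$ output $\x_k$ is drawn uniformly from the iterates of that stage, the master inequality bounds $\E[\|\nabla F(\x_k)\|^2]$, and PL then yields $\E[F(\x_k)-F(\x^*)]\le\frac{1}{2\mu}\E[\|\nabla F(\x_k)\|^2]$. I would then define a stage potential bundling $F(\x_k)-F(\x^*)$ with the scaled errors $\delta_{z,k}+\frac{1}{m}(\delta_{y,k}+\delta_{v,k}+\delta_{s,k}+\delta_{u,k})$, and prove by induction that if the $(k-1)$-st potential is at most $\mu\epsilon_{k-1}$ (with $F(\x_{k-1})-F(\x^*)\le\epsilon_{k-1}$), then with the geometric schedule $\beta_k=\alpha_k=\balp_k=\Theta(\mu\epsilon_k\min\{I,B\})$, the coupling $\btau_k^2=C_{\btau}\frac{I^2}{m^2}\balp_k$, and $\tau_k,\eta_k$ chosen as in Theorem~\ref{thm:5}, the stage-$k$ potential drops to $\mu\epsilon_k=\mu\epsilon_{k-1}/2$. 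The iteration budget $T_k$ is taken as the maximum of the inverse step sizes so that the descent term $\frac{2}{\eta_k T_k}\epsilon_{k-1}$ and each initialization term $\frac{1}{T_k}(\text{coeff})\mu\epsilon_{k-1}$ fall below a fixed fraction of $\mu\epsilon_k$; after $K=O(\log(\epsilon_1/\epsilon))$ halvings we reach $\E[F(\x_K)-F(\x^*)]\le\epsilon$, with the base case $k=1$ handled by the warm start $F(\x_1)-F(\x^*)\le\Delta$ and the large-batch initialization of $\delta_{z,1},\delta_{s,1},\delta_{u,1}$.

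I expect the main obstacle to be the bookkeeping forced by the extra quadratic layer $(\v,\u)$, which is absent in RE-$\vone$. In the proof of Theorem~\ref{thm:5} the carried errors $\delta_{v,1}$ and $\delta_{u,1}$ appear with composite coefficients such as $\frac{1}{m\btau_t}+\frac{\btau_t}{m\beta_t}+\frac{\btau_t}{m}+\frac{m\btau_t}{\balp_t I^2}$ and the analogous sum multiplying $\delta_{u,1}$. To make the single budget $T_k=O(\max\{\frac{1}{\mu\eta_k},\frac{1}{\beta_k},\frac{1}{\tau_k m},\frac{1}{\btau_k m}\})$ work, I must verify that, under the coupling $\btau_t^2=C_{\btau}\frac{I^2}{m^2}\balp_t$ together with $\beta_t=\Theta(\balp_t)$, every one of these terms collapses to $O(\frac{1}{\btau_k m})$ (respectively $O(\frac{1}{\balp_k})$ after the $m/I$ absorption), so that no new inverse-step-size scale is introduced and the induction closes at the same geometric rate. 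In particular I would check that, as $\balp_k$ halves and $\btau_k$ shrinks like $\sqrt{\balp_k}$, the induction hypothesis $\frac{1}{m}\E[\delta_{v,k-1}+\delta_{u,k-1}]\le\mu\epsilon_{k-1}$ is reproduced at the next stage. This is routine but constant-chasing-heavy, and it is the only place where RE-$\vtwo$ genuinely differs from RE-$\vone$.
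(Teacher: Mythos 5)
Your proposal follows essentially the same route as the paper's own proof: you start from the master inequality inside the proof of Theorem~\ref{thm:5}, absorb the $m/I$ factor into $C_9'$ and use the coupling $\btau_t^2=C_{\btau}\frac{I^2}{m^2}\balp_t$ with $\beta_t=\balp_t$ to collapse the composite coefficients of $\delta_{v,1}$ and $\delta_{u,1}$ (this is exactly step $(a)$ of the paper's argument), then run the stagewise induction on the potential $F(\x_k)-F(\x^*)$ together with $\delta_{z,k}+\frac{1}{m}(\delta_{y,k}+\delta_{v,k}+\delta_{s,k}+\delta_{u,k})$, invoking the PL condition on the uniformly sampled stage output and halving $\epsilon_k$ with $T_k$ set to the maximum of the inverse step sizes. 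The obstacle you flag — verifying that all composite coefficients reduce to $O(\frac{1}{m\btau_k})$ and $O(\frac{1}{m\balp_k})$ so no new inverse-step-size scale enters the budget — is precisely the point where the paper's RE-$\vtwo$ proof differs from RE-$\vone$, and your plan handles it the same way.
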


\begin{proof}
Following from the proof of Theorem~\ref{thm:5}, we have 
\begin{equation}\label{ineq:30}
    \begin{aligned}
        &\E\Bigg[\sum_{t=1}^T\|\nabla F(\x_t)\|^2+\sum_{t=1}^T\delta_{z,t}+\frac{1}{m}\sum_{t=1}^T\delta_{y,t}+\frac{1}{m}\sum_{t=1}^T\delta_{v,t}+\frac{1}{m}\sum_{t=1}^T\delta_{s,t}+\frac{1}{m}\sum_{t=1}^T\delta_{u,t}\Bigg]\\
     &\leq \E\Bigg[\frac{2\Delta}{\eta_0}+ C_9'\Bigg[ \frac{\delta_{z,1}}{\beta_0}+\frac{\delta_{y,1}}{m\tau_0} +\left(\frac{1}{m\btau_0}+\frac{\btau_0}{m\beta_0}+\frac{\btau_0}{m}+\frac{m \btau_0}{\balp_0 I^2}\right)\delta_{v,1}+  \frac{\delta_{s,1}}{ I \alpha_0}+\left(\frac{1}{I \balp_0}+\frac{\btau_0^2}{I \balp_0\beta_0}+\frac{\btau_0^2}{I \balp_0}+\frac{m^2\btau_0^2}{I \balp_0^2 I^2}\right)\delta_{u,1}\\
        &\quad +T\bigg(\frac{\beta_t }{\min\{ I ,B\}}+\frac{\alpha_0 }{B} +\frac{\balp_0 }{B}+\frac{\btau_0^2\balp_0 }{\beta_0B}+\frac{\btau_0^2\balp_0 }{B}+\frac{m^2\btau_0^2  }{ BI^2}\bigg)\Bigg]\Bigg]\\
     &\stackrel{(a)}{\leq} \E\Bigg[\frac{2\Delta}{\eta_0}+ C_9'\bigg[\frac{1}{\beta_{0}}\delta_{z,1}+\frac{1}{m\tau_0}\delta_{y,1} +\frac{1}{ m \alpha_0}\delta_{s,1} +\frac{1}{m\balp_0}\delta_{u,1}+\frac{1}{m\btau_0}\delta_{v,1}+\frac{(\beta_{0}+\alpha_0+\balp_0)T}{\min\{ I ,B\}}\bigg]\Bigg]\\
    \end{aligned}
\end{equation}
where in $(a)$ we enlarge the constant $C_9'$ and use the setting $\balp_0=\beta_{0}$ and $\btau_0^2=\frac{C_{\btau}I^2}{m^2}\balp_0$.

From Theorem~\ref{thm:5}, we know that it is required that  $\beta_0,\alpha_0,\balp_0\leq \frac{1}{2}$, $\btau_0 = \min\left\{\frac{\lambda}{8L_{\phi v}^2},\frac{\lambda}{2},\frac{1}{\lambda},\frac{1}{2\sqrt{C_6'}},\frac{\sqrt{C_{\btau}}I\balp_0^{1/2}}{m}\right\}$, $\tau_0 = \sqrt{C_8'}\sqrt{\min\left\{\beta_0,\btau_0^2,\frac{I^2 \alpha_0}{m^2},\frac{I^2 \balp_0}{m^2},\right\}}$, $\eta_0 = \min\left\{\frac{1}{2L_f},\sqrt{C_{10}'}\sqrt{\min\left\{\beta_0, \tau_0^2, \btau_0^2, \frac{\alpha_0 I^2}{m^2 }, \frac{\balp_0 I^2}{m^2 } \right\}}\right\}$.

Without loss of generality, set $\beta_0=\alpha_0=\balp_0$ and let us assume that $\epsilon_0 = 2\Delta > \frac{7C_9' (\beta_0+\alpha_0+\balp_0)}{\mu}\left(\frac{\I(I<m)}{I}+\frac{1}{B}\right)$. The case that $2\Delta \leq \frac{7C_9' (\beta_0+\alpha_0+\balp_0)}{\mu}\left(\frac{\I(I<m)}{I}+\frac{1}{B}\right)$ can be simply covered by our proof. Then denotes $\epsilon_1 = \frac{7C_9' (\beta_0+\alpha_0+\balp_0)}{\mu}\left(\frac{\I(I<m)}{I}+\frac{1}{B}\right)$, and $\epsilon_k = \epsilon_1/2^{k-1}$.\\

In the first epoch $(k=1)$, we have initialization such that $F(\x_1)-F(\x^*)\leq \Delta$. In the following, we let the last subscript denote the epoch index. Setting $\eta_1 = \eta_0$, $\beta_1 = \beta_0$, $\alpha_1 = \alpha_0$, $\balp_1 = \balp_0$, $\tau_1=\tau_0$, $\btau_1=\btau_0$, and 
\begin{equation*}
    T_1 = \max\left\{ \frac{7\Delta}{\mu\eta_1},\max\left\{\frac{7}{\mu\beta_1}(\delta_{z,0}+\delta_{s,0}+\delta_{u,0}),\frac{7I^2C_8'}{\mu\tau_1 m}\delta_{y,0},\frac{7I^2C_{\btau}}{\mu\btau_1 m}\delta_{v,0}\right\}\left(\frac{\I(I<m)}{I}+\frac{1}{B}\right)^{-1}\right\}
\end{equation*}
We bound the error of the first stage's output as follows,
\begin{equation}
    \begin{aligned}
        &\E\left[\|\nabla F(\x_1)\|^2+\delta_{z,1}+\frac{1}{m}\delta_{y,1}+\frac{1}{m}\delta_{v,1}+\frac{1}{m}\delta_{s,1}+\frac{1}{m}\delta_{u,1}\right]\\
        &\leq \frac{2\Delta}{\eta_1 T_1}+\frac{C_9'}{T_1}\left(\frac{1}{\beta_1}\delta_{z,0}+\frac{1}{\tau_1 m}\delta_{y,0}+\frac{1}{\tau_1 m}\delta_{v,0} +\frac{1}{\alpha_1 m}\delta_{s,0}+\frac{1}{\balp_1 m}\delta_{u,0}\right)+\frac{C_9'(\beta_1+\alpha_1+\balp_1) }{\min\{ I ,B\}}\\
        &\leq \mu\epsilon_1
    \end{aligned}
\end{equation}  
where the first inequality uses (\ref{ineq:30}) and the fact that the output of each epoch is randomly sampled from all iterations, and the last line uses the choice of $\eta_1,\beta_1,\alpha_1,\balp_1, \tau_1,\btau_1,T_1,\epsilon_1$. If follows that
\begin{equation}
\E[F(\x_1)-\F(\x^*)]\leq \frac{1}{2\mu}\E[\|\nabla F(\x_1)\|^2]\leq \frac{\epsilon_1}{2}.
\end{equation}
Starting from the second stage, we will prove by induction. Suppose we are at $k$-th stage. Assuming that the output of $(k-1)$-the stage satisfies that $\E[F(\x_{k-1})-F(\x^*)]\leq \epsilon_{k-1}$ and $\E\left[\delta_{z,k-1}+\frac{\delta_{y,k-1}}{m}+\frac{\delta_{v,k-1}}{m} +\frac{\delta_{s,k-1}}{m}+\frac{\delta_{u,k-1}}{m}\right]\leq \mu \epsilon_{k-1}$, and setting $\beta_k=\alpha_k=\balp_k \leq \frac{\mu \epsilon_k }{21C_9'}\left(\frac{\I(I<m)}{I}+\frac{1}{B}\right)^{-1}$, $\btau_k^2 = \frac{C_{\btau} I^2}{m^2}\balp_k$, $\tau_k^2 = C_8'\min\left\{\beta_k,\btau_k^2,\frac{I^2 \alpha_k}{m^2},\frac{I^2 \balp_k}{m^2},\right\}$, $\eta_k=C_{10}'\min\left\{\beta_k, \tau_k^2, \btau_k^2, \frac{\alpha_k I^2}{m^2 }, \frac{\balp_k I^2}{m^2 } \right\}$, $T_k = \max\left\{\frac{28}{\mu\eta_k},\frac{7C_9'}{\beta_k},\frac{7C_9'}{\tau_k},\frac{7C_9'}{\btau_k} \right\}$, we have
\begin{equation}
    \begin{aligned}
        &\E\left[\|\nabla F(\x_k)\|^2+\delta_{z,k}+\frac{1}{m}\delta_{y,k}+\frac{1}{m}\delta_{v,k}+\frac{1}{m}\delta_{s,k}+\frac{1}{m}\delta_{u,k}\right]\\
        &\leq \E\Bigg[\frac{2(F(\x_{k-1})-F(\x^*))}{\eta_k T_k}+\frac{C_9'}{T_k}\left(\frac{1}{\beta_k}\delta_{z,k-1}+\frac{1}{\tau_k m}\delta_{y,k-1}+\frac{1}{\tau_k m}\delta_{v,k-1} +\frac{1}{\alpha_k m}\delta_{s,k-1}+\frac{1}{\balp_k m}\delta_{u,k-1}\right)\\
        &\quad +\frac{C_9'(\beta_k+\alpha_k+\balp_k) }{\min\{ I ,B\}}\Bigg]\\
        &\leq \E\Bigg[\frac{2\epsilon_{k-1}}{\eta_k T_k}+\frac{C_9'\mu \epsilon_{k-1}}{T_k}\left(\frac{1}{\beta_k}+\frac{1}{\tau_k }+\frac{1}{\btau_k } +\frac{1}{\alpha_k }+\frac{1}{\balp_k}\right) +\frac{C_9'(\beta_k+\alpha_k+\balp_k) }{\min\{ I ,B\}}\Bigg]\\
        &\leq \mu\epsilon_k
    \end{aligned}
\end{equation}  
It follows that
\begin{equation}
\begin{aligned}
\E[F(\x_k)-F(\x^*)]\leq \frac{1}{2\mu}\E[\|\nabla F(\x_k)\|^2] \leq \frac{\epsilon_k}{2}.
\end{aligned}
\end{equation}
Thus, after $K=1+\log_2(\epsilon_1/\epsilon)\leq \log_2(\epsilon_0/\epsilon)$ stages, $\E[F(\x_k)-F(\x^*)]\leq \epsilon$.

\end{proof}

\section{Proof of Lemmas}

\subsection{Proof of Lemma~\ref{lem:01}}
\begin{proof}
%Let $\z_{t+1} = (\u_{t+1} -V_{t+1}[H_{t+1}]^{-1}\v_{t+1})$. 
Due the smoothness of $F$, we can prove that under $\eta_t L_F\leq 1/2$
\begin{align*}
&F(\x_{t+1}) \leq F(\x_t) + \nabla F(\x_t)^{\top} (\x_{t+1} - \x_t) + \frac{L_F}{2}\|\x_{t+1} - \x_t\|^2\\
&= F(\x_t) - \eta_t\nabla F(\x_t)^{\top} \z_{t+1} + \frac{L_F\eta_t^2}{2}\|\z_{t+1}\|^2\\
&  = F(\x_t) +   \frac{\eta_t}{2}\|\nabla F(\x_t) - \z_{t+1}\|^2- \frac{\eta_t}{2}\|\nabla F(\x_t)\|^2 + (\frac{L_F\eta_t^2}{2} - \frac{\eta_t}{2})\|\z_{t+1}\|^2\\
&  \leq F(\x_t) +   \frac{\eta_t}{2}\|\nabla F(\x_t) - \z_{t+1}\|^2- \frac{\eta_t}{2}\|\nabla F(\x_t)\|^2  - \frac{\eta_t}{4}\|\z_{t+1}\|^2
\end{align*}
%\begin{align*}
%&F(\x_{t+1})\leq F(\x_t) - \eta \|\nabla F(\x_t)\|^2 + \eta\E_t\|\u_{t+1} -V_{t+1}[H_{t+1}]^{-1}\v_{t+1} - \nabla F(\x_t)\|^2\\
%& \leq F(\x_t) - \eta/2 \|\nabla F(\x_t)\|^2 + \eta\E_t\|\u_{t+1} -V_{t+1}[H_{t+1}]^{-1}\v_{t+1}  - \nabla F(\x_t, y_t) + \nabla F(\x_t, \y_t)- \nabla F(\x_t)\|^2\\
%& \leq F(\x_t) - \eta/2 \|\nabla F(\x_t)\|^2 + 2\eta\E_t\|\u_{t+1} -V_{t+1}[H_{t+1}]^{-1}\v_{t+1}  - \nabla F(\x_t, \y_t)\|^2  + 2\eta \E_t\|\nabla F(\x_t, \y_t)- \nabla F(\x_t)\|^2\\
%& \leq F(\x_t) - \eta/2 \|\nabla F(\x_t)\|^2 + 2\eta\E_t\|\u_{t+1} -V_{t+1}[H_{t+1}]^{-1}\v_{t+1}  - \nabla F(\x_t, \y_t)\|^2  + 2\eta L_f\|\y_t - \y(\x_t)\|^2
%\end{align*}
\end{proof}

\subsection{Proof of Lemma~\ref{lem:MSVR}}\label{app:MSVR}

\begin{proof}
Consider the updates
\begin{equation*}
    \h_{i,t+1} = \begin{cases}\Pi_{\Omega}\left[(1-\alpha)\h_{i,t} + \alpha h_i(\e_{i,t}; \B_i^t)+\gamma[ h_i(\e_{i,t}; \B_i^t-  h_i(\e_{i,t-1}; \B_i^t)]\right] & \text{  if }i \in I_t\\ \h_{i,t}  & \text{ o.w. }    \end{cases}
\end{equation*}
Define
\begin{equation*}
    \begin{aligned}
    & \tilde{h}_{i,t}=\Pi_{\Omega}\left[(1-\alpha)\h_{i,t} + \alpha h_i(\e_{i,t}; \B_i^t)+\gamma[ h_i(\e_{i,t}; \B_i^t-  h_i(\e_{i,t-1}; \B_i^t)]\right]\\
    & \bar{h}_{i,t}=(1-\alpha)\h_{i,t} + \alpha h_i(\e_{i,t}; \B_i^t)+\gamma[ h_i(\e_{i,t}; \B_i^t-  h_i(\e_{i,t-1}; \B_i^t)]
    \end{aligned}
\end{equation*}

% We have
% \begin{equation*}
%     \begin{aligned}
%     &\E_{\B_i^t}\left[\| \tilde{h}_{i,t}- h_i(\e_{i,t})\|^2\right]\\
%     &= \E_{\B_i^t}\left[\left\|(1-\alpha)(\h_{i,t} -  h_i(\e_{i,t-1}; \B_i^t)+  h_i(\e_{i,t}; \B_i^t)- h_i(\e_{i,t})\right\|^2\right]\\
%     &=\E_{\B_i^t}\bigg[\Big\|(1-\alpha)[\h_{i,t} - h_i(\e_{i,t-1})]+(1-\alpha)[ h_i(\e_{i,t-1})-  h_i(\e_{i,t-1}; \B_i^t)]\\
%     &\quad +  h_i(\e_{i,t}; \B_i^t)- h_i(\e_{i,t})\Big\|^2]\bigg]\\
%     &=(1-\alpha)^2\left\|\h_{i,t} - h_i(\e_{i,t-1})\right\|^2+\E_{\B_i^t}\bigg[\Big\|(1-\alpha)[ h_i(\e_{i,t-1})-  h_i(\e_{i,t-1}; \B_i^t)]\\
%     &\quad +  h_i(\e_{i,t}; \B_i^t)- h_i(\e_{i,t})\Big\|^2\bigg]\\
%     &\leq(1-\alpha)^2\left\|\h_{i,t} - h_i(\e_{i,t-1})\right\|^2+\E_{\B_i^t}\bigg[2\Big\|(1-\alpha)[ h_i(\e_{i,t-1})-  h_i(\e_{i,t-1}; \B_i^t)\\
%     &\quad +  h_i(\e_{i,t}; \B_i^t)- h_i(\e_{i,t})]\Big\|^2+2\Big\|\alpha[ h_i(\e_{i,t}; \B_i^t)- h_i(\e_{i,t})]\Big\|^2\bigg]\\
%     &\stackrel{(a)}{\leq}(1-\alpha)^2\left\|\h_{i,t} - h_i(\e_{i,t-1})\right\|^2 +2(1-\alpha)^2\E_{\B_i^t}\left[ h_i(\e_{i,t-1};\B_i^t)-\| h_i(\e_{i,t};\B_i^t)\|^2\right]\\
%     &\quad+2\alpha^2\sigma^2\\
%     &\leq(1-\alpha)^2\left\|\h_{i,t} - h_i(\e_{i,t-1})\right\|^2+2(1-\alpha)^2L^2\left\left[\|\x_t-\x_{t-1}\|^2+\|\y_{i,t}-\y_{i,t-1}\|^2\right]+2\alpha^2\sigma^2\\
%     \end{aligned}
% \end{equation*}
% where $(a)$ follows from the mean variance inequality $\E[\|X-\E[X]\|^2]\leq \E[\|X\|^2]$.

We have
\begin{equation}\label{ineq:5}
    \begin{aligned}
    &\E_t\left[\|\h_{i,t+1}- h_i(\e_{i,t})\|^2\right]\\
    &=
    \E_{I_t}\E_{\B_i^t}\left[\|\h_{i,t+1}- h_i(\e_{i,t})\|^2\right]\\
    &=\frac{ I }{m}\E_{\B_i^t}\left[\| \tilde{h}_{i,t}- h_i(\e_{i,t})\|^2\right]+\left(1-\frac{ I }{m}\right)\|\h_{i,t}- h_i(\e_{i,t})\|^2\\
    &=\frac{ I }{m}\E_{\B_i^t}\left[\| \tilde{h}_{i,t}- h_i(\e_{i,t})\|^2\right] +\left(1-\frac{ I }{m}\right)\|\h_{i,t}- h_i(\e_{i,t-1})+ h_i(\e_{i,t-1})- h_i(\e_{i,t})\|^2\\
    &=\frac{ I }{m}\E_{\B_i^t}\left[\| \tilde{h}_{i,t}- h_i(\e_{i,t})\|^2\right] +\left(1-\frac{ I }{m}\right)\|\h_{i,t}- h_i(\e_{i,t-1})\|^2+\left(1-\frac{ I }{m}\right)\| h_i(\e_{i,t-1})- h_i(\e_{i,t})\|^2\\
    &\quad +\underbrace{2\left(1-\frac{ I }{m}\right)\Big\langle \h_{i,t}- h_i(\e_{i,t-1}),  h_i(\e_{i,t-1})- h_i(\e_{i,t})\Big\rangle}_{\text{\textcircled{a}}}
    \end{aligned}
\end{equation}

It follows from the non-expansive property of projection that
\begin{equation}\label{ineq:6}
    \begin{aligned}
    &\E_{\B_i^t}\left[\| \tilde{h}_{i,t}- h_i(\e_{i,t})\|^2\right]\leq \E_{\B_i^t}\left[\| \bar{h}_{i,t}- h_i(\e_{i,t})\|^2\right]\\
    &= \E_{\B_i^t}\bigg[\Big\|(1-\alpha)\h_{i,t} + \alpha h_i(\e_{i,t}; \B_i^t)+\gamma[ h_i(\e_{i,t}; \B_i^t-  h_i(\e_{i,t-1}; \B_i^t)] - h_i(\e_{i,t})\Big\|^2\bigg]\\
    &= \E_{\B_i^t}\bigg[\Big\|(1-\alpha)[\h_{i,t}- h_i(\e_{i,t-1})]+(1-\alpha)[ h_i(\e_{i,t-1})- h_i(\e_{i,t})]\\
    &\quad +\alpha[ h_i(\e_{i,t}; \B_i^t)- h_i(\e_{i,t})]+\gamma[ h_i(\e_{i,t}; \B_i^t-  h_i(\e_{i,t-1}; \B_i^t)]\Big\|^2\bigg]\\
    &\stackrel{(a)}{=} \E_{\B_i^t}\bigg[\Big\|(1-\alpha)[\h_{i,t}- h_i(\e_{i,t-1})]+(1-\alpha)[ h_i(\e_{i,t-1})- h_i(\e_{i,t})]\\
    &\quad +\gamma[ h_i(\e_{i,t}; \B_i^t)-  h_i(\e_{i,t-1}; \B_i^t)]\Big\|^2\bigg]+\alpha^2\E_{\B_i^t}\bigg[\Big\| h_i(\e_{i,t}; \B_i^t)- h_i(\e_{i,t})\Big\|^2\bigg]\\
    &\quad +2\gamma\alpha\E_{\B_i^t}\Big[\langle h_i(\e_{i,t}; \B_i^t)-  h_i(\e_{i,t-1}; \B_i^t), h_i(\e_{i,t}; \B_i^t)- h_i(\e_{i,t}) \rangle\Big]\\
    &\stackrel{(b)}{=} (1-\alpha)^2\Big\|\h_{i,t}- h_i(\e_{i,t-1})\Big\|^2\\
    &\quad +\E_{\B_i^t}\bigg[\Big\|(1-\alpha)[ h_i(\e_{i,t-1})- h_i(\e_{i,t})]+\gamma[ h_i(\e_{i,t}; \B_i^t)-  h_i(\e_{i,t-1}; \B_i^t)]\Big\|^2\bigg]\\
    &\quad + \underbrace{2(1-\alpha)(1-\alpha-\gamma)\Big\langle\h_{i,t}- h_i(\e_{i,t-1}),  h_i(\e_{i,t-1})- h_i(\e_{i,t})\Big\rangle}_{\text{\textcircled{b}}}\\
    &\quad+\frac{\alpha^2\sigma^2}{B} +2\gamma\alpha\E_{\B_i^t}\Big[\langle h_i(\e_{i,t}; \B_i^t)-  h_i(\e_{i,t-1}; \B_i^t), h_i(\e_{i,t}; \B_i^t)- h_i(\e_{i,t}) \rangle\Big]\\
    \end{aligned}
\end{equation}
where $(a)$ follows from $\E_{\B_i^t}[ h_i(\e_{i,t}; \B_i^t)- h_i(\e_{i,t})]=0$, $(b)$ follows from $\E_{\B_i^t}[ h_i(\e_{i,t}; \B_i^t)-  h_i(\e_{i,t-1}; \B_i^t)]= h_i(\e_{i,t})-  h_i(\e_{i,t-1})$.

Combining inequalities~(\ref{ineq:5}) and (\ref{ineq:6}) gives
\begin{equation}\label{ineq:7}
    \begin{aligned}
    &\E_t\left[\|\h_{i,t+1}- h_i(\e_{i,t})\|^2\right]\\
    &=\left(1-\frac{ I }{m}+\frac{(1-\alpha)^2 I }{m}\right)\Big\|\h_{i,t}- h_i(\e_{i,t-1})\Big\|^2\\
    &\quad +\frac{ I }{m}\E_{\B_i^t}\bigg[\Big\|(1-\alpha)[ h_i(\e_{i,t-1})- h_i(\e_{i,t})]+\gamma[ h_i(\e_{i,t}; \B_i^t)-  h_i(\e_{i,t-1}; \B_i^t)]\Big\|^2\bigg]\\
    &\quad + \frac{ I }{m}\text{\textcircled{b}}+\frac{\alpha^2 I \sigma^2}{Bm}+\left(1-\frac{ I }{m}\right)\Big\| h_i(\e_{i,t-1})- h_i(\e_{i,t})\Big\|^2+\text{\textcircled{a}}\\
    &\quad +\frac{2\gamma\alpha I }{m}\E_{\B_i^t}\Big[\langle h_i(\e_{i,t}; \B_i^t)-  h_i(\e_{i,t-1}; \B_i^t), h_i(\e_{i,t}; \B_i^t)- h_i(\e_{i,t}) \rangle\Big]\\
    &\stackrel{(a)}{=}\left(1-\frac{ I }{m}+\frac{(1-\alpha)^2 I }{m}\right)\Big\|\h_{i,t}- h_i(\e_{i,t-1})\Big\|^2\\
    &\quad +\frac{ I }{m}\E_{\B_i^t}\bigg[\Big\|(1-\alpha)[ h_i(\e_{i,t-1})- h_i(\e_{i,t})]+\gamma[ h_i(\e_{i,t}; \B_i^t)-  h_i(\e_{i,t-1}; \B_i^t)]\Big\|^2\bigg]\\
    &\quad +\frac{\alpha^2 I \sigma^2}{Bm}+\left(1-\frac{ I }{m}\right)\Big\| h_i(\e_{i,t-1})- h_i(\e_{i,t})\Big\|^2\\
    &\quad +\frac{2\gamma\alpha I }{m}\E_{\B_i^t}\Big[\langle h_i(\e_{i,t}; \B_i^t)-  h_i(\e_{i,t-1}; \B_i^t), h_i(\e_{i,t}; \B_i^t)- h_i(\e_{i,t}) \rangle\Big]\\
    &=\left(1-\frac{ I }{m}+\frac{(1-\alpha)^2 I }{m}\right)\Big\|\h_{i,t}- h_i(\e_{i,t-1})\Big\|^2+\frac{(1-\alpha)^2 I }{m}\Big\| h_i(\e_{i,t-1})- h_i(\e_{i,t})\Big\|^2\\
    &\quad +\frac{\gamma^2 I }{m}\E_{\B_i^t}\bigg[\Big\| h_i(\e_{i,t}; \B_i^t)-  h_i(\e_{i,t-1}; \B_i^t)\Big\|^2\bigg] - \frac{2(1-\alpha)\gamma I }{m}\Big\| h_i(\e_{i,t-1})- h_i(\e_{i,t})\Big\|^2\\
    &\quad +\frac{\alpha^2 I \sigma^2}{Bm}+\left(1-\frac{ I }{m}\right)\Big\| h_i(\e_{i,t-1})- h_i(\e_{i,t})\Big\|^2\\
    &\quad +\frac{2\gamma\alpha I }{m}\E_{\B_i^t}\Big[\langle h_i(\e_{i,t}; \B_i^t)-  h_i(\e_{i,t-1}; \B_i^t), h_i(\e_{i,t}; \B_i^t)- h_i(\e_{i,t}) \rangle\Big]\\
    &\stackrel{(b)}{=}\left(1-\frac{\alpha I }{m}\right)\Big\|\h_{i,t}- h_i(\e_{i,t-1})\Big\|^2 +\frac{4mL^2}{ I }\|\e_{i,t-1}-\e_{i,t}\|^2+\frac{\alpha^2 I \sigma^2}{Bm}\\
    &\quad +\frac{2\gamma\alpha I }{m}\E_{\B_i^t}\Big[\langle h_i(\e_{i,t}; \B_i^t)-  h_i(\e_{i,t-1}; \B_i^t), h_i(\e_{i,t}; \B_i^t)- h_i(\e_{i,t}) \rangle\Big]\\
    &\stackrel{(c)}{=}\left(1-\frac{\alpha I }{m}\right)\Big\|\h_{i,t}- h_i(\e_{i,t-1})\Big\|^2 +\frac{8mL^2}{ I }\|\e_{i,t-1}-\e_{i,t}\|^2+\frac{2\alpha^2 I \sigma^2}{Bm}
    \end{aligned}
\end{equation}

where $(a)$ is due to $\text{\textcircled{a}}+\frac{ I }{m}\text{\textcircled{b}}=0$, which follows from the setting $\gamma=\frac{m-\alpha I }{(1-\alpha) I }$, $(b)$ is due to $1-\frac{ I }{m}+\frac{(1-\alpha)^2 I }{m}\leq \frac{2(1-\alpha)\gamma I }{m}$ and $\gamma\leq \frac{2m}{ I }$, which follows from $\alpha\leq \frac{1}{2}$, $(c)$ is due to
\begin{equation*}
    \begin{aligned}
    &\frac{2\gamma\alpha I }{m}\E_{\B_i^t}\Big[\langle h_i(\e_{i,t}; \B_i^t)-  h_i(\e_{i,t-1}; \B_i^t), h_i(\e_{i,t}; \B_i^t)- h_i(\e_{i,t}) \rangle\Big]\\
    &\leq \frac{ I }{m}\E_{\B_i^t}\Big[\gamma^2\| h_i(\e_{i,t}; \B_i^t)-  h_i(\e_{i,t-1}; \B_i^t)\|^2+\alpha^2\| h_i(\e_{i,t}; \B_i^t)- h_i(\e_{i,t}) \|^2\Big]\\
    &\leq \frac{4mL^2}{ I }\|\e_{i,t-1}-\e_{i,t}\|^2+\frac{\alpha^2 I \sigma^2}{mB}\\
    \end{aligned}
\end{equation*}

Then by taking expectation over all randomness and summing over $i=1,\dots,m$, we obtain
\begin{equation}
    \begin{aligned}
    &\E\left[\sum_{i=1}^m\|\h_{i,t+1}-h_i(\e_{i,t})\|^2\right]\\
    &\leq \left(1-\frac{\alpha I }{m}\right)\E\Big[\sum_{i=1}^m\|\h_{i,t}- h_i(\e_{i,t-1})\|^2\Big] +\frac{8mL^2}{ I }\E\Big[\sum_{i=1}^m\|\e_{i,t-1}-\e_{i,t}\|^2\Big]+\frac{2\alpha^2 I \sigma^2}{B}
    \end{aligned}
\end{equation}

\end{proof}

\subsection{Proof of Lemma~\ref{lem:301}}

\begin{proof}
\begin{equation}
\begin{aligned}
&\|\Delta_t-\nabla F(\x_t)\|^2\\
&= \Bigg\|\frac{1}{m}\sum_{i=1}^m \nabla_x f_i(\x_t,\y_{i,t})-\nabla_{xy}^2 g_i(\x_t,\y_{i,t})\E_t[[H_{i,t}]^{-1}]\nabla_y f_i(\x_t,\y_{i,t}) \\
&\quad -\frac{1}{m}\sum_{i=1}^m \nabla_x f_i(\x_t,\y_i(\x_t))-\nabla_{xy}^2 g_i(\x_t,\y_i(\x_t))[\nabla_{yy}^2 g_i(\x_t,\y_i(\x_t))]^{-1} \nabla_y f_i(\x_t,\y_i(\x_t))\Bigg\|^2\\
&\leq \frac{1}{m}\sum_{i=1}^m 2\left\|\nabla_x f_i(\x_t,\y_{i,t}) - \nabla_x f_i(\x_t,\y_i(\x_t))\right\|^2\\
&\quad + 6\left\|\nabla_{xy}^2 g_i(\x_t,\y_{i,t})[\E_t[[H_{i,t}]^{-1}]-[\nabla_{yy}^2 g_i(\x_t,\y_i(\x_t))]^{-1}]\nabla_y f_i(\x_t,\y_{i,t})\right\|^2\\
&\quad + 6\left\|[\nabla_{xy}^2 g_i(\x_t,\y_{i,t})-\nabla_{xy}^2 g_i(\x_t,\y_i(\x_t))][\nabla_{yy}^2 g_i(\x_t,\y_i(\x_t))]^{-1}\nabla_y f_i(\x_t,\y_{i,t})\right\|^2\\
&\quad + 6\left\|\nabla_{xy}^2 g_i(\x_t,\y_i(\x_t))[\nabla_{yy}^2 g_i(\x_t,\y_i(\x_t))]^{-1}[\nabla_y f_i(\x_t,\y_{i,t})-\nabla_y f_i(\x_t,\y_i(\x_t))]\right\|^2\\
&\leq  \frac{1}{m}\sum_{i=1}^m  \left(2L_{fx}^2 + \frac{L_{gxy}^2C_{fy}^2}{\lambda^2}+\frac{6C_{gxy}^2L_{fy}^2}{\lambda^2}\right)\|\y_{i,t}-\y_i(\x_t)\|^2+6C_{gxy}^2C_{fy}^2\|\E_t[[H_{i,t}]^{-1}]-[\nabla_{yy}^2 g_i(\x_t,\y_i(\x_t))]^{-1}\|^2\\
&\stackrel{(a)}{\leq} \frac{1}{m}\sum_{i=1}^m  C_1 \|\y_{i,t}-\y_i(\x_t)\|^2+C_2\|H_{i,t}-\nabla_{yy}^2 g_i(\x_t,\y_{i,t}))\|^2
\end{aligned}
\end{equation}
where $C_1:=\left(2L_{fx}^2 + \frac{L_{gxy}^2C_{fy}^2}{\lambda^2}+\frac{6C_{gxy}^2L_{fy}^2}{\lambda^2}+\frac{12C_{gxy}^2C_{fy}^2L_{gyy}^2}{\lambda^4}\right)$, $C_2:=\frac{12C_{gxy}^2C_{fy}^2}{\lambda^4}$, and $(a)$ uses the fact that $[H_{i,t}]^{-1}$ is irrelevant to the randomness at iteration $t$, which means $[H_{i,t}]^{-1}=E_t[[H_{i,t}]^{-1}]$, and the Lipschitz continuity of $\nabla_{yy}^2 g_i(\x,\y_i)$.
\end{proof}

\subsection{Proof of Lemma~\ref{lem:302}}
\begin{proof}

\begin{equation}
    \begin{aligned}
        &\E_t[\left\|\z_{t+1}-\Delta_t\right\|^2]\\
        &=\E_t\left[\left\|(1-\beta_t)(\z_t-\Delta_{t-1})+(1-\beta_t)(\Delta_{t-1}-\widetilde{G}_t)+G_t-\Delta_t\right\|^2\right]\\
        &=(1-\beta_t)^2 \|\z_t-\Delta_{t-1}\|^2+2(1-\beta_t)^2\E_t\left[\|\Delta_{t-1}-\widetilde{G}_t+G_t-\Delta_t\|^2\right]+2\beta_t^2\E_t\left[\|G_t-\Delta_t\|^2\right]\\
        &\stackrel{(a)}{\leq} (1-\beta_t)^2 \|\z_t-\Delta_{t-1}\|^2+2(1-\beta_t)^2\E_t\left[\|G_t-\widetilde{G}_t\|^2\right]+2\beta_t^2\E_t\left[\|G_t-\Delta_t\|^2\right]
    \end{aligned}
\end{equation}
where $(a)$ use the standard inequality $\E[\|a-\E[a]\|^2]\leq \E[\|a\|^2]$, and $\E_t[G_t]=\Delta_t$, $\E_t[\widetilde{G}_t]=\Delta_{t-1}$. We further bound the last two terms as following
\begin{equation}
    \begin{aligned}
        &\E_t\left[\|G_t-\Delta_t\|^2\right]\\
        &\leq \E_t\Bigg[\Bigg\|\frac{1}{ I }\sum_{i\in \mI_t}\left(\nabla_x f_i(\x_t,\y_{i,t};\B_i^t)-\nabla_{xy}^2 g_i(\x_t,\y_{i,t};\wB_i^t)[H_{i,t}]^{-1}\nabla_y f_i(\x_t,\y_{i,t};\B_i^t)\right)\\
        &\quad  -\frac{1}{m}\sum_{i=1}^m\left(\nabla_x f_i(\x_t,\y_{i,t})-\nabla_{xy}^2 g_i(\x_t,\y_{i,t})[H_{i,t}]^{-1}\nabla_y f_i(\x_t,\y_{i,t})\right)\Bigg\|^2\Bigg]\\
        &\leq \E_t\Bigg[2\Bigg\|\frac{1}{ I }\sum_{i\in \mI_t}\left(\nabla_x f_i(\x_t,\y_{i,t};\B_i^t)-\nabla_{xy}^2 g_i(\x_t,\y_{i,t};\wB_i^t)[H_{i,t}]^{-1}\nabla_y f_i(\x_t,\y_{i,t};\B_i^t)\right)\\
        &\quad -\frac{1}{m}\sum_{i=1}^m\left(\nabla_x f_i(\x_t,\y_{i,t};\B_i^t)-\nabla_{xy}^2 g_i(\x_t,\y_{i,t};\wB_i^t)[H_{i,t}]^{-1}\nabla_y f_i(\x_t,\y_{i,t};\B_i^t)\right)\Bigg\|^2\\
        &\quad +2\Bigg\|\frac{1}{m}\sum_{i=1}^m\left(\nabla_x f_i(\x_t,\y_{i,t};\B_i^t)-\nabla_{xy}^2 g_i(\x_t,\y_{i,t};\wB_i^t)[H_{i,t}]^{-1}\nabla_y f_i(\x_t,\y_{i,t};\B_i^t)\right)\\
        &\quad -\frac{1}{m}\sum_{i=1}^m\left(\nabla_x f_i(\x_t,\y_{i,t})-\nabla_{xy}^2 g_i(\x_t,\y_{i,t})[H_{i,t}]^{-1}\nabla_y f_i(\x_t,\y_{i,t})\right)\Bigg\|^2\Bigg]\\
        &\leq \frac{8(2C_{fx}^2+\frac{2C_{gxy}^2C_{fy}^2}{\lambda^2})}{I}+\frac{4\sigma^2}{B}+\frac{8(\frac{C_{gxy}^2+C_{fy}^2}{\lambda^2})\sigma^2}{B}=:C_5\left(\frac{\I(I<m)}{I}+\frac{1}{B}\right),
    \end{aligned}
\end{equation}
where $C_5=\max\{8(2C_{fx}^2+\frac{2C_{gxy}^2C_{fy}^2}{\lambda^2}), 4\sigma^2+8(\frac{C_{gxy}^2+C_{fy}^2}{\lambda^2})\sigma^2\}$, and

\begin{equation}
    \begin{aligned}
        &\E_t\left[\|G_t-\widetilde{G}_t\|^2\right]\\
        &=\E_t\Bigg[\Bigg\|\frac{1}{ I }\sum_{i\in \mI_t}\left(\nabla_x f_i(\x_t,\y_{i,t};\B_i^t)-\nabla_{xy}^2 g_i(\x_t,\y_{i,t};\wB_i^t)[H_{i,t}]^{-1}\nabla_y f_i(\x_t,\y_{i,t};\B_i^t)\right)\\
        &\quad -\left(\nabla_x f_i(\x_{t-1},\y_{i,t-1};\B_i^t)-\nabla_{xy}^2 g_i(\x_{t-1},\y_{i,t-1};\wB_i^t)[H_{i,t-1}]^{-1}\nabla_y f_i(\x_{t-1},\y_{i,t-1};\B_i^t)\right)\Bigg\|^2\Bigg]\\
        &\leq \frac{1}{m}\sum_{i=1}^m (2L_{fx}^2+\frac{6L_{gxy}^2C_{fy}^2}{\lambda^2}+\frac{6C_{gxy}^2L_{fy}^2}{\lambda^2})(\|\x_t-\x_{t-1}\|^2+\|\y_{i,t}-\y_{i,t-1}\|^2)+\frac{6C_{gxy}^2C_{fy}^2}{\lambda^4}\|H_{i,t}-H_{i,t-1}\|^2\\
        &=: C_3\|\x_t-\x_{t-1}\|^2+\frac{C_3}{m}\|\y_t-\y_{t-1}\|^2+\frac{C_4}{m}\|H_t-H_{t-1}\|^2.
    \end{aligned}
\end{equation}
Then we have
\begin{equation}
    \begin{aligned}
        &\E_t[\left\|\z_{t+1}-\Delta_t\right\|^2]\\
        &\leq (1-\beta_t)^2 \|\z_t-\Delta_{t-1}\|^2+2(1-\beta_t)^2\left(C_3\|\x_t-\x_{t-1}\|^2+\frac{C_3}{m}\|\y_t-\y_{t-1}\|^2+\frac{C_4}{m}\|\v_t-\v_{t-1}\|^2\right)\\
        &\quad +2\beta_t^2C_5\left(\frac{\I(I<m)}{I}+\frac{1}{B}\right)\\
        &\leq (1-\beta_t) \|\z_t-\Delta_{t-1}\|^2+2C_3\|\x_t-\x_{t-1}\|^2+\frac{2C_3}{m}\|\y_t-\y_{t-1}\|^2+\frac{2C_4}{m}\|H_t-H_{t-1}\|^2 +2\beta_t^2C_5\left(\frac{\I(I<m)}{I}+\frac{1}{B}\right)
    \end{aligned}
\end{equation}
\end{proof}

\subsection{Proof of Lemma~\ref{lem:303}}
\begin{proof}
By Lemma6 in \cite{https://doi.org/10.48550/arxiv.2207.08540}, we have, for $\balp_{t+1}\leq 1/2$, 
\begin{equation}
\begin{aligned}
\|H_{t+1}-H_t\|^2 &\leq \frac{2I\balp_{t+1}^2\sigma^2}{B}+\frac{4I \balp_{t+1}^2}{m}\E\left[\sum_{i=1}^m\|H_{i,t}-\nabla_{yy}^2 g_i(\x_t,\y_{i,t})\|^2\right]\\
&\quad +\frac{9m^2 L_{gyy}^2}{I}\E[m\|\x_{t+1}-\x_t\|^2+\|\y_{t+1}-\y_t\|^2]\\
% &\leq  \frac{2I\balp_{t+1}^2\sigma^2}{B}+\frac{8I \balp_{t+1}^2}{m}\E\left[\delta_{H,t}\right]+\frac{8L_{gyy}^2I \balp_{t+1}^2}{m}\E\left[\delta_{y,t}\right]\\
% &\quad +\frac{9m^2 L_{gyy}^2}{I}\E[m\|\x_{t+1}-\x_t\|^2+\|\y_{t+1}-\y_t\|^2]\\
\end{aligned}
\end{equation}
\end{proof}

\subsection{Proof of Lemma~\ref{lem:203}}
\begin{equation}
    \begin{aligned}
        &\left\|\Delta_t-\nabla F(\x_t)\right\|^2\\
        &=\Bigg\|\frac{1}{m}\sum_{i=1}^m \left(\nabla_x f_i(\x_{t},\y_{i,t})-\nabla_{xy}^2 g_i(\x_t,\y_{i,t})\v_{i,t}\right)-\frac{1}{m}\sum_{i=1}^m \left(\nabla_x f_i(\x_{t},\y_i(\x_t))-\nabla_{xy}^2 g_i(\x_t,\y_i(\x_t))\v_i(\x_t)\right) \Bigg\|^2\\
        &\leq \frac{1}{m}\sum_{i=1}^m (2L_{fx}^2+\frac{4L_{gxy}^2C_{fy}^2}{\lambda^2})\|\y_{i,t}-\y_i(\x_t)\|^2+4C_{gxy}^2\|\v_{i,t}-\v_i(\x_t)\|^2\\
        &\leq \frac{1}{m}\sum_{i=1}^m (2L_{fx}^2+\frac{4L_{gxy}^2C_{fy}^2}{\lambda^2})\|\y_{i,t}-\y_i(\x_t)\|^2+8C_{gxy}^2\|\v_{i,t}-\v_i(\x_t,\y_{i,t})\|^2+8C_{gxy}^2L_v^2\|\y_{i,t}-\y_i(\x_t)\|^2\\
        &=:\frac{C_1'}{m}\|\y_t-\y(\x_t)\|^2+\frac{C_2'}{m}\|\v_t-\v(\x_t,\y_t)\|^2
    \end{aligned}
\end{equation}

\subsection{Proof of Lemma~\ref{lem:202}}
\begin{proof}
\begin{equation}
    \begin{aligned}
        &\E_t[\left\|\z_{t+1}-\Delta_t\right\|^2]\\
        &=\E_t\left[\left\|(1-\beta_t)(\z_t-\Delta_{t-1})+(1-\beta_t)(\Delta_{t-1}-\widetilde{G}_t)+G_t-\Delta_t\right\|^2\right]\\
        &=(1-\beta_t)^2 \|\z_t-\Delta_{t-1}\|^2+2(1-\beta_t)^2\E_t\left[\|\Delta_{t-1}-\widetilde{G}_t+G_t-\Delta_t\|^2\right]+2\beta_t^2\E_t\left[\|G_t-\Delta_t\|^2\right]\\
        &\stackrel{(a)}{\leq} (1-\beta_t)^2 \|\z_t-\Delta_{t-1}\|^2+2(1-\beta_t)^2\E_t\left[\|G_t-\widetilde{G}_t\|^2\right]+2\beta_t^2\E_t\left[\|G_t-\Delta_t\|^2\right]
    \end{aligned}
\end{equation}
where $(a)$ use the standard inequality $\E[\|a-\E[a]\|^2]\leq \E[\|a\|^2]$, and $\E_t[G_t]=\Delta_t$, $\E_t[\widetilde{G}_t]=\Delta_{t-1}$. We further bound the last two terms as following
\begin{equation}
    \begin{aligned}
        &\E_t\left[\|G_t-\Delta_t\|^2\right]\\
        &\leq \E_t\Bigg[\Bigg\|\frac{1}{ I }\sum_{i\in \mI_t}\left(\nabla_x f_i(\x_t,\y_{i,t};\B_i^t)-\nabla_{xy}^2 g_i(\x_t,\y_{i,t};\wB_i^t)\v_{i,t}\right) -\frac{1}{m}\sum_{i=1}^m\left(\nabla_x f_i(\x_t,\y_{i,t})-\nabla_{xy}^2 g_i(\x_t,\y_{i,t})\v_{i,t}\right)\Bigg\|^2\Bigg]\\
        &\leq \E_t\Bigg[2\Bigg\|\frac{1}{ I }\sum_{i\in \mI_t}\left(\nabla_x f_i(\x_t,\y_{i,t};\B_i^t)-\nabla_{xy}^2 g_i(\x_t,\y_{i,t};\wB_i^t)\v_{i,t}\right)\\
        &\quad -\frac{1}{m}\sum_{i=1}^m\left(\nabla_x f_i(\x_t,\y_{i,t};\B_i^t)-\nabla_{xy}^2 g_i(\x_t,\y_{i,t};\wB_i^t)\v_{i,t}\right)\Bigg\|^2\\
        &\quad +2\Bigg\|\frac{1}{m}\sum_{i=1}^m\left(\nabla_x f_i(\x_t,\y_{i,t};\B_i^t)-\nabla_{xy}^2 g_i(\x_t,\y_{i,t};\wB_i^t)\v_{i,t}\right)-\frac{1}{m}\sum_{i=1}^m\left(\nabla_x f_i(\x_t,\y_{i,t})-\nabla_{xy}^2 g_i(\x_t,\y_{i,t})\v_{i,t}\right)\Bigg\|^2\Bigg]\\
        &\leq \frac{8(2C_{fx}^2+2C_{gxy}^2\mathcal{V}^2)}{I}+\frac{4\sigma^2}{B}+\frac{4\sigma^2\mathcal{V}^2}{B}\leq C_5'\left(\frac{\I(I<m)}{I}+\frac{1}{B}\right),
    \end{aligned}
\end{equation}
where $C_5'=\max\{8(2C_{fx}^2+2C_{gxy}^2\mathcal{V}^2, 4\sigma^2+4\sigma^2\mathcal{V}^2) \}$, and

\begin{equation}
    \begin{aligned}
        &\E_t\left[\|G_t-\widetilde{G}_t\|^2\right]\\
        &=\E_t\Bigg[\Bigg\|\frac{1}{ I }\sum_{i\in \mI_t}\left(\nabla_x f_i(\x_t,\y_{i,t};\B_i^t)-\nabla_{xy}^2 g_i(\x_t,\y_{i,t};\wB_i^t)\v_{i,t}\right)\\
        &\quad -\left(\nabla_x f_i(\x_{t-1},\y_{i,t-1};\B_i^t)-\nabla_{xy}^2 g_i(\x_{t-1},\y_{i,t-1};\wB_i^t)\v_{i,t-1}\right)\Bigg\|^2\Bigg]\\
        &\leq \frac{1}{m}\sum_{i=1}^m (2L_{fx}^2+\frac{4L_{gxy}^2C_{fy}^2}{\lambda^2})(\|\x_t-\x_{t-1}\|^2+\|\y_{i,t}-\y_{i,t-1}\|^2)+4C_{gxy}^2\|\v_{i,t}-\v_{i,t-1}\|^2\\
        &=: C_3'\|\x_t-\x_{t-1}\|^2+\frac{C_3'}{m}\|\y_t-\y_{t-1}\|^2+\frac{C_4'}{m}\|\v_t-\v_{t-1}\|^2.
    \end{aligned}
\end{equation}
Then we have
\begin{equation}
    \begin{aligned}
        &\E_t[\left\|\z_{t+1}-\Delta_t\right\|^2]\\
        &\leq (1-\beta_t)^2 \|\z_t-\Delta_{t-1}\|^2+2(1-\beta_t)^2\left(C_3'\|\x_t-\x_{t-1}\|^2+\frac{C_3'}{m}\|\y_t-\y_{t-1}\|^2+\frac{C_4'}{m}\|\v_t-\v_{t-1}\|^2\right)\\
        &\quad +2\beta_t^2C_5'\left(\frac{\I(I<m)}{I}+\frac{1}{B}\right)\\
        &\leq (1-\beta_t) \|\z_t-\Delta_{t-1}\|^2+2C_3'\|\x_t-\x_{t-1}\|^2+\frac{2C_3'}{m}\|\y_t-\y_{t-1}\|^2+\frac{2C_4'}{m}\|\v_t-\v_{t-1}\|^2 +2\beta_t^2C_5'\left(\frac{\I(I<m)}{I}+\frac{1}{B}\right)
    \end{aligned}
\end{equation}

\end{proof}

\subsection{Proof of Lemma~\ref{lem:204}}
\begin{proof}
Consider updates $\v_{i,t+1}=\Pi_{\V}[\v_{i,t}-\btau_t \u_{i,t}]$. Note that $\v_i(\x_t,\y_{i,t}) = \Pi_{\V}[\v_i(\x_t,\y_{i,t})-\btau_t \nabla_v \phi_i(\v_i(\x_t,\y_{i,t}),\x_t,\y_{i,t})]$
\begin{equation}
\begin{aligned}
&\E[\|\v_{i,t+1}-\v_i(\x_t,\y_{i,t})\|^2]\\
& =\E\left[\|\Pi_{\V}[\v_{i,t}-\btau_t \u_{i,t}]-\Pi_{\V}[\v_i(\x_t,\y_{i,t})-\btau_t \nabla_v \phi_i(\v_i(\x_t,\y_{i,t}),\x_t,\y_{i,t})]\|^2\right]\\
& \leq \E\left[\|\v_{i,t}-\btau_t \u_{i,t}-\v_i(\x_t,\y_{i,t})+\btau_t \nabla_v \phi_i(\v_i(\x_t,\y_{i,t}),\x_t,\y_{i,t})\|^2\right]\\
& \leq \E\left[\|\v_{i,t}-\v_i(\x_t,\y_{i,t})-\btau_t \u_{i,t}+\btau_t \nabla_v \phi_i(\v_{i,t},\x_t,\y_{i,t})-\btau_t \nabla_v \phi_i(\v_{i,t},\x_t,\y_{i,t})+\btau_t \nabla_v \phi_i(\v_i(\x_t,\y_{i,t}),\x_t,\y_{i,t})\|^2\right]\\
& \leq \E\big[\|\v_{i,t}-\v_i(\x_t,\y_{i,t})\|^2+\|-\btau_t \u_{i,t}+\btau_t \nabla_v \phi_i(\v_{i,t},\x_t,\y_{i,t})-\btau_t \nabla_v \phi_i(\v_{i,t},\x_t,\y_{i,t})+\btau_t \nabla_v \phi_i(\v_i(\x_t,\y_{i,t}),\x_t,\y_{i,t})\|^2\\
&\quad +\langle \v_{i,t}-\v_i(\x_t,\y_{i,t}),-\btau_t \nabla_v \phi_i(\v_{i,t},\x_t,\y_{i,t})+\btau_t \nabla_v \phi_i(\v_i(\x_t,\y_{i,t}),\x_t,\y_{i,t}) \rangle\\
&\quad +\langle \v_{i,t}-\v_i(\x_t,\y_{i,t}),-\btau_t \u_{i,t}+\btau_t \nabla_v \phi_i(\v_{i,t},\x_t,\y_{i,t})\rangle\big]\\
& \stackrel{(a)}{\leq} \E\big[\|\v_{i,t}-\v_i(\x_t,\y_{i,t})\|^2+2\btau_t^2 L_{\phi v}^2\| \v_{i,t}-\v_i(\x_t,\y_{i,t})\|^2-\lambda \btau_t\|\v_{i,t}-\v_i(\x_t,\y_{i,t})\|^2\\
&\quad +2\btau_t^2\| \u_{i,t}- \nabla_v \phi_i(\v_{i,t},\x_t,\y_{i,t})\|^2+\langle \v_{i,t}-\v_i(\x_t,\y_{i,t}),-\btau_t \u_{i,t}+\btau_t \nabla_v \phi_i(\v_{i,t},\x_t,\y_{i,t})\rangle\big]\\
& \stackrel{(b)}{\leq} \E\big[\|\v_{i,t}-\v_i(\x_t,\y_{i,t})\|^2+2\btau_t^2 L_{\phi v}^2\| \v_{i,t}-\v_i(\x_t,\y_{i,t})\|^2-\frac{3\lambda\btau_t}{4}\|\v_{i,t}-\v_i(\x_t,\y_{i,t})\|^2\\
&\quad +2\btau_t^2\| \u_{i,t}- \nabla_v \phi_i(\v_{i,t},\x_t,\y_{i,t})\|^2+4\lambda\btau_t\|\u_{i,t}- \nabla_v \phi_i(\v_{i,t},\x_t,\y_{i,t})\|^2 \big]\\
& \stackrel{(c)}{\leq} \E\big[(1-\frac{\lambda\btau_t}{2})\|\v_{i,t}-\v_i(\x_t,\y_{i,t})\|^2+5\lambda\btau_t\|\u_{i,t}- \nabla_v \phi_i(\v_{i,t},\x_t,\y_{i,t})\|^2 \big]\\
\end{aligned}
\end{equation}
where $(a)$ uses the $\lambda$-strong convexity of $\phi_i$, $(b)$ uses
\begin{equation}
\begin{aligned}
&\langle \v_{i,t}-\v_i(\x_t,\y_{i,t}),\btau_t \u_{i,t}-\btau_t \nabla_v \phi_i(\v_{i,t},\x_t,\y_{i,t})\rangle\\
&=\langle \frac{\sqrt{\lambda\btau_t}}{2}(\v_{i,t}-\v_i(\x_t,\y_{i,t})),2\sqrt{\lambda\btau_t}( \u_{i,t}- \nabla_v \phi_i(\v_{i,t},\x_t,\y_{i,t}))\rangle\\
&\leq \frac{\lambda\btau_t}{4}\|\v_{i,t}-\v_i(\x_t,\y_{i,t})\|^2+4\lambda\btau_t\|\u_{i,t}- \nabla_v \phi_i(\v_{i,t},\x_t,\y_{i,t})\|^2
\end{aligned}
\end{equation}
and $(c)$ uses the assumption $\btau_t\leq \min\left\{\frac{\lambda}{8L_{\phi v}^2},\frac{\lambda}{2}\right\}$, 

Then
\begin{equation}
\begin{aligned}
&\E[\|\v_{i,t+1}-\v_i(\x_{t+1},\y_{i,t+1})\|^2]\\
&\leq (1+\frac{\lambda\btau_t}{4})\E[\|\v_{i,t+1}-\v_i(\x_t,\y_{i,t})\|^2]+(1+\frac{4}{\lambda\btau_t})\E[\|\v_i(\x_t,\y_{i,t})-\v_i(\x_{t+1},\y_{i,t+1})\|^2]\\
&\leq (1-\frac{\lambda\btau_t}{4})\E\|\v_{i,t}-\v_i(\x_t,\y_{i,t})\|^2]+10\lambda\btau_t\E[\|\u_{i,t}- \nabla_v \phi_i(\v_{i,t},\x_t,\y_{i,t})\|^2]\\
&\quad  +\frac{5}{\lambda\btau_t}E[\|\v_i(\x_t,\y_{i,t})-\v_i(\x_{t+1},\y_{i,t+1})\|^2]\\
\end{aligned}
\end{equation}
where we use the assumption $\btau_t\leq \frac{1}{\lambda}$.
Take summation over all blocks $i=1,\dots,m$, we have
\begin{equation}
\begin{aligned}
\E[\delta_{v,t+1}]&\leq (1-\frac{\lambda\tau_t}{4})\E[\delta_{v,t}]+10\lambda\btau_t\E[\tilde{\delta}_{u,t}]  +\frac{5L_v^2}{\lambda\tau_t}E[\|\x_t-\x_{t+1}\|^2]+\frac{5L_v^2}{\lambda\tau_t}\E[\|\y_{t}-\y_{t+1}\|^2]\\
\end{aligned}
\end{equation}
\end{proof}

\section{Numeric Results of Hyper-parameter Optimization Experiment}\label{app:HO}
\begin{table}[h]
\caption{Testing accuracies and standard deviation over 3 runs with different random seeds from logistic regression, $\vone$ with $m=1$ lower-level problem, and $\vone$ with $m=100$ lower-level problems on various noise level of dataset \textit{a8a}. Noise level represents the proportion of training sample labels that are flipped. $70\%$ of the positive samples are removed from training data except for noise level $0*$, which means no label noise and no data imbalance.}
\begin{center}
\begin{tabular}{c|ccc}
% \hline
% method & 0*& 0 & 0.1 & 0.2 & 0.3 & 0.4 \\
% \hline
%  Logistic Regression &0.8528 \pm 0.0005& 0.8442 \pm 0.0009 & 0.8285\pm 0.0034& 0.8250\pm0.0066&   0.7929\pm0.0081&   0.7715\pm0.0025\\ 
%  $\text{BSVRB}^{\text{v1}} (m=1)$ & 0.8526\pm 0.0002 & 0.8426 \pm 0.0016 & 0.8303 \pm 0.0100 &   0.8185\pm0.0090 &   0.8118 \pm 0.0047 & 0.7749 \pm 0.0079 \\
%   $\text{BSVRB}^{\text{v1}} (m=100)$ &0.8509\pm 0.0011 & 0.8477 \pm 0.0013&   0.8400\pm0.0025 & 0.8388 \pm0.0024 &   0.8239 \pm 0.0015&  0.8051\pm 0.0013  \\ 
\hline 
Noise Level & Logistic Regression & $\text{BSVRB}^{\text{v1}} (m=1)$ & $\text{BSVRB}^{\text{v1}} (m=100)$\\
\hline
0*&0.8528 $\pm$ 0.0005& 0.8526$\pm$ 0.0002 & \textbf{0.8509}$\pm$ \textbf{0.0011} \\
0& 0.8442 $\pm$ 0.0009 &0.8426 $\pm$ 0.0016 &  \textbf{0.8477} $\pm$ \textbf{0.0013}\\
0.1& 0.8285$\pm$ 0.0034& 0.8303 $\pm$ 0.0100 &  \textbf{0.8400}$\pm$ \textbf{0.0025} \\
0.2&  0.8250$\pm$0.0066& 0.8185$\pm$ 0.0090 & \textbf{0.8388} $\pm$ \textbf{0.0024}\\
0.3&   0.7929$\pm$ 0.0081& 0.8118 $\pm$ 0.0047 & \textbf{0.8239} $\pm$ \textbf{0.0015}\\
0.4&   0.7715$\pm$ 0.0025& 0.7749 $\pm$ 0.0079 &  \textbf{0.8051}$\pm$ \textbf{0.0013}\\
\hline
\end{tabular}
\end{center}
\end{table}
\end{document}